\setlist{itemsep=3pt}
\numberwithin{equation}{section}
\newtheorem{prop}{Proposition}
\newtheorem{theo}[prop]{Theorem}
\newtheorem{lemm}[prop]{Lemma}
\newtheorem{coro}[prop]{Corollary}
\newtheorem{claim}[prop]{Claim}
\theoremstyle{definition}
\newtheorem{defi}[prop]{Definition}
\newtheorem{rema}[prop]{Remark}
\numberwithin{prop}{section}
\newcommand{\CC}{\mathbb{C}}
\newcommand{\HH}{\mathbb{H}}
\newcommand{\NN}{\mathbb{N}}
\newcommand{\RR}{\mathbb{R}}
\renewcommand{\SS}{\mathbb{S}}
\newcommand{\ZZ}{\mathbb{Z}}
\newcommand{\bC}{\mathbf{C}}
\newcommand{\bF}{\mathbf{F}}
\newcommand{\bL}{\mathbf{L}}
\newcommand{\bM}{\mathbf{M}}
\newcommand{\bv}{\mathbf{v}}
\newcommand{\cA}{\mathcal A}
\newcommand{\cB}{\mathcal B}
\newcommand{\cC}{\mathcal C}
\newcommand{\cF}{\mathcal F}
\newcommand{\cG}{\mathcal G}
\renewcommand{\cH}{\mathcal H}
\newcommand{\cI}{\mathcal I}
\renewcommand{\cL}{\mathcal L}
\newcommand{\cM}{\mathcal M}
\newcommand{\cN}{\mathcal N}
\newcommand{\cP}{\mathcal P}
\renewcommand{\cR}{\mathcal R}
\newcommand{\cS}{\mathcal S}
\newcommand{\cT}{\mathcal T}
\newcommand{\cU}{\mathcal U}
\newcommand{\cV}{\mathcal V}
\newcommand{\cZ}{\mathcal Z}
\def\fB{\mathfrak{B}}
\DeclareMathOperator{\tr}{tr}
\DeclareMathOperator{\im}{im}
\DeclareMathOperator{\supp}{supp}
\newcommand{\bangle}[1]{\left\langle #1 \right\rangle}
\DeclareMathOperator{\area}{area}
\DeclareMathOperator{\Vol}{Vol}
\let\oldmarginpar\marginpar
\renewcommand\marginpar[1]{\-\oldmarginpar[\raggedleft\footnotesize #1]%
{\raggedright\footnotesize #1}}
\DeclareMathOperator{\re}{Re}
\DeclareMathOperator{\imag}{Im}
\DeclareMathOperator{\Id}{Id}
\DeclareMathOperator{\length}{length}
\DeclareMathOperator{\Div}{div}
\DeclareMathOperator{\vol}{vol}
\DeclareMathOperator{\genus}{genus}
\DeclareMathOperator{\proj}{proj}
\DeclareMathOperator{\dist}{dist}
\DeclareMathOperator{\Index}{index}
\DeclareMathOperator{\sing}{sing}
\DeclareMathOperator{\inj}{inj}
\newcommand{\eps}{\varepsilon}
\DeclareMathOperator{\image}{image}
\DeclareMathOperator{\met}{Met}
\DeclareMathOperator{\reg}{reg}
\DeclareMathOperator{\imm}{Imm}
\DeclareMathOperator{\emb}{Emb}
\DeclareMathOperator{\Tan}{Tan}
\DeclareMathOperator{\VarTan}{VarTan}
\newcommand{\su}{\mathfrak{su}}
\newcommand{\bI}{\mathbf{I}}
\DeclareMathOperator{\dmn}{dmn}
\DeclareMathOperator{\Lim}{Lim}
\newcommand{\restr}{\mathbin{\vrule height 1.6ex depth 0pt width
0.13ex\vrule height 0.13ex depth 0pt width 1.3ex}}
\DeclareMathOperator{\sech}{sech}
\title{The p-widths of a surface}
\author{Otis Chodosh} 
\address{Department of Mathematics, Stanford University, Building 380, Stanford, CA 94305, USA}
\email{ochodosh@stanford.edu}
\author{Christos Mantoulidis} 
\address{Department of Mathematics, Rice University, Houston, TX 77005, USA}
\email{christos.mantoulidis@rice.edu}
\begin{document}

\maketitle

\begin{abstract}
The $p$-widths of a closed Riemannian manifold are a nonlinear analogue of the spectrum of its Laplace--Beltrami operator, which corresponds to areas of a certain min-max sequence of possibly singular minimal submanifolds. We show that the $p$-widths of any closed Riemannian two-manifold correspond to a union of closed immersed geodesics, rather than simply geodesic nets. 

We then prove optimality of the sweepouts of the round two-sphere constructed from the zero set of homogeneous polynomials, showing that the $p$-widths of the round sphere are attained by $\lfloor \sqrt{p}\rfloor$ great circles. As a result, we find the universal constant in the Liokumovich--Marques--Neves--Weyl law for surfaces to be $\sqrt{\pi}$. 

En route to calculating the $p$-widths of the round two-sphere, we prove two additional new results: a bumpy metrics theorem for stationary geodesic nets with fixed edge lengths, and that, generically, stationary geodesic nets with bounded mass and bounded singular set have Lusternik--Schnirelmann category zero.
\end{abstract}

\tableofcontents

\section{Introduction}

\subsection{Our setting and results} 
Fix a closed (i.e., compact and without boundary) Riemannian manifold $(M^{n+1},g)$. The \emph{$p$-widths} of $(M,g)$, denoted $\omega_p(M,g) \in (0,\infty)$ for $p \in \NN^*$, are a geometric nonlinear analogue of the spectrum of its Laplace--Beltrami operator. They are obtained by replacing the Rayleigh quotient of the Laplace--Beltrami operator along families of scalar-valued functions on $M$ with the $n$-dimensional area along sweepouts of $M$ of (possibly singular) hypersurfaces. See Section \ref{subsec:p-widths} for the definition. They were introduced by Gromov \cite{Gromov:spectra.width,Gromov:waist,Gromov:sing.exp.top}, studied further by Guth \cite{Guth:minimax}, and have  played a central and exciting role in minimal surface theory when combined with the Almgren--Pitts--Marques--Neves Morse theory program for the area functional \cite{Almgren:htpy, Pitts, MarquesNeves:posRic,IrieMarquesNeves,MarquesNevesSong,Song:full-yau,song:dichotomy,Li:infinitely-high-dim,GasparGuaraco,GasparGuaraco:weyl,HaslhoferKetover,CM:3d,Zhou:multiplicity-one,SongZhou:scarring}.  We invite the reader to refer to \cite{Gromov:spectra.width} for the analogy between the Laplace spectrum and the volume spectrum, and to \cite{MarquesNeves:uper-semi-index} for a thorough overview of the importance of this analogy in minimal surface theory.

Let us recall the main existence theorem for $p$-widths. By the combined work of Almgren--Pitts, Schoen--Simon, Marques--Neves, and Li, it is known that in ambient dimensions $n+1 \geq 3$ every $p$-width is attained as the area of a smoothly embedded minimal hypersurface $\Sigma_p$ whose singular set $\bar \Sigma_p \setminus \Sigma_p$ has dimension $\leq n-7$, whose connected components may contribute to area with different multiplicities, and whose total Morse index (discounting multiplicities) is bounded by $p$. That is:

\begin{theo}[{\cite{Pitts, SchoenSimon, MarquesNeves:multiplicity,Li:morse}}] \label{theo:ap.existence}
	Let $(M^{n+1}, g)$ be a closed Riemannian manifold with $n+1 \geq 3$. For every $p \in \NN^*$, there exists a smoothly embedded minimal hypersurface $\Sigma_p \subset M$, with $\bar \Sigma_p \setminus \Sigma_p$ of Hausdorff dimension $\leq n-7$ and components $\Sigma_{p,1}, \ldots, \Sigma_{p,N(p)} \subset \Sigma_p$, such that
	\[ \omega_p(M, g) = \sum_{j=1}^{N(p)} m_j \cdot \area_g(\Sigma_{p,j}), \]
	where $m_j \in \NN^*$ for all $j \in \{1, \ldots, N(p)\}$ and $\Index(\Sigma_{p})\leq p$.
\end{theo}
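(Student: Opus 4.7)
The statement is really an aggregate of four landmark results -- Almgren--Pitts min-max existence, Schoen--Simon regularity, the Marques--Neves multiplicity/compactness framework, and Li's index bound -- so my plan is to unpack each ingredient in turn.

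\textbf{Step 1 (producing a stationary limit).} Starting from a minimizing sequence of $p$-sweepouts realizing $\omega_p(M,g)$, I would apply the Almgren--Pitts discretization/interpolation machinery to replace the continuous families by discrete $p$-parameter families of integral $n$-cycles with fineness tending to $0$ and maximal mass tending to $\omega_p(M,g)$. Pitts' pull-tight together with his combinatorial argument then extracts a stationary integral $n$-varifold $V$ on $M$ of mass $\|V\|(M)=\omega_p(M,g)$ which is \emph{almost-minimizing in annuli}: at each $x\in M$ there is a small annulus $A_x\subset M$ on which $V$ arises as the $\mathbf{F}$-limit of integral cycles nearly minimizing in $A_x$.

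\textbf{Step 2 (regularity and decomposition).} Almost-minimizing-in-annuli varifolds are, on their good annuli, $\mathbf{F}$-limits of stable embedded minimal hypersurfaces, so Schoen--Simon compactness applies. Working componentwise, this produces a smoothly embedded minimal hypersurface $\Sigma_p=\Sigma_{p,1}\cup\cdots\cup\Sigma_{p,N(p)}$ with singular set $\overline{\Sigma_p}\setminus\Sigma_p$ of Hausdorff dimension at most $n-7$, together with multiplicities $m_j\in\NN^*$ such that $V=\sum_j m_j[\Sigma_{p,j}]$. This yields the mass identity
\[
\omega_p(M,g)=\sum_{j=1}^{N(p)} m_j\cdot\area_g(\Sigma_{p,j}).
\]

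\textbf{Step 3 (index bound).} The estimate $\Index(\Sigma_p)\leq p$, discounting multiplicities, is the most delicate piece and would be handled by Li's extension of the Marques--Neves scheme. Arguing by contradiction, if $\Index(\Sigma_p)\geq p+1$, then the $(p+1)$-dimensional negative eigenspace of the Jacobi operator on the smooth part yields, via a Marques--Neves-type deformation theorem combined with the Almgren--Pitts pull-tight procedure, a new admissible $p$-sweepout whose maximal mass falls strictly below $\omega_p(M,g)$, contradicting the definition of the $p$-width.

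The main obstacle is precisely Step 3: one must propagate an index assumption living on the smooth part of $\Sigma_p$ across the inherently non-local Almgren--Pitts construction, while simultaneously handling the possibly high integer multiplicities $m_j$ and the singular set of dimension up to $n-7$. This is exactly why a dedicated refinement beyond the Marques--Neves multiplicity-one framework -- the one provided by Li -- is needed to close the argument.
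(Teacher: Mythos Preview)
The paper does not contain its own proof of this theorem; it is stated in the introduction as a known result, with full attribution to the cited references (Pitts for existence and almost-minimizing regularity, Schoen--Simon for the regularity in high dimensions, Marques--Neves and Li for the index bound). Your three-step outline is a faithful high-level summary of how those references combine, and is appropriate as a literature sketch for a result the paper only quotes rather than proves.
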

See also \cite{SimonSmith,ColdingDeLellis,DeLellisPellandini,DeLellisTasnady,Zhou:posRic,Zhou:Ricci-sing,Ketover:genus,KMN:catenoid,KLM,RL,CLS,ZWang:deform,Li-ZWang,MarquesMontezumaNeves} for related work. 

Note that, when $3 \leq n+1 \leq 7$, $\Sigma_p$ is necessarily smoothly embedded. On the other hand, in the case of a two-dimensional Riemannian manifold ($n+1=2$), 
min-max methods not only need not produce \textit{embedded} geodesics (see \cite{Aiex:ellipsoids} for examples of immersed geodesics being produced), but in full generality they could \emph{a priori} produce \textit{geodesic nets} as opposed to (immersed) geodesics (see \cite[Remark 1.1]{MarquesNeves:multiplicity}). 

Our first main result shows that the min-max methods described above can be guaranteed to produce (immersed) geodesics regardless of the number of parameters. Throughout the paper, a geodesic is said to be primitive if it is connected and traversed with multiplicity one.

\begin{theo}\label{theo:immersed.geo.min.max}
Let $(M^{2},g)$ be a closed Riemannian manifold. For every $p \in \NN^{*}$, there exists a $\sigma_p \subset M$ consisting of distinct primitive closed geodesics $\sigma_{p,1}, \ldots, \sigma_{p,N(p)} \subset \sigma_p$  such that
\[
\omega_{p}(M,g) = \sum_{j=1}^{N(p)} m_j \cdot \length_g(\sigma_{p,j}),
\]
where $m_j \in \NN^*$ for all $j \in \{1, \ldots, N(p) \}$.
\end{theo}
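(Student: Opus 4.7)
The plan is to first use Pitts' min-max theorem in ambient dimension two to produce a stationary integral $1$-varifold realizing $\omega_p(M,g)$, and then to upgrade the conclusion from ``stationary geodesic net'' to ``immersed primitive closed geodesics with integer multiplicities'' by exploiting the bumpy-metrics theorem and the Lusternik--Schnirelmann-category-zero result announced in the abstract.

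I would start with Pitts' theorem in dimension $n+1=2$, which yields a stationary integral $1$-varifold $V$ with $\|V\|(M) = \omega_p(M,g)$ whose support is a stationary geodesic net: finitely many closed geodesic arcs meeting at finitely many points where the multiplicity-weighted incident tangent vectors balance. Call such a junction point \emph{essential} if the incident tangent vectors do not split into oppositely-directed pairs. The support of $V$ is a union of immersed primitive closed geodesics with integer multiplicities precisely when no essential junction point is present, so the task is to rule out essential junctions.

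Next, I would reduce to generic metrics. By the paper's bumpy-metrics theorem for stationary geodesic nets with fixed edge lengths, there is a $C^\infty$-dense set $\cB$ of metrics for which all stationary geodesic nets of bounded combinatorial complexity are nondegenerate. Choose $g_k \in \cB$ with $g_k \to g$ smoothly; the $p$-width is continuous in the metric, so $\omega_p(M,g_k) \to \omega_p(M,g)$, and Pitts produces realizers $V_k$ for each $g_k$. I would then invoke the abstract's second new tool: stationary geodesic nets with an essential junction have Lusternik--Schnirelmann category zero within the space of stationary nets of bounded mass and bounded singular set. By standard min-max deformation theory, a critical set of LS category zero cannot support a $p$-width: one constructs near such a net a length-decreasing pseudo-gradient deformation of the optimal sweepout that strictly pushes its supremum mass below $\omega_p(M,g_k)$, contradicting the fact that $V_k$ realizes it. Hence each $V_k$ must be supported on immersed primitive closed geodesics $\sigma^k_j$ with integer multiplicities $m^k_j$.

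Finally, I would let $k \to \infty$. Since primitive closed geodesics on metrics near $g$ have a uniform positive lower bound on length and uniformly bounded individual lengths (by $\omega_p(M,g)+1$, say), the decomposition of $V_k$ has a uniformly bounded number of components with uniform ODE bounds. Parametrizing by constant speed and using Arzel\`a--Ascoli with the geodesic equation yields $C^2$ subsequential limits that are primitive closed $g$-geodesics, after possibly absorbing multiple covers or coincident limits into the multiplicities; varifold convergence then gives the claimed decomposition with total weighted length $\omega_p(M,g)$. I expect the main obstacle to be the LS-category-zero deformation step: translating the abstract statement about the configuration space of stationary geodesic nets into a concrete sweepout-level deformation compatible with the discrete Almgren--Pitts setup on $\sZ_1(M;\ZZ_2)$ will require carefully interpolating between the continuous retraction of junctioned configurations onto strictly shorter ones and the discretized cycles that approximate the optimal sweepout, together with ensuring that the resulting perturbation remains admissible in the $p$-width class.
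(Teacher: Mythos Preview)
Your proposal has a genuine gap, and it also misidentifies which tools in the paper are used for this theorem.

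First, the paper does \emph{not} prove Theorem~\ref{theo:immersed.geo.min.max} via the bumpy-metrics theorem or the Lusternik--Schnirelmann covering lemma. Those tools are developed in Sections~\ref{sec:geo.net} and~\ref{sec:LS} solely for the computation of the $p$-widths of the round sphere (Theorem~\ref{theo:sphere.pwidths}), and they presuppose Theorem~\ref{theo:immersed.geo.min.max} as an input. The actual proof of Theorem~\ref{theo:immersed.geo.min.max} runs through the phase-transition regularization with the specific sine-Gordon double-well potential $W(t) = (1+\cos(\pi t))/\pi^2$: one produces min-max critical points $u_{\eps_i}$ of the $\eps_i$-energy, and the Liu--Wei tangent-cone theorem (Theorem~\ref{theo:SG.cone}), which uses the integrable-PDE structure of sine-Gordon, forces every blow-up at a potential singular point of the limit varifold to be a union of full lines through the origin rather than an odd junction of half-lines. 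Combined with the Wang--Wei curvature estimates, this is what rules out essential junctions and yields immersed geodesics (Theorem~\ref{theo:SG-lim} and Proposition~\ref{prop:phase.tran.crit.immersed}).

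Second, your use of Lusternik--Schnirelmann theory is incorrect on its own terms. The statement ``a critical set of LS category zero cannot support a $p$-width'' is false: a single isolated nondegenerate critical point has category zero and can perfectly well realize a width. What LS theory actually gives is that if $\omega_p = \omega_{p+1}$ then the critical set at that level must have category $\geq 1$; equivalently, category zero only prevents \emph{repeated} widths, not individual ones. So even granting that the set of junctioned nets has category zero inside the space of stationary nets, nothing prevents a single such net from being the min-max varifold for a given $p$. Relatedly, the paper's category-zero result (Lemma~\ref{lemm:cover.cF.bF}) concerns the \emph{entire} space $\cS^\Lambda(g)$ of stationary nets with bounded mass and singular set, not the subset with essential junctions, and it is used precisely to force strict monotonicity $\omega_p < \omega_{p+1}$ for the perturbed sphere metric --- not to exclude junctions.
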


Our approach uses a phase-transition regularization of the area-functional. In this direction, we note the following contributions: \cite{HutchinsonTonegawa00,Tonegawa05,Wickramasekera14,TonegawaWickramasekera12,Guaraco,Hiesmayr,Gaspar,WangWei,WangWei2,Dey:APAC,Bellettini:generic.exist,bellettini:mult1-AC}. There also exist other successful regularization approaches \cite{Riviere:visc,Riviere:lsc.index,PigatiRiviere:parametrized.varifolds,PigatiRiviere,MichelatRiviere,ChengZhou} (note that our technique precisely allows us to circumvent the fundamental issue discussed in \cite[p. 1984]{PigatiRiviere:parametrized.varifolds}.), but the phase transition technique is the only one known to allow for the study of $p$-widths across $p \in \NN^*$ via its relationship to Almgren--Pitts (\cite{Dey:APAC}).

The existence of immersed geodesics representing $p$-widths was previously known in the following cases: 
\begin{itemize}
	\item $p=1$ by Calabi--Cao \cite{CalabiCao} and, independently using phase transitions, by the second-named author\footnote{\cite{Mantoulidis} works with $p$-widths defined via phase transitions instead, but those agree with the ones above by Dey \cite{Dey:APAC}. See Propositions \ref{prop:AC-AP}, \ref{prop:phase.vs.ap.limit} below.} \cite{Mantoulidis} (see also \cite{ZhouZhu:networks,KetoverLiokumovich:c11.curves}).
	\item $p\in \{1,\dots,8\}$ for nearly round metrics on $\SS^2$  by  Aiex \cite{Aiex:ellipsoids}.
\end{itemize}

\begin{rema}
The min-max approach to finding closed geodesics date to Birkhoff's work in 1917 \cite{Birkhoff} in which he proved that the two-sphere with any Riemannian metric admits a closed geodesic (this question was posed by Poincar\'e \cite{Poincare}). See also \cite{CM:curve.shortening}. Lusternik--Schnirelmann have established the (sharp) result that any metric on the two-sphere admits at least three simple closed geodesics \cite{LS:three} (see also \cite{Grayson,Jost:LS,Klingenberg,L:top-calc-var-large,Tauimanov}). Similarly, Franks \cite{Franks} and Bangert \cite{Bangert} have proven that such a surface admits infinitely many principal (immersed) closed geodesics (see also \cite{Hingston}). 
\end{rema} 

Our second main result is a computation of the full $p$-width spectrum of the round two-sphere. 

\begin{theo}\label{theo:sphere.pwidths}
	Let $g_0$ denote the unit round metric on $\SS^2$. For every $p \in \NN^*$,
	\[ \omega_p(\SS^2,g_0) = 2\pi \lfloor \sqrt{p} \rfloor, \]
	and is attained by a sweepout constructed out of homogeneous polynomials. The corresponding $\sigma_p$ is a union of $\lfloor \sqrt{p}\rfloor$ great circles (repetitions allowed).
\end{theo}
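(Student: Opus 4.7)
I establish $\omega_p(\SS^2,g_0) = 2\pi \lfloor \sqrt{p} \rfloor$ by proving matching upper and lower bounds; the description of $\sigma_p$ then follows from Theorem \ref{theo:immersed.geo.min.max}. Throughout, set $d := \lfloor\sqrt{p}\rfloor$.

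\emph{Upper bound via polynomial sweepouts.} I use the space $V_d := \bigoplus_{k=0}^{d} \cH_k$ of polynomial functions on $\SS^2$ of degree $\leq d$, where $\cH_k$ is the $k$-th Laplace eigenspace. By standard spherical harmonic dimensions,
\[ \dim V_d = \sum_{k=0}^{d}(2k+1) = (d+1)^2. \]
Define
\[ \Phi_d : X_d := \PP(V_d) \cong \RR\PP^{(d+1)^2-1} \longrightarrow \cZ_1(\SS^2; \ZZ_2), \quad [f] \mapsto \{f=0\}. \]
This is flat-continuous. To see that $\Phi_d^* \bar\lambda$ generates $H^1(X_d;\ZZ_2)$, I exhibit a generating loop: take generic linearly independent $f, g \in V_d$, set $\gamma(t) = [\cos(\pi t) f + \sin(\pi t) g]$, and note that $\gamma$ generates $\pi_1(X_d;\ZZ_2)$ while $\Phi_d\circ\gamma$ sweeps $\SS^2$ with odd multiplicity, hence generates $\pi_1(\cZ_1(\SS^2; \ZZ_2)) \cong \ZZ_2$ via Almgren's isomorphism. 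Thus $(\Phi_d^*\bar\lambda)^p \neq 0$ for all $p \leq (d+1)^2 - 1 = d^2 + 2d$, making $\Phi_d$ a $p$-sweepout. The mass bound comes from Crofton's formula on $(\SS^2, g_0)$: for any great circle $C$, $f|_C$ is a trigonometric polynomial of degree $\leq d$ in the arclength parameter and hence has at most $2d$ zeros, so with $\mu$ the invariant measure on the space $G$ of great circles normalized so that $\mu(G) = 4\pi$,
\[ \length_{g_0}(\{f=0\}) = \tfrac{1}{4} \int_G \#(\{f = 0\} \cap C) \, d\mu(C) \leq \tfrac{1}{4} \cdot 2d \cdot 4\pi = 2\pi d. \]
Consequently $\omega_p(\SS^2, g_0) \leq 2\pi d$.

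\emph{Lower bound via Lusternik--Schnirelmann.} Theorem \ref{theo:immersed.geo.min.max} identifies $\omega_p(\SS^2,g_0) = \sum_j m_j \length_{g_0}(\sigma_{p,j})$; since every primitive closed geodesic of $(\SS^2, g_0)$ is a great circle of length $2\pi$, this forces $\omega_p(\SS^2, g_0) = 2\pi m_p$ for some $m_p \in \NN^*$, and it remains to show $m_p \geq d$. I argue by induction in $d$: suppose $\omega_{d^2}(\SS^2, g_0) \leq 2\pi(d-1)$. Combined with the degree-$(d-1)$ upper bound, monotonicity, and the quantization $\omega_p/(2\pi) \in \NN^*$, this gives a constancy run
\[ \omega_{(d-1)^2}(\SS^2,g_0) = \omega_{(d-1)^2 + 1}(\SS^2,g_0) = \cdots = \omega_{d^2}(\SS^2,g_0) = 2\pi(d-1) \]
of length $2d$. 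To rule this out, I invoke the paper's bumpy metrics theorem for stationary geodesic nets with fixed edge lengths to pick a sequence $g_k \to g_0$ of bumpy metrics on which all critical varifolds of $\mathbf{M}$ of bounded mass are geometrically isolated, and the paper's generic LS-category zero result to conclude that each such critical varifold has LS-category zero in $\cZ_1(\SS^2;\ZZ_2)$. The Marques--Neves Lusternik--Schnirelmann multiplicity principle then forces $\omega_{q+1}(g_k) > \omega_q(g_k)$ whenever the critical varifolds at consecutive $p$-widths are distinct. A careful analysis of how critical configurations on $(\SS^2, g_k)$ coalesce into $O(3)$-orbits of great circles on $(\SS^2, g_0)$, together with the integer quantization, bounds constancy runs at level $2\pi(d-1)$ by $2d - 1$, contradicting the assumed run of length $2d$. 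Hence $m_p \geq d$.

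\emph{Main obstacle.} The upper bound is a direct Crofton computation once one identifies the right polynomial sweepout (degree $\leq d$, yielding the exact dimension $(d+1)^2 - 1 = d^2 + 2d$ required). The crux of the proof is the lower bound, which depends essentially on both new technical results advertised in the abstract. The delicate part is extracting from the Lusternik--Schnirelmann multiplicity principle, applied to bumpy metrics $g_k$ close to $g_0$, the correct length bound $2d - 1$ on constancy runs of $\omega_p(\SS^2,g_0)/(2\pi)$ at level $d-1$; this requires keeping careful track of how many isolated critical configurations on $(\SS^2, g_k)$ can collapse into the same $O(3)$-orbit of great circle configurations on $(\SS^2, g_0)$ in the limit.
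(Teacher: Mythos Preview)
Your upper bound via polynomial sweepouts and Crofton is correct and matches the paper's Appendix \ref{sec:upp.bds}. Your observation that Theorem \ref{theo:immersed.geo.min.max} forces $\omega_p(\SS^2,g_0)\in 2\pi\NN^*$ is also correct and is an important ingredient.

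The gap is in your lower bound. You assert that on a bumpy perturbation $g_k$ the Lusternik--Schnirelmann argument, together with an analysis of ``how critical configurations coalesce into $O(3)$-orbits,'' bounds constancy runs at level $2\pi(d-1)$ by $2d-1$. But you give no mechanism for this count. A generic bumpy metric near $g_0$ has no control on \emph{which} closed geodesics exist or what their lengths are; the number of distinct critical values of the length functional near $2\pi(d-1)$ is not bounded in any useful way, and the orbit structure of great-circle configurations on $g_0$ does not by itself limit how many nearby isolated critical points can appear after perturbation. So the key inequality $m_p\ge d$ is unproven.

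The paper closes this gap with a much more specific construction (Theorem \ref{theo:good.metric.final}). Rather than a generic bumpy metric, it uses Morse's classical result that a nearly round ellipsoid has \emph{exactly three} primitive closed geodesics of bounded length, and it chooses the ellipsoid so that these lengths form the arithmetic progression $2\pi,\,2\pi+\mu,\,2\pi+2\mu$. Then the constrained bumpy metrics theorem is applied \emph{relative to these three fixed lengths}. On the resulting $g_\mu$ the possible width values are exactly $\{2\pi(n_1+n_2+n_3)+\mu(n_2+2n_3)\}$, and a direct count shows there are precisely $(m+1)^2-1$ such values in $(0,2\pi m+1]$. Since Proposition \ref{prop:LS} makes the widths strictly increasing on $g_\mu$, the first $(m+1)^2-1$ widths must exhaust this set, giving $\omega_{m^2}(\SS^2,g_\mu)\ge 2\pi m$; sending $\mu\to 0$ finishes. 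The arithmetic progression is essential: with three generic lengths the number of admissible values below $2\pi m$ would grow cubically in $m$, and the counting would give no useful lower bound.
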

As explained to us by Guth, the optimality of the sweepout constructed out of homogeneous polynomials fits into the theme of the ``efficiency of polynomials,'' which is loosely connected to the polynomial method in combinatorics; see \cite{Guth:unexpected, Guth:book}. Our result in Theorem \ref{theo:sphere.pwidths} is in line with Guth's conjecture for the open problem in \cite[Exercise 14.2]{Guth:book}.

We highlight the following prior results regarding some low-frequency $p$-widths of round two- and three-spheres: 
\begin{itemize}
\item Aiex \cite{Aiex:ellipsoids} proved that in the unit two-sphere, $\sigma_1, \sigma_2, \sigma_3$ can be taken to be great circles while $\sigma_4,\dots,\sigma_8$ can be taken to be the union of two great circles.
\item Nurser \cite{Nurser} proved that in the unit three-sphere\footnote{The study of minimal surfaces in $\SS^3$ is a nontrivial subject with a rich history; see the survey \cite{Brendle:survey}.} $\Sigma_1,\dots,\Sigma_4$ are\footnote{This work relied, in particular, on recent advances in the study of embedded minimal surfaces in $\SS^3$ by Marques--Neves \cite{MarquesNeves:Willmore} and Brendle \cite{Brendle:Lawson}.} totally geodesic spheres and $\Sigma_5,\dots,\Sigma_7$ are Clifford tori. Furthermore, he proved that $\Sigma_9$ is some embedded minimal surface in $\SS^3$ having $\genus(\Sigma_9)> 1$ and $\area(\Sigma_9) = \omega_9(\SS^3,g_{\SS^3}) \in (2\pi^2,8\pi)$. See also the related works \cite{CajuGasparGuaracoMatthisen,Hiesmayr:clifford} computing low parameter phase-transition widths of the round three-sphere.
\end{itemize}
However, to this point there had not been a single\footnote{Ignoring the trivial example of $\SS^1$.} $(M,g)$ for which the areas $\omega_p(M,g)$ (let alone the surfaces $\Sigma_p$) are known for all $p \in \NN^*$, not even in the two-dimensional case. (For comparison, the spectrum of the Laplacian is completely determined for a large class of Riemannian manifolds, cf.\ \cite{mathoverflow:laplace.spectrum}).

One application of Theorem \ref{theo:sphere.pwidths} concerns Weyl law for the $p$-widths. Recall that the Laplacian spectrum (denoted by $\lambda_p(M,g)$) of a closed Riemannian $(n+1)$-manifiold satisfies the celebrated Weyl law
\[
\lim_{p\to\infty} \lambda_p(M,g) p^{-\frac{2}{n+1}} = 4\pi^2 \vol(B)^{-\frac{2}{n+1}} \vol(M,g)^{-\frac{2}{n+1}} 
\]
showing that the high-frequency behavior of the spectrum is universal in a certain sense. Liokumovich--Marques--Neves have recently proven  \cite{LMN:Weyl} that the $p$-widths satisfy the following Weyl-type law 
\begin{equation}\label{eq:LMN.Weyl}
\lim_{p\to\infty}\omega_p(M,g)p^{-\frac{1}{n+1}} = a(n) \vol(M,g)^{\frac{n}{n+1}}
\end{equation}
for some constant $a(n)>0$. (See also \cite{GasparGuaraco:weyl}.) This result has had important implications for existence of minimal hypersurfaces, cf.\ \cite{IrieMarquesNeves,Li:infinitely-high-dim}. However, the constant $a(n)$ has not been determined for any dimension $n$ (see \cite[\S 1.5]{LMN:Weyl}). This is in contrast with the classical Weyl law, where one can use e.g. the (explicitly known) spectrum of a cube to compute the constant in a straightforward manner. Here, our full computation of the $p$-widths of the round two-sphere in Theorem \ref{theo:sphere.pwidths} readily implies:
\begin{coro}\label{coro:const.weyl}
When $n=1$, the constant in \eqref{eq:LMN.Weyl} satisfies $a(1) = \sqrt{\pi}$. 
\end{coro}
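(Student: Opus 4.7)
The plan is to apply the Liokumovich--Marques--Neves Weyl law \eqref{eq:LMN.Weyl} to the specific manifold $(\SS^2, g_0)$, using the exact computation of $\omega_p(\SS^2, g_0)$ from Theorem \ref{theo:sphere.pwidths}. Since the Weyl law asserts the limit exists and equals $a(n) \vol(M,g)^{n/(n+1)}$ for \emph{every} closed Riemannian $(n+1)$-manifold, and the constant $a(n)$ is universal, a single example with known $p$-widths determines it.

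First I would take $(M,g) = (\SS^2, g_0)$ and $n=1$, so that $\vol(\SS^2, g_0) = 4\pi$. Plugging into \eqref{eq:LMN.Weyl}, the law reads
\[
\lim_{p \to \infty} \omega_p(\SS^2, g_0) \, p^{-1/2} = a(1) \cdot (4\pi)^{1/2} = 2\sqrt{\pi} \cdot a(1).
\]
Next I would substitute the explicit formula $\omega_p(\SS^2, g_0) = 2\pi \lfloor \sqrt{p} \rfloor$ from Theorem \ref{theo:sphere.pwidths}, obtaining
\[
\lim_{p \to \infty} \frac{2\pi \lfloor \sqrt{p} \rfloor}{\sqrt{p}} = 2\sqrt{\pi} \cdot a(1).
\]
Since $\lfloor \sqrt{p} \rfloor / \sqrt{p} \to 1$ as $p \to \infty$, the left-hand side equals $2\pi$. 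Solving gives $a(1) = \pi / \sqrt{\pi} = \sqrt{\pi}$.

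There is no real obstacle here: the corollary is essentially a one-line consequence of Theorem \ref{theo:sphere.pwidths} combined with the known Weyl law of \cite{LMN:Weyl}. The entire nontrivial content is packed into Theorem \ref{theo:sphere.pwidths}, whose proof provides both the asymptotic rate $2\pi\sqrt{p}$ and matches it with the universal constant.
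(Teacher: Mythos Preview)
Your proof is correct and matches the paper's approach: the paper does not give an explicit proof of this corollary, simply stating that it ``readily follows'' from Theorem~\ref{theo:sphere.pwidths}, and your computation is exactly the intended one-line derivation---plug the exact widths $\omega_p(\SS^2,g_0)=2\pi\lfloor\sqrt{p}\rfloor$ and $\vol(\SS^2,g_0)=4\pi$ into the Weyl law \eqref{eq:LMN.Weyl} and solve for $a(1)$.
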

This settles the ``simplest case'' of the first question in \cite[\S 1.5]{LMN:Weyl}.

\begin{rema} \label{rema:guth.bound}
	It is interesting to compare this to Guth's estimates for the $p$-widths of the unit disk \cite[p.\ 1974]{Guth:minimax}. By \cite{LMN:Weyl} and Corollary \ref{coro:const.weyl}, it holds that
\[
 \omega_p(D^2) = \pi p^{\frac 12} + o(p^{\frac 12})
\]
as $p\to\infty$. As such, (as predicted in \cite{Guth:minimax}) the estimate given in \cite[p.\ 1974]{Guth:minimax} is not sharp for large $p$. On the other hand, the conjectural value corresponding to straight lines through the origin in \cite[p.\ 1974]{Guth:minimax} would be too small for sufficiently large $p$ (see also \cite[Exercise 14.2]{Guth:book}). To this end, it would be interesting to understand the analogue of Theorem \ref{theo:immersed.geo.min.max} for manifolds with boundary. 
\end{rema}

\subsection{Strategy of the proof of Theorem \ref{theo:immersed.geo.min.max}: phase transitions and the Liu--Wei tangent cone theorem} 

We study the singularities of a limiting object of the Almgren--Pitts apparatus (i.e., an element of the \textit{Almgren--Pitts critical set}, as we define in Section \ref{sec:AP.AC}), which \emph{a priori} is at best only a stationary geodesic network per \cite[Remark 1.1]{MarquesNeves:multiplicity}. Our result will follow if we can prove that at least one limiting object exists whose singular points have tangent cones (unique by \cite{AllardAlmgren:1varifold}) corresponding to lines in $\RR^2$ through the origin.

As usual with Almgren--Pitts theory, we do not  prove this for all possible limiting objects coming out of the Almgren--Pitts apparatus, but content ourselves with showing that at least one good limiting object exists. Unlike the standard approach in Almgren--Pitts theory, however, we do not show their existence by contradiction. We argue directly, by ``regularizing'' the length functional in a way that favors better behaved singularities in the limiting objects.

In our prior work \cite{CM:3d} we used a phase transition regularization first introduced to this min-max setting by Guaraco \cite{Guaraco}; we showed that for generic metrics the limiting objects occur with multiplicity one, in analogy with\footnote{At the time \cite{CM:3d} was written, it was not clear whether or not the phase transition limiting objects were Almgren--Pitts limiting objects. This has since been resolved by Dey \cite{Dey:APAC} (see Propositions \ref{prop:AC-AP} and \ref{prop:phase.vs.ap.limit}). This was not important for \cite{CM:3d} but one of its consequences (Proposition \ref{prop:AC-AP}) is essential for our current paper.} the Almgren--Pitts multiplicity one conjecture.\footnote{It is worthwhile to note that Zhou \cite{Zhou:multiplicity-one}'s subsequent result on the Almgren--Pitts multiplicity-one conjecture also relied on a regularization process, albeit one of a different type: he used a prescribed mean curvature regularization.} 

The phase transition regularization forms the basis of our approach to Theorem \ref{theo:immersed.geo.min.max}, too. It allows us to study the singularities of the (\emph{a priori}) limiting stationary geodesic network \textit{before} they actually form (as the phase transition scale tends to zero), exactly as in \cite{Mantoulidis}. However, rather than work with any double-well potential as in \cite{CM:3d, Mantoulidis}, the novelty is that we choose a very specific potential (see \eqref{eq:SG-potential}) with particularly favorable properties. It is based on the (elliptic) sine-Gordon equation
\[
\Delta u = \sin u.
\] 
The relevance of the sine-Gordon equation is that with this precise potential it becomes an \emph{integrable PDE}. This has been employed in a recent remarkable work of Liu--Wei \cite{LiuWei} to give a full classification of finite index entire solutions to the sine-Gordon equation on $\RR^2$, as well as a computation of their Morse index and nullity. In the context of Theorem \ref{theo:immersed.geo.min.max} we need to rely on a consequence of the Liu--Wei result, namely, that the tangent cone at infinity to any entire solution of the phase transition regularization blows down to a varifold that is supported on lines through the origin. Given this fact, we can use the curvature estimates of Wang--Wei \cite{WangWei} to propogate this information outwards from the phase transition scale to the (original) manifold scale and get the desired conclusion about the tangent cone of the singularity. Similar arguments were used in \cite{CKM,Mantoulidis}.

Because the Liu--Wei result is so central to our work, and because the integrable PDE techniques are potentially unfamiliar, we have given a complete proof of the tangent cone theorem in this work, following the ideas of Liu--Wei. See Theorem \ref{theo:SG.cone}. At a heuristic level, one can see that integrability implies the tangent cone result by thinking of the ends of an entire solution as a initial pulse of a wave, which will then interact with the other ends in the compact region, but then (thanks to integrability) continue to propagate in the same direction. (Of course, the equation of interest here is elliptic, so this is not a very precise explanation.)

\subsection{Strategy of the proof of Theorem \ref{theo:sphere.pwidths}: bumpy metrics for geodesic nets and Lusternik--Schnirelmann theory}

We seek to deform the round metric $g_0$ to a nearby well-behaved metric $g_\mu$, $\mu = o(1)$, whose Almgren--Pitts $p$-widths we can guarantee to be:
\begin{enumerate}
	\item well-quantized, and
	\item strictly increasing.
\end{enumerate}
Once we have arranged (1) and (2), we use a counting argument to estimate the $p$-widths by an expression of the form
\[ 2 \pi \lfloor \sqrt{p} \rfloor \leq \omega_p(\SS^2, g_\mu) \leq (2\pi + 2\mu) \lfloor \sqrt{p} \rfloor, \]
and conclude (using the continuity of $g \mapsto \omega_p(\SS^2, g)$) by sending $\mu \to 0$.

Theorem \ref{theo:immersed.geo.min.max} provides a partial (but important) step toward (1): it ensures that the $p$-widths are attained by unions of immersed geodesics. So, one can hope to perturb the unit round metric $g_0$ to a nearby ellipsoidal metric $g_\mu'$, $\mu = o(1)$, whose only immersed geodesics (with controlled mass) are made up of iterates of three principal curves (this guarantees \textit{quantization}) and whose three principal lengths form an arithmetic progression (this guarantees \textit{good} quantization). Such metrics were already known to Morse  \cite{Morse:calculus.variations} and were instrumental (without the arithmetic progression property) in \cite{Aiex:ellipsoids}.

However, these metrics need not guarantee (2) above. But as is well-known in Lusternik--Schnirelmann theory, the failure of (2) implies that the set of candidate limiting objects (in our case, stationary integral 1-varifolds with controlled mass and singular set) has to have Lusternik--Schnirelmann category $\geq 1$. (This was highlighted by Aiex in \cite[Appendix A]{Aiex:ellipsoids}.) In particular, if we can guarantee the existence of a metric with few (three) principal geodesics, whose lengths form an arithmetic progression, and whose set of stationary geodesic networks with controlled mass and singular set has Lusternik--Schnirelmann category $0$, we are done.

This is arranged by Theorem \ref{theo:good.metric.final}, whose proof builds on two new tools:
\begin{enumerate}	
	\item[(a)] A proof that, for bumpy metrics on $\SS^2$, the space of stationary geodesic networks with bounded mass and singular sets has Lusternik--Schnirelmann category $0$. This is the content of a varifold/flat-chain covering lemma (Lemma \ref{lemm:cover.cF.bF}) inspired by the Marques--Neves \cite[\S 6]{MarquesNeves:posRic} covering in case the set in question were to be only finite (ours isn't) and a trichotomy theorem for stationary integral 1-varifolds that builds on a stratification of their moduli space that follows from refining the conclusions of Allard--Almgren \cite{AllardAlmgren:1varifold}; see Theorem \ref{theo:geodesic.network.trichotomy}.
	\item[(b)] A bumpy metrics theorem for stationary geodesic nets\footnote{Previous work on geodesic nets includes \cite{AllardAlmgren:1varifold,HassMorgan,Heppes,NabutovskyRotman:minimal.net,Rotman:shortest.net,NabutovskyRotman:shapes,IvanovTuzhilin:deform,IvanovTuzhilin:review,Parsch:thesis,NabutovskyParsch}.}  subject to a certain length constraint (Theorem \ref{theo:geodesic.network.constrained} and Corollary \ref{coro:geodesic.network.constrained.generic.smooth}). The unconstrained versions of these results are presented first (Theorem \ref{theo:geodesic.network.manifold} and Corollary \ref{coro:geodesic.network.manifold.generic.smooth}) for expository simplicity. We note that a version of the \textit{unconstrained} of the bumpiness theorem holding in all codimensions was independently proven by Staffa \cite{Staffa:bumpy.geodesic.nets} as a means to proving generic density of geodesic nets in all codimensions in his joint work with Liokumovich \cite{LiokumovichStaffa:density.geodesic.nets}. 
\end{enumerate}


\subsection{Organization of the paper}
In Section \ref{sec:AP.AC} we review background on the relevant min-max theories. In Section \ref{sec:SG} we specialize to phase transition min-max based on the sine-Gordon equation; this culminates in the proof of Theorem \ref{theo:immersed.geo.min.max} in Section \ref{sec:immersed.p.width}. The bumpy metric theorem for stationary nets is proven in Section \ref{sec:geo.net}. The Lusternik--Schnirelmann covering argument and choice of good metric used in the proof of Theorem \ref{theo:sphere.pwidths} is discussed in Section \ref{sec:LS}. The proof of Theorem \ref{theo:sphere.pwidths} is then completed in Section \ref{sec:sphere.p.widths}. Section \ref{sec:open} contains some open problems and further discussion. Appendix \ref{app:metric} contains some results about metric spaces, Appendix \ref{sec:GMT} recalls several notions from geometric measure theory, and Appendix \ref{sec:ac.regularity} contains an overview of regularity results for phase transitions. Finally, Appendix \ref{sec:upp.bds} contains a proof of Aiex's upper bounds for the $p$-widths coming from homogeneous polynomials.

\subsection{Acknowledgements}
O.C.\ was supported by an NSF grant (DMS-2016403), a Terman Fellowship, and a Sloan Fellowship. C.M. was supported by an NSF grant (DMS-2050120). We are grateful to Larry Guth, Yevgeny Liokumovich, Yong Liu, Luca Spolaor, Kelei Wang, Juncheng Wei, Brian White, and Alex Wright for useful discussions. We are also grateful to the anonymous referee for their careful reading of the manuscript.

\section{The $p$-widths and two min-max theories} \label{sec:AP.AC}

In this section we review the definition of $p$-widths and relevant properties of the  Almgren--Pitts and double-well phase transition min-max theories following \cite{Almgren:htpy,Pitts,MarquesNeves:Willmore,MarquesNeves:posRic,MarquesNeves:multiplicity,MarquesNeves:uper-semi-index,Guaraco,GasparGuaraco,Dey:APAC}. We will make heavy use of geometric measure theory. We direct the reader to Appendix \ref{sec:GMT}, where all the necessary geometric measure theory notation is presented. 

In this paper will write $\NN= \{0,1,2,\dots\}$ and $\NN^*=\{1,2,\dots\}$. We need the notions of a cubical complex and subcomplex. Let $I = [0,1]$. Denote by $I(1,j)$ the cube complex on $I^1$ with $1$-cells $[0,3^{-j}],[3^{-j},2 \cdot 3^{-j}],\dots,[1-3^{-j},1]$ and then define $I(m,j)$ to be the cell complex on $I^m$ given by the $m$-fold tensor product of $I(1,j)$ with itself. A \emph{cubical subcomplex} $X\subset I^k$ is a subcomplex of $I(k,j)$ for some $j \in \NN$. If $X$ is a subcomplex of $I(k,j)$, for $\ell\in\NN$, denote by $X(\ell)$ the subcomplex of $I(k,j+\ell)$ given by the union of all cells whose support is contained in some cell of $X$. Write $X(\ell)_0$ for the set of $0$-cells in $X(\ell)$.

Fix $(M,g)$ a closed Riemannian $2$-manifold for the rest of the section.

\subsection{Gromov--Guth $p$-widths} \label{subsec:p-widths}

 In what follows $X$ denotes a cubical subcomplex of some $I^k$. Recall that $\cZ_{1}(M;\ZZ_{2})$ is weakly homotopic to $\RR P^{\infty}$ (see \cite{Almgren:htpy} or \cite[Theorem 5.1]{MarquesNeves:uper-semi-index}), so
\[
H^{1}(\cZ_{1}(M;\ZZ_{2});\ZZ_{2}) = \ZZ_{2}.
\]
Denote by $\bar \lambda$ the generator. We define:

\begin{defi}[{\cite[Definitions 4.1]{MarquesNeves:posRic}}] \label{defi:sweepout}
	A map $\Phi : X\to \cZ_{1}(M;\ZZ_{2})$ is a \emph{$p$-sweepout} if it is continuous (with the standard flat norm ``$\cF$'' topology on $\cZ_1(M; \ZZ_2)$) and $\Phi^{*}(\bar\lambda^{p}) \neq 0$. 
\end{defi}

\begin{defi}[{\cite[\S 3.3]{MarquesNeves:posRic}}] \label{defi:no.concentration.of.mass}
	A map $\Phi : X \to \cZ_1(M; \ZZ_2)$ is said to have \textit{no concentration of mass} if
	\[ \lim_{r \to 0} \sup \{ \Vert \Phi(x) \Vert(B_r(p)) : x \in X, p \in M \} = 0. \]
\end{defi}

\begin{defi}[{\cite{Gromov:waist,Guth:minimax,MarquesNeves:posRic})}] \label{defi:p-width}
	We define $\cP_{p} = \cP_{p}(M)$ to be the set of all $p$-sweepouts, out of any cubical subcomplex $X$, with no concentration of mass. The \emph{$p$-width} of $(M,g)$ is
\[
\omega_{p}(M,g) = \inf_{\Phi \in \cP_{p}} \sup\{\bM(\Phi(x)) : x \in \dmn(\Phi)\}.
\]
\end{defi}

We also note the following lemma that is key in perturbative proofs such as ours of Theorem \ref{theo:sphere.pwidths} or that of Irie--Marques--Neves \cite{IrieMarquesNeves} for the generic existence of infinitely many hypersurfaces:

\begin{lemm}[{\cite[Lemma 2.1]{IrieMarquesNeves}}]\label{lem:p-width-cont-metric}
The $p$-width $\omega_p(M,g)$ depends continuously on $g$ with respect to the $C^0$-topology. 
\end{lemm}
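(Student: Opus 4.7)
The plan is to establish two-sided bounds $\omega_p(M,g') \le (1+\eps)^{1/2}\,\omega_p(M,g)$ whenever $g'$ is sufficiently $C^0$-close to $g$, and then symmetrize. The basic mechanism is that $C^0$-close metrics are quasi-isometric, so masses of $1$-currents scale in a controlled way, while the purely topological data defining a $p$-sweepout is metric-independent.

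First I would fix $g$ and, for each $\eps \in (0,1)$, choose $\delta > 0$ small enough that any metric $g'$ with $\Vert g' - g \Vert_{C^0(M)} < \delta$ satisfies the pointwise quadratic-form bounds $(1-\eps)\, g \le g' \le (1+\eps)\, g$. For such $g'$ and any rectifiable $1$-cycle $T \in \cZ_1(M;\ZZ_2)$, the density formula $\bM(T) = \int_{\supp T} \theta\, d\mathcal{H}^1$ and the pointwise comparison of the induced $1$-dimensional Hausdorff measures gives
\[
(1-\eps)^{1/2}\, \bM_g(T) \le \bM_{g'}(T) \le (1+\eps)^{1/2}\, \bM_g(T).
\]
The same comparison gives a bi-Lipschitz equivalence $C^{-1}\, \dist_g \le \dist_{g'} \le C\, \dist_g$ with $C = C(\eps) \to 1$ as $\eps \to 0$, which in turn implies equivalence of the flat seminorms (since the flat norm of a $k$-cycle is controlled by the infimum of mass of $(k{+}1)$-chains filling it, plus comparison of masses). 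In particular, the topologies on $\cZ_1(M; \ZZ_2)$ determined by $\cF_g$ and $\cF_{g'}$ agree, so the notion of continuous map into $\cZ_1(M;\ZZ_2)$, and therefore the notion of $p$-sweepout from Definition \ref{defi:sweepout}, is unchanged.

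Next I would verify that the ``no concentration of mass'' condition in Definition \ref{defi:no.concentration.of.mass} transfers between $g$ and $g'$: the bi-Lipschitz comparison of distances gives $B^{g'}_{r/C}(q) \subset B^g_r(q) \subset B^{g'}_{Cr}(q)$ for every $q \in M$, while the mass comparison above controls $\Vert \Phi(x)\Vert_{g'}$ by $(1+\eps)^{1/2}\,\Vert \Phi(x)\Vert_g$ as Radon measures up to this change of ball. Hence $\cP_p(M)$ as a set of admissible maps is the same whether measured with respect to $g$ or $g'$. Taking the supremum in $x$ and then the infimum over $\Phi \in \cP_p(M)$ in the two-sided mass comparison displayed above yields
\[
(1-\eps)^{1/2}\, \omega_p(M,g) \le \omega_p(M,g') \le (1+\eps)^{1/2}\, \omega_p(M,g),
\]
so letting $\eps \to 0$ gives continuity of $g \mapsto \omega_p(M,g)$ at $g$ in the $C^0$-topology.

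I do not expect any serious obstacle: the argument is essentially bookkeeping around the bi-Lipschitz comparison of $g$ and $g'$. The only point that requires care is making sure the flat-topology on $\cZ_1(M; \ZZ_2)$ really is preserved (so that continuity of $\Phi$ as a map to $\cZ_1(M;\ZZ_2)$ is a metric-independent notion), and that ``no concentration of mass'' is preserved when one simultaneously changes the balls and the measures; both follow from the two-sided comparison of $\dist_{g}$ and $\dist_{g'}$ with $C(\eps) \to 1$.
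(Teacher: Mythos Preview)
The paper does not supply its own proof of this lemma; it simply cites \cite[Lemma 2.1]{IrieMarquesNeves}. Your argument is correct and is essentially the standard one found there: $C^0$-close metrics are quasi-isometric, so masses of $1$-cycles are comparable up to a factor tending to $1$, while the flat topology on $\cZ_1(M;\ZZ_2)$, the cohomological condition defining a $p$-sweepout, and the no-concentration-of-mass condition are all preserved under bi-Lipschitz equivalence of metrics. Hence the admissible class $\cP_p$ is metric-independent and the min-max value inherits the two-sided bound.
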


\subsection{Almgren--Pitts theory}\label{subsec:cont-AP} 

Thanks to the interpolation theory developed by Almgren and Marques--Neves (\cite[\S 3]{MarquesNeves:posRic}), we can avoid discussing the discretized version of Almgren--Pitts theory and simply give references where necessary. We will work with a refined class of sweepouts that are continuous with respect to a stronger topology on $\cZ_1(M; \ZZ_2)$ than the $\cF$-norm topology, which is given by the $\bF$-metric. We write $\cZ_1(M; \bF; \ZZ_2)$ for the space with this topology. We have:

\begin{lemm}[{\cite[p. 472]{MarquesNeves:multiplicity}}] \label{lemm:ap.bf.is.good}
	Let $X$ be a cubical subcomplex. If $\Phi : X \to \cZ_1(M; \bF; \ZZ_2)$ is continuous, then it is also continuous with the $\cF$-topology on the target (i.e., $\Phi : X \to \cZ_1(M; \ZZ_2)$ is continuous) and has no concentration of mass. 
\end{lemm}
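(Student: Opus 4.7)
The plan is to verify the two claimed properties separately.

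The passage from $\bF$-continuity to $\cF$-continuity is immediate because, by definition, $\bF(S, T) = \cF(S, T) + \bF(|S|, |T|)$, where $\bF(|S|, |T|)$ is Pitts's F-metric on the associated integral varifolds; in particular $\bF \geq \cF$, so every map continuous into $\cZ_1(M; \bF; \ZZ_2)$ is automatically continuous into $\cZ_1(M; \ZZ_2)$.

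For no concentration of mass, I would first observe that for any single $T \in \cZ_1(M; \ZZ_2)$, the mass measure $\|T\|$ is absolutely continuous with respect to $\cH^1$ on $\supp T$, hence has no atoms, and therefore satisfies $\lim_{r \to 0} \sup_{p \in M} \|T\|(B_r(p)) = 0$. Combining this with the compactness of $X$ and the uniform $\bF$-continuity of $\Phi$ yields, for any tolerance $\delta > 0$, finitely many points $x_1, \ldots, x_N \in X$ such that every $x \in X$ satisfies $\bF(\Phi(x), \Phi(x_i)) < \delta$ for some $i$.

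The main step is to transfer the uniform no-concentration property from the $\Phi(x_i)$ to all of $\Phi(X)$. Since the F-metric on varifolds pairs against bounded $1$-Lipschitz test functions on $G_1(M)$ rather than against characteristic functions of metric balls, I would approximate $\chi_{B_r(p)}$ by a $(1/\eta)$-Lipschitz cutoff $\phi$ that equals $1$ on $B_r(p)$ and vanishes outside $B_{r+\eta}(p)$. This yields a bound of the form
\[
\|\Phi(x)\|(B_r(p)) \leq \|\Phi(x_i)\|(B_{r+\eta}(p)) + C\, \eta^{-1}\, \bF(|\Phi(x)|, |\Phi(x_i)|),
\]
and choosing $\eta$ small, then $\delta$ much smaller depending on $\eta$, makes the right-hand side uniformly small for all sufficiently small $r$. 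The only mildly delicate point is the bookkeeping between the Lipschitz scale $\eta$, the $\bF$-tolerance $\delta$, and the radius $r$; there is no conceptual obstruction.
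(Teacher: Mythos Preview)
Your proof is correct. The paper does not supply its own argument for this lemma; it merely cites \cite[p.~472]{MarquesNeves:multiplicity} and moves on. What you have written is essentially the standard justification: $\bF \geq \cF$ gives $\cF$-continuity for free, and the no-concentration-of-mass claim follows from the compactness of $X$, the rectifiability of each $\Phi(x)$ (hence no atoms in $\|\Phi(x)\|$), and the Lipschitz-cutoff comparison you describe. The bookkeeping order you outline---fix $\eta$, then $\delta \ll \eta$, then the finite cover $x_1,\dots,x_N$, then $r$ small depending on the cover---is the right one, and your displayed estimate (with $C$ absorbing the Lipschitz constant of the bundle projection $G_1(M)\to M$) is exactly what one needs.
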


Such refined sweepouts still capture the $p$-widths $\omega_p(M, g)$, even if we restrict the dimension of the cubical subcomplexes used. That is, if:
\[ \cP^{\bF}_{p,m} : = \{\Phi \in \cP_p : \dmn(\Phi) \subset I^m \text{ and } \Phi \text{ is } \bF\text{-continuous} \} \]
(cf. \cite{Xu:pm-width}), then:

\begin{lemm} \label{lemm:P.p.m}
If $m = 2p+1$, then
\[
\omega_{p}(M,g) = \inf_{\Phi \in \cP^{\bF}_{p,m}} \sup\{\bM(\Phi(x)) : x \in \dmn(\Phi)\}.
\]
\end{lemm}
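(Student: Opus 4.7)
The forward inequality $\omega_p(M,g) \leq \inf_{\Phi \in \cP^{\bF}_{p,m}} \sup_x \bM(\Phi(x))$ is immediate from Lemma \ref{lemm:ap.bf.is.good}, which ensures $\cP^{\bF}_{p,m} \subset \cP_p$ and hence that the right-hand side is an infimum over a smaller class. The content of the lemma is therefore the reverse inequality. My plan for that is to take any $\Phi \in \cP_p$ whose sup mass is within $\eta > 0$ of $\omega_p(M,g)$ and construct $\tilde\Phi \in \cP^{\bF}_{p,m}$ whose sup mass exceeds that of $\Phi$ by at most $O(\eta)$; letting $\eta \to 0$ will give the result.

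The construction of $\tilde\Phi$ decomposes into two independent stages. In Stage~1, I would upgrade $\cF$-continuity to $\bF$-continuity using the Almgren--Marques--Neves discretization-interpolation machinery of \cite[\S 3]{MarquesNeves:posRic}: discretize $\Phi$ on a sufficiently fine refinement $X(k)$ of its domain $X$, apply Almgren's extension theorem to the discrete data, and then reinterpolate to produce an $\bF$-continuous $\tilde\Phi_0$ that is homotopic to $\Phi$ in the flat topology (so it remains a $p$-sweepout). The no-concentration-of-mass hypothesis on $\Phi$ is precisely what permits one to keep the sup mass of $\tilde\Phi_0$ within $\eta$ of that of $\Phi$, by ruling out adversarial clustering of mass across adjacent cells as the mesh size is refined.

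In Stage~2, I would reduce the domain dimension to $2p+1$. Since $\cZ_1(M;\ZZ_2) \simeq \RR P^\infty$ weakly and the $p$-sweepout condition $\tilde\Phi_0^*(\bar\lambda^p) \neq 0$ says exactly that the classifying map of $\tilde\Phi_0$ into $\RR P^\infty$ pulls back the generator of $H^p(\RR P^\infty;\ZZ_2)$ nontrivially, a cellular/obstruction-theoretic argument lets one replace $\tilde\Phi_0$, up to $\bF$-homotopy, by a sweepout that factors through a cubical subcomplex of $I^{2p+1}$. The specific dimension bound $m = 2p+1$ is obtained by the explicit Xu-type construction of \cite{Xu:pm-width}, where $\RR P^p$ is cubulated and embedded into $I^{2p+1}$ and the factorization is arranged without increasing the sup mass by more than $O(\eta)$.

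The main obstacle is Stage~1: passing from $\cF$- to $\bF$-continuity requires a quantitative interpolation whose mass cost is controlled only under no-concentration-of-mass, and the bookkeeping to keep the sup mass within $\eta$ as the mesh is refined is the crux of the Marques--Neves theorem. Once Stage~1 is available, Stage~2 is an essentially topological reduction, and the combination gives the desired $\tilde\Phi \in \cP^{\bF}_{p,m}$.
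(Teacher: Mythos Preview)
Your proposal is correct and takes essentially the same approach as the paper: the two ingredients are the Almgren--Marques--Neves discretization/interpolation to pass from $\cF$- to $\bF$-continuity (your Stage~1; the paper cites \cite[Theorem 3.8]{MarquesNeves:multiplicity} and \cite[Proposition 3.1]{MarquesNeves:uper-semi-index}) and the Li/Xu dimension reduction to a cubical subcomplex of $I^{2p+1}$ (your Stage~2; the paper cites \cite[Corollary 3.1]{Li:infinitely-high-dim}). The paper simply reverses the order, first invoking Li's result for $\cF$-continuous sweepouts and then upgrading to $\bF$-continuity, but the content is the same.

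One minor remark: in Stage~2 you describe the dimension reduction as happening ``up to $\bF$-homotopy,'' but the Li/Xu argument does not produce a homotopic map on the same domain; rather, one composes the classifying map with a cubulated $\RR P^p \hookrightarrow I^{2p+1}$ to obtain a new $p$-sweepout on a new domain whose sup mass does not exceed that of the original. Since the lemma only asks for \emph{some} $\tilde\Phi \in \cP^{\bF}_{p,m}$ with controlled sup mass, this is harmless, but the phrasing should not suggest that the original homotopy class is preserved.
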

\begin{proof}
	This was shown in {\cite[Corollary 3.1]{Li:infinitely-high-dim}} for $\cF$-continuous maps with no concentration of mass. The result for $\bF$-continuous maps follows from Lemma \ref{lemm:ap.bf.is.good}, a discretization argument (see, e.g., the proof of \cite[Theorem 3.8]{MarquesNeves:multiplicity}, and \cite[Proposition 3.1]{MarquesNeves:uper-semi-index}.
\end{proof}

\begin{defi}[{\cite[Definitions 2.2, 2.4-2.7]{MarquesNeves:uper-semi-index}}] \label{defi:ap.homotopy}
	Let $X$ be a cubical subcomplex and fix a continuous $\Phi : X \to \cZ_1(M; \bF; \ZZ_2)$. We define the \textit{homotopy class} of $\Phi$ to be the set
	\begin{align*}
		\Pi & := \{ \text{continuous } \Phi' : X \to \cZ_1(M; \bF; \ZZ_2) \text{ that are} \\
			& \qquad \qquad \text{ homotopic to } \Phi \text{ in the } \cF \text{-topology} \}.
	\end{align*}
	The \emph{Almgren--Pitts width} of the homotopy class $\Pi$ is defined by
	\[
		\bL_{\textrm{AP}}(\Pi) = \inf_{\Phi \in \Pi} \sup_{x\in X} \bM(\Phi(x)). 
	\]
	We will write $\bL_{\textrm{AP}}(\Pi,g)$ when the dependence on the metric is relevant. 
	
	A sequence $\{\Phi_i\}_{i=1}^\infty  \subset \Pi$ is a \emph{minimizing sequence} if   
	\[
		\limsup_{i\to\infty}\sup_{x\in X} \bM(\Phi_i(x)) = \bL_\textrm{AP}(\Pi). 
	\]
	The image set $\mathbf{\Lambda}(\{\Phi_i\})$ of $\{\Phi_i\}$ is defined to be the set of $V \in \cV_1(M)$ so that there is $i_j\to\infty$ and $x_j\in X$ with $\lim_{j\to\infty}\bF(|\Phi_{i_j}(x_j)|,V) = 0$. Assuming that $\{\Phi_i\}$ is a minimizing sequence, the \emph{critical set} of $\{\Phi_i\}$ is 
	\[
		\bC(\{\Phi_i\}) = \{V\in\mathbf{\Lambda}(\{\Phi_i\}) : \Vert V\Vert(M) = \bL_\textrm{AP}(\Pi)\}. 
	\]
\end{defi}

Given these definitions, we proceed to summarize the main results of the Almgren--Pitts theory needed here, still following Marques--Neves. We first recall the ``pull-tight'' procedure, which improves arbitrary minimizing sequences $\{\Phi_i\}_{i=1}^\infty$ into ones whose critical set consists only of stationary varifolds.

\begin{prop}[{\cite[\S 2.8]{MarquesNeves:uper-semi-index}}]\label{prop:pull.tight}
Suppose $\bL_{\textrm{AP}}(\Pi)>0$. For any minimizing sequence $\{\Phi_i\}_{i=1}^\infty \subset \Pi$ there exists another  minimizing sequence $\{\Phi_i^*\}_{i=1}^\infty \subset \Pi$ with $\bC(\{\Phi_i^*\}_{i=1}^\infty) \subset \bC(\{\Phi_i\}_{i=1}^\infty)$ and every element of $\bC(\{\Phi_i^*\}_{i=1}^\infty)$ stationary. 
\end{prop}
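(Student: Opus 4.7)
The plan is to follow the standard pull-tight construction of Pitts, as refined to the $\bF$-continuous setting by Marques--Neves. The idea is to produce a continuous deformation on the compact metric space of bounded-mass $1$-varifolds that (a) fixes stationary varifolds, (b) quantitatively decreases mass at non-stationary varifolds, and (c) moves points only a short $\bF$-distance. Applying this deformation slicewise to a given minimizing sequence will yield another minimizing sequence in the same homotopy class whose critical set consists solely of stationary varifolds.

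Set $L := \bL_{\textrm{AP}}(\Pi) > 0$ and let $\cA := \{V \in \cV_1(M) : \Vert V \Vert(M) \leq L+1\}$, which is compact in the $\bF$-metric. Let $\cS \subset \cA$ denote the stationary subset. The first step is to construct, for each $V_0 \in \cA \setminus \cS$, a smooth vector field $X_{V_0}$ on $M$ with $\delta V_0(X_{V_0}) < 0$, and observe that $\bF$-continuity of the first variation propagates this: there is an $\bF$-neighborhood $U_{V_0}$ of $V_0$ and a time $t_{V_0} > 0$ such that for all $V \in U_{V_0}$ and $s \in [0, t_{V_0}]$, the pushforward by the time-$s$ flow of $X_{V_0}$ strictly decreases mass by a uniform amount $\gamma_{V_0} > 0$. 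Stratifying $\cA \setminus \cS$ by $\bF$-distance to $\cS$ and using compactness, extract a locally finite countable cover by such neighborhoods, and use a partition of unity subordinate to the cover to assemble a continuous map
\[
H : \cA \times [0,1] \to \cA
\]
with $H(V, 0) = V$, $H(V, t) = V$ for $V \in \cS$, and a quantitative decrease $\bM(H(V,1)) \leq \bM(V) - \psi(\bF(V,\cS))$ for a continuous modulus $\psi$ with $\psi(0) = 0$ and $\psi > 0$ off of $\cS$. By construction, $\bF(H(V,t),V)$ is controlled by $t$ uniformly in $V$.

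Given a minimizing sequence $\{\Phi_i\} \subset \Pi$ (eventually taking values in $\cA$), define $\Phi_i^*(x) := H(\Phi_i(x), 1)$. Continuity of $\Phi_i^*$ in $\bF$ follows from continuity of $H$ and $\Phi_i$. The map $t \mapsto H(\Phi_i(\cdot), t)$ is a homotopy in the $\bF$-topology (hence $\cF$-topology) from $\Phi_i$ to $\Phi_i^*$, so $\Phi_i^* \in \Pi$. The mass bound $\sup_x \bM(\Phi_i^*(x)) \leq \sup_x \bM(\Phi_i(x))$ shows $\{\Phi_i^*\}$ is still minimizing. Finally, if $V \in \bC(\{\Phi_i^*\})$ were non-stationary, then $\psi(\bF(V,\cS)) > 0$; along a subsequence $\Phi_{i_j}(x_j) \to \tilde V$ in $\bF$ (by compactness), and $H$-continuity forces $\Phi_{i_j}^*(x_j) \to H(\tilde V,1) = V$, so $\bF(\tilde V, \cS) \to \bF(V,\cS)$ and hence
\[
\bM(\Phi_{i_j}^*(x_j)) \leq \bM(\Phi_{i_j}(x_j)) - \psi(\bF(\tilde V,\cS))/2
\]
for large $j$, contradicting $\Vert V \Vert(M) = L$. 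The same uniform $\bF$-closeness of $\Phi_i^*$ to $\Phi_i$ guarantees the inclusion $\bC(\{\Phi_i^*\}) \subset \bC(\{\Phi_i\})$: any $\bF$-limit of $|\Phi_{i_j}^*(x_j)|$ is also an $\bF$-limit of $|\Phi_{i_j}(x_j)|$ up to applying $H(\cdot,1)$, but on $\cS$ this map is the identity.

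The main technical obstacle is arranging the deformation $H$ to be simultaneously (i) continuous in $V$, (ii) quantitatively mass-decreasing with a modulus depending only on $\bF(V,\cS)$, and (iii) $\bF$-small, all globally on the non-locally-compact-looking $\cA$. This is where the $\bF$-continuity of the first variation operator and the partition-of-unity assembly (carried out rigorously in \cite[\S 2.8]{MarquesNeves:uper-semi-index} along lines pioneered by Pitts) do the essential work; once $H$ is in hand, the remaining verifications about $\bC$ and homotopy class are bookkeeping.
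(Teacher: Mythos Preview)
The paper does not supply its own proof of this proposition; it simply cites \cite[\S 2.8]{MarquesNeves:uper-semi-index}. Your sketch is the standard pull-tight construction from that reference, and the overall strategy is correct.

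Two points of imprecision worth flagging. First, you write $H : \cA \times [0,1] \to \cA$ as a map on varifolds and then set $\Phi_i^*(x) = H(\Phi_i(x),1)$, but $\Phi_i$ takes values in $\cZ_1(M;\ZZ_2)$, not $\cV_1(M)$. In the actual construction the deformation is by ambient isotopies (flows of the assembled vector fields), so it acts on cycles and varifolds compatibly; the vector field at $T$ is chosen as a function of $|T|$. This matters for verifying $\bF$-continuity of $\Phi_i^*$ and for the homotopy staying in $\Pi$. Second, your final sentence on $\bC(\{\Phi_i^*\}) \subset \bC(\{\Phi_i\})$ is a bit loose. The clean argument is: given $V \in \bC(\{\Phi_i^*\})$, pass to a subsequence with $|\Phi_{i_j}(x_j)| \to \tilde V$; then $V = H(\tilde V,1)$ by continuity, and $\Vert V\Vert(M) = L$ together with $\Vert H(\tilde V,1)\Vert(M) \leq \Vert \tilde V\Vert(M) - \psi(\bF(\tilde V,\cS)) \leq L$ forces $\Vert \tilde V\Vert(M) = L$ and $\tilde V \in \cS$, hence $V = \tilde V \in \bC(\{\Phi_i\})$. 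The ``uniform $\bF$-closeness'' you invoke is not what drives this step.
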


It is possible to further improve minimizing sequences whose critical set consists of only stationary varifolds. We can arrange for the existence of one whose critical set has at least one stationary \textit{integral} varifold with a \textit{controlled} number of singular points. In our intended application of such a result, it will be important that we even allow the minimizing sequences to have varying domains. We can do this following \cite[\S 2.5]{MarquesNeves:posRic}. For $i \in \NN^*$, consider $Y_i$ cubical subcomplexes of $I^k$ and continuous maps $\Phi_i : Y_i\to\cZ_1(M;\bF;\ZZ_2)$. Set
\[
\bL_{\textrm{AP}}(\{\Phi_i\}) = \limsup_{i\to\infty} \sup_{x\in Y_i} \bM(\Phi_i(x))
\]
and define the image set $\Lambda(\{\Phi_i\})$ and critical set $\bC(\{\Phi_i\})$ in the obvious way (see \cite[\S 2.5]{MarquesNeves:posRic}).

\begin{prop}[{cf.\ \cite[Theorem 2.8]{MarquesNeves:posRic}}]\label{prop:am.vary.dom}
Fix $k \in \NN^*$ and assume that 
\[
\Phi_i : Y_i\to\cZ_1(M;\bF;\ZZ_2)
\]
is a sequence of continuous maps for $Y_i$ cubical subcomplexes of $I^k$ so that every $V\in\bC(\{\Phi_i\})$ is stationary in $M$. Then, at least one of the following holds:
\begin{enumerate}
	\item $\bC(\{\Phi_i\})$ contains a stationary integral varifold with $\leq 5^k$ singular points, or
	\item there exists a sequence of continuous $\Psi_i^* : Y_i \to \cZ_1(M;\bF;\ZZ_2)$, with each $\Psi_i^*$ homotopic to $\Psi_i$ in the $\cF$-topology, so that
\[
\bL_\textrm{AP}(\{\Psi_i^*\}) <  \bL_{\textrm{AP}}(\{\Psi_i\}).
\]
\end{enumerate}
\end{prop}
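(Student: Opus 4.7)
The plan is to argue by contradiction: assume simultaneously that (1) fails (no element of $\bC(\{\Phi_i\})$ is a stationary integral varifold with at most $5^k$ singular points) and that (2) fails (the value $\bL_{\textrm{AP}}(\{\Phi_i\})$ cannot be strictly decreased by replacing the $\Phi_i$ by any $\cF$-homotopic sequence $\Psi_i^*$). Under these two assumptions, I aim to extract a subsequential $\bF$-limit $V \in \bC(\{\Phi_i\})$ that is a stationary integral varifold with at most $5^k$ singular points, contradicting the failure of (1). This is the same overall scheme as in \cite[Theorem 2.8]{MarquesNeves:posRic}; the only new wrinkle is the varying domains.

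The first step is discretization. Applying the Marques--Neves interpolation and discretization machinery (\cite[\S 3]{MarquesNeves:posRic}, cf.\ \cite[\S 3]{MarquesNeves:uper-semi-index}), I would replace each $\bF$-continuous $\Phi_i : Y_i \to \cZ_1(M;\bF;\ZZ_2)$ by a discrete map on a refinement $Y_i(\ell_i)$, $\ell_i \to \infty$, whose $\bF$-fineness is controlled sharply enough that the discrete sequence inherits both the critical set $\bC(\{\Phi_i\})$ and the value $\bL_{\textrm{AP}}(\{\Phi_i\})$. Because every $Y_i$ is a subcomplex of $I^k$, the discretization parameters can be chosen uniformly in $i$, which is what ultimately allows the varying domains to pose no obstruction.

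The second, combinatorial step is the Pitts pigeonhole. One selects a family of $5^k$-many pairwise-disjoint open annuli $\{A_j\}_{j=1}^{5^k}$ in $M$, obtained from a suitably telescoping sequence of radii centered at points covering $M$, so that the combinatorial complexity of the cover matches the combinatorial complexity of $k$-dimensional cubical subcomplexes. Running Pitts' avoidance/deformation lemma on the discretized maps, the failure of (2) rules out the existence of a homotopic deformation that ``pulls mass'' out of any $A_j$; a pigeonhole across the $5^k$ annuli and the $k$-dimensional cube $I^k$ then produces vertices $x_i \in Y_i(\ell_i)_0$ with $|\Phi_i(x_i)|$ being $\eta_i$-almost minimizing in each $A_j$, with $\eta_i \downarrow 0$. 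The main technical subtlety, and the step I expect to be the principal obstacle, is carrying out this combinatorial argument uniformly across the varying domains $Y_i$; this is enabled by the fact that the annuli and the discretization depend only on $k$ and on bounds for $\bL_{\textrm{AP}}(\{\Phi_i\})$, not on the specific $Y_i$.

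Finally, I would pass to a subsequential $\bF$-limit $V = \lim_{j} |\Phi_{i_j}(x_{i_j})|$, which lies in $\bC(\{\Phi_i\})$, is stationary by the standing hypothesis, and is almost minimizing in each $A_j$. By the regularity theory for stationary $1$-dimensional almost-minimizing varifolds (Pitts combined with Allard--Almgren \cite{AllardAlmgren:1varifold}), $V$ is integral and its support is a finite union of smooth geodesic arcs away from the centers of the $A_j$. Thus $\sing V$ consists of at most $5^k$ points, contradicting the failure of (1) and completing the proof.
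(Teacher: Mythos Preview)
Your overall contradiction scheme is right, and the discretization/interpolation step is fine, but your description of the Pitts combinatorial step and its output is incorrect in a way that breaks the argument.

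The pigeonhole does \emph{not} produce a single fixed family $\{A_j\}_{j=1}^{5^k}$ of annuli in which the limit $V$ is almost minimizing. Rather (see \cite[Theorem 2.8]{MarquesNeves:posRic}, \cite[Lemma 3.1]{Li:infinitely-high-dim}, and the paper's statement (1')), the output is a varifold $V\in\bC(\{\Phi_i\})$ with the following property: for \emph{every} collection of $5^k$ distinct points $\{p_j\}$ with minimal pairwise distance $d$, $V$ is almost minimizing in at least one of the balls $B_{d/16}(p_j)$. This quantifier is essential. From your version (``$V$ is almost minimizing in each of $5^k$ fixed annuli''), you can only conclude $\sing V$ avoids those particular annuli; nothing prevents singular points elsewhere in $M$, so your inference $\#\sing V\le 5^k$ fails. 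Relatedly, the phrase ``annuli centered at points covering $M$'' has no content: $5^k$ open annuli cannot cover a closed surface.

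The correct conclusion of the singularity bound is: since almost minimizing in $U$ implies $\sing V\cap U=\emptyset$ (this is \cite[Proposition 3.4]{ZhouZhu:networks}, not a direct consequence of Allard--Almgren), if $\sing V$ contained $5^k$ distinct points one would apply the property above with $\{p_j\}=\sing V$ and obtain a contradiction. Also note that integrality of $V$ comes from \cite[Theorem 3.13]{Pitts} via the almost-minimizing property, and the geodesic net structure then follows from \cite{AllardAlmgren:1varifold}; your proposal conflated these two ingredients.
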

\begin{proof}
Lemma \ref{lemm:ap.bf.is.good}, \cite[Proposition 3.1]{MarquesNeves:uper-semi-index}, and \cite[Lemma 3.1]{Li:infinitely-high-dim} together  imply that either:
\begin{enumerate}
\item[(1')] there is some $V \in \bC(\{\Phi_i\})$ with the property that for any $5^k$ distinct points $\{p_j\}_{j=1}^{5^k}$ with minimal pairwise distance $d$, it holds that $V$ is almost minimizing in at least one of $\{B_{d/16}(p_j)\}_{j=1}^{5^k}$, or
\item[(2')]
 conclusion (2) above holds.
\end{enumerate}
(In \cite[Lemma 3.1]{Li:infinitely-high-dim}, the domain of the maps was assumed to be fixed as $i$ varies, but like in \cite[Theorem 2.8]{MarquesNeves:posRic}, the proof clearly extends to yield the given statement.) 

We may assume that (1') holds. It follows from \cite[Theorem 3.13]{Pitts} that $V \in \cI\cV_1(M)$ so by \cite[Section 3]{AllardAlmgren:1varifold}, $V$ is a stationary geodesic net. By \cite[Proposition 3.4]{ZhouZhu:networks} (cf.\ \cite[p.\ 3]{ZhouZhu:networks} and \cite{Aiex:ellipsoids}), if $V$ is almost minimizing in $U$ then $\sing V \cap U = \emptyset$. Thus, we find that $\# \sing V \leq 5^k$.
\end{proof}

\subsection{Double-well phase transition theory} \label{subsec:AC} 

\begin{defi} \label{defi:ac.potential.general}
A smooth function $W : \RR \to \RR$ is said to be a \textit{double-well potential} if it has the following properties:
\begin{enumerate}
\item[(W$_1$)] $W\geq 0$,
\item[(W$_2$)] $W(-t) = W(t)$ for all $t \in \RR$,
\item[(W$_3$)] $t W'(t) < 0$ for $0 < |t| < 1$,
\item[(W$_4$)] $W''(\pm 1) > 0$.
\end{enumerate}
\end{defi}

Fix a double-well potential $W$. For $\eps > 0$, define the \emph{$\eps$-phase transition energy} of a function $u \in C^\infty(M)$ to be 
\begin{equation} \label{eq:ac.energy}
	E_{\eps}[u] = \int_{M} \frac \eps 2 |\nabla u|^{2} + \frac 1 \eps W(u) 
\end{equation}
A critical point $u$ of $E_\eps$ necessarily solves the \emph{$\eps$-phase transition equation}
\begin{equation} \label{eq:ac.pde}
	\eps^2 \Delta u = W'(u).
\end{equation}
\begin{rema}
	In the special case where $W(t) = \tfrac14 (1-t^2)^2$, \eqref{eq:ac.pde} describes the Van der Waals--Cahn--Hilliard theory of phase transitions. This is not the potential we will choose to work with later. See Section \ref{sec:SG}.
\end{rema}

An easy computation shows the \emph{second variation} of $E_\eps$ at a critical point $u$ to be
\[
D^2 E_\eps[u]\{\zeta,\psi\} = \int_M \eps \bangle{\nabla \zeta,\nabla \psi} + \eps^{-1}W''(u) \zeta\psi \text{ for all } \zeta, \psi \in C^\infty(M).
\]
The second variation allows us to count the number of linearly unstable directions at a critical point. A solution $u$ of \eqref{eq:ac.pde} is said to have \emph{Morse index} $k \in \NN$ on $U\subset M$, denoted $\Index_\eps(u;U) = k$, if 
\begin{multline*}
\max \{\dim V : V\subset C^\infty_c(U) \textrm{ linear subspace with } D^2 E_\eps[u]\{\zeta,\zeta\} < 0 \\
\textrm{ for all } \zeta\in V\setminus\{0\}\} = k. 
\end{multline*}

We now discuss the min-max construction of solutions of \eqref{eq:ac.pde} following \cite{Guaraco, GasparGuaraco, Dey:APAC}. As in the previous subsection, $X$ is a cubical subcomplex of $I^{k}$ for some $k \in \NN^*$. 
Fix any double cover $\pi : \tilde X \to X$. Write $\Pi$ for the $\cF$-homotopy class of $\bF$-continuous maps corresponding to $\pi$, i.e., $\Phi : X\to \cZ_{1}(M;\bF;\ZZ_{2})$ is in $\Pi$ whenever\footnote{We use the $\cF$-topology (instead of the $\bF$-topology) on $\cZ_1(M;\ZZ_2)$ when computing $\ker \Phi_*$, since our homotopies are always $\cF$-homotopies.}
\[
\ker(\Phi_{*} : \pi_1(X) \to \pi_1(\cZ_1(M;\ZZ_2))) =\im \pi_{*} \subset \pi_{1}(X).
\]
Note that fixing $\Pi$ is the same as fixing the double cover $\pi:\tilde X\to X$; we will implicitly use this in many places below.

Since $\bI_{2}(M;\ZZ_{2})$ is contractible and paracompact, 
\[
\partial : \bI_{2}(M;\ZZ_{2}) \to \cZ_{1}(M;\ZZ_{2})
\]
is a $\ZZ_{2}$-principal bundle. Since $H^{1}(M)\setminus\{0\}$ is contractible and paracompact with a free $\ZZ_{2}$ action $u\mapsto -u$, it is the total space of a $\ZZ_{2}$-principal bundle. We denote by $\tilde \Pi$ the space of $\ZZ_{2}$-equivariant maps $h: \tilde X \to H^{1}(M)\setminus\{0\}$. We define \emph{$\eps$-phase transition width} of $\tilde \Pi$ by 
\[
\bL_{\eps}(\tilde \Pi) = \inf_{h\in \tilde\Pi} \sup_{x\in\tilde X} E_{\eps}(h(x))
\]
We say that $u \in H^{1}(M)\setminus\{0\}$ is a \emph{min-max critical point} if $E_{\eps}(u) = \bL_{\eps}(\tilde\Pi)$ and there is a minimizing sequence $\{h_{i}\}_{i=1}^{\infty}\subset \tilde\Pi$ with
\[
\lim_{i\to\infty} d_{H^{1}(M)}(u,h_{i}(\tilde X)) = 0. 
\]
The main existence result for min-max critical points is as follows. 

\begin{prop}[{\cite[Proposition 4.4]{Guaraco}, \cite[Theorem 3.3]{GasparGuaraco}; cf.\ \cite[\S 2.4]{Dey:APAC}}]\label{prop:AC.min.max}
If $\bL_{\eps}(\tilde \Pi) < E_{\eps}(0) = \Vol(M,g)/\eps$ then there is a min-max critical point $u_{\eps}$ of $E_{\eps}$; the function $u_\eps$ satisfies $|u_\eps| < 1$, solves \eqref{eq:ac.pde}, and has $\Index_\eps(u_\eps) \leq \dim X = k$.
\end{prop}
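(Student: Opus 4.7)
The plan is to follow the equivariant min-max scheme for the phase-transition functional $E_\eps$ developed by Guaraco and Gaspar--Guaraco, adapted to the free $\ZZ_2$-action $u \mapsto -u$ on $H^{1}(M) \setminus \{0\}$. First I would verify that $E_\eps \in C^2(H^1(M), \RR)$, is $\ZZ_2$-invariant by (W$_2$), is nonnegative, and satisfies the Palais--Smale condition at every level $c < E_\eps(0) = \Vol(M,g)/\eps$: a PS sequence is automatically $H^1$-bounded, Rellich--Kondrachov on the closed surface $M$ makes the subcritical term $W'(u)$ converge strongly in $L^2$ along a subsequence, which promotes weak $H^1$-convergence to strong $H^1$-convergence, and the gap $c < E_\eps(0)$ rules out the trivial weak limit $u_\infty \equiv 0$.

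With PS in hand, the equivariant deformation lemma applies: were $\bL_\eps(\tilde\Pi)$ not a critical value of $E_\eps$ on $H^1(M) \setminus \{0\}$, the $\ZZ_2$-equivariant negative pseudo-gradient flow would push every sufficiently near-optimal $h \in \tilde\Pi$ to strictly lower sup-energy, contradicting the definition of $\bL_\eps$. The gap hypothesis $\bL_\eps(\tilde\Pi) < E_\eps(0)$ is used precisely to keep the flow bounded away from the removed point $0$, preserving both equivariance and PS. This yields a critical point $u_\eps \in H^1(M) \setminus \{0\}$ with $E_\eps(u_\eps) = \bL_\eps(\tilde\Pi)$, which by elliptic regularity is smooth and solves \eqref{eq:ac.pde}. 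For the bound $|u_\eps| < 1$, properties (W$_3$)--(W$_4$) make the constants $\pm 1$ a supersolution/subsolution pair, so testing the PDE against $(u_\eps - 1)_+$ and $(u_\eps + 1)_-$ yields $|u_\eps| \leq 1$; strict inequality then follows from the strong maximum principle applied to $1 \pm u_\eps$, using that $u_\eps \not\equiv \pm 1$ because $E_\eps(\pm 1) = 0$ while $\bL_\eps(\tilde\Pi) > 0$ (a nontrivial $\ZZ_2$-equivariant map $\tilde X \to H^1(M) \setminus \{0\}$ must connect some $u$ to $-u$ along a path and hence cross a uniformly positive-energy threshold separating the two wells).

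The Morse index bound $\Index_\eps(u_\eps) \leq \dim X = k$ is the main obstacle, and here I would follow the argument of \cite{GasparGuaraco}. If $\Index_\eps(u_\eps) \geq k+1$, pick a $(k+1)$-dimensional negative subspace $V \subset C^\infty(M)$ for $D^2 E_\eps[u_\eps]$ and build a $\ZZ_2$-equivariant local deformation supported in a small $H^1$-neighborhood of the orbit $\{\pm u_\eps\}$ which, spliced into any near-optimal $h \in \tilde\Pi$, uses the $k+1$ negative directions to drop the sup-energy strictly below $\bL_\eps$, contradicting the definition. The delicate point is that the critical set at level $\bL_\eps(\tilde\Pi)$ need not be a single orbit and $u_\eps$ need not be isolated; the standard remedy is first to make $E_\eps$ Morse along that critical set by a small compactly supported perturbation of $W$ (or by Sard--Smale on the metric), run the local argument orbit by orbit, and then recover the bound for the original $u_\eps$ by upper semicontinuity of the Morse index under $H^1$-convergence.
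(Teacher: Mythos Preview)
The paper does not supply its own proof of this proposition; it is quoted directly from \cite[Proposition 4.4]{Guaraco} and \cite[Theorem 3.3]{GasparGuaraco}. Your sketch follows the same equivariant min-max scheme developed in those references, and the existence, regularity, and $|u_\eps|<1$ portions are correctly outlined.

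One small correction on the index bound: your proposed remedy for a non-isolated critical set---perturbing $W$ to make $E_\eps$ Morse and then passing back by upper semicontinuity---is not how the cited references proceed, and it is not clear such a perturbation can be made $\ZZ_2$-equivariant while preserving the double-well structure (W$_1$)--(W$_4$). The standard argument (as in Ghoussoub's monograph, which underlies \cite{GasparGuaraco}) instead uses Palais--Smale compactness of the critical set at level $\bL_\eps(\tilde\Pi)$ directly: if \emph{every} critical point there had index $\geq k+1$, the $(k+1)$-dimensional negative cones would patch into a uniform equivariant deformation of a neighborhood of the whole critical set, and combining this with the usual pseudo-gradient flow away from the critical set would push a near-optimal $h\in\tilde\Pi$ strictly below $\bL_\eps(\tilde\Pi)$. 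So one obtains at least one min-max critical point with $\Index_\eps \leq k$ without any perturbation step.
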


\subsection{Comparison between the min-max theories}\label{subsec:dey}

The following summarizes the equivalence between the two theories:

\begin{prop}[{\cite[Theorem 6.1]{GasparGuaraco}, \cite[Theorem 1.2]{Dey:APAC}}]\label{prop:AC-AP}
	The $\eps$-phase transition widths and Almgren--Pitts width are related by
	\[ h_0^{-1} \lim_{\eps\to0}\bL_{\eps}(\tilde\Pi) = \bL_{\textnormal{AP}}(\Pi), \] 
	where $h_0$ is the squared $L^2$ energy of the heteroclinic solution 
	\begin{equation} \label{eq:ac.heteroclinic}
		\mathbb{H} : \RR \to (-1, 1), \; \mathbb{H}(0) = 0, \; \lim_{t \to \pm \infty} \mathbb{H}(t) = \pm 1
	\end{equation}
	of \eqref{eq:ac.pde} on $\RR$ with $\eps=1$.
\end{prop}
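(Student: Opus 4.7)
The plan is to establish the two inequalities
\[
\limsup_{\eps\to 0} \bL_\eps(\tilde\Pi) \leq h_0 \bL_{\mathrm{AP}}(\Pi) \quad \text{and} \quad \liminf_{\eps\to 0} \bL_\eps(\tilde\Pi) \geq h_0 \bL_{\mathrm{AP}}(\Pi),
\]
which together yield the claimed equality. Both rest on the Modica--Mortola heuristic that, for $u \in H^1(M)$ transitioning between $\pm 1$ across a narrow interface of width $\sim\eps$, one has $E_\eps(u) \approx h_0 \cdot \length(\partial\{u > 0\})$, made quantitative by coarea.

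For the $\leq$ direction, given $\Phi \in \Pi$ with $\sup_x \bM(\Phi(x)) \leq L$, I would lift via the $\ZZ_2$-principal bundle $\partial : \bI_2(M;\ZZ_2) \to \cZ_1(M;\ZZ_2)$ to a $\ZZ_2$-equivariant $\tilde\Phi : \tilde X \to \bI_2(M;\ZZ_2)$ (possible since the total space is contractible and $\tilde X$ is a cubical subcomplex), and then set $h_\eps(\tilde x) := \mathbb{H}(d_\eps/\eps)$, where $d_\eps$ is a continuous regularization of the signed distance to $\partial\tilde\Phi(\tilde x)$ with sign prescribed by $\tilde\Phi(\tilde x)$. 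A direct coarea computation gives $E_\eps(h_\eps(\tilde x)) \leq h_0 \bM(\Phi(x)) + o_\eps(1)$ with $o_\eps(1)$ uniform in $\tilde x$ thanks to the mass continuity implied by $\bF$-continuity; equivariance of the construction places $h_\eps \in \tilde\Pi$. For the $\geq$ direction, given a near-optimal sequence $h_{\eps_i} \in \tilde\Pi$, write $u_{\tilde x} := h_{\eps_i}(\tilde x)$ and define $\Phi_i(x) := \partial\{u_{\tilde x} > 0\}$, which is well defined modulo $2$ on the closed manifold $M$ since $\partial\{-u > 0\} = \partial\{u > 0\}$ mod $2$, in concert with the symmetry $W(-t) = W(t)$. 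The coarea formula yields $\bM(\Phi_i(x)) \leq h_0^{-1} E_{\eps_i}(u_{\tilde x}) + o_{\eps_i}(1)$ after averaging over a narrow window of levels near $0$ to extract a uniformly good one; $H^1$-continuity of $h_{\eps_i}$ upgrades to $\cF$-continuity of $\Phi_i$ via BV/flat compactness, and the $E_{\eps_i}$ bound precludes concentration of mass.

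The primary obstacle is cohomological bookkeeping: verifying that the lift and projection above lie in the correct homotopy classes $\tilde\Pi$ and $\Pi$. This follows from the observation that $\partial : \bI_2(M;\ZZ_2) \to \cZ_1(M;\ZZ_2)$ and $H^1(M)\setminus\{0\} \to (H^1(M)\setminus\{0\})/\ZZ_2$ are both universal $\ZZ_2$-principal bundles, so pulling back the generator $\bar\lambda$ classifies the same double cover $\tilde X \to X$ in either picture. Secondary technical points---parameter-continuity of signed-distance regularizations, uniform-in-$\tilde x$ level selection near $0$, and upgrading $\cF$- to $\bF$-continuity---are handled by Almgren--Marques--Neves style interpolation and amount to standard robustness checks on the constructions above.
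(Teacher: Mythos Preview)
The paper does not prove this proposition; it is stated with citations to \cite[Theorem~6.1]{GasparGuaraco} and \cite[Theorem~1.2]{Dey:APAC} and used as a black box. So there is no ``paper's own proof'' to compare against.

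That said, your sketch captures the correct high-level structure---Modica--Mortola truncation of signed distance for one inequality, coarea/level-set extraction for the other---but it underestimates where the actual work lies. Two points deserve caution. First, for the $\leq$ direction, the map $\tilde x \mapsto \mathbb{H}(d_\eps(\tilde x)/\eps)$ is not obviously $H^1$-continuous: signed distance to $\supp \partial\tilde\Phi(\tilde x)$ is badly behaved under $\bF$- or $\cF$-perturbations of $\tilde\Phi(\tilde x)$, and ``a continuous regularization'' hides the real difficulty. The cited references handle this by first passing through a discretization/interpolation step rather than working with signed distance directly. Second, for the $\geq$ direction, ``averaging over a narrow window of levels near $0$ to extract a uniformly good one'' does not yield a single level $t$ that works simultaneously for all $\tilde x \in \tilde X$, and even if it did, $\tilde x \mapsto \partial\{h_{\eps_i}(\tilde x) > t\}$ need not be $\cF$-continuous for fixed $t$. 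Dey's argument again proceeds through discretized sweepouts and the Almgren--Marques--Neves interpolation machinery, which you gesture at in your last sentence but which is doing far more than ``standard robustness checks.'' Your homotopy-class bookkeeping via the universal $\ZZ_2$-bundle identification is correct and is indeed the mechanism the references use.
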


In fact, more is true. Recall that every solution $u$ of \eqref{eq:ac.pde} has an associated 1-varifold on $M$, the \emph{$\eps$-phase transition 1-varifold}. It is defined as the unique  $V_\eps[u] \in \cV_1(M)$ such that
\[ V_\eps[u]\{f\} := h_0^{-1} \int_M \eps |\nabla u|^2 f(x, \Tan_x \{ u = u(x) \}) \text{ for all } f \in C^0(G_1(M)). \]
The Hutchinson--Tonegawa compactness theorem (Proposition \ref{prop:HT.theory}) shows that if $u_i$ are a solutions of \eqref{eq:ac.pde} with $\eps_i \to 0$ and suitable a priori $L^\infty$ and $\eps_i$-energy bounds (which hold along our min-max critical points $(u_i, \eps_i)$ in Proposition \ref{prop:AC-AP}), then the corresponding 1-varifolds $V_{\eps_i}[u_i]$ subsequentially converge to a stationary integral 1-varifold $V$ with mass $h_0^{-1} \lim_i E_{\eps_i}[u_i]$.

If the set of all limiting stationary integral 1-varifolds arising from our phase transition min-max critical points $(u_i, \eps_i)$, $\eps_i \to 0$, is denoted $\bC_{\textnormal{PT}}(\tilde \Pi)$, and the set of all Almgren--Pitts min-max critical points produced in Section \ref{subsec:cont-AP} is denoted $\bC_{\textnormal{AP}}(\Pi)$, then:
\begin{prop}[{\cite[Theorem 1.4]{Dey:APAC}}]\label{prop:phase.vs.ap.limit}
$\bC_{\textnormal{PT}}(\tilde \Pi) \subset \bC_{\textnormal{AP}}(\Pi)$.
\end{prop}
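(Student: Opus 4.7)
The plan is to construct, out of the sequence defining $V\in\bC_{\textnormal{PT}}(\tilde\Pi)$, an Almgren--Pitts minimizing sequence in $\Pi$ whose critical set contains $V$. By definition, there exist $\eps_i\to 0$ and phase-transition min-max critical points $u_i\in H^1(M)\setminus\{0\}$ at level $\bL_{\eps_i}(\tilde\Pi)$, together with $\ZZ_2$-equivariant $\tilde\Pi$-minimizing sequences $h_{i,k}:\tilde X\to H^1(M)\setminus\{0\}$ and points $\tilde x_{i,k}\in\tilde X$ with $h_{i,k}(\tilde x_{i,k})\to u_i$ in $H^1(M)$ as $k\to\infty$, while $V_{\eps_i}[u_i]\to V$ as varifolds.

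The main tool is a slicing map $\Theta_\eps:H^1(M)\setminus\{0\}\to\cZ_1(M;\bF;\ZZ_2)$ of the form $\Theta_\eps(u)=\partial[\{u>s(u)\}]$, where $s(u)$ is a Sard-chosen level near $0$, following the construction of Guaraco and Gaspar--Guaraco (cf.\ Dey). Two properties must be checked. First, $\Theta_\eps$ should be $\bF$-continuous on the bounded-energy subsets of $H^1(M)\setminus\{0\}$ relevant to us. Second, since $\{u>s\}$ and $\{-u>-s\}$ are complementary, one has $\Theta_\eps(-u)=\Theta_\eps(u)$ in $\cZ_1(M;\ZZ_2)$, so the composition $\Theta_\eps\circ h_{i,k}$ descends to an $\bF$-continuous map $\Phi_{i,k}:X\to\cZ_1(M;\bF;\ZZ_2)$. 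Then I would verify $\Phi_{i,k}\in\Pi$ by tracing through the principal $\ZZ_2$-bundle structure recalled in Section \ref{subsec:AC}: $\Theta_\eps$ covers $\partial:\bI_2(M;\ZZ_2)\to\cZ_1(M;\ZZ_2)$ modulo the $\ZZ_2$-action, so that the kernel of $(\Phi_{i,k})_*$ on $\pi_1$ is exactly $\im\pi_*$.

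For the mass bound, the Modica--Mortola coarea inequality
\[ E_\eps(u)\geq \int_M \sqrt{2W(u)}\,|\nabla u|\,d\vol = \int_{-1}^1 \sqrt{2W(s)}\,\bM(\partial[\{u>s\}])\,ds, \]
combined with the Sard selection of $s(u)$, yields $\bM(\Phi_{i,k}(x))\leq h_0^{-1}E_{\eps_i}(h_{i,k}(\tilde x))+o(1)$ as $\eps_i\to 0$. By Proposition \ref{prop:AC-AP},
\[ \limsup_{k\to\infty}\sup_{x\in X}\bM(\Phi_{i,k}(x))\leq h_0^{-1}\bL_{\eps_i}(\tilde\Pi)+o(1)\xrightarrow{i\to\infty}\bL_{\textnormal{AP}}(\Pi), \]
so a diagonal choice $\Phi_j=\Phi_{i(j),k(j)}$ produces an Almgren--Pitts minimizing sequence in $\Pi$.

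Finally, I verify $V\in\bC(\{\Phi_j\})$ by setting $x_j=\pi(\tilde x_{i(j),k(j)})$ and showing $|\Phi_j(x_j)|\to V$ in $\bF$. Since $h_{i,k}(\tilde x_{i,k})\to u_i$ in $H^1(M)$ and $\Theta_\eps$ is $\bF$-continuous on bounded-energy sets, $|\Phi_j(x_j)|$ is $\bF$-close to $|\Theta_{\eps_{i(j)}}(u_{i(j)})|$. The remaining step, which is the main obstacle, is identifying the limit of $|\Theta_{\eps_i}(u_i)|$ with the Hutchinson--Tonegawa varifold limit $V$ of $V_{\eps_i}[u_i]$: a priori these carry different geometric information, and reconciling them requires using the bounded Morse index afforded by Proposition \ref{prop:AC.min.max} together with the regularity theory recalled in Appendix \ref{sec:ac.regularity} to rule out cancellation in the flat limit, so that the mass and support of the two varifold limits actually coincide.
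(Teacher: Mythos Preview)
The paper does not give a proof of this proposition; it simply cites \cite[Theorem 1.4]{Dey:APAC} and remarks that the result is not actually used in the paper. So there is nothing in the paper to compare against, and your sketch is an attempt to reconstruct Dey's argument.

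Your outline has the right shape but contains two genuine gaps. First, the slicing map $\Theta_\eps(u)=\partial[\{u>s(u)\}]$ with a Sard-chosen level $s(u)$ cannot be made $\bF$-continuous in $u$: a regular value cannot be selected continuously as the function varies, so you do not get a continuous map $X\to\cZ_1(M;\bF;\ZZ_2)$ this way. The constructions of Guaraco, Gaspar--Guaraco, and Dey instead pass through discretizations and Almgren--Pitts interpolation, which is where the $\bF$-continuity and the membership in $\Pi$ actually come from.

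Second, the step you correctly flag as ``the main obstacle'' is not resolved by the regularity theory of Appendix~\ref{sec:ac.regularity} in the way you suggest. The coarea/mean-value argument gives only an \emph{upper} bound $\bM(|\Theta_{\eps_i}(u_i)|)\le h_0^{-1}E_{\eps_i}(u_i)+o(1)$; to obtain $|\Theta_{\eps_i}(u_i)|\to V$ in $\bF$ you need the masses to \emph{converge} to $\Vert V\Vert(M)=\bL_{\textnormal{AP}}(\Pi)$. The Wang--Wei curvature estimates do show that, away from finitely many index-concentration points, each level set $\{u_i=s\}$ has as many nearly parallel sheets as the local multiplicity of $V$, which morally yields the matching lower bound. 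But turning this into a proof---controlling the contribution near the bad points, making the level choice compatible with the discretized sweepout, and ensuring no varifold mass is lost in the interpolation---is precisely the substantial work carried out in \cite{Dey:APAC}, and your sketch does not indicate how that goes.
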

We will not use this result here but it is relevant to some of the discussion in Section \ref{sec:open}.

\section{The sine-Gordon double-well potential} \label{sec:SG}

The following is the main result of this section. 

\begin{theo}[Sine-Gordon limit theorem]\label{theo:SG-lim}
	Let $(M,g)$ be a closed Riemannian 2-manifold. Fix the ``sine-Gordon'' double-well potential
	\begin{equation} \label{eq:SG-potential}
		W(t) := \frac{1 + \cos(\pi t)}{\pi^2}.
	\end{equation}
	(Note that $W$ satisfies Definition \ref{defi:ac.potential.general}.) Let $u_{i}\in C^\infty(M)$ be solutions of \eqref{eq:ac.pde} on $(M,g)$ with the sine-Gordon potential \eqref{eq:SG-potential}, $\eps_i \to 0$, and such that
	\begin{equation} \label{eq:SG-lim.finite.complexity}
		\Index_{\eps_i}(u_i) + E_{\eps_{i}}[u_{i}] \leq \Lambda \text{ for all } i = 1, 2, \ldots
	\end{equation}
	Then, passing to a subsequence, the $\eps_i$-phase transition 1-varifolds $V_{\eps_{i}}[u_{i}]$ converge to a stationary integral 1-varifold $V$ such that
	\[ V = \sum_{j=1}^N \mathbf{v}(\sigma_j, \mathbf{1}_{\sigma_j}) \]
	for $\sigma_1, \ldots, \sigma_N$ (possibly repeated) primitive closed geodesics in $(M, g)$.
\end{theo}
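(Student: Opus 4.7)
The plan is to combine Hutchinson--Tonegawa compactness with a blow-up/blow-down argument that converts the Liu--Wei tangent cone theorem for entire sine-Gordon solutions (Theorem \ref{theo:SG.cone}) into information about the tangent cones of $V$ at its singular points. First, \eqref{eq:SG-lim.finite.complexity} together with the maximum principle applied to \eqref{eq:ac.pde} and (W$_3$)--(W$_4$) gives the uniform bound $|u_i| \leq 1$. Hutchinson--Tonegawa (Proposition \ref{prop:HT.theory}) then yields a subsequence (not relabeled) with $V_{\eps_i}[u_i] \to V$ in the varifold topology, and by Allard--Almgren \cite{AllardAlmgren:1varifold}, $V$ is a stationary geodesic net: supported on a finite union of geodesic arcs meeting at a finite singular set $\Sigma := \sing V$, with integer multiplicities satisfying a balancing condition at each $p \in \Sigma$.

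The heart of the argument is to identify $C_p V$ at each $p \in \Sigma$ as a union of multiplicity-one lines through the origin. Take normal coordinates centered at $p$ and form the blow-ups $\tilde u_i(y) := u_i(\exp_p(\eps_i y))$ on $B_{R_i}(0) \subset \RR^2$ with $R_i \to \infty$. These solve a curvature-perturbed version of $\Delta \tilde u = W'(\tilde u)$ whose error vanishes uniformly on compact sets, and they inherit a uniform Morse index bound from \eqref{eq:SG-lim.finite.complexity}. By the regularity theory recalled in Appendix \ref{sec:ac.regularity}, a further subsequence converges in $C^2_{\mathrm{loc}}(\RR^2)$ to an entire finite-index solution $u_\infty : \RR^2 \to (-1,1)$ of $\Delta u_\infty = W'(u_\infty)$. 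A direct change-of-variables computation gives $\eta_{p,\eps_i\,\#} V_{\eps_i}[u_i] = V_1[\tilde u_i]$, where $\eta_{p,r}(x) := (x-p)/r$, so a two-parameter diagonal argument yields
\[
C_p V \;=\; \lim_{\rho \to \infty} \eta_{0,\rho\,\#} V_1[u_\infty],
\]
i.e., $C_pV$ is the blow-down at infinity of the sine-Gordon varifold $V_1[u_\infty]$. Theorem \ref{theo:SG.cone} then says this blow-down is supported on finitely many lines through the origin, each with multiplicity one.

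Having excluded genuine junction singularities, the support of $V$ near each $p \in \Sigma$ is a finite union of smooth geodesic arcs meeting transversally, so globally $\supp\Vert V\Vert$ is a finite union of smoothly immersed closed geodesics with multiplicity constant along each. Writing each such immersed closed geodesic as an iterate of a primitive one and expanding the integer multiplicity as a sum of $1$'s gives the desired decomposition $V = \sum_j \mathbf{v}(\sigma_j,\mathbf{1}_{\sigma_j})$ with $\sigma_j$ primitive (possibly repeated).

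The main obstacle is carrying out the blow-up/blow-down matching rigorously: the displayed identity above requires commuting the limits $\eps_i \to 0$ (which produces $V_{\eps_i}[u_i] \to V$) and $r \to 0$ (which extracts $C_pV$). This is precisely where the Wang--Wei curvature/sheet estimates \cite{WangWei} enter, allowing one to propagate the phase-transition-scale information outward through the intermediate annular regions $\eps_i \ll r \ll \dist(\,\cdot\,,\Sigma)$ where the level sets of $u_i$ are graphical over the regular part of $V$, and thereby ensuring that the sine-Gordon entire solution $u_\infty$ really does ``see'' the correct tangent cone of $V$ at $p$.
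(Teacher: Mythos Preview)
Your overall strategy is right and you have correctly located the main obstacle, but the claimed identity $C_pV=\lim_{\rho\to\infty}(\eta_{0,\rho})_{\#}V_1[u_\infty]$ is a genuine gap that a diagonal argument plus Wang--Wei does not close.

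The difficulty is that Morse index may concentrate at intermediate scales $r_i$ with $\eps_i\ll r_i\ll 1$, and the Wang--Wei estimates only apply in \emph{stable} regions. Your diagonal sequence $\rho_i\to\infty$, $\rho_i\eps_i\to 0$, does give $(\eta_{p,\rho_i\eps_i})_{\#}V_{\eps_i}[u_i]\to C_pV$; but to identify the same limit with the blow-down of $u_\infty$ you would need $V_1[\tilde u_i]$ close to $V_1[u_\infty]$ on balls of radius $\rho_i\to\infty$, whereas the $C^2_{\mathrm{loc}}$ convergence $\tilde u_i\to u_\infty$ only controls fixed compact sets. Upgrading this requires stability on the annuli $B_{\rho_i}\setminus B_K$ so that Wang--Wei applies there --- which is exactly what fails when index sits at an intermediate scale. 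There is also a centering issue: blowing up at the fixed point $p$ rather than at a carefully chosen $p_i\to p$ can yield $u_\infty\equiv\pm 1$, giving no information at all.

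The paper resolves this with an induction on the index bound $I$ (Proposition \ref{prop:induct.SG.lim.thm}). One locates the smallest radius $r_i$ and center $p_i$ for which $u_i$ is stable on $B_2\setminus\bar B_{r_i}(p_i)$. If $r_i^{-1}\eps_i$ stays bounded away from $0$, rescaling by $\eps_i^{-1}$ about $p_i$ yields an entire sine-Gordon solution, and Wang--Wei in the now-stable annulus matches its blow-down with $C_pV$, so Theorem \ref{theo:SG.cone} gives the contradiction. If $r_i^{-1}\eps_i\to 0$, one rescales by $r_i^{-1}$ instead; Wang--Wei in the stable annulus again matches the tangent cone at infinity of the rescaled limit with $C_pV$, while inside the unit ball the index has dropped by at least one and the inductive hypothesis applies. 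Theorem \ref{theo:SG-lim} itself then follows by first blowing up slowly at $p$ (take $\lambda_i\to\infty$ with $\lambda_i\eps_i\to 0$) so that the limit varifold is exactly $\VarTan(V,p)$, and feeding that sequence to the proposition.
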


\begin{rema}
	One can derive bounds on $\sing V$ similarly to \cite{Mantoulidis}, but we do not need them for our proof of Theorem \ref{theo:sphere.pwidths}.
\end{rema}

\subsection{Phase transitions on the plane for general double-well potentials}

We prove Theorem \ref{theo:SG-lim} by analyzing the singularities of $V$ before they occur in the $\eps_i \to 0$ limit. To do so, we perform blow-ups of our phase transitions near the singularities that are about to form. This leads us to consider nontrivial phase transitions on all of $\RR^2$. We need a few definitions.

We call $u \in C^\infty(\RR^2)$ an \emph{entire phase transition} on $\RR^2$ that is regular at infinity if $|u| < 1$, $u$ solves \eqref{eq:ac.pde} on $\RR^2$ (with any double-well potential), and\footnote{It is a deep result of Wang--Wei \cite{WangWei} that the energy growth bound in \eqref{eq:entire.finite.complexity} is \textit{implied} by the index bound. Thus, a bounded entire solution is finite-index if and only if it is regular at infinity. We will not need this fact in our paper.}
\begin{equation} \label{eq:entire.finite.complexity}
	\limsup_{R\to\infty} \left( \Index_1(u;B_R) + R^{-1} (E_1 \restr B_R(0))[u] \right) < \infty. 
\end{equation}
Notice that, by Lemma \ref{lemm:monotonicity.AC.energy} and \eqref{eq:SG-lim.finite.complexity}, blow ups of solutions $(u_i, \eps_i)$ to Theorem \ref{theo:SG-lim}, rescaled so that $\eps_i \mapsto 1$ and taking $i \to \infty$, are all of this form.

The study of such solutions on $\RR^2$ has a rich literature (cf.\ \cite{dPKPW:multiple-end,Gui:symmetry,KowalczykLiuPacard12,KLP:towards-4-end,delPinoKowalczykPacard:moduli,KLP:classification,KLPW:end-to-end,GLW:variational,Wang:finite-morse-index,WangWei}), and we briefly summarize the facts needed in our work. We first recall a result of Wang:

\begin{prop}[{\cite[Theorem 1.1]{Wang:finite-morse-index}}]\label{prop:tangent.cone.infty.AC} 
	Let $u$ be an entire phase transition on $\RR^2$ that is regular at infinity. Then, there exist distinct unit vectors $v_1,\dots,v_{2m} \in \RR^2$ so that
	\[
		V^\infty := \sum_{j=1}^{2m} \bv([0,\infty)v_j,\mathbf{1}_{[0,\infty)v_j}) \in \cI \cV_1(\RR^2) \text{ is stationary}
	\]
	and for all $\{ \lambda_i \}_{i=1}^\infty \subset (0, \infty)$ with $\lim_i \lambda_i = \infty$, the $\lambda_i^{-1}$-phase transition 1-varifolds of the rescalings $u_i(x) := u(x/\lambda_i)$ (which solve \eqref{eq:ac.pde} with $\eps = \lambda_i^{-1}$) satisfy
	 \[ \lim_{i} V_{\lambda_i^{-1}} [u_i] \restr G_1(B_R) = V^\infty \restr G_1(B_R) \text{ for all } R>0. \]
\end{prop}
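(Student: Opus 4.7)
The plan is to extract a blow-down limit, show it is a stationary integral cone with multiplicity one on each ray, and then remove the subsequence dependence. First, the hypothesis \eqref{eq:entire.finite.complexity} yields uniform $\lambda_i^{-1}$-energy and Morse-index bounds for the rescalings $u_i(x)=u(x/\lambda_i)$ on each ball $B_R\subset\RR^2$ (indeed these rescalings solve \eqref{eq:ac.pde} with $\eps=\lambda_i^{-1}$, and the growth hypothesis converts to $\lambda_i^{-1}(E_{\lambda_i^{-1}}\restr B_R)[u_i]\leq CR$). By the Hutchinson--Tonegawa compactness theorem (Proposition \ref{prop:HT.theory}) together with the index bound, a subsequence of $V_{\lambda_i^{-1}}[u_i]$ converges in the varifold sense on each compact set to a stationary integral 1-varifold $V^\infty\in\cI\cV_1(\RR^2)$.

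Next I would show that $V^\infty$ is a cone over the origin. The classical Modica and Pohozaev identities for \eqref{eq:ac.pde}, together with the energy growth bound, produce a monotone quantity controlled by $r\mapsto r^{-1}(E_1\restr B_r(0))[u]$ whose limit exists and is finite. Comparing mass ratios at different scales then forces the blow-down to be dilation-invariant. Stationary integral $1$-cones in $\RR^2$ based at the origin are explicitly classified following Allard--Almgren \cite{AllardAlmgren:1varifold}: they take the form $\sum_{j=1}^N m_j\,\bv([0,\infty)v_j,\mathbf{1}_{[0,\infty)v_j})$ with distinct unit vectors $v_j\in\SS^1$, integer multiplicities $m_j\in\NN^*$, and the balancing condition $\sum_j m_jv_j=0$.

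The main obstacle is promoting the multiplicities to $m_j\equiv1$, after which balancing forces $N=2m$ to be even as in the statement. I would deduce this from the uniform Morse index bound via a sheet-counting argument: if $m_j\geq 2$, the Wang--Wei curvature estimates (valid since the limit cone is smooth away from the origin and carries integer multiplicity) furnish $m_j$ disjoint nearly-parallel level-set sheets of $u_i$ in an annular sector around the ray $[0,\infty)v_j$ whose inner radius tends to $0$ and outer radius tends to $\infty$ after rescaling. Each such sheet supports a compactly-supported negative direction of $D^2E_1[u]$, obtained by testing against a cut-off of $\mathbb{H}'$ composed with the signed distance to that sheet (the standard one-dimensional computation shows this test function has negative Rayleigh quotient once the sheet is long enough). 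These directions are mutually disjointly supported, so they span a linear subspace whose dimension grows without bound as $\lambda_i\to\infty$, contradicting the fixed index bound $\Index_1(u;B_{R\lambda_i})\leq\Lambda$. Finally, uniqueness of the blow-down (the ``for all $\lambda_i$'' clause) follows because the density $\lim_{r\to\infty}r^{-1}(E_1\restr B_r(0))[u]$ exists by monotonicity and coincides with $\|V^\infty\|(B_1)$ for every subsequential limit; combined with the multiplicity-one and balancing data, this pins down the cone, so all blow-downs agree. I expect the index-versus-multiplicity step to be the most delicate, since one must carefully localize the negative directions on each sheet to guarantee linear independence across sheets.
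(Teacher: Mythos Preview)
The paper does not prove this proposition; it is stated as a citation of Wang's result \cite[Theorem 1.1]{Wang:finite-morse-index}. So there is no ``paper's proof'' to compare to --- your proposal is an attempted sketch of Wang's theorem itself.

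Your outline has two genuine gaps. First, the multiplicity-one step is incorrect as written: the heteroclinic $\mathbb{H}$ is \emph{stable}, and $\mathbb{H}'$ spans the kernel of its linearized operator, not a negative eigenspace. Testing a single sheet against a cutoff of $\mathbb{H}'(\text{signed distance})$ produces a Rayleigh quotient that is small and \emph{nonnegative}, not negative. So ``each sheet supports a compactly-supported negative direction'' is false, and the index-counting contradiction does not fire. Obtaining multiplicity one in Wang's work instead proceeds through the structure of finite-index solutions outside a compact set: stability there, the two-dimensional De Giorgi theorem (Ghoussoub--Gui), and an analysis showing distinct nodal curves have \emph{distinct} asymptotic directions. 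The interaction of multiple nearby sheets (Toda-type) can indeed produce instability, but that mechanism is far more delicate than what you describe and is not captured by testing sheets individually.

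Second, your uniqueness argument is insufficient. Knowing $\|V^\infty\|(B_1)$, balancing, and multiplicity one does \emph{not} pin down the cone: for instance, any pair of lines through the origin gives a balanced multiplicity-one cone with $\|V^\infty\|(B_1)=4$. Uniqueness of the blow-down in Wang's theorem comes from the stronger statement that each nodal curve of $u$ outside a compact set is asymptotic (with decaying curvature) to a specific ray, so the asymptotic directions $v_1,\dots,v_{2m}$ are intrinsic to $u$, not merely to a subsequence.
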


\begin{defi} \label{defi:asymptotic.directions}
For $u$ an entire phase transition on $\RR^2$ that is regular at infinity we say that the $V^\infty$ of Proposition \ref{prop:tangent.cone.infty.AC} is the \emph{tangent cone to $u$ at infinity} and call $\{v_1,\dots,v_{2m}\} \subset \SS^1$ the \emph{asymptotic directions of $u$}. For each asymptotic direction $v_j$ of $u$, define $v_j^\perp$ to be the unit vector orthogonal to $v_j$ oriented so that, when taken along $[0, \infty) v_j \subset \supp V^\infty$, it points into the ``$+1$'' limit region of the BV limit of the $u_i$ above per Proposition \ref{prop:HT.theory}.
\end{defi}

It is a consequence of Proposition \ref{prop:tangent.cone.infty.AC} that entire phase transitions which are regular at infinity have a unique tangent cone at infinity or, equivalently, unique asymptotic directions, and they are precisely $2m$-ended solutions as considered by del Pino--Kowalczyk--Pacard \cite{delPinoKowalczykPacard:moduli}.

\begin{prop}[{\cite[Theorem 1.4]{Wang:finite-morse-index}, \cite[Theorem 2.1]{delPinoKowalczykPacard:moduli} cf. \cite[Proposition 3.10]{Mantoulidis}}] \label{prop:asymp.dir.behavior}
	 Let $u$ be an entire phase transition on $\RR^2$ which is regular at infinity with asymptotic directions $\{ v_1,\dots,v_{2m} \} \subset \SS^1$. There are $C, \kappa > 0$ depending on $u$ and the double-well potential $W$ so that for $x \in \RR^2$,
\begin{equation}\label{eq:exp.decay.entire}
(1-u(x)^2) + |\nabla u(x)| \leq C e^{-\kappa \mathscr{D}(x)}
\end{equation}
where 
\[
\mathscr{D}(x) := \dist\left( x , \cup_{j=1}^{2m} [0,\infty)v_j\right).
\]
Furthermore, for any $\{(z_i,r_i) \}_{i=1}^\infty \subset \SS^1\times (0,\infty)$ with $\lim_i z_i = z_\infty$ and $\lim_i r_i = \infty$, then up to passing to a subsequence (not relabeled),
\[
u_{i}(\cdot) =  u(\cdot + r_{i}z_{i})
\]
converges in $C^{\infty}_{\textnormal{loc}}(\RR^{2})$ to a function $u_\infty$ and either
\begin{enumerate}
	\item there exists $j \in \{ 1, \ldots, 2m \}$ such that
		\begin{equation}\label{eq:ends.deviate.asymptot.dir.bd.dist}
			\lim_i z_i = v_j \text{ and } \lim_i r_i (v_j^\perp \cdot z_i) \text{ exists and is finite},
		\end{equation}
		(where $v_j^\perp$ is as in Definition \ref{defi:asymptotic.directions}) in which case
		\[ u_\infty(x, y) = \HH((x,y) \cdot v_j^\perp + \eta_j) \]
		where $\HH$ is as in \eqref{eq:ac.heteroclinic} and $\eta_j \in \RR$; or,
	\item $u_\infty \equiv \pm 1$.
\end{enumerate} 
\end{prop}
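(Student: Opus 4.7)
The plan is to first establish the exponential decay estimate \eqref{eq:exp.decay.entire} and then use it to classify all subsequential blow-downs along the given translation sequence. For the decay, Proposition \ref{prop:tangent.cone.infty.AC} combined with the $\mathrm{BV}$-convergence from Proposition \ref{prop:HT.theory} implies that outside any fixed conical neighborhood of $\cup_{j} [0,\infty) v_{j}$, $u$ converges uniformly to $\pm 1$ at infinity, with sign determined by the orientation fixed in Definition \ref{defi:asymptotic.directions}. Wherever $|u|$ is close to $1$, the non-degeneracy $W''(\pm 1) > 0$ allows one to linearize: setting $v := 1-u^{2}$ one has an elliptic inequality of the form $\Delta v \geq c_{0} v - O(v^{2})$ with $c_{0} > 0$. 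A standard comparison with a radial barrier of the form $C e^{-\kappa d}$ then yields $1 - u^{2} \leq C e^{-\kappa \mathscr{D}(x)}$ outside a tubular neighborhood of the rays, and interior Schauder estimates upgrade this to the claimed bound on $|\nabla u|$.

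For the trichotomy, set $u_{i}(\cdot) := u(\cdot + y_{i})$ with $y_{i} := r_{i} z_{i}$. Elliptic regularity and $|u_{i}| \leq 1$ give $C^{\infty}_{\mathrm{loc}}$-subsequential convergence $u_{i} \to u_{\infty}$, with $u_{\infty}$ solving \eqref{eq:ac.pde} on $\RR^{2}$. Which case arises is dictated by the behavior of $\mathscr{D}(y_{i})$. If $\mathscr{D}(y_{i}) \to \infty$, then $\mathscr{D}(x + y_{i}) \to \infty$ uniformly for $x$ in any compact set, and the decay just proved forces $u_{i} \to \pm 1$ uniformly, giving case~(2). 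Otherwise, $\mathscr{D}(y_{i})$ stays bounded along a subsequence; since $|y_{i}| = r_{i} \to \infty$, the nearest point of $\cup_{j}[0,\infty)v_{j}$ to $y_{i}$ eventually lies on a single ray $[0,\infty) v_{j}$. Writing $y_{i} = t_{i} v_{j} + b_{i} v_{j}^{\perp}$ with $t_{i} > 0$ and $\{b_{i}\}$ bounded forces $t_{i} \to \infty$, hence $z_{\infty} = v_{j}$ and $r_{i}(v_{j}^{\perp} \cdot z_{i}) = b_{i}$, which converges along a further subsequence to some $\eta_{j} \in \RR$; this is exactly \eqref{eq:ends.deviate.asymptot.dir.bd.dist}.

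It remains to identify $u_{\infty}$ in this remaining case. Under the translations, every ray $[0,\infty) v_{k}$ with $k \neq j$ is pushed out to infinity, while $[0,\infty) v_{j}$ fills out the entire line $\RR v_{j}$ offset by $-\eta_{j} v_{j}^{\perp}$. Passing the exponential decay to the limit, $u_{\infty} \to \pm 1$ exponentially off the affine line $L := \{(x,y) : (x,y) \cdot v_{j}^{\perp} = -\eta_{j}\}$. One-dimensional symmetry of $u_{\infty}$ in the $v_{j}$ direction then follows from a sliding/moving-planes argument (in the spirit of \cite{Gui:symmetry}) comparing $u_{\infty}$ with its translates by $s v_{j}$ and using the common $\pm 1$ limits at infinity; uniqueness of the heteroclinic ODE orbit of \eqref{eq:ac.pde} joining $\mp 1$ to $\pm 1$ on $\RR$ then pins down $u_{\infty}(x,y) = \HH((x,y)\cdot v_{j}^{\perp} + \eta_{j})$.

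The main obstacle is this final rigidity step: the naive argument only produces a heteroclinic profile on each slice $\{x \cdot v_{j} = \mathrm{const}\}$, and one must rule out a slice-dependent phase shift. The sliding method handles this cleanly provided one has genuine exponential decay (rather than mere uniform convergence) transverse to $L$, which is why the quantitative bound \eqref{eq:exp.decay.entire} is established first; an alternative route is to identify the tangent cone at infinity of $u_{\infty}$ via Proposition \ref{prop:tangent.cone.infty.AC} and observe that it must be $2\,\bv(\RR v_{j}, \mathbf{1})$, from which translation invariance in the $v_{j}$ direction follows directly.
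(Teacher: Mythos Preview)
The paper does not supply its own proof of this proposition; it is quoted from the literature with citations to \cite{Wang:finite-morse-index}, \cite{delPinoKowalczykPacard:moduli}, and \cite{Mantoulidis}, and no \texttt{proof} environment follows the statement. So there is no in-paper argument to compare against.

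That said, your sketch tracks the standard approach in those references: uniform closeness to $\pm 1$ off conical neighborhoods of the ends via Propositions \ref{prop:tangent.cone.infty.AC} and \ref{prop:HT.theory}, linearization using $W''(\pm 1)>0$ to upgrade to exponential decay, and then a dichotomy on $\mathscr{D}(r_i z_i)$ to decide between cases (1) and (2). Two small points are worth flagging. First, in your ``alternative route'' the tangent cone at infinity of $u_\infty$ should be $\bv(\RR v_j,\mathbf{1})$, not $2\,\bv(\RR v_j,\mathbf{1})$: a single heteroclinic transition contributes multiplicity one to the limiting varifold (this is the normalization by $h_0$). Second, to invoke Proposition \ref{prop:tangent.cone.infty.AC} on $u_\infty$ you must first check that $u_\infty$ is itself regular at infinity; the linear energy growth passes to the limit via Lemma \ref{lemm:monotonicity.AC.energy} (or directly from the decay), but this should be stated. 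Finally, the orientation convention for $v_j^\perp$ in Definition \ref{defi:asymptotic.directions} (pointing into the $+1$ region of the BV limit) is what pins down $u_\infty = \HH((x,y)\cdot v_j^\perp + \eta_j)$ rather than its negative; your argument is implicitly using this but does not make it explicit.
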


\subsection{Reduction of the sine-Gordon limit theorem}

The following remarkable result is contained in the work of Liu--Wei \cite{LiuWei}. In fact, they prove much more, namely a full classification of entire phase transitions on $\RR^2$ that are regular at infinity along with a computation of their Morse index and nullity, \textit{provided} one uses the sine-Gordon double well-potential \eqref{eq:SG-potential}. We just need to study the asymptotic directions, and thus only need the following consequence of their work:

\begin{theo}[\cite{LiuWei}]\label{theo:SG.cone}
	If $u$ is an entire solution of \eqref{eq:ac.pde} on $\RR^2$ with the sine-Gordon potential \eqref{eq:SG-lim.finite.complexity}, which is regular at infinity, then up to relabeling the asymptotic directions of $u$, we have $v_{2k+1} = - v_{2k}$ for $k=1,\dots,m$. In other words, there are lines $\ell_1,\dots,\ell_m\subset \RR^2$ through the origin so that the tangent cone of $u$ at infinity is
	\[
		V^\infty = \sum_{j=1}^m \bv(\ell_j,\mathbf{1}_{\ell_j}).
	\]
\end{theo}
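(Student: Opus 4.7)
\medskip

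The goal is to upgrade Proposition \ref{prop:asymp.dir.behavior}, which gives an arbitrary set of $2m$ asymptotic directions $\{v_1,\dots,v_{2m}\} \subset \SS^1$, to the statement that these directions come in antipodal pairs, which means the tangent cone $V^\infty$ at infinity is supported on honest lines through the origin. The plan is to exploit the fact that for the sine-Gordon potential \eqref{eq:SG-potential} the PDE \eqref{eq:ac.pde} reduces (after the substitution $v = \pi u$ and the rescaling $\eps \mapsto 1$) to the classical elliptic sine-Gordon equation $\Delta v = \sin v$, which is an integrable PDE admitting a Lax pair and a B\"acklund transformation. The heuristic from the introduction will be rigorously cashed in as follows: B\"acklund transformations add ``kinks'' to sine-Gordon solutions, and each such kink is characterized by a single parameter (an angle) which prescribes an antipodal pair of asymptotic directions.

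The first step is to normalize the end-data. For each asymptotic direction $v_j$, choose the perpendicular $v_j^\perp$ and the shift $\eta_j$ so that, by Proposition \ref{prop:asymp.dir.behavior}(1), the solution $u$ asymptotes to the one-dimensional heteroclinic $\HH((x,y)\cdot v_j^\perp + \eta_j)$ in an exponentially small neighborhood of the ray $[0,\infty) v_j$, with the $\pm 1$ regions prescribed by Definition \ref{defi:asymptotic.directions}. The next step is to associate to $u$ a ``scattering'' quantity using the sine-Gordon Lax pair: parametrize large circles $\partial B_R$ by angle and track the monodromy of the auxiliary linear problem around $\partial B_R$ as $R\to\infty$. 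By the exponential decay \eqref{eq:exp.decay.entire}, the monodromy reduces, to leading order in $R$, to a product of ``jump'' factors, one per asymptotic direction, each corresponding to an elementary kink factor attached to the direction $v_j$. Integrability of the auxiliary linear problem (conservation of the monodromy in the isospectral sense) forces the resulting product to be trivial, which in turn forces the kinks to cancel in pairs: every kink at direction $v_j$ must be matched by an inverse kink at $-v_j$.

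A second, more constructive, way to reach the conclusion is via induction on the number of ends using the B\"acklund transformation itself. One argues that the trivial solution $u \equiv 0$ has $m=0$ ends and that the B\"acklund transformation applied to any entire solution regular at infinity adds exactly one antipodal pair of asymptotic directions (and changes the count by $+2$). Combined with uniqueness-type statements for the moduli of $2m$-ended solutions (which, for sine-Gordon, one can establish with the help of the explicit B\"acklund formula and energy growth bounds from \eqref{eq:entire.finite.complexity}), this forces the list of $2m$ directions to be the symmetric union of $m$ antipodal pairs, giving the claimed $V^\infty = \sum_{j=1}^m \bv(\ell_j,\mathbf{1}_{\ell_j})$.

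The main obstacle in either route is the elliptic (rather than hyperbolic) nature of the problem: in the time-evolution sine-Gordon equation, solitons automatically propagate in straight lines, and the antipodal pairing is trivial, but in the elliptic setting ``propagation'' has no a priori meaning, and one has to extract it from the integrable structure. Concretely, one must show that the Lax-pair monodromy is well-defined and asymptotically factorizes into elementary kink contributions aligned with the asymptotic directions; this requires combining the exponential decay from \eqref{eq:exp.decay.entire} with uniform elliptic estimates in the compact interaction region. Once this factorization is in place, the antipodal pairing drops out from the monodromy identity. A secondary, more technical, obstacle is to check that the normalizations chosen at each end are compatible with the sign convention in Definition \ref{defi:asymptotic.directions}, so that the pairing $v \leftrightarrow -v$ indeed corresponds to genuine antipodal asymptotic rays rather than to coincidental equalities modulo orientation.
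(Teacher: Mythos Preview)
Your proposal is a sketch of two possible strategies, not a proof, and both have genuine gaps. In the monodromy route you assert that the Lax-pair monodromy around $\partial B_R$ ``factorizes into jump factors'' and that triviality ``forces the kinks to cancel in pairs,'' but neither claim is justified. Since the Lax connection is flat precisely when $u$ solves the equation, the monodromy around any contractible loop is already trivial for every $R$, so no information is extracted this way; one needs instead to compare solutions of the linear problem with prescribed asymptotics in different spatial directions, which is not what you describe. In the B\"acklund route you presuppose that every finite-index entire solution is obtained from the trivial one by iterated B\"acklund transformations together with a uniqueness statement for the moduli; this is essentially the full Liu--Wei classification, which is strictly stronger than the tangent-cone statement you are trying to prove, and you give no argument for it.

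The paper's proof avoids both detours. It builds Jost solutions $\Phi_{+,1},\Phi_{-,2}$ of the Lax pair with normalized exponential asymptotics as $x\to\pm\infty$, and records the scattering datum $a(\lambda)=\det(\Phi_{+,1},\Phi_{-,2})$, which is constant in $(x,y)$. The set $\mathfrak{B}=\{\lambda\in\SS^1\cap\mathring H_+ : a(\lambda)=0\}$ then mediates between ends and directions: translating along an end and comparing with the explicit Jost solutions for the heteroclinic shows that each asymptotic direction $v_j$ forces $R(\sigma(j)v_j)\in\mathfrak{B}$; conversely, for $\lambda\in\mathfrak{B}$ the (now coincident) Jost solution is globally bounded in both weighted senses, and by sliding along the line $\{(-q,p)\cdot(x,y)=\eta\}$ with $\eta\to+\infty$ and with $\eta\to-\infty$ one produces \emph{both} $R(\lambda)$ and $-R(\lambda)$ among the asymptotic directions. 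The antipodal pairing is thus a consequence of the two-sided asymptotic analysis of a single bound state, not of a global monodromy identity or of a classification of all solutions. Your outline does not isolate this mechanism, and without it the argument does not close.
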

Since this result is central to this paper, we give a complete proof in Section \ref{subsec:Liu.Wei.proof}, but we emphasize that our proof will follow the work \cite{LiuWei} reasonably closely and we will not prove the full uniqueness/nondegeneracy result obtained there.

\begin{rema}
Theorem \ref{theo:SG.cone} is the only place that the exact form of the potential $W$ enters. It would be interesting to know if counterexamples to such  a statement exist for other potentials $W$ such as the standard $W(t) = \frac 14 (1-t^2)^2$. 
\end{rema}

In this section, we show how the Liu--Wei tangent cone theorem (Theorem \ref{theo:SG.cone}) implies the sine-Gordon limit theorem (Theorem \ref{theo:SG-lim}). Specifically, we will deduce Theorem \ref{theo:SG-lim} from the following blow-up result, which is related to arguments used in \cite[\S 4]{CKM} and \cite[\S 4.4]{Mantoulidis}. 

\begin{prop}\label{prop:induct.SG.lim.thm}
Suppose that we are working with the sine-Gordon potential \eqref{eq:SG-potential}, that Theorem \ref{theo:SG.cone} is true, and that:
\begin{itemize}
\item $\{R_i\}_{i=1}^\infty \subset (0,\infty)$ with $\lim_i R_i = \infty$;
\item  $\{g_i\}_{i=1}^\infty\subset \met(B_{R_i}(0)\subset \RR^2)$ with $\lim_i g_i =$ Euclidean metric in $C^\infty_\textnormal{loc}(\RR^2)$;
\item $\{(u_i,\eps_i)\})_{i=1}^\infty \subset C^\infty(B_{R_i}) \times (0,\infty)$ satisfying \eqref{eq:ac.pde} on $B_{R_i}$;
\item $\limsup_{i\to\infty} (E_{\eps_i}\restr B_R(0))[u_i] \leq CR$ for some $C > 0$;
\item $\limsup_{R\to\infty} \limsup_{i\to\infty} \Index_{\eps_i}(u_i;B_R(0)) \leq I \in \NN$. 
\end{itemize}
Then, after to passing to a subsequence, there are lines $\ell_1,\dots,\ell_m \subset \RR^2$ (with repetition allowed) so that 
\[
\lim_i V_{\eps_i}[u_i] = \sum_{j=1}^m \bv(\ell_j,\mathbf{1}_{\ell_j}). 
\]
\end{prop}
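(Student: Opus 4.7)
The plan is to combine Hutchinson--Tonegawa compactness with the Liu--Wei tangent cone theorem (Theorem~\ref{theo:SG.cone}) via a two-scale blow-up argument. We show that at every $p_0 \in \RR^2$ the tangent cone of the limit varifold $V := \lim_i V_{\eps_i}[u_i]$ agrees with the tangent cone at infinity of an entire blow-up $u_\infty$, which by Liu--Wei consists of distinct multiplicity-one lines through the origin. This rigidity then forces $V$ to be a finite multiplicity-one union of full lines in $\RR^2$. The framework is parallel to \cite[\S 4]{CKM} and \cite[\S 4.4]{Mantoulidis}.

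\textbf{Extracting the limit.} Hutchinson--Tonegawa compactness (Proposition~\ref{prop:HT.theory}) applied to $(u_i, \eps_i)$ using the linear energy bound, the index bound, and the a priori bound $|u_i| \leq 1$, yields, after subsequence, $V_{\eps_i}[u_i] \to V$ as varifolds on $\RR^2$ with $V$ stationary integral and $\|V\|(B_R) \leq C'R$. By Allard--Almgren, $V$ is a stationary geodesic net admitting a unique tangent cone $C_{p_0}(V)$ at every $p_0 \in \RR^2$.

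\textbf{Entire blow-up and Liu--Wei.} Fix $p_0 \in \RR^2$ and set $\tilde u_i(x) := u_i(p_0 + \eps_i x)$, which solves \eqref{eq:ac.pde} with $\eps = 1$ on balls exhausting $\RR^2$. Elliptic regularity gives $\tilde u_i \to u_\infty$ in $C^\infty_{\mathrm{loc}}(\RR^2)$ along a subsequence. Lower semi-continuity of the index gives $\Index_1(u_\infty) \leq I$. The scaling identity
\[ E_1[\tilde u_i](B_R) = \eps_i^{-1}\,E_{\eps_i}[u_i](B_{\eps_i R}(p_0)), \]
combined with the monotonicity formula (Lemma~\ref{lemm:monotonicity.AC.energy}) applied at $p_0$, promotes the linear-energy hypothesis to $E_1[\tilde u_i](B_R) \leq C'' R$, so $u_\infty$ is regular at infinity. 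Theorem~\ref{theo:SG.cone} then gives that the tangent cone at infinity of $u_\infty$ is $V^\infty = \sum_{j=1}^{m(p_0)} \bv(\ell_j, \mathbf{1}_{\ell_j})$ for distinct lines $\ell_j$ through the origin.

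\textbf{Matching $V^\infty$ with $C_{p_0}(V)$.} Choose $r_i \to 0$ with $r_i/\eps_i \to \infty$ slowly (diagonal extraction) and set $\hat u_i(y) := u_i(p_0 + r_i y)$, $\hat\eps_i := \eps_i/r_i \to 0$. A direct computation shows $V_{\hat\eps_i}[\hat u_i] = (\eta_{p_0, r_i})_\# V_{\eps_i}[u_i]$. By choosing $r_i$ slowly enough, we can simultaneously arrange that $V_{\hat\eps_i}[\hat u_i] \to V^\infty$ in each fixed ball (using Proposition~\ref{prop:tangent.cone.infty.AC} for $u_\infty$ together with the $C^\infty_{\mathrm{loc}}$ convergence $\tilde u_i \to u_\infty$), and that $(\eta_{p_0, r_i})_\# V_{\eps_i}[u_i] \to C_{p_0}(V)$ (using $V_{\eps_i}[u_i] \to V$ and uniqueness of $C_{p_0}(V)$). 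Hence $C_{p_0}(V) = V^\infty$ is a multiplicity-one union of distinct lines through the origin at every $p_0 \in \RR^2$. Feeding this back through the regularity theory for stationary integral $1$-varifolds, $V$ is locally a multiplicity-one union of smooth geodesic arcs crossing transversely at singular points; in $\RR^2$ these arcs are line segments, and each extends through any singular endpoint along the matching line direction, hence is a full line. The linear mass bound $\|V\|(B_R) \leq C'R$ caps the number of such lines, yielding the claimed decomposition.

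\textbf{Main obstacle.} The crux is the diagonal selection of $r_i$ in the matching step: it must simultaneously preserve the vague convergence $V_{\eps_i}[u_i] \to V$ under the pushforward $(\eta_{p_0, r_i})_\#$, and track the tangent cone at infinity of $u_\infty$ through the family of rescalings indexed by $\lambda_i := r_i/\eps_i \to \infty$. This is precisely what transfers the Liu--Wei rigidity, which lives at the $\eps$-phase-transition scale, to the macroscopic structure of $V$. The final multiplicity-one conclusion also relies crucially on the fact that the asymptotic directions produced by Theorem~\ref{theo:SG.cone} are \emph{distinct} (which rules out higher-multiplicity lines at the varifold level).
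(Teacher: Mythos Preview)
Your matching step has a genuine gap. The two objects $V_{\hat\eps_i}[\hat u_i]$ and $(\eta_{p_0,r_i})_\#V_{\eps_i}[u_i]$ are literally the same varifold, so you are asserting that a single diagonal sequence $r_i$ can be chosen so that this common object converges both to $V^\infty$ (the blow-down of the entire limit $u_\infty$) and to $C_{p_0}(V)$. Condition~(1) requires $\lambda_i:=r_i/\eps_i$ to grow slowly enough that the $C^\infty_{\mathrm{loc}}$ convergence $\tilde u_i\to u_\infty$ persists on $B_{\lambda_i R}$; condition~(2) requires $r_i$ to shrink slowly enough that the weak varifold convergence $V_{\eps_i}[u_i]\to V$ survives the rescaling by $r_i^{-1}$. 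When the index of $u_i$ concentrates at an intermediate scale $s_i$ with $\eps_i\ll s_i\ll 1$, these pull in opposite directions and no such $r_i$ exists. Already for $I=0$: if $u_i$ carries $k$ nearly parallel transition layers separated by distances $s_i$ with $\eps_i\ll s_i\ll 1$, then any $\eps_i$-scale blow-up at a fixed $p_0$ on the limit line yields $u_\infty$ equal to a constant or a single heteroclinic, so $V^\infty$ is $0$ or one multiplicity-one line, whereas $C_{p_0}(V)$ is a multiplicity-$k$ line. This also shows your multiplicity-one conclusion is false; the paper's statement correctly allows repeated lines.

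The paper closes this multi-scale gap by induction on $I$ combined with the Wang--Wei pointwise curvature estimates (Proposition~\ref{prop:WW.est}), not by a diagonal varifold argument. One locates the index-concentration scale $r_i$ via Claim~\ref{clai:induct.SG.lim.thm.limr} and the minimizing center $p_i$, then splits into cases. If $r_i\lesssim\eps_i$ (case~(B)) the $\eps_i$-blow-up does capture the full tangent cone, and Claim~\ref{clai:induct.SG.lim.thm.B} uses Wang--Wei on the stable annulus $B_1\setminus B_{R\eps_i}(p_i)$ to \emph{integrate} the curvature bound and match the ends of the entire solution with the directions $v_j$ of $\VarTan(V,0)$; Theorem~\ref{theo:SG.cone} then gives the contradiction. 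If $r_i\gg\eps_i$ (case~(A)), one rescales at $r_i$ instead, uses the analogous integration (Claim~\ref{clai:induct.SG.lim.thm.A}) to transport the tangent directions across scales, and observes that the rescaled problem has index $\leq I-1$ on a definite ball, reducing to the inductive hypothesis. The curvature integration across the stable annulus is exactly the mechanism your diagonal extraction is missing.
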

\begin{proof}
We induct on $I$. If $I=0$ the result easily follows from the curvature estimates for stable solutions in Proposition \ref{prop:WW.est} (in this case, all of the lines $\ell_i$ are parallel). 

Assuming the assertion holds for the index $\leq I-1$, we consider 
\[
\{(R_i, g_i, u_i, \eps_i)\}_{i=1}^\infty
\]
as in the statement of the proposition (where we assume index $\leq I$). We can apply the Hutchinson--Tonegawa compactness theorem (Proposition \ref{prop:HT.theory}) and pass to a subsequence to find a stationary integral $1$-varifold 
\[ V := \lim_i V_{\eps_i}[u_i] \in \cI \cV_1(\RR^2). \]
By \cite[\S 3]{AllardAlmgren:1varifold}, $V$ is a geodesic net and for any $x \in \supp V$ there is a unique tangent cone. Assume, for the sake of contradiction, that $V$ is not the varifold associated to the union of lines. There must then be a singular point on $V$, which we may assume to be $0 \in \sing V$ for simplicity, so that
\begin{equation} \label{eq:induct.SG.lim.thm.vartan}
	\VarTan(V, 0) = \sum_{j=1}^{2m} \bv([0,\infty)v_j,\mathbf{1}_{[0,\infty)v_j})
\end{equation}
and unit vectors $v_1,\dots,v_{2m}\in\RR^2$ with repetitions allowed\footnote{ The even parity is a consequence of Proposition \ref{prop:HT.theory}} so that
\begin{equation} \label{eq:induct.SG.lim.thm.nolines}
	 -v_j \not \in \{ v_1, \ldots, v_{2m} \} \text{ for some } j \in \{ 1, \ldots, 2m \}.
\end{equation}
By a dilation, we can assume that
\begin{equation} \label{eq:induct.SG.lim.thm.nosing}
	\sing V \cap B_2(0) = \{0\}.	
\end{equation}

\begin{claim} \label{clai:induct.SG.lim.thm.limr}
There is $\rho \in (0,2)$ so that 
\[
\limsup_{r\to0}\limsup_{i\to\infty} \Index_{\eps_i}(u_i; B_\rho(0)\setminus \bar B_r(0)) = 0
\]
\end{claim}
\begin{proof}[Proof of claim]
If the claim failed, there would exist a map $\rho \mapsto r(\rho)$ with $r(\rho) < \rho$ and 
\[
\limsup_{i\to\infty} \Index_{\eps_i}(u_i; B_\rho(0)\setminus \bar B_{r(\rho)}(0)) \geq 1
\]
for all $\rho \in (0,2)$. Set $\rho_1 = 1$ and inductively define $\rho_k = r(\rho_{k-1})/2$. We can pass to a subsequence of $u_i$ (not relabeled) so that 
\[
\Index_{\eps_i}(u_i; B_{\rho_k}(0)\setminus \bar B_{r(\rho_k)}(0)) \geq 1
\]
for all $i\in\NN^*$ and $k\in \{1,\dots,I+1\}$. Since $\rho_k < r(\rho_{k-1})/2$, we have found $I+1$ disjoint unstable regions for $u_i$, contradicting the index bound. 
\end{proof}
Dilating again, we can assume that $\rho =2$, i.e., 
\[
\limsup_{r\to0}\limsup_{i\to\infty} \Index_{\eps_i}(u_i; B_2(0)\setminus \bar B_r(0)) = 0. 
\]
For $q \in \bar B_1$, define
\begin{equation} \label{eq:induct.SG.lim.thm.r}
	r_i(q) : = \inf\{r>0 : \Index_{\eps_i}(u_i;B_2(0)\setminus \bar B_r(q)) = 0\}.
\end{equation}
Set $r_i : = \inf_{q\in \bar B_1} r_i(q)$. Note that, by Claim \ref{clai:induct.SG.lim.thm.limr},
\begin{equation} \label{eq:induct.SG.lim.thm.limr}
	\lim_i r_i = 0.
\end{equation}
\begin{claim}
There are $p_i \in \bar B_1(0)$ with $r_i(p_i) = r_i$ and $\lim_i p_i = 0$.
\end{claim}
\begin{proof}[Proof of claim]
Choose $\{p_{i,\ell}\}_{\ell=1}^\infty\subset \bar B_1$ with $\lim_\ell r_i(p_{i,\ell}) = r_i$. Set 
\[ p_i := \lim_\ell p_{i,\ell}. \]
Let $\delta > 0$ be arbitrary. For all $\ell$ large enough so that $|p_{i,\ell}-p_i|<\delta$, we have
\[ B_2(0)\setminus \bar B_{r_i(p_{i,\ell})+2\delta}(p_i) \subset B_2(0)\setminus \bar B_{r_i(p_{i,\ell})+\delta}(p_{i,\ell}). \]
This implies that $r_i(p_i) \leq \lim_\ell r_i(p_{i,\ell}) + 2\delta= r_i+2\delta$. Since $\delta>0$ was arbitrary, $r_i(p_i) \leq r_i$. Obviously $r_i(p_i)\geq r_i$, so $r_i(p_i) = r_i$.

Finally, if $\lim_i p_i = p\not = 0$, then by \eqref{eq:induct.SG.lim.thm.limr}:
\[ \Index_{\eps_i}(u_i;B_{|p|/2}(0)) = 0 \text{ for all } i \text{ sufficiently large}. \]
This would violate $0\in\sing \tilde V$ by Proposition \ref{prop:WW.est}.\footnote{ The curvature estimates of \cite{Tonegawa05} would suffice here.} So, $\lim_i p_i = 0$.
\end{proof}

Pass to a subsequence so that either:
\begin{enumerate}
	\item[(A)] $\lim_i r_i^{-1} \eps_i = 0$, or
	\item[(B)] $\lim_i r_i^{-1} \eps_i \in (0,\infty]$.
\end{enumerate}

We begin with case (A).

\begin{claim} \label{clai:induct.SG.lim.thm.A}
	For $i$ sufficiently large, 
	\[ u_i^{-1}(0)\cap (\bar B_{1}(0)\setminus  B_{2r_i}(p_i)) = \cup_{j=1}^{2m} \beta_{i,j} \]
	for pairwise-disjoint curves $\{ \beta_{i,j} \}_{j=1}^{2m}$ that are properly embedded in $\bar B_1(0) \setminus B_{2r_i}(p_i)$ that additionally satisfy
	\begin{equation} \label{eq:induct.SG.lim.thm.A}
		\lim_{i \to \infty} \max_{s \in [0, T_{i,j}]} |\beta'_{i,j}(s) - v_j| = 0
	\end{equation}
	when parametrized by unit speed with $\beta_{i,j}(0) \in \partial B_{2r_i}(p_i)$, $\beta_{i,j}(T_{i,j}) \in \partial B_1(0)$.
\end{claim}
\begin{proof}[Proof of claim]
By rescaling the curvature estimates in Proposition \ref{prop:WW.est} (cf.\ \cite[Theorem 3.8]{WangWei} and \cite[(4.8)]{Mantoulidis}), for $i$ sufficiently large and all $y_i \in u_i^{-1}(0)\cap (\bar B_{1}(0)\setminus B_{2r_i}(p_i))$ we have the curvature estimate
\begin{equation} \label{eq:induct.SG.lim.thm.A.curv}
	|\cA_{u_i}|(y_i) \leq C \eps_i^{\theta} D_i(y_i)^{-1-\theta}
\end{equation}
where 
\begin{equation} \label{eq:induct.SG.lim.thm.A.dist}
	D_i(y_i) := \dist_{g_i}(y_i, \bar B_{r_i}(p_i)).
\end{equation}
Indeed, we have
\[
\Index_{\eps_i}(u_i,B_{D_i(y_i)}(y_i)) = 0
\]
and rescaling by $D_i(y_i)$ around $y_i$ yields a stable solution $\tilde u_i$ on $(B_1(0),\tilde g_i)$ to $\tilde\eps_i^2\Delta_{\tilde g_i}\tilde u_i = W'(\tilde u_i)$, where 
\[
\tilde\eps_i := D_i(y_i)^{-1} \eps_i \leq r_i^{-1} \eps_i
\]
and $\tilde g_i$ limits to the flat metric in $C^\infty_\textrm{loc}(B_1(0))$. Thus, our assumption (A) guarantees that $\tilde\eps_i \leq \eps_0$ (with $\eps_0$ as defined in Proposition \ref{prop:WW.est}) for large $i$, so
\[
 \tilde \eps_i |\nabla \tilde u_i| \geq C^{-1}, \qquad |\cA_{\tilde u_i}|(0) \leq C \tilde \eps_i^{\theta}
\]
Rescaling back yields \eqref{eq:induct.SG.lim.thm.A.curv}.

We can now  integrate \eqref{eq:induct.SG.lim.thm.A.dist} as in \cite[(4.11)]{Mantoulidis} to prove the claim. Indeed, define $s_i$ to be the infimum of $s \in [2r_i,1/2]$ so that for all $s' \in [s,1/2]$, $u_i^{-1}(0)$ intersects $\partial B_{s'}(p_i)$ transversely in $2m$ points and so that if $w$ is a unit tangent vector to $u^{-1}(0)$ at some $x \in u^{-1}(0)\cap \partial B_{s'}(p_i)$ then $|w \cdot x| \geq 3s'/4$. By the curvature estimates (see Remark \ref{rema:gen.sff}), we see that $\lim_i s_i =0$. Furthermore, we can find pairwise disjoint embedded curves 
\[
\{\beta_{i,j} : [0,T_{i,j}] \to (\bar B_1(0)\setminus B_{s_j}(p_j) :  j \in \{1,\dots,2m\}\}
\]
parametrized by unit speed so that $\beta_{i,j}(0) \in \partial B_{s_i}(p_i)$, $\beta_{i,j}(T_{i,j})\in\partial B_1(0)$, and
\[
u_i^{-1}(0) \cap (\bar B_1(0)\setminus B_{s_j}(p_j)) = \cup_{j=1}^{2m} \beta_{i,j}([0,T_{i,j}]). 
\]
Recalling \eqref{eq:induct.SG.lim.thm.vartan}, \eqref{eq:induct.SG.lim.thm.nosing}, we can assume that the ordering of the curves is fixed so that $\lim_i \beta_i(T_{i,j}) = v_j$ for all $j \in \{ 1, \ldots, 2m \}$ and thus
\begin{equation} \label{eq:induct.SG.lim.thm.A.tang}
	\lim_i \beta_{i,j}'(T_{i,j}) = v_j \text{ for all } j \in \{ 1, \ldots, 2m \}.
\end{equation}
The transversality assumption and \eqref{eq:induct.SG.lim.thm.A.dist} imply
\[
	D_i(\beta_{i,j}(t)) \geq s_i - r_i + ct \text{ for all } t \in [0, T_{i,j}]
\]
with $c>0$ independent of $i,j$. Integrating the curvature estimate, we find (allowing $C$ to change from line to line, but not depend on $i,j$)  that for all $t_1$, $t_2 \in [0, T_{i,j}]$:
\begin{align*}
	|\beta_{i,j}'(t_2) - \beta_{i,j}'(t_1)|
		& \leq C \eps_i^{\theta} \int_0^{T_{i,j}} D_i(\beta_{i,j}(t))^{-1-\theta} dt\\
		& \leq C \eps_i^{\theta} \int_0^{T_{i,j}} (s_i - r_i + ct)^{-1-\theta} dt\\
		& \leq C \eps_i^{\theta} (s_i-r_i)^{-\theta} \leq C (\eps_i / r_i)^{\theta}
\end{align*}
since $s_i\geq 2r_i$. Because we have assumed condition (A), \eqref{eq:induct.SG.lim.thm.A.tang} implies
\[
\lim_i \max_{t \in [0, T_{i,j}]} |\beta'_{i,j}(t) - v_j| = 0.  
\]
This proves that, for $i$ sufficiently large, $s_i = 2r_i$ and thus the curves $\beta_{i,j}$ we already constructed have the asserted properties. 
\end{proof}

Define 
\[
\{(\tilde R_i,\tilde g_i,\tilde u_i,\tilde \eps_i)\}_{i=1}^\infty \subset (0,\infty)\times \met(B_{\tilde R_i}(0)) \times C^\infty(B_{\tilde R_i}(0)) \times (0,\infty)
\]
by dilating by $r_i^{-1}$ around $p_i$ (so $\tilde \eps_i = r_i \eps_i$ and so on). We have 
\[
\tilde \eps_i^2 \Delta_{\tilde g_i} \tilde u_i = W'(\tilde u_i). 
\]
By Lemma \ref{lemm:monotonicity.AC.energy}, it holds that 
\[
\limsup_{i\to\infty} (E_{\tilde \eps_i}\restr B_R(0))[\tilde u_i] \leq CR 
\]
for all $R>0$. Thus, by Proposition \ref{prop:HT.theory}, we can pass to a subsequence and find a stationary integral $1$-varifold $\tilde V \in \cI\cV_1(\RR^2)$ so that $V_{\tilde\eps_i}[\tilde u_i]$ converges to $\tilde V$. Note that
\begin{equation} \label{eq:induct.SG.lim.thm.vtilde.sing}
	\sing \tilde V\subset \bar B_1(0)
\end{equation}
thanks to the choice of rescaling, the definition of $r_i$ in \eqref{eq:induct.SG.lim.thm.r}, and Proposition \ref{prop:WW.est}.\footnote{The curvature estimates of \cite{Tonegawa05} would suffice here.}

By the scaling-invariance of Claim \ref{clai:induct.SG.lim.thm.A}'s \eqref{eq:induct.SG.lim.thm.A}, the tangent cone at infinity to $\tilde V$ is 
\[
\VarTan(\tilde V, \infty) = \sum_{j=1}^{2m} \bv([0,\infty)v_j,\mathbf{1}_{[0,\infty)v_j})
\]
where the right hand side is as in \eqref{eq:induct.SG.lim.thm.vartan}, \eqref{eq:induct.SG.lim.thm.nolines}. In particular, $\tilde V$ is also not a varifold associated to a union of lines and therefore the same is true for at least one of its tangent cones at a singular point. There are two possibilities:
\begin{enumerate}
	\item[(A$_1$)] $\# \sing \tilde V = 1$, or
	\item[(A$_2$)] $\# \sing \tilde V \geq 2$.
\end{enumerate}

We begin with (A$_1$). Write $\sing \tilde V = \{z\}$ and note that $z \in \bar B_1(0)$ by \eqref{eq:induct.SG.lim.thm.vtilde.sing}. By definition of $r_i$ in \eqref{eq:induct.SG.lim.thm.r}, and undoing the rescaling procedure to get a point $z_i$ (out of $z$) on the original scale, we find that
\[
\limsup_{i\to\infty} \Index_{\eps_i}(u_i; B_\rho(0) \setminus \bar B_{r_i/2}(z_i)) \geq 1.
\]
Therefore, after passing to a subsequence and returning to the current scale,
\begin{equation}\label{eq:SG.lim.thm.induct.drop.index.A1}
\Index_{\tilde \eps_i}(\tilde u_i;B_{1/2}(z)) \leq I - 1.
\end{equation}
We now choose $\lambda_i\to\infty$ sufficiently slowly so that defining 
\[
\{(\hat R_i,\hat g_i,\hat u_i,\hat \eps_i)\}_{i=1}^\infty \subset (0,\infty)\times \met(B_{\hat R_i}(0)) \times C^\infty(B_{\hat R_i}(0)) \times (0,\infty)
\]
by dilating $(\tilde R_i,\tilde g_i,\tilde u_i,\tilde \eps_i)$ by $\lambda_i$ around $z$, it holds that $V_{\hat \eps_i}[\hat u_i]$ converges to $\VarTan(\tilde V, z)$. (This uses the monotonicity formula, Lemma \ref{lemm:monotonicity.AC.energy}, in the same way as above.) Since $z$ was the only singular point, $\VarTan(\tilde V, z)$ is not a union of lines. Thanks to \eqref{eq:SG.lim.thm.induct.drop.index.A1}, 
\[
\{(\hat R_i,\hat g_i,\hat u_i,\hat \eps_i)\}_{i=1}^\infty 
\]
satisfies the hypothesis of the proposition but with index $\leq I-1$ instead of $I$. By the inductive hypothesis, this is a contradiction. 

We now consider case (A$_2$), i.e., there are $z_1\neq z_2 \in \sing\tilde V$. As before, we can assume that $\VarTan(\tilde V, z_1)$ is not the union of lines. For $\delta<\tfrac 12 |z_{1}-z_{2}|$, the curvature estimates\footnote{The curvature estimates of \cite{Tonegawa05} would suffice here} imply that $\Index_{\tilde \eps_i}(\tilde u_i;B_{\delta}(z_{2})) \geq 1$ for $i$ sufficiently large, implying that
\[
\Index_{\tilde \eps_{i}}(\tilde u_{i};B_{\delta}(z_{1})) \leq I-1
\]
for $i$ large. As in the previous case, we can dilate around $z_{1}$ by $\lambda_{i}\to\infty$ sufficiently slowly to obtain 
\[
\{(\hat R_i,\hat g_i,\hat u_i,\hat \eps_i)\}_{i=1}^\infty 
\]
satisfying the hypothesis of the proposition, but with index $\leq I-1$, a contradiction as before. 

It remains to consider case (B), namely $\lim_i r_i^{-1}\eps_i \in (0,\infty]$. As in case (A), we begin by showing that we can parametrize the zero set in an appropriate region. 
\begin{claim}[cf. Claim \ref{clai:induct.SG.lim.thm.A}] \label{clai:induct.SG.lim.thm.B}
	Fix $\mu>0$. There is $R = R(\mu)>0$ so that for $i$ sufficiently large,
	\[
	u_i^{-1}(0)\cap (\bar B_{1}(0)\setminus  B_{R\eps_i}(p_i)) = \cup_{j=1}^{2m} \beta_{i,j}
	\]
	for of $2m$ pairwise-disjoint curves $\beta_{i,1},\dots,\beta_{i,2m}$ properly embedded in $\bar B_1(0) \setminus B_{R\eps_i}(p_i)$ that additionally satisfy, for each $j = 1, \ldots, 2m$,
	\begin{equation} \label{eq:induct.SG.lim.thm.B}
		\limsup_{i\to\infty} \max_{s \in [0,T_{i,j}]} |\beta'_{i,j}(s) - v_j| \leq \mu.
	\end{equation}
	when parametrized by unit speed with $\beta_{i,j}(0) \in \partial B_{R\eps_i}(p_i)$, $\beta_{i,j}(T_{i,j}) \in \partial B_1(0)$.
\end{claim}
\begin{proof}
As in Case (A), Proposition \ref{prop:WW.est} implies, for $i$ sufficiently large and all $y_i \in u_i^{-1}(0)\cap (B_{1}(0)\setminus \bar B_{R\eps_i}(p_i))$ that $\nabla u_i(y_i) \neq 0$ and
\begin{equation} \label{eq:induct.SG.lim.thm.B.curv}
	|\cA_{u_i}|(y_i) \leq C  \eps_i^\theta D_i(y_i)^{-1-\theta}
\end{equation}
where
\[ D_i(y_i) := \dist_{g_i}(y_i, \bar B_{R \eps_i/2}(p_i)).  \]
Indeed, condition (B) implies that  $R \eps_i/2 \geq r_i$ as long as $R$ and $i$ are sufficiently large, so rescaling around $y_i$ with $D_i(y_i)$ gives a new phase transition with 
\[
\tilde\eps_i = D_i(y_i)^{-1} \eps_i \leq (R \eps_i/2)^{-1} \eps_i,
\]
so by taking $R$ even larger if necessary, Proposition \ref{prop:WW.est} is applicable and yields \eqref{eq:induct.SG.lim.thm.B.curv} as before.

We can argue as in case (A) to prove the remaining assertions except, at this scale, we only get
\[
|\beta_{i,j}'(t) - v_j| \leq C \eps_i^{\theta}(R \eps_i/2)^{-\theta} = C R^{-\theta} \text{ for all } t \in [0, T_{i,j}].
\]
This can be made arbitrarily small by choosing $R$ large, so the remaining assertions in the claim follow easily. 
\end{proof}
Define $\tilde u_i$ by rescaling by $\eps_i^{-1}$ around $p_i$. Passing to a subsequence, $\tilde u_i$ converges to an entire solution $\tilde u$ to sine-Gordon on $\RR^2$ which is regular at infinity.  As before, the tangent cone at infinity to $\tilde u$ is 
\[
\VarTan(\tilde V, \infty) = \sum_{j=1}^{2m} \bv([0,\infty)v_j,\mathbf{1}_{[0,\infty)v_j})
\]
where the right hand side is as in \eqref{eq:induct.SG.lim.thm.vartan}, \eqref{eq:induct.SG.lim.thm.nolines}. In particular, the tangent cone of $\tilde u$ at infinity is not a union of lines. This contradicts Theorem \ref{theo:SG.cone}, showing that (B) cannot occur either. This completes the proof. 
\end{proof}

\begin{proof}[Proof of Theorem \ref{theo:SG-lim}]
	Consider $(M,g)$ a closed Riemannian $2$-manifold. Assume $\{(u_i,\eps_i)\}_{i=1}^\infty \subset C^\infty(M)\times (0,\infty)$ solve \eqref{eq:ac.pde} with the sine-Gordon potential \eqref{eq:SG-potential} and
\[
\Index_{\eps_i}(u_i) + E_{\eps_i}[u_i] \leq \Lambda. 
\]
By Proposition \ref{prop:HT.theory}, we can pass to a subsequence so that $V_{\eps_i}[u_i]$ converges to a stationary integral $1$-varifold $V$. By \cite[\S 3]{AllardAlmgren:1varifold}, $V$ is a geodesic net. We claim that there are primitive closed geodesics $\sigma_1,\dots,\sigma_N$ so that 
\[
V = \sum_{j=1}^N \bv(\sigma_j,\mathbf{1}_{\sigma_j})
\]
It suffices to prove that for $p \in \supp V$, $\VarTan(V, p)$ is the varifold associated to a union of lines. To this end, for $p\in\supp V$, choose $\lambda_i\to\infty$ sufficiently slowly so that if we dilate $(u_i,\eps_i)$ around $p_i$ by $\lambda_i$ to $(\tilde u_i,\tilde \eps_i)$, it holds that $\lim_i\tilde \eps_i = 0$ and $V_{\tilde \eps_i}[\tilde u_i]$ converges to $\VarTan(V, p)$. Applying Proposition \ref{prop:induct.SG.lim.thm}, we find that $\VarTan(V, p)$ is the varifold associated to a union of lines. This completes the proof. 
\end{proof}

\subsection{Proof of the Liu--Wei tangent cone theorem}\label{subsec:Liu.Wei.proof}

Suppose that $\tilde u$ satisfies \eqref{eq:ac.pde} on $\RR^2$ with the sine-Gordon potential \eqref{eq:SG-potential}, i.e.,
\begin{equation} \label{eq:sg.pde.pretransform}
	\Delta \tilde u = - \pi^{-1}\sin \pi \tilde u.
\end{equation}
It is convenient (in this section only) to adjust the equation slightly so that certain formulas become simpler. Set
\begin{equation} \label{eq:SG.transformation}
	u = \pi (1 + \tilde u)
\end{equation}
so that 
\begin{equation}\label{eq:simplified.SG}
\Delta u = \sin u
\end{equation}
and $|\tilde u|< 1 \iff u \in (0,2\pi)$. We say that $u$ is an entire solution to \eqref{eq:simplified.SG} that is regular at infinity if $\tilde u$ is and we will associate various quantities to $u$ that we previously associated to $\tilde u$, e.g., the asymptotic directions. 

\begin{rema} \label{rema:sg.potential}
	The double-well potential that gives rise to \eqref{eq:simplified.SG} is:
	\begin{equation} \label{eq:sg.potential.simplified}
		W(t) := 1 - \cos t.
	\end{equation}
	Due to the translation we made, the wells are no longer $\pm 1$ but rather $0$ and $2\pi$. In particular, $W(\pi^{-1} t - 1)$ satisfies Definition \ref{defi:ac.potential.general}, but not $W(t)$.
\end{rema}

The goal of this section is to prove Theorem \ref{theo:SG.cone} closely following \cite{LiuWei}, with some changes allowed by our simpler setting. Besides \cite{LiuWei}, a non-comprehensive list of references concerning the inverse scattering method for (elliptic) sine-Gordon include \cite{FT:solitons,Hirota:direct,GL:bvSG,NS:bdStSG,P:SGquarter,PP:elliptic-half,FP:DNmap,FLP:SGsemistrip}.

Before turning to the proof, we need to discuss some preliminary results about the asymptotic behavior of solutions to \eqref{eq:simplified.SG}. 

\subsubsection{The Heteroclinic solution} An important example of a solution to \eqref{eq:simplified.SG} is the rotated heteroclinic $\HH(px+qy+\eta)$ for $|p|^2+|q|^2=1$, where
\begin{equation} \label{eq:sg.heteroclinic}
	\HH(t) = 4 \arctan (e^t)
\end{equation}
is the standard heteroclinic solution \eqref{eq:ac.heteroclinic} with the sine-Gordon potential \eqref{eq:SG-potential} under  the transformation \eqref{eq:SG.transformation}. Recall we will encounter this whenever we fall under case (1) of Proposition \ref{prop:asymp.dir.behavior}.

\subsubsection{The Lax pair} 
In this section, we show that \eqref{eq:simplified.SG} is the compatibility equation for a certain system of ODE's on $\RR^2$. The discussion here closely follows \cite[\S 5]{LiuWei} with some added details. 

Let $\sigma_j \in M(\CC, 2)$ denote the Pauli spin matrices
\[
\sigma_1 = \left( \begin{matrix} 0 & 1 \\ 1 & 0 \end{matrix} \right), \qquad \sigma_2 = \left( \begin{matrix} 0 & -i \\ i & 0 \end{matrix} \right), \qquad \sigma_3 = \left( \begin{matrix} 1 & 0 \\ 0 & -1 \end{matrix} \right). 
\]
Note that we have the usual spin formulas
\[ \sigma_1^2 = \sigma_2^2 = \sigma_3^2 = \Id, \]
\[ \sigma_1\sigma_2 = i\sigma_3 = - \sigma_2\sigma_1, \; \sigma_2\sigma_3 = i \sigma_1 = -\sigma_3\sigma_2, \; \sigma_3\sigma_1 = i\sigma_2 = - \sigma_1\sigma_3, \]
and $\su(2)$ is generated by $i\sigma_1,i\sigma_2,i\sigma_3$. It will be convenient to denote:
\begin{equation} \label{eq:SG.KL}
	K(\lambda) := \lambda - \lambda^{-1}, \; J(\lambda) := \lambda + \lambda^{-1}
\end{equation}
For $u \in C^{\infty}_{\textrm{loc}}(\RR^{2})$, $\lambda \in \CC \setminus \{0\}$, define $A,B \in C^{\infty}_{\textrm{loc}}(\RR^{2};M(\CC,2))$ by
\begin{align}
	A 
		& = \frac{i}{4} \left( \left( \lambda - \frac{\cos u}{\lambda} \right) \sigma_3 - (\partial_xu - i \partial_y u) \sigma_2 - \frac{\sin u}{\lambda} \sigma_1 \right) \nonumber \\
		&  = \frac{i}{4} \left( (\lambda^{-1} W(u) + K(\lambda)) \sigma_3 - (\partial_x u - i \partial_y u) \sigma_2 - \lambda^{-1} W'(u) \sigma_1 \right), \label{eq:lax.pair.A} \\
	B
		& = \frac{1}{4} \left( -\left( \lambda + \frac{\cos u}{\lambda} \right) \sigma_3 + (\partial_xu - i \partial_y u) \sigma_2 - \frac{\sin u}{\lambda} \sigma_1 \right) \nonumber \\
		&  = \frac{1}{4} \left( (\lambda^{-1} W(u) - J(\lambda)) \sigma_3 + (\partial_x u - i \partial_y u) \sigma_2 - \lambda^{-1} W'(u) \sigma_1 \right), \label{eq:lax.pair.B}
\end{align}
where $W$ is as in \eqref{eq:sg.potential.simplified}. Given $A,B$ we consider the system of ODE's 
\begin{equation}\label{eq:lax.pair.A.B}
\left\{
\begin{gathered}
	\partial_{x} \Phi  = A \Phi \\ 
	\partial_{y} \Phi  = B \Phi
\end{gathered}
\right\}
\end{equation}
for $\Phi \in C_\textrm{loc}^{\infty}(\RR^{2};\CC^{2})$. A crucial observation, as we now explain, is that these equations encode \eqref{eq:simplified.SG}. The compatibility of the two equations for $\Phi$ (i.e., consistency with $\partial^2_{x,y} \Phi = \partial^2_{y,x} \Phi$) requires that
\[
(\partial_y A) \Phi + AB\Phi = (\partial_x B) \Phi + B A \Phi.
\]
As such, for the equations for $\Phi$ to be compatible, it must hold that
\[
\partial_y A - \partial_x B = [B,A]. 
\]
The fundamental link with \eqref{eq:simplified.SG} is contained in the following straightforward calculation.

\begin{lemm}\label{lemm:Lax-pair-compat}
	For $\lambda \in \CC\setminus\{0\}$ and $A,B$ as in \eqref{eq:lax.pair.A}, \eqref{eq:lax.pair.B},
	\[ \partial_y A - \partial_x B = [B,A] \iff \Delta u = \sin u. \]
\end{lemm}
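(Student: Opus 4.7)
The plan is to verify the identity by direct computation, decomposing both sides along the Pauli basis and matching coefficients. Writing $A = A_1\sigma_1 + A_2\sigma_2 + A_3\sigma_3$ and $B = \sum_j B_j\sigma_j$ with scalar coefficients $A_j, B_j$ read off from \eqref{eq:lax.pair.A}--\eqref{eq:lax.pair.B}, the equation $\partial_y A - \partial_x B = [B,A]$ decouples into three scalar equations (one per $\sigma_j$). The goal is to show that exactly one of these three equations is nontrivial and encodes $\Delta u = \sin u$, while the other two hold identically in $\lambda$ and $u$.

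First, I would differentiate \eqref{eq:lax.pair.A}--\eqref{eq:lax.pair.B} term by term. The only second-order derivatives of $u$ arise in the $\sigma_2$ coefficients, via $\partial_y(u_x - iu_y)$ in $\partial_y A$ and $\partial_x(u_x - iu_y)$ in $\partial_x B$; a direct combination collapses the $\sigma_2$-component of $\partial_y A - \partial_x B$ to $-\tfrac14(u_{xx}+u_{yy}) = -\tfrac14\Delta u$. For the $\sigma_1$ and $\sigma_3$ components only first-order $u$-derivatives survive, and one obtains $\tfrac14\lambda^{-1}(u_x - iu_y)\cos u$ and $-\tfrac14\lambda^{-1}(u_x - iu_y)\sin u$, respectively.

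Next, I would expand $[B,A]=BA-AB$ using $[\sigma_j,\sigma_k]=2i\,\epsilon_{jk\ell}\sigma_\ell$, which yields
\[
[B,A] = 2i\bigl[(B_2 A_3 - B_3 A_2)\sigma_1 + (B_3 A_1 - B_1 A_3)\sigma_2 + (B_1 A_2 - B_2 A_1)\sigma_3\bigr].
\]
Plugging in the explicit $A_j,B_j$, the $\sigma_2$ coefficient simplifies (after using $K(\lambda)+J(\lambda) = 2\lambda$ to cancel a $\lambda^{-1}$ factor) to $-\tfrac14 W'(u) = -\tfrac14\sin u$. For the $\sigma_1$ and $\sigma_3$ coefficients, the identity $J(\lambda)-K(\lambda)=2\lambda^{-1}$, together with the cancellation of the common $\lambda^{-1}W(u)$ pieces in $A_3$ and $B_3$, reproduces exactly the expressions $\tfrac14\lambda^{-1}(u_x-iu_y)\cos u$ and $-\tfrac14\lambda^{-1}(u_x-iu_y)\sin u$ from the previous paragraph. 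Thus the $\sigma_1$ and $\sigma_3$ equations are tautologies, and the $\sigma_2$ equation reads $-\tfrac14\Delta u = -\tfrac14\sin u$, proving both directions of the equivalence.

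The calculation is pure Pauli-matrix bookkeeping; there is no real obstacle. The only point worth underlining is what one might call the integrability miracle: the $\lambda$-weights in the ansatz \eqref{eq:lax.pair.A}--\eqref{eq:lax.pair.B} are tuned precisely so that $K(\lambda)+J(\lambda)=2\lambda$ and $J(\lambda)-K(\lambda)=2\lambda^{-1}$ eliminate \emph{all} $\lambda$-dependence from the Pauli coefficients of $\partial_y A-\partial_x B - [B,A]$, leaving a single scalar PDE and no further constraints. This is what makes $(A,B)$ a genuine Lax pair rather than an ordinary one-parameter family, and is what will be exploited in the ensuing inverse-scattering arguments to establish Theorem \ref{theo:SG.cone}.
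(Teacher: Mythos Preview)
Your proof is correct; the paper does not provide an explicit proof of this lemma, noting only that the result follows from ``a straightforward calculation,'' and your Pauli-basis decomposition is exactly such a calculation carried out in full.
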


It is useful to reformulate the Lax pair equations in a more invariant manner as follows. For $A,B\in C^\infty_{\textrm{loc}}(\RR^2;M(\CC,2))$ we can define a connection $\nabla$ on the trivial $2$-dimensional complex vector bundle $E\to \RR^2$ by
\begin{equation}\label{eq:lax.connection}
 \nabla \Phi = d\Phi - (A dx + Bdy) \wedge \Phi
\end{equation}
for $\Phi \in \Gamma(E) = C^\infty_\textrm{loc}(\RR^2;\CC^2)$. Note that $\nabla \Phi = 0$ is equivalent to \eqref{eq:lax.pair.A.B}. 
\begin{rema}
It is interesting to observe that $\nabla$ is a flat connection if and only if the compatibility conditions are satisfied, but we will not need this fact here. 
\end{rema}
\begin{defi}
	We say that $\Phi \in \Gamma(E)$ is \emph{parallel with respect to (the connection defined by data) $(u,\lambda) \in C^\infty_\textrm{loc}(\RR^2)\times (\CC\setminus \{0\})$} if $\nabla \Phi \equiv 0$ with respect to the connection $\nabla$ defined via  \eqref{eq:lax.connection} and $A$, $B$ as in \eqref{eq:lax.pair.A}, \eqref{eq:lax.pair.B}.
\end{defi}

We will use the following rotation lemma several times in the sequel. 
\begin{lemm}[Rotating parallel sections] \label{lemm:rotate.connection.lax.pair}
	Suppose that $\Phi \in \Gamma(E)$ is parallel with respect to $(u,\lambda) \in C^\infty_{\textrm{loc}}(\RR^2) \times (\CC\setminus\{0\})$. For $\theta \in \RR$ define $\phi : \CC\to\CC$ by $\phi(z) = e^{i\theta}z$. Then, $\phi^* \Phi$ is parallel with respect to $(\phi^*u,e^{i\theta}\lambda)$. 
\end{lemm}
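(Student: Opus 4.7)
The strategy is a direct verification. Writing $\omega := A\,dx + B\,dy$ for the connection 1-form, parallelism of $\Phi$ with respect to $(u,\lambda)$ means $d\Phi = \omega \wedge \Phi$. Since $d$ commutes with pullback, we get $d(\phi^*\Phi) = (\phi^*\omega)\wedge \phi^*\Phi$, so the claim reduces to showing that
\[
\phi^*\omega = A'\,dx + B'\,dy,
\]
where $A' := A(\phi^*u, e^{i\theta}\lambda)$ and $B' := B(\phi^*u, e^{i\theta}\lambda)$ are built from the same formulas \eqref{eq:lax.pair.A}, \eqref{eq:lax.pair.B} but with rotated data.

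The plan is to expand both sides and match coefficients on the Pauli-matrix basis $\{\sigma_1,\sigma_2,\sigma_3\}$. Two elementary ingredients do all of the work. First, since $\phi$ is a rotation by $\theta$, one has $\phi^* dx = \cos\theta\, dx - \sin\theta\, dy$ and $\phi^* dy = \sin\theta\, dx + \cos\theta\, dy$, so the claimed identity is equivalent to the pair
\begin{align*}
A' &= \cos\theta\,(A\circ\phi) + \sin\theta\,(B\circ\phi),\\
B' &= -\sin\theta\,(A\circ\phi) + \cos\theta\,(B\circ\phi).
\end{align*}
Second, the Wirtinger derivative transforms by a phase under rotation: the chain rule gives
\[
\partial_x(\phi^*u) - i\,\partial_y(\phi^*u) = e^{i\theta}\,\phi^*(\partial_xu - i\partial_y u),
\]
which is precisely why the rescaling $\lambda \mapsto e^{i\theta}\lambda$ is the correct one to pair with a rotation.

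Given these two facts, I would check each Pauli-matrix component separately. The $\sigma_3$-coefficients match through the identity $i\cos\theta - \sin\theta = ie^{i\theta}$ (producing the $e^{i\theta}\lambda$ in $A'$) together with $i\cos\theta + \sin\theta = ie^{-i\theta}$ (producing the $e^{-i\theta}\lambda^{-1}\cos u$ term), and similarly for $B'$. The $\sigma_1$-coefficients reduce to the same two scalar identities applied to $\lambda^{-1}\sin u$. The $\sigma_2$-coefficients, which carry the $(\partial_x - i\partial_y)u$ factor, collect one factor of $e^{-i\theta}$ from the Wirtinger pullback and one from the trigonometric rotation, combining to the expected coefficient in $A', B'$. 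I do not expect any genuine obstacle here; the computation uses only the explicit form of $A,B$ and the equivariance of the Wirtinger derivative under rotation. Notably, the sine-Gordon equation \eqref{eq:simplified.SG} itself plays no role in this step, which is as it should be since we are only rotating a parallel section, not using the compatibility condition from Lemma \ref{lemm:Lax-pair-compat}.
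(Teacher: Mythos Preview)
Your proof is correct and follows the same strategy as the paper: pull back the connection $1$-form $\omega = A\,dx + B\,dy$ by $\phi$ and verify that the result is the connection $1$-form for the rotated data $(\phi^*u, e^{i\theta}\lambda)$. The only difference is in execution. You work in real coordinates $(x,y)$, which forces you to decompose on the Pauli basis and repeatedly invoke the identities $i\cos\theta \pm \sin\theta = ie^{\pm i\theta}$. The paper instead rewrites $\omega$ in complex coordinates as in \eqref{eq:lax.pair.complex.coord}, where every term is proportional to either $dz$ or $d\bar z$ with a matching power of $\lambda$ or $\lambda^{-1}$; since $\phi^* dz = e^{i\theta}\,dz$ and $\phi^* d\bar z = e^{-i\theta}\,d\bar z$, the substitution $\lambda \mapsto e^{i\theta}\lambda$ is then immediate by inspection, and the $\partial_z u$ term is handled by the same Wirtinger chain rule you identified. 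The complex-coordinate route packages your trigonometric identities into the single relation $\phi^* dz = e^{i\theta} dz$ and is correspondingly shorter, but the content is identical.
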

\begin{proof} 
In terms of complex coordinates $z=x+iy$, $\bar z = x-iy$ on $\RR^2 = \CC$,
\begin{align} \label{eq:lax.pair.complex.coord}
	& A dx + Bdy \\
	&   = \frac i 4 \left( \lambda \sigma_3 dz - \lambda^{-1} \sigma_3 d\bar z + \lambda^{-1} W(u) \sigma_3 d\bar z - 2 (\partial_z u) \sigma_2 dz - \lambda^{-1} W'(u) \sigma_1 d\bar z\right) \nonumber 
\end{align}
where $dz=dx+idy$, $d\bar z=dx-idy$, $\partial_z = \tfrac 12 (\partial_x-i\partial_y)$. Thus, 
\begin{multline*}
	0 = \nabla \Phi = d\Phi - \frac i 4 \Big( \lambda \sigma_3 dz - \lambda^{-1} \sigma_3 d\bar z + \lambda^{-1} W(u) \sigma_3 d\bar z \\
	- 2 (\partial_z u) \sigma_2 dz - \lambda^{-1} W'(u) \sigma_1 d\bar z\Big) \wedge \Phi = 0.
\end{multline*}
Pulling this back by $\phi$ we obtain
\begin{multline*}
	d\phi^*\Phi - \frac i 4 \Big( \lambda \sigma_3 e^{i\theta} dz - \lambda^{-1} \sigma_3 e^{-i\theta} d\bar z + W(\phi^* u) \sigma_3 e^{-i\theta} d\bar z \\
	- 2 (\partial_z (\phi^*u)) \sigma_2 dz - \lambda^{-1} W'(\phi^* u)  \sigma_1 e^{-i\theta} d\bar z\Big) \wedge \phi^* \Phi = 0.
\end{multline*}
This completes the proof. 
\end{proof}

\subsubsection{Parallel sections with respect to the trivial solution} We now consider $u\equiv 0$ (or $u\equiv 2\pi$) and study the equation $\nabla \Phi =0$. This will correspond to the asymptotic behavior of $\Phi$ away from the ends of an entire solution $u$ that is regular at infinity. In this case, we clearly have
\begin{equation} \label{eq:SG.AB.0}
A^0 = \frac i 4 K(\lambda) \sigma_3,  \;  B^0 = - \frac 1 4 J(\lambda) \sigma_3
\end{equation}
for the trivial solution's connection coefficients $A^0$, $B^0$. (The superscript $0$ indicates we've set $u \equiv 0$.) Note, now, that
\begin{equation*} 
	\Phi^0(x, y, \lambda) := \exp((\tfrac i 4(K(\lambda)x-\tfrac 1 4 J(\lambda)y) \sigma_3)
\end{equation*}
i.e.,
\begin{align}
	\Phi^0(x,y,\lambda) 
		& = \begin{pmatrix} e^{\frac i 4 K(\lambda) x - \frac 1 4 J(\lambda)y} & 0 \\ 0 &  e^{-\frac i 4 K(\lambda) x + \frac 1 4 J(\lambda)y} \end{pmatrix} \label{eq:defn.jost.trivial.u} \\ 
		& =: \begin{pmatrix} \vert & \vert \\
 				\Phi^0_{+,1}(x,y,\lambda) & \Phi^0_{-,2}(x,y,\lambda) \\
 				\vert & \vert \end{pmatrix} \nonumber
\end{align}
is the unique matrix valued solution to $\nabla \Phi^0 =0$ (i.e., its columns $\Phi^0_{+,1}$, $\Phi^0_{-,2}$ are parallel with respect to $(0, \lambda)$) normalized so that $\Phi^0(0,0) = \Id$. Note that this choice of solutions satisfies:
\begin{align}
	\lim_{x\to +\infty} e^{\frac{-iK(\lambda)}{4}x} \Phi^0_{+,1}(x,y) 
		& = e^{-\frac{J(\lambda)}{4}y}\left( \begin{matrix} 1 \\ 0 \end{matrix}\right), \label{eq:defn.jost.trivial.u.p1} \\
	\lim_{x\to-\infty} e^{\frac{iK(\lambda)}{4}x} \Phi^0_{-,2}(x,y) 
		& = e^{\frac{J(\lambda)}{4}y}\left( \begin{matrix} 0 \\ 1 \end{matrix}\right), \label{eq:defn.jost.trivial.u.m2}
\end{align}
which is also how the subscripts $_{+,1}$, $_{-,2}$ are meant to be interpreted.

\begin{rema} \label{rema:SG.KL.S1}
	We could restrict to $\lambda \in \SS^1 \subset \CC \setminus \{0\}$ in all that follows, in which case $K$ and $J$ in \eqref{eq:SG.KL} would take the more geometric form
	\begin{equation} \label{eq:SG.KL.S1}
		K(\lambda) = \lambda - \bar \lambda = 2 i \operatorname{Im} \lambda, \; J(\lambda) = \lambda + \bar \lambda = 2 \operatorname{Re} \lambda, \; \lambda \in \SS^1.
	\end{equation}
	For $\lambda = q + ip \in \SS^1$, 
	\begin{equation} \label{eq:defn.jost.trivial.u.S1}
		\Phi^0(x,y,q+ip) = \begin{pmatrix} e^{-(px + qy)/2} & 0 \\ 0 & e^{(px + qy)/2} \end{pmatrix}.
	\end{equation}
	These formulas are geometrically simpler and worth keeping in mind. We will adhere to the Liu--Wei approach of working on $\CC \setminus \{0\} \supset \SS^1$ in order to prevent any confusion in the process of referring to their work. (Liu--Wei need to work on $\CC \setminus \{0\} \supset \SS^1$ for  other results.) 
\end{rema}

\subsubsection{Parallel sections for the heteroclinic solution} 

In the previous section we found solutions to $\nabla \Phi = 0$ for the trivial solution $u\equiv 0$. In this section we instead consider $u(x,y) = \HH(x) = 4\arctan e^x$. Note that the asymptotic directions of $u$ are $v_1=(0,1),v_2=(0,-1)$.

In the sequel it will be important that we have a linearly independent set of solutions to $\nabla \Phi =0$ with respect to the data $(\HH(x),\lambda)$. Use $u(x,y) = \HH(x)$ to define the connection coefficients $A,B$ via \eqref{eq:lax.pair.A}, \eqref{eq:lax.pair.B}, and note that they are related to the trivial solution's connection coefficients in \eqref{eq:SG.AB.0} which we now denote $A^0, B^0$, by:
\begin{equation} \label{eq:SG.AB.H}
	A^{\HH} = A^0 + \frac i 4 \Delta^0_{\HH}, \; B^{\HH} = B^0 + \frac 1 4 \Delta^0_{\HH},
\end{equation}
where
\begin{align} \label{eq:SG.AB.H.Delta}
	\Delta^0_{\HH} 
		& := \lambda^{-1} W(\HH) \sigma_3 - \HH' \sigma_2 - \lambda^{-1} W'(\HH) \sigma_1 \nonumber \\
		& = 2 (\lambda^{-1} (\sech^2 x) \sigma_3 - (\sech x) \sigma_2 + \lambda^{-1} (\sech x) (\tanh x) \sigma_1).
\end{align}
In the above, we used the easily verifiable facts 
\begin{align*}
	\HH'(x) & = 2 (\sech x), \\
	W(\HH(x)) & = \tfrac12 (\HH'(x))^2 = 2 (\sech x)^2, \\
	W'(\HH(x)) & = \HH'' = - 2 (\sech x)(\tanh x),
\end{align*}
that follow from \eqref{eq:sg.heteroclinic} and \eqref{eq:simplified.SG} on $\HH(x)$, $x \in \RR$. 

By examining some of the expressions given in \cite{LiuWei}, we are able to explicitly write down such a set of solutions. It is possible to (partially) justify the reason for positing the given expressions by using the inverse scattering transform (cf.\ \cite[Lemma 5.9, Step 1]{LiuWei}), but here we will simply take the expressions for granted (with no justification as to their origin) and then check that they do indeed define parallel sections.

We thus declare, for $\lambda \neq -i$,
\begin{align} \label{eq:defn.jost.heteroclinic.u}
	\Phi^{\HH} 
		& := \Phi^0 + \frac{i}{\lambda+i} \left( (\tanh x) \sigma_3 - (\sech x) \sigma_1 - \Id \right) \Phi^0 \\
		& =: \begin{pmatrix} \vert & \vert \\ \Phi^{\HH}_{+,1} & \Phi^{\HH}_{-,2} \\ \vert & \vert \end{pmatrix}. \nonumber
\end{align}
where $\Phi^0$ is as in \eqref{eq:defn.jost.trivial.u}. As the notation suggests:

\begin{lemm} \label{lemm:heteroclinic-jost} 
	For $\lambda \neq -i$, the columns of $\Phi^{\HH}$ are parallel with respect to $(\HH(x), \lambda)$ and their Wronskian is $\det \Phi^{\HH}(x, y, \lambda) = \frac{\lambda-i}{\lambda+i}$ for all $(x, y) \in \RR^2$.
\end{lemm}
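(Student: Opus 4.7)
The plan is to factor $\Phi^{\HH} = N(x,\lambda)\Phi^{0}$, where collecting terms in the definition \eqref{eq:defn.jost.heteroclinic.u} gives
\[
N = \frac{1}{\lambda + i}\bigl(\lambda \Id + i (\tanh x)\,\sigma_3 - i (\sech x)\,\sigma_1\bigr).
\]
Since $\Phi^{0}$ is parallel with respect to $(0,\lambda)$, the parallelism of $\Phi^{\HH}$ with respect to $(\HH(x),\lambda)$ is equivalent, after writing $d\Phi^{\HH} = (dN)\Phi^0 + N\,d\Phi^0$, to the matrix ODE system
\[
\partial_x N = A^{\HH} N - N A^{0}, \qquad \partial_y N = B^{\HH} N - N B^{0}.
\]
Because $N$ depends only on $x$, the second equation is a purely algebraic identity (both sides are algebraic in $\sech x, \tanh x$ and the Pauli matrices), and the first becomes one after computing $\partial_x N$ via $(\tanh x)' = \sech^2 x$ and $(\sech x)' = -\sech x\tanh x$.

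I would verify each of the two identities by expanding both sides with the Pauli relations $\sigma_j^2 = \Id$ and $\sigma_1\sigma_2 = i\sigma_3$, $\sigma_2\sigma_3 = i\sigma_1$, $\sigma_3\sigma_1 = i\sigma_2$, and matching coefficients of $\Id,\sigma_1,\sigma_2,\sigma_3$. The only nontrivial cancellation comes from the Pythagorean identity $\sech^2 x + \tanh^2 x = 1$, which eliminates the $\Id$-terms arising when one squares the matrix $(\tanh x)\sigma_3 - (\sech x)\sigma_1$ inside the expansion. This is a finite algebraic check with no subtle point.

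For the determinant, a slick observation handles everything: setting $M_x := (\tanh x)\sigma_3 - (\sech x)\sigma_1$, we have
\[
M_x^{\,2} = (\tanh^2 x)\sigma_3^2 + (\sech^2 x)\sigma_1^2 - (\sech x)(\tanh x)\{\sigma_1,\sigma_3\} = \Id,
\]
using $\{\sigma_1,\sigma_3\}=0$ and $\sech^2 x + \tanh^2 x = 1$. Hence $\lambda\Id + i M_x$ has eigenvalues $\lambda \pm i$, so $\det(\lambda\Id + i M_x) = \lambda^2 + 1 = (\lambda-i)(\lambda+i)$. Since $\Phi^{0}$ from \eqref{eq:defn.jost.trivial.u} is diagonal with reciprocal entries, $\det\Phi^{0}\equiv 1$, and therefore
\[
\det \Phi^{\HH}(x,y,\lambda) = \det N = \frac{(\lambda-i)(\lambda+i)}{(\lambda+i)^2} = \frac{\lambda - i}{\lambda + i}.
\]

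The main difficulty is bookkeeping rather than conceptual depth: the factorization $\Phi^{\HH} = N\Phi^{0}$ is precisely a one-soliton dressing (Darboux/B\"acklund) transformation in the inverse scattering framework, of which the two matrix identities above are the defining property. So the structural reason everything cancels is a given; the only real task is to confirm that the dressing matrix $N$ read off from \eqref{eq:defn.jost.heteroclinic.u} is the correct one, and then grind through the Pauli multiplications.
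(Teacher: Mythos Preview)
Your proposal is correct and follows essentially the same route as the paper. Both arguments factor $\Phi^{\HH}=N\Phi^{0}$ (the paper writes this as $\Phi^{\HH}(\Phi^{0})^{-1}$) and reduce to a Pauli-algebra verification; the paper carries out the $\partial_x$ computation in full by collecting $\Id,\sigma_1,\sigma_2,\sigma_3$ coefficients, whereas you stop at ``finite algebraic check.'' Your determinant argument via $M_x^2=\Id$ and the eigenvalues $\lambda\pm i$ is a bit slicker than the paper's direct $2\times 2$ expansion, but otherwise the two proofs coincide.
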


Note that this choice of solutions satisfies (cf. \eqref{eq:defn.jost.trivial.u.p1}, \eqref{eq:defn.jost.trivial.u.m2}) for  $\lambda \neq -i$:
\begin{align}
	\lim_{x\to +\infty} e^{\frac{-iK(\lambda)}{4}x} \Phi^{\HH}_{+,1}(x,y, \lambda) 
		& = e^{-\frac{J(\lambda)}{4}y}\left( \begin{matrix} 1 \\ 0 \end{matrix}\right), \label{eq:defn.jost.heteroclinic.u.p1} \\
	\lim_{x\to-\infty} e^{\frac{iK(\lambda)}{4}x} \Phi^{\HH}_{-,2}(x,y, \lambda) 
		& = e^{\frac{J(\lambda)}{4}y}\left( \begin{matrix} 0 \\ 1 \end{matrix}\right). \label{eq:defn.jost.heteroclinic.u.m2}
\end{align}

\begin{proof}
	For obvious reasons we will abbreviate:
	\[ S := \sech x, \; T := \tanh x. \]
	By rewriting the definition of $\Phi^{\HH}$, using the fact that $\det \Phi^0 = 1$ from \eqref{eq:defn.jost.trivial.u}, and the hyperbolic trigonometry identity $S^2 + T^2 = 1$ we have:
	\[ \Phi^{\HH} (\Phi^0)^{-1} = \frac{\lambda}{\lambda+i} \Id + \frac{i}{\lambda+i} \begin{pmatrix} T & -S \\ -S & -T \end{pmatrix} \implies \det \Phi^{\HH} = \frac{\lambda-i}{\lambda+i} \]
	as desired. 
	
	Next, we compute $\partial_x \Phi^{\HH} - A^{\HH} \Phi^{\HH}$ using the fact that $\partial_x \Phi^0 = A^0 \Phi^0$ and the calculus identities $\partial_x S = -ST$, $\partial_x T = S^2$:
	\begin{align*}
		& \partial_x \Phi^{\HH} - A^{\HH} \Phi^{\HH} \\
		& = \partial_x \Big( \Phi^0 + \frac{i}{\lambda+i} (T\sigma_3 - S\sigma_1 - \Id) \Phi^0 \Big) \\
		& \qquad - \Big( A^0 + \frac{i}{4} \Delta^0_{\HH} \Big) \Big( \Phi^0 + \frac{i}{\lambda+i} (T\sigma_3 - S\sigma_1 - \Id) \Phi^0 \Big) \\
		& = A^0 \Phi^0 + \frac{i}{\lambda+i} \partial_x (T\sigma_3 - S\sigma_1 - \Id) \Phi^0 + \frac{i}{\lambda+i} (T\sigma_3 - S \sigma_1 - \Id) A^0 \Phi^0 \\
		& \qquad - \Big( A^0 + \frac{i}{4} \Delta^0_{\HH} \Big) \Big( \Phi^0 + \frac{i}{\lambda+i} (T\sigma_3 - S \sigma_1 - \Id) \Phi^0 \Big) \\
		& = \frac{i}{\lambda+i} (S^2 \sigma_3 + ST \sigma_1) \Phi^0  + \frac{i}{\lambda+i} (T\sigma_3 - S \sigma_1 - \Id) A^0 \Phi^0 - \frac{i}{4} \Delta^0_{\HH} \Phi^0 \\
		& \qquad - \frac{i}{\lambda+i} A^0 (T \sigma_3 - S \sigma_1 - \Id) \Phi^0 + \frac{1}{4(\lambda+i)} \Delta^0_{\HH} (T\sigma_3 - S \sigma_1 - \Id) \Phi^0.
	\end{align*}
	Recalling \eqref{eq:SG.AB.0} and the spinor formulas we find that
	\[ A^0 \sigma_1 - \sigma_1 A^0 = \frac{i}{2} K(\lambda) \sigma_3 \sigma_1 = -\frac{1}{2} K(\lambda) \sigma_2, \]
	while the $A^0 \sigma_3, \sigma_3 A^0$ and $A^0 \Id, \Id A^0$ cancel out (the former using \eqref{eq:SG.AB.0}, the latter trivially). Using \eqref{eq:SG.AB.H.Delta} and our expressions for $W(\HH)$, $\HH'$, $W'(\HH)$:
	\begin{align*}
		& \partial_x \Phi^{\HH} - A^{\HH} \Phi^{\HH} \\
		& = \Big[ \frac{i}{\lambda+i} (S^2 \sigma_3 + ST \sigma_1) - \frac{i}{2(\lambda+i)} S K(\lambda) \sigma_2 - \frac{i}{2} (\lambda^{-1} S^2 \sigma_3 - S \sigma_2 + \lambda^{-1} ST \sigma_1) \\
		& \qquad + \frac{1}{2(\lambda+i)} (\lambda^{-1} S^2 \sigma_3 - S \sigma_2 + \lambda^{-1} ST \sigma_1)(T\sigma_3 - S\sigma_1 - \Id) \Big] \Phi^0.
	\end{align*}
	We show the expression inside the bracket vanishes by collecting terms according to $\sigma_1$, $\sigma_2$, $\sigma_3$, $\Id$, using the spinor formulas. The coefficients are:
	\begin{align*}
		\Big( \cdots \Big)_{\sigma_1} 
			& = \frac{i}{\lambda+i} ST - \frac{i}{2\lambda} ST - \frac{i}{2(\lambda+i)} ST - \frac{1}{2\lambda(\lambda+i)} ST \\
			& = \frac{i}{2} ST \Big( \frac{1}{\lambda+i} - \frac{1}{\lambda} + \frac{i}{\lambda(\lambda+i)} \Big) = 0, \\
		\Big( \cdots \Big)_{\sigma_2} 
			& = - \frac{i}{2(\lambda+i)} SK(\lambda) + \frac{i}{2} S - \frac{i}{2\lambda(\lambda+i)} S^3 + \frac{1}{2(\lambda+i)} S \\
			& \qquad - \frac{i}{2 \lambda(\lambda+i)} ST^2 \\
			& = \frac{i}{2} S \Big( - \frac{\lambda}{\lambda+i} + \frac{1}{\lambda(\lambda+i)} + 1 - \frac{1}{\lambda(\lambda+i)} - \frac{i}{\lambda+i} \Big) = 0, \\
		\Big( \cdots \Big)_{\sigma_3} 
			& = \frac{i}{\lambda+i} S^2 - \frac{i}{2\lambda} S^2 - \frac{1}{2\lambda(\lambda+i)} S^2 - \frac{i}{2(\lambda+i)} S^2 \\
			& = \frac{i}{2} S^2 \Big( \frac{2}{\lambda+i} - \frac{1}{\lambda} + \frac{i}{\lambda(\lambda+i)} - \frac{1}{\lambda+i} \Big) = 0, \\
		\Big( \cdots \Big)_{\Id}
			& = \frac{1}{2\lambda (\lambda+i)} S^2 T - \frac{1}{2\lambda (\lambda+i)} S^2 T = 0.
	\end{align*}
	Thus, $\partial_x \Phi^{\HH} = A^{\HH} \Phi^{\HH}$. The computation for $\partial_y \Phi^{\HH} = B^{\HH} \Phi^{\HH}$ is analogous and easier, since $\HH(x)$ is independent of $y$.
\end{proof}

Note that when $\lambda=i$, the columns of our matrix-valued solution are linearly dependent (though neither is identically zero) and thus do not span the ODE solution space. Nonetheless, they do span the space of solutions that will be of interest to us:

\begin{coro} \label{coro:one.bound.state}
	If $\Phi \in C^\infty_{\textrm{loc}}(\RR^2, \CC^2)$ is parallel with respect to $(\HH(x), i)$ and either 
	\[ e^{-x/2} \Phi(x,y) \in L^\infty(\RR^2) \text{ or } e^{x/2} \Phi(x, y) \in L^\infty(\RR^2), \]
	then $\Phi$ is a multiple of the (linearly dependent) columns of $\Phi^{\HH}(x, y, i)$.
\end{coro}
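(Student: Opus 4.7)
The plan is to exploit a nilpotency property of $B^{\HH}(\cdot,i)$ that is specific to the sine-Gordon potential, and combine it with the $y$-boundedness hypothesis to pin the parallel section down to a scalar multiple. The crucial observation is that $J(i) = 0$, so the $u$-independent piece of \eqref{eq:lax.pair.B} vanishes at $\lambda = i$ and a direct substitution gives
\[
B^{\HH}(x,i) = \tfrac{1}{4}\mathcal{B}(x), \qquad \mathcal{B}(x) := -iW(\HH(x))\sigma_3 + \HH'(x)\sigma_2 + iW'(\HH(x))\sigma_1.
\]
Using $\sigma_k^2 = \Id$ and $\sigma_j\sigma_k + \sigma_k\sigma_j = 0$ for $j \neq k$, the cross terms vanish and $\mathcal{B}(x)^2 = ((\HH')^2 - W(\HH)^2 - W'(\HH)^2)\Id$. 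The heteroclinic first integral $(\HH')^2 = 2W(\HH)$, together with the sine-Gordon identity $W(t)^2 + W'(t)^2 = (1-\cos t)^2 + \sin^2 t = 2W(t)$, yield $\mathcal{B}(x)^2 \equiv 0$.

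Since $B^{\HH}(x,i)$ is $y$-independent, the ODE $\partial_y \Phi = B^{\HH}(x,i)\Phi$ integrates exactly at each fixed $x$, and nilpotency truncates the exponential to $\Phi(x,y) = (\Id + \tfrac{y}{4}\mathcal{B}(x))\Phi(x,0)$. Either boundedness hypothesis forces $\Phi(x_0,\cdot)$ to be bounded on $\RR$ for each fixed $x_0$; since the formula is linear in $y$, this gives $\mathcal{B}(x_0)\Phi(x_0,0) = 0$ at every $x_0$, and in particular $\Phi(x,y) \equiv \Phi(x,0)$ is globally $y$-independent. Since $\HH'(x) > 0$ everywhere, $\mathcal{B}(x)$ is a nonzero nilpotent $2\times 2$ matrix for every $x$, so $\dim \ker \mathcal{B}(x) = 1$.

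To identify this kernel, I observe that $\Phi^{\HH}_{+,1}(\cdot,\cdot,i)$ is itself $y$-independent: its prefactor in \eqref{eq:defn.jost.heteroclinic.u} depends only on $x$, and $\Phi^0(\cdot,\cdot,i) = e^{-(x/2)\sigma_3}$ is $y$-independent because $J(i)=0$. Feeding $\Phi^{\HH}_{+,1}$ into the $y$-equation gives $\mathcal{B}(x)\Phi^{\HH}_{+,1}(x,0,i) = 0$, and since $\Phi^{\HH}_{+,1}(x,0,i) = \tfrac{1}{2\cosh x}(e^{x/2},-e^{-x/2})^{T}$ is nowhere zero, it spans $\ker \mathcal{B}(x)$. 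Thus $\Phi(x,0) = c(x)\Phi^{\HH}_{+,1}(x,0,i)$ for a scalar function $c$. Plugging into the $x$-equation $\partial_x \Phi = A^{\HH}(x,i)\Phi$ and using that $\Phi^{\HH}_{+,1}$ satisfies the same equation yields $c'(x)\Phi^{\HH}_{+,1}(x,0,i) = 0$, so $c$ is constant. Therefore $\Phi = c\,\Phi^{\HH}_{+,1}(\cdot,\cdot,i)$, which is a scalar multiple of both columns of $\Phi^{\HH}(\cdot,\cdot,i)$ (these columns being linearly dependent at $\lambda=i$ by Lemma \ref{lemm:heteroclinic-jost}).

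The decisive step is the first one. Recognizing the nilpotency of $B^{\HH}(\cdot,i)$, ultimately rooted in the sine-Gordon identity $W^2 + (W')^2 = 2W$, is what makes $\lambda = i$ a \emph{simple} bound state; once this is in hand, the rest of the argument is linear algebra.
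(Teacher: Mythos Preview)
Your proof is correct and takes a genuinely different route from the paper's. The paper argues via the Wronskian: since $A^{\HH}$, $B^{\HH}$ are traceless, $\det(\Phi,\Phi^{\HH}_{+,1})$ is constant in $(x,y)$; combining the hypothesis $|\Phi|\leq Ce^{-x/2}$ with the explicit asymptotic $e^{x/2}\Phi^{\HH}_{+,1}(x,y,i)\to(1,0)^T$ as $x\to+\infty$ forces this constant to vanish, whence $\Phi$ is a multiple of $\Phi^{\HH}_{+,1}$ (and analogously with $\Phi^{\HH}_{-,2}$ for the other hypothesis). Your argument instead isolates the algebraic mechanism behind the bound state: the nilpotency $B^{\HH}(\cdot,i)^2=0$, which rests on the sine-Gordon identity $W^2+(W')^2=2W$ together with the heteroclinic first integral $(\HH')^2=2W(\HH)$. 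This makes the $y$-flow affine, so boundedness in $y$ alone (a strictly weaker consequence of either hypothesis) already confines $\Phi$ to the one-dimensional kernel of $\mathcal{B}(x)$; the rest is linear algebra. Your approach thus handles both hypotheses uniformly, avoids the asymptotic analysis of $\Phi^{\HH}_{\pm}$, and makes explicit why $\lambda=i$ is a \emph{simple} bound state. The paper's Wronskian method, on the other hand, is the classical inverse-scattering template and generalizes more directly to the Jost-solution framework used later in the proof of Proposition~\ref{prop:ends.to.bd.state}.
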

\begin{proof}
	Suppose that $e^{x/2} \Phi(x, y, i) \in L^\infty(\RR^2)$. It follows from the tracelessness of $A^{\HH}$, $B^{\HH}$ (see \eqref{eq:SG.AB.0}, \eqref{eq:SG.AB.H}, \eqref{eq:SG.AB.H.Delta}) that the Wronskian
	\[ (x, y) \mapsto \det (\Phi, \Phi^{\HH}_{+,1})(x, y, i) \]
	is independent of $(x, y) \in \RR^2$. (See, e.g., Lemma \ref{lemm:jost.wronskian}.) However,
	\[ e^{x/2} \Phi(x, y) \in L^\infty(\RR^2) \implies |\Phi(x, y)| \leq C e^{-x/2} \text{ for all } (x, y) \in \RR^2, \]
	while \eqref{eq:defn.jost.heteroclinic.u.p1} and $K(i) = 2i$ imply
	\[ \lim_{x \to \infty} e^{x/2} \Phi^{\HH}_{+,1}(x, y, i) = e^{-iy/2} \begin{pmatrix} 1 \\ 0 \end{pmatrix}. \]
	In particular, holding $y$ fixed and sending $x \to \infty$ shows $\det(\Phi, \Phi^{\HH}_{+,1}) = 0$, so $\Phi$ is a multiple of $\Phi^{\HH}_{+,1}$. The argument is analogous when $e^{-x/2} \Phi \in L^\infty(\RR^2)$ instead, except we use $\Phi^{\HH}_{-,2}$ rather than $\Phi^{\HH}_{+,1}$.
\end{proof}

We need one final piece of notation:
\begin{align}
\mathring H_{\pm} & : = \{\lambda \in \CC : \pm \imag \lambda >0\} \label{eq:mathring.Hpm}\\
\check H_{\pm} & : = \{\lambda \in \CC \setminus\{0\} : \pm \imag \lambda \geq 0\}\label{eq:check.Hpm}
\end{align}

\begin{coro}\label{coro:jost.soln.bds.line.trivial}
Let $\lambda = q + i p \in (\SS^1 \cap \mathring H_+) \setminus \{i\}$. Suppose that $\Phi$ is parallel with respect to $(\HH(x),\lambda)$ and also satisfies
\[
\sup_{(x,y) \in \ell} |\Phi(x,y)| < \infty
\]
where $\ell$ is the line defined by
\[
\ell = \{(x,y) \in \RR^2 : (-q, p) \cdot (x, y) = 0\}. 
\]
Then, $\Phi \equiv 0$. 
\end{coro}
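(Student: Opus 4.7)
The plan is to expand $\Phi$ in the basis of parallel sections produced by Lemma~\ref{lemm:heteroclinic-jost}, then read off the asymptotic behavior of each basis vector along $\ell$ from the explicit formula \eqref{eq:defn.jost.heteroclinic.u}. Since $\lambda \in \mathring H_+$ forces $\lambda \neq -i$ and the hypothesis excludes $\lambda = i$, Lemma~\ref{lemm:heteroclinic-jost} gives $\det \Phi^{\HH}(\cdot,\cdot,\lambda) = \tfrac{\lambda-i}{\lambda+i} \neq 0$, so $\{\Phi^{\HH}_{+,1}(\cdot,\cdot,\lambda),\Phi^{\HH}_{-,2}(\cdot,\cdot,\lambda)\}$ is a basis for the two-dimensional space of parallel sections with respect to $(\HH(x),\lambda)$. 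Thus $\Phi = c_1 \Phi^{\HH}_{+,1} + c_2 \Phi^{\HH}_{-,2}$ for some $c_1,c_2 \in \CC$.

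The key geometric observation is that $\ell$ is precisely the line through the origin along which the two diagonal exponents of $\Phi^0$ collapse to a single linear function of arclength. Parametrize $\ell$ by arclength $(x,y) = (pt,qt)$, $t \in \RR$; since $p^2+q^2 = 1$, we have $px+qy = t$ on $\ell$, so \eqref{eq:defn.jost.trivial.u.S1} gives
\[
\Phi^0(pt,qt,\lambda) = \begin{pmatrix} e^{-t/2} & 0 \\ 0 & e^{t/2} \end{pmatrix}.
\]
Substituting into \eqref{eq:defn.jost.heteroclinic.u} and expanding column by column yields
\begin{align*}
\Phi^{\HH}_{+,1}(pt,qt,\lambda) &= e^{-t/2}\left[\begin{pmatrix}1\\0\end{pmatrix} + \tfrac{i}{\lambda+i}\begin{pmatrix}\tanh(pt)-1\\-\sech(pt)\end{pmatrix}\right],\\
\Phi^{\HH}_{-,2}(pt,qt,\lambda) &= e^{t/2}\left[\begin{pmatrix}0\\1\end{pmatrix} + \tfrac{i}{\lambda+i}\begin{pmatrix}-\sech(pt)\\-\tanh(pt)-1\end{pmatrix}\right].
\end{align*}

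Finally, since $p > 0$, we have $pt \to \pm\infty$ as $t \to \pm\infty$; using $\tanh(\pm\infty) = \pm 1$, $\sech(\pm\infty) = 0$, and the identity $1 - \tfrac{2i}{\lambda+i} = \tfrac{\lambda-i}{\lambda+i}$, one reads off along $\ell$: as $t\to+\infty$, $\Phi^{\HH}_{+,1}\to 0$ while $\Phi^{\HH}_{-,2} \sim \tfrac{\lambda-i}{\lambda+i}\,e^{t/2}\begin{pmatrix}0\\1\end{pmatrix}$; as $t\to-\infty$, $\Phi^{\HH}_{-,2}\to 0$ while $\Phi^{\HH}_{+,1} \sim \tfrac{\lambda-i}{\lambda+i}\,e^{-t/2}\begin{pmatrix}1\\0\end{pmatrix}$. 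Since $\lambda \neq i$ makes $\tfrac{\lambda-i}{\lambda+i} \neq 0$, both tails diverge in modulus, so the boundedness hypothesis on $\Phi$ along $\ell$ forces $c_2 = 0$ (by letting $t\to+\infty$) and then $c_1 = 0$ (by letting $t\to-\infty$), giving $\Phi \equiv 0$. There is no real obstacle: the proof is a direct computation once one recognizes that $\ell$ is chosen so that the growth/decay rates of $\Phi^0$ become pure arclength exponentials, which makes the Jost basis split manifestly into one growing and one decaying section along $\ell$.
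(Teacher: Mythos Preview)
Your proof is correct and follows essentially the same approach as the paper: expand $\Phi$ in the basis $\{\Phi^{\HH}_{+,1},\Phi^{\HH}_{-,2}\}$ (valid since $\lambda\neq\pm i$), parametrize $\ell$ by $(pt,qt)$ so that $\Phi^0$ becomes $\mathrm{diag}(e^{-t/2},e^{t/2})$, and read off from \eqref{eq:defn.jost.heteroclinic.u} that one column diverges along each end of $\ell$, forcing $c_1=c_2=0$. The only cosmetic difference is that the paper computes the matrix limit of $\Phi^{\HH}(\Phi^0)^{-1}$ rather than expanding column by column, but the substance is identical.
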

\begin{proof} 
By Lemma \ref{lemm:heteroclinic-jost}, the columns of our $\Phi^{\HH}$ from \eqref{eq:defn.jost.heteroclinic.u} span the ODE solution space when $\lambda \not = i$, so we can write
\begin{equation} \label{eq:jost.soln.bds.line.trivial.phi}
	\Phi = c_{1}\Phi_{+,1}^\HH(\cdot, \cdot, \lambda) + c_2 \Phi_{-,2}^\HH(\cdot, \cdot, \lambda)
\end{equation}
for some $c_1,c_2\in\CC$, where $\Phi^{\HH}_{+,1}$, $\Phi^{\HH}_{-,2}$ are as in \eqref{eq:defn.jost.heteroclinic.u}. Observe that the line $\ell$ can be parametrized by $\gamma(t) = (tp,tq)$ and 
\[
\frac{iK(\lambda)}{4} t p - \frac{J(\lambda)}{4} t q = - \frac t 2,
\]
so
\begin{equation} \label{eq:jost.soln.bds.line.trivial.phi0}
	\Phi^{0}(tp, tq, q+ip) = \begin{pmatrix} e^{-t/2} & 0 \\ 0 & e^{t/2} \end{pmatrix}
\end{equation}
for all $t \in \RR$. Moreover, by inspecting \eqref{eq:defn.jost.heteroclinic.u},
\begin{align*}
	& \Phi^{\HH}(tp, tq, q+ip) (\Phi^0(tp, tq, q+ip))^{-1} \\
	& = \frac{\lambda}{\lambda+i} \Id + \frac{i}{\lambda+i} (\tanh pt) \sigma_3 - \frac{i}{\lambda+i} (\sech pt) \sigma_1 \\
	& \to \frac{\lambda}{\lambda+i} \Id \pm \frac{i}{\lambda+i} \sigma_3 \text{ as } |t| \to \infty,
\end{align*}
so together with \eqref{eq:jost.soln.bds.line.trivial.phi0} this implies
\[ \Phi^{\HH}_{+,1}(tp, tq, q+ip) \to \infty \text{ as } t \to -\infty, \]
\[ \Phi^{\HH}_{-,2}(tp, tq, q+ip) \to \infty \text{ as } t \to +\infty. \]
Now evaluating \eqref{eq:jost.soln.bds.line.trivial.phi} along $\gamma(t)$ with $t \to -\infty$ yields $c_1 = 0$, and $t \to +\infty$ yields $c_2 = 0$. This completes the proof. 
\end{proof}

\subsubsection{Analyzing the flipped heteroclinic solution}  

The following result corresponds to a well-known symmetry/gauge invariance of the Lax pair (cf.\ \cite[Lemma 5.3]{LiuWei}). 

\begin{lemm}\label{lemm:flip.lax.pair.pm}
Suppose that $\lambda \in \check{H}_+$, $\nabla$ is the connection with respect to $(\HH(x),\lambda)$, and $\tilde\nabla$ is the connection with respect to $(-\HH(x),\lambda)$. If $\phi(z) := -z$, then
	\[ \nabla \Phi \equiv 0 \iff \tilde \nabla \tilde \Phi \equiv 0 \text{ for } \tilde\Phi := i \sigma_2 \phi^*\Phi. \]
\end{lemm}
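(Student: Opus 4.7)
The plan is to reduce both implications to a purely algebraic identity between the Lax-pair coefficient matrices at $u = \HH(x)$ and $u = -\HH(x)$, and then verify that identity by tracking Pauli-matrix conjugations together with the parity of $W$ and $\HH$.

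First I would unpack definitions: by \eqref{eq:lax.connection} and \eqref{eq:lax.pair.A.B}, $\nabla\Phi\equiv 0$ is equivalent to $\partial_x\Phi = A^{\HH}\Phi$ and $\partial_y\Phi = B^{\HH}\Phi$, where $A^{\HH}, B^{\HH}$ are the matrices \eqref{eq:lax.pair.A}, \eqref{eq:lax.pair.B} built from $u(x,y)=\HH(x)$, and similarly $\tilde\nabla\tilde\Phi \equiv 0$ is equivalent to the analogous system with $A^{-\HH}, B^{-\HH}$. Differentiating $\tilde\Phi(x,y) = i\sigma_2\Phi(-x,-y)$ and using $\sigma_2^2 = \Id$ to clear $\sigma_2$ on the right, the claim reduces to the pointwise matrix identities
\begin{align*}
A^{-\HH}(x,y) &= -\sigma_2 \, A^{\HH}(-x,-y) \, \sigma_2, \\
B^{-\HH}(x,y) &= -\sigma_2 \, B^{\HH}(-x,-y) \, \sigma_2.
\end{align*}
(The reverse implication in the statement then follows from the forward direction applied to $\tilde\Phi$, using $(i\sigma_2)^2 = -\Id$ and the linearity of $\nabla\Psi = 0$.)

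Next I would assemble the three parity inputs needed to compare the two sides. Because $W(t) = 1 - \cos t$ is even and $W'(t) = \sin t$ is odd, and because $\HH(-x) = 2\pi - \HH(x)$, we have $W(\HH(-x)) = W(\HH(x))$ and $W'(\HH(-x)) = -W'(\HH(x))$; also $\HH'(x) = 2\sech x$ is even, so $\HH'(-x) = \HH'(x)$. Reading off components against $\sigma_1, \sigma_2, \sigma_3$ in \eqref{eq:lax.pair.A} then shows that $A^{\HH}(-x,-y)$ and $A^{-\HH}(x,y)$ agree in their $\sigma_3$ coefficient (which depends only on $W$ and $K(\lambda)$) but have opposite $\sigma_1$ coefficients (from $W'$) and opposite $\sigma_2$ coefficients (from $\partial_x u$), and the analogous statement holds for $B$ via \eqref{eq:lax.pair.B}.

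Finally I would invoke the Pauli identities $\sigma_2\sigma_1\sigma_2 = -\sigma_1$, $\sigma_2\sigma_3\sigma_2 = -\sigma_3$, $\sigma_2^3 = \sigma_2$ to see that $M \mapsto -\sigma_2 M \sigma_2$ preserves the $\sigma_1$ and $\sigma_3$ components while negating the $\sigma_2$ component of any $M \in \Span\{\sigma_1, \sigma_2, \sigma_3\}$. This is exactly the sign pattern established above, so the two identities hold. The only obstacle is bookkeeping: one must consistently align the three minus signs (one from conjugation by $\sigma_2$, one from $W'$ being odd, and one from $\partial_x u$ flipping sign under $u \mapsto -\HH$ versus $u \mapsto u\circ \phi$). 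No deeper input is required; the hypothesis $\lambda \in \check H_+$ plays no role in the computation itself.
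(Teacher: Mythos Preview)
Your approach is essentially the same as the paper's (which carries out the identical computation in the complex $dz,d\bar z$ form \eqref{eq:lax.pair.complex.coord} rather than component-by-component in real coordinates), and the reduction to the matrix identities $A^{-\HH}(x,y)=-\sigma_2A^{\HH}(-x,-y)\sigma_2$, $B^{-\HH}(x,y)=-\sigma_2B^{\HH}(-x,-y)\sigma_2$ is correct.

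However, there is a bookkeeping slip exactly where you anticipated one. You assert that $A^{\HH}(-x,-y)$ and $A^{-\HH}(x,y)$ have \emph{opposite} $\sigma_1$ coefficients, but in fact they have the \emph{same} $\sigma_1$ coefficient: using $\HH(-x)=2\pi-\HH(x)$ gives $W'(\HH(-x))=-W'(\HH(x))$, and using oddness of $W'$ gives $W'(-\HH(x))=-W'(\HH(x))$, so both flip sign relative to $W'(\HH(x))$ and therefore agree with each other. This is precisely what is needed, since you correctly note that $M\mapsto -\sigma_2 M\sigma_2$ \emph{preserves} the $\sigma_1$ component; so the final identity does hold, but your sentence ``this is exactly the sign pattern established above'' is false as written. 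Fix the $\sigma_1$ claim and the proof goes through.
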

\begin{proof}
We compute, using \eqref{eq:lax.pair.complex.coord}, the evenness of $W$, and oddness of $W'$:
\begin{align*}
	(\phi^{-1})^*\tilde\nabla \tilde\Phi 
	& = (\phi^{-1})^* d\tilde\Phi - \frac i 4 (\phi^{-1})^* \Big( \lambda \sigma_3 dz - \lambda^{-1} \sigma_3 d\bar z + \lambda^{-1} W(-\HH) d \bar z \\
	& \qquad \qquad \qquad \qquad + 2 (\partial_z \HH) \sigma_2 dz - \lambda^{-1} W'(-\HH) \sigma_1 d\bar z \Big)\wedge(\phi^{-1})^* \tilde\Phi\\
	& = i \sigma_2 d\Phi + \frac i 4  \Big(  \lambda \sigma_3 dz - \lambda^{-1} \sigma_3 d\bar z + \lambda^{-1} W(\HH) \sigma_3 d\bar z \\
	& \qquad \qquad \qquad - 2 (\partial_z \HH) \sigma_2 dz - \lambda^{-1} W'(\HH) \sigma_1 d\bar z \Big) \wedge i \sigma_2 \Phi \\
	& = i\sigma_2 \Big( d\Phi + \frac i 4 \Big( \lambda \sigma_2 \sigma_3 \sigma_2 dz - \lambda^{-1} \sigma_2 \sigma_3 \sigma_3 d \bar z + \lambda^{-1} W'(\HH)\sigma_2 \sigma_3 \sigma_2 d\bar z \\
	& \qquad \qquad \qquad - 2 (\partial_z \HH) \sigma_2 dz - \lambda^{-1} W'(\HH)   \sigma_2 \sigma_1\sigma_2 d\bar z \Big)   \wedge  \Phi  \Big)\\
	& = i\sigma_2 \Big( d\Phi - \frac i 4 \Big(  \lambda  \sigma_3  dz - \lambda^{-1} \sigma_3 d\bar z + \lambda^{-1} W'(\HH) \sigma_3 d\bar z \\
	& \qquad \qquad \qquad - 2 (\partial_z \HH) \sigma_2 dz - \lambda^{-1} W'(\HH) \sigma_1  d\bar z \Big) \wedge\Phi \Big)  \\
& = i \sigma_2 \nabla \Phi.
\end{align*}
This completes the proof. 
\end{proof}

This allows us to ``flip'' Corollaries \ref{coro:one.bound.state} and \ref{coro:jost.soln.bds.line.trivial} as follows. 

\begin{coro}\label{coro:one.bound.state.flipped}
If $\Phi \in C^\infty_\textrm{loc}(\RR^2;\CC^2)$ is parallel with respect to $(-\HH(x), i)$ and either
\[ e^{-x/2} \Phi(x,y) \in L^\infty(\RR^2) \text{ or } e^{x/2} \Phi(x,y) \in L^\infty(\RR^2), \]
then $\Phi$ is a multiple of the (linearly dependent) columns of $\sigma_2 \Phi^{\HH}(-x, -y, i)$.
\end{coro}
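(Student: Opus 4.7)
The plan is to reduce this to Corollary \ref{coro:one.bound.state} via the flip symmetry established in Lemma \ref{lemm:flip.lax.pair.pm}. Concretely, given $\Phi$ parallel with respect to $(-\HH(x), i)$, I would define
\[ \Psi(x,y) := -i \sigma_2 \, \Phi(-x,-y), \]
so that $\Phi(x,y) = i\sigma_2 \, \Psi(-x,-y)$ (using $(i\sigma_2)^2 = -\Id$, hence $(i\sigma_2)^{-1} = -i\sigma_2$). Reading Lemma \ref{lemm:flip.lax.pair.pm} with $\lambda = i$ in the direction $\tilde\nabla \tilde \Phi \equiv 0 \Rightarrow \nabla \Phi \equiv 0$ shows that $\Psi$ is parallel with respect to $(\HH(x), i)$, since the lemma precisely relates sections of the connection built from $(-\HH(x),\lambda)$ to those built from $(\HH(x),\lambda)$ via the assignment $\Phi \mapsto i\sigma_2 \phi^* \Phi$ with $\phi(z) = -z$.

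Next I would transfer the hypothesized decay. Because the pointwise norm is unchanged by multiplication by $\sigma_2$, we have $|\Psi(x,y)| = |\Phi(-x,-y)|$, so the two alternative hypotheses flip:
\[ e^{-x/2}\Phi \in L^\infty(\RR^2) \;\Longleftrightarrow\; e^{x/2}\Psi \in L^\infty(\RR^2), \]
\[ e^{x/2}\Phi \in L^\infty(\RR^2) \;\Longleftrightarrow\; e^{-x/2}\Psi \in L^\infty(\RR^2). \]
Thus $\Psi$ satisfies the hypotheses of Corollary \ref{coro:one.bound.state}, which is directly applicable since $\Psi$ is parallel with respect to $(\HH(x), i)$. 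We conclude that $\Psi$ is a scalar multiple of the (linearly dependent) columns of $\Phi^{\HH}(x,y,i)$.

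Finally, I would translate this conclusion back to $\Phi$: from $\Phi(x,y) = i\sigma_2\, \Psi(-x,-y)$, the fact that $\Psi(\cdot,\cdot)$ is a multiple of the columns of $\Phi^{\HH}(\cdot,\cdot,i)$ implies that $\Phi(x,y)$ is a multiple of the columns of $i\sigma_2\, \Phi^{\HH}(-x,-y,i)$, and hence of $\sigma_2\, \Phi^{\HH}(-x,-y,i)$ (absorbing the factor of $i$ into the scalar multiple), which is the desired conclusion. No step here is delicate; the only content is the bookkeeping of the gauge/flip transformation, and the real work is already done in Lemma \ref{lemm:flip.lax.pair.pm} and Corollary \ref{coro:one.bound.state}.
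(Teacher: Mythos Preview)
Your proof is correct and follows exactly the same approach as the paper's: both reduce to Corollary~\ref{coro:one.bound.state} by applying the flip symmetry of Lemma~\ref{lemm:flip.lax.pair.pm} with $\phi(z)=-z$, then translate the $L^\infty$ hypothesis and the conclusion back through this transformation. Your version is simply more explicit about the bookkeeping (inverting $i\sigma_2$, checking how the exponential weights swap under $x\mapsto -x$), whereas the paper compresses all of this into two sentences.
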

\begin{proof}
By Lemma \ref{lemm:flip.lax.pair.pm}, $\sigma_2 \phi^* \Phi$ is parallel with respect to $(\HH(x),i)$ provided $\phi(z) = -z$. The result follows from Corollary \ref{coro:one.bound.state}.
\end{proof}

\begin{coro}\label{coro:jost.soln.bds.line.trivial.flipped}
Let $\lambda = q + i p \in \SS^1 \cap \mathring H_+ \setminus \{i\}$. Suppose that $\Phi$ is parallel with respect to $(-\HH(x),\lambda)$ and also satisfies
\[
\sup_{(x,y) \in \ell} |\Phi(x,y)| < \infty
\]
where $\ell$ is the line defined by
\[
\ell = \{(x,y) \in \RR^2 : (-q, p) \cdot (x, y) = 0\}. 
\]
Then, $\Phi \equiv 0$. 
\end{coro}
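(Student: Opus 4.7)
The plan is to deduce this flipped statement from its unflipped predecessor, Corollary \ref{coro:jost.soln.bds.line.trivial}, by means of the gauge/symmetry exchange provided by Lemma \ref{lemm:flip.lax.pair.pm}. This is exactly the same pattern already used to pass from Corollary \ref{coro:one.bound.state} to Corollary \ref{coro:one.bound.state.flipped}, so no genuinely new ingredient is needed; the main work is just checking that the symmetry $\phi(z) = -z$ interacts well with the hypothesis of boundedness along $\ell$.

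More concretely: since $\lambda = q+ip$ with $p > 0$, we have $\lambda \in \mathring H_+ \subset \check H_+$, so Lemma \ref{lemm:flip.lax.pair.pm} applies. Given a section $\Phi$ parallel with respect to $(-\HH(x), \lambda)$, I would set
\[
\Psi := -i \sigma_2 \phi^* \Phi, \qquad \phi(z) := -z,
\]
and verify (applying Lemma \ref{lemm:flip.lax.pair.pm} with the roles of $\HH$ and $-\HH$ swapped, using $\sigma_2^2 = \Id$ and $\phi \circ \phi = \id$ so that the substitution is involutive) that $\Psi$ is parallel with respect to $(\HH(x), \lambda)$.

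The key observation is then that the hypothesis $\sup_\ell |\Phi| < \infty$ transfers verbatim to $\Psi$: the line $\ell = \{(x,y) : (-q,p) \cdot (x,y) = 0\}$ passes through the origin and is therefore invariant under $\phi$, so $\phi^* \Phi$ is also bounded on $\ell$; and multiplication by the unitary matrix $-i\sigma_2$ preserves pointwise norms, so $\sup_\ell |\Psi| = \sup_\ell |\Phi| < \infty$. At this point Corollary \ref{coro:jost.soln.bds.line.trivial}, whose hypotheses on $\lambda$ (namely $\lambda \in (\SS^1 \cap \mathring H_+)\setminus\{i\}$) are inherited unchanged, forces $\Psi \equiv 0$. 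Inverting the relation yields $\phi^*\Phi \equiv 0$, hence $\Phi \equiv 0$.

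There is essentially no obstacle: the only thing that needs momentary care is bookkeeping with the involution, i.e., checking that $\Phi \mapsto -i\sigma_2 \phi^* \Phi$ is its own inverse (up to the sign absorbed by $(i\sigma_2)^{-1} = -i\sigma_2$) and that $\phi(\ell) = \ell$. Once these routine verifications are in place, the result is immediate from Lemma \ref{lemm:flip.lax.pair.pm} and Corollary \ref{coro:jost.soln.bds.line.trivial}.
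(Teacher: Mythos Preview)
Your proposal is correct and follows essentially the same approach as the paper: apply Lemma \ref{lemm:flip.lax.pair.pm} to convert the $(-\HH(x),\lambda)$-parallel section into an $(\HH(x),\lambda)$-parallel section via $\sigma_2\phi^*\Phi$ (up to an irrelevant scalar), note that $\phi(\ell)=\ell$ so the boundedness hypothesis transfers, and then invoke Corollary \ref{coro:jost.soln.bds.line.trivial}. The paper's proof is a terse two-line version of exactly this argument.
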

\begin{proof}
By Lemma \ref{lemm:flip.lax.pair.pm}, $\sigma_2 \phi^* \Phi$ is parallel with respect to $(\HH(x),\lambda)$. Moreover, $\phi$ maps $\ell$ to itself. Thus, 
\[
\sup_{(x,y) \in \ell} |\phi^*\Phi(x,y)| < \infty.
\]
Hence, the assertion follows from Corollary \ref{coro:jost.soln.bds.line.trivial}. 
\end{proof}

\subsubsection{Jost solutions} In the previous sections we found solutions to $\nabla \Phi = 0$ for the trivial solution ($u\equiv 0$) and the heteroclinic solution ($u = \HH(x)$). We now discuss the general case. This section follows the arguments given in \cite[Lemma 5.2]{LiuWei} very closely. 

Consider an entire solution of \eqref{eq:simplified.SG} which is regular at infinity and none of whose asymptotic directions are $\pm (1,0)$. Recall the definition of $\check H_\pm$ in \eqref{eq:check.Hpm}. 

\begin{prop}[Existence of Jost solutions]\label{prop:Jost}
Fix $\lambda \in \check H_+$. There exist $\Phi_{+,1},\Phi_{-,2} \in \Gamma(E)$ that are parallel with respect to $(u,\lambda)$ and so that, for all $y \in \RR$ fixed (cf. \eqref{eq:defn.jost.trivial.u.p1}, \eqref{eq:defn.jost.trivial.u.m2}, \eqref{eq:defn.jost.heteroclinic.u.p1}, \eqref{eq:defn.jost.heteroclinic.u.m2}),
\begin{align*}
	\lim_{x\to +\infty} e^{\frac{-iK(\lambda)}{4}x} \Phi_{+,1}(x,y) & = e^{-\frac{J(\lambda)}{4}y}\left( \begin{matrix} 1 \\ 0 \end{matrix}\right), \\
	\lim_{x\to-\infty} e^{\frac{iK(\lambda)}{4}x} \Phi_{-,2}(x,y) & = e^{\frac{J(\lambda)}{4}y}\left( \begin{matrix} 0 \\ 1 \end{matrix}\right).
\end{align*}
Moreover, 
\[ e^{-\frac{iK(\lambda)}{4}x + \frac{J(\lambda)}{4}y} \Phi_{+,1} \text{ and } e^{\frac{iK(\lambda)}{4} x - \frac{J(\lambda)}{4}y} \Phi_{-,2} \in L^\infty(\RR^2). \]
\end{prop}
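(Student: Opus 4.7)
The plan is to peel off the trivial-solution exponential factor from $\Phi_{+,1}$ and construct the remainder by Volterra iteration. Write $\Phi_{+,1}(x, y) = e^{iK(\lambda)x/4 - J(\lambda)y/4}\varphi^+(x, y)$, where $\varphi^+:\RR^2\to\CC^2$ is to be constructed with $\lim_{x\to+\infty}\varphi^+(x, y) = (1, 0)^T$ for every $y\in\RR$. The equation $\partial_x\Phi_{+,1} = A\Phi_{+,1}$ becomes $\partial_x\varphi^+ = \bigl(A - \tfrac{iK(\lambda)}{4}\Id\bigr)\varphi^+$, which via \eqref{eq:lax.pair.A} reads componentwise
\begin{align*}
	\partial_x\varphi^+_1 & = a(x, y)\varphi^+_1 + b(x, y)\varphi^+_2, \\
	\partial_x\varphi^+_2 & = c(x, y)\varphi^+_1 - \bigl(\tfrac{iK(\lambda)}{2} + a(x, y)\bigr)\varphi^+_2,
\end{align*}
with $a(x, y) = \tfrac{i\lambda^{-1} W(u(x, y))}{4}$ and $b, c$ being the off-diagonal entries of $A$, whose moduli are controlled by $|\partial u| + |\lambda^{-1}W'(u)|$. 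Imposing the asymptote at $x = +\infty$ recasts these as the coupled Volterra system
\begin{align*}
	\varphi^+_1(x, y) & = 1 - \int_x^{+\infty}\bigl(a\varphi^+_1 + b\varphi^+_2\bigr)(s, y)\, ds, \\
	\varphi^+_2(x, y) & = -\int_x^{+\infty}\exp\Bigl(\int_x^s\bigl[\tfrac{iK(\lambda)}{2} + a(s', y)\bigr]\, ds'\Bigr)\,c(s, y)\,\varphi^+_1(s, y)\, ds.
\end{align*}

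\textbf{Convergence of Volterra iteration.} The decisive input is the exponential decay \eqref{eq:exp.decay.entire} of Proposition \ref{prop:asymp.dir.behavior}. Since no asymptotic direction of $u$ equals $\pm(1, 0)$, the distance $\mathscr{D}(s, y)$ from $(s, y)$ to the union of asymptotic rays satisfies $\mathscr{D}(s, y) \geq c|s| - C(y)$ for some $c > 0$ and a locally bounded $C(y)$. Consequently the kernels $a(\cdot, y), b(\cdot, y), c(\cdot, y)$ all lie in $L^1(\RR_s)$ with $L^1$-norms locally uniformly bounded in $y$. For $\lambda\in\check H_+$, the crucial sign condition $\re(iK(\lambda)) \leq 0$ guarantees $|e^{iK(\lambda)(s-x)/2}| \leq 1$ for all $s \geq x$, so the exponential factor in the $\varphi^+_2$ equation is bounded by $\exp\|a(\cdot, y)\|_{L^1(\RR_s)}$. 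Standard Picard iteration then converges absolutely, producing a unique $\varphi^+\in L^\infty(\RR^2;\CC^2)$; this gives $\partial_x\Phi_{+,1} = A\Phi_{+,1}$ on $\RR^2$ together with the claimed $x\to+\infty$ asymptote.

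\textbf{Compatibility, the second Jost solution, and boundedness.} To verify the second Lax equation $\partial_y\Phi_{+,1} = B\Phi_{+,1}$, set $\Xi := \partial_y\Phi_{+,1} - B\Phi_{+,1}$. Combining Lemma \ref{lemm:Lax-pair-compat} (sine-Gordon compatibility) with $\partial_x\Phi_{+,1} = A\Phi_{+,1}$ yields $\partial_x\Xi = A\Xi$. Set $\Xi_1(x, y) := e^{-iK(\lambda)x/4 + J(\lambda)y/4}\Xi(x, y)$. Differentiating the Volterra system in $y$ (justified by the uniform convergence of Picard iteration in a neighborhood of $y$) and using $B(x, y) \to B^0 = -\tfrac{J(\lambda)}{4}\sigma_3$ as $x\to+\infty$ (from exponential decay of $B - B^0$) I find $\Xi_1(x, y) \to (0, 0)^T$ as $x\to +\infty$. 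Since $\Xi_1$ satisfies the same Volterra system as $\varphi^+$ but with zero asymptotic data, the uniqueness half of Picard iteration forces $\Xi_1\equiv 0$, hence $\Xi\equiv 0$. The construction of $\Phi_{-,2}$ is entirely symmetric: write $\Phi_{-,2}(x, y) = e^{-iK(\lambda)x/4 + J(\lambda)y/4}\varphi^-(x, y)$ and solve the analogous Volterra system from $x = -\infty$, where now $\re(-iK(\lambda)) \geq 0$ makes the associated exponential factor bounded for $s \leq x$. Finally, the $L^\infty$ bounds asserted in the proposition are automatic, as the expressions in question equal $\varphi^{\pm}\in L^\infty(\RR^2;\CC^2)$. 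The main obstacle I anticipate is the uniform-in-$y$ $L^1$ control on the Volterra kernels across the closed half-plane $\check H_+$, notably at the borderline $\lambda\in\RR\setminus\{0\}$ where $\re(iK(\lambda)) = 0$ and the exponential gauge factor no longer contributes decay.
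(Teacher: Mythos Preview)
Your proposal is correct and follows essentially the same strategy as the paper's proof: a scalar gauge transformation to peel off the trivial exponential factor, Volterra/Picard iteration in $x$ using the $L^1$-in-$s$ decay of the reduced coefficients (coming from \eqref{eq:exp.decay.entire} and the assumption that no asymptotic direction is horizontal), the sign condition $\re(iK(\lambda))\le 0$ to bound the residual exponential kernel, and finally the compatibility argument $\partial_x\Xi=A\Xi$ with vanishing asymptotic data to obtain the $y$-equation. The only cosmetic differences are that the paper treats $\Phi_{-,2}$ first, uses the gauge $\tilde A=A-\tfrac{iK}{4}\sigma_3$ rather than your $A-\tfrac{iK}{4}\Id$ (which shifts which component carries the oscillatory factor), and constructs $\Phi$ for each fixed $y$ before multiplying by $e^{J(\lambda)y/4}$, whereas you incorporate that factor from the outset.

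Your closing worry about the borderline $\lambda\in\RR\setminus\{0\}$ is unfounded: neither your argument nor the paper's ever relies on the exponential gauge factor for \emph{decay}---only for \emph{boundedness} ($|e^{iK(\lambda)(s-x)/2}|\le 1$ for $s\ge x$)---while all convergence comes from the $L^1(\RR_s)$ control on $a,b,c$, which is uniform in $y$ since each non-horizontal ray meets a horizontal line in a single point with transversal angle bounded away from zero.
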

\begin{proof}
We consider $\Phi_{-,2}$ since the argument for $\Phi_{+,1}$ is the same. 

We first fix $y=y_0$ and seek a solution to
\[ \partial_x \Phi(x, y_0) = A(x, y_0) \Phi(x, y_0), \; x \in \RR, \] 
\[ \lim_{x\to-\infty} e^{\frac{iK(\lambda)}{4} x} \Phi = (0,1)^T. \]
Write $\Psi(x,y) := e^{\frac{iK(\lambda)}{4}x} \Phi(x,y)$ so in coordinates $(\psi_1, \psi_2) := \Psi$ the evolution $\partial_x\Phi = A \Phi$ becomes 
\[
\begin{cases}
\partial_x \psi_1 = (A_{11} + \tfrac{iK(\lambda)}{4}) \psi_1 + A_{12} \psi_2, \\
\partial_x \psi_2 = A_{21} \psi_1 + (A_{22}+ \tfrac{iK(\lambda)}{4}) \psi_2. 
\end{cases}
\]
Define 
\[
\tilde A(s,y_0) : = A(s,y_0) - \frac{iK(\lambda)}{4} \sigma_3
\]
so that $\tilde A\to 0$ exponentially fast away from the ends of $u$; cf.\ \eqref{eq:exp.decay.entire}. We can thus rewrite the equations for $(\psi_1,\psi_2)$ as
\[
\begin{cases}
\partial_x (e^{-\frac{iK(\lambda)}{2}x} \psi_1) = e^{-\frac{iK(\lambda)}{2}x} (\tilde A_{11}  \psi_1 + \tilde A_{12} \psi_2), \\
\partial_x \psi_2 = \tilde A_{21} \psi_1 + \tilde A_{22}  \psi_2.
\end{cases}
\]
These equations, along with the limiting assumption are equivalent to the integral equations (where the dependence of $\tilde A_{ij}$ on $y_0$ is suppressed):
\[
\begin{cases}
\psi_1(x) = \int_{-\infty}^x \exp(\tfrac{iK(\lambda)}{2} (x-s)) (\tilde A_{11}  \psi_1 + \tilde A_{12} \psi_2)(s) \,  ds\\
\psi_2(x) = 1 + \int_{-\infty}^x   (\tilde A_{21}  \psi_1 + \tilde A_{22} \psi_2)(s) \,  ds. 
\end{cases}
\]
We now solve this integral equation with Picard iteration using, crucially, that $\re(iK(\lambda)) \leq 0$ when $\lambda \in \check{H}_+$. Our estimates will involve
\[
Q(x,y) : = \sum_{i,j=1}^2 \int_{-\infty}^x |\tilde A_{ij}(s,y)| ds, \; Q_* := \Vert Q \Vert_{L^\infty(\RR^2)}.
\]
(Note that $Q \in L^\infty(\RR^2)$ thanks to \eqref{eq:exp.decay.entire}.)

We start our Picard iteration with $(\psi^{(0)}_1,\psi^{(0)}_2) = (0,1)$. Inductively set
\[
\begin{cases}
\psi_1^{(n)}(x) = \int_{-\infty}^x \exp(\tfrac{iK(\lambda)}{2} (x-s)) (\tilde A_{11}  \psi_1^{(n-1)} + \tilde A_{12} \psi_2^{(n-1)})(s) \,  ds\\
\psi_2^{(n)}(x) = 1 + \int_{-\infty}^x   (\tilde A_{21}  \psi_1^{(n-1)} + \tilde A_{22} \psi_2^{(n-1)})(s) \,  ds.
\end{cases}
\]
For example, we have
\[
\begin{cases}
\psi_1^{(1)}(x) = \int_{-\infty}^x \exp(\tfrac{iK(\lambda)}{2} (x-s))   \tilde A_{12} (s) \,  ds\\
\psi_2^{(1)}(x) = 1 + \int_{-\infty}^x    \tilde A_{22}(s)  \,  ds.
\end{cases}
\]
We claim that, for every $x \in \RR$,
\[
| \psi_1^{(n)}(x) - \psi_1^{(n-1)}(x) |, \; | \psi_2^{(n)}(x) - \psi_2^{(n-1)}(x) | \leq \frac{Q(x, y_0)^{n}}{n!}. 
\]
This clearly holds when $n=1$ since we can bound (using $\re(iK(\lambda)) \leq 0$)
\[
|\psi_1^{(1)}(x)|, \; |\psi_2^{(1)}(x) - 1| \leq Q(x, y_0).
\]
In general, this follows inductively: 
\begin{align*}
|\psi_1^{(n)}(x) - \psi_1^{(n-1)}(x)| & \leq \int_{-\infty}^x  (|\tilde A_{11}|  + |\tilde A_{12}|)(s) \frac{Q(s, y_0)^{n-1}}{(n-1)!}  \,  ds\\
& \leq  \frac{1}{(n-1)!} \int_{-\infty}^x (\tfrac{d}{ds} Q(s, y_0)) Q(s, y_0)^{n-1}  \,  ds\\
& =  \frac{1}{n!} \int_{-\infty}^x \frac{d}{ds} Q(s, y_0)^{n}  \,  ds = \frac{Q(x, y_0)^n}{n!}.
\end{align*}
The estimate for $\psi_2$ is identical. Using this, we find that $(\psi_1^{(n)},\psi_2^{(n)})$ converges uniformly to $(\psi_1,\psi_2)$ solving the integral equation (and thus the differential equation).  We also note that we have established the bounds
\begin{equation}\label{eq:jost.exist.psi.Q.bds}
|\psi_1(x)| \leq \exp(Q(x)) - 1, \; |\psi_2(x)|\leq \exp(Q(x)),
\end{equation}
so combining the integral equation with the given bounds shows that 
\[
\lim_{x\to-\infty} (\psi_1(x),\psi_2(x)) = (0,1). 
\]
Thus, $\Phi = e^{-\frac{iK(\lambda)}{4}x} (\psi_1,\psi_2)^T$ solves $\partial_x \Phi = A \Phi$ (for $y=y_0$ fixed) with 
\[
\lim_{x\to-\infty} e^{\frac{iK(\lambda)}{4}x} \Phi(x) = (0,1)^T
\]
and satisfies $|\Phi(x)| \leq e^{Q_*} \exp( - \tfrac 14  \re(iK(\lambda)) x)$.

It is standard to show that when allowing $y_0$ to vary, we obtain $\Phi(x,y) \in C^\infty_{\textrm{loc}}(\RR^2;\CC^2)$ solving $\partial_x \Phi = A\Phi$ and satisfying the same bound 
\[
|\Phi(x,y)| \leq e^{Q_*} \exp( - \tfrac 14  \re(iK(\lambda)) x).
\]
By differentiating the integral equation for $(\psi_1,\psi_2)$, we have that 
\[
\lim_{x\to-\infty} e^{\frac{iK(\lambda)}{4} x} \partial_y \Phi(x,y) = (0,0)^T.
\]
In particular (since $\re(iK(\lambda))\leq 0$) 
\[
\lim_{x\to- \infty} e^{\frac{iK(\lambda)}{4} x} (\partial_y \Phi(x,y) - B\Phi(x,y)) = - \tfrac{J(\lambda)}{4} (0,-1)^T. 
\]
Note that $\Phi(x,y)$ does \textit{not} solve the $y$-equation, but we claim that 
\[
\Phi_{-,2}(x,y) := e^{\frac{J(\lambda)}{4} y} \Phi(x,y)
\]
does. Indeed, using $\partial_x \Phi = A\Phi$, we have
\begin{multline}\label{eq:y-equation-from-x-equation-and-infty}
\partial_x(\partial_y \Phi_{-,2} - B \Phi_{-,2}) \\
= A (\partial_y\Phi_{-,2} - B\Phi_{-,2}) + \underbrace{(\partial_y A-\partial_x B + AB -BA)}_{=0}\Phi_{-,2} 
\end{multline}
by Lemma \ref{lemm:Lax-pair-compat}. Moreover, by the limiting behavior of $\Phi$, we find that
\[
\lim_{x\to -\infty} e^{\frac{iK(\lambda)}{4}x } (\partial_y \Phi_{-,2} - B \Phi_{-,2}) = (0,0)^T. 
\]
Using this in conjunction with \eqref{eq:y-equation-from-x-equation-and-infty}, we can argue similarly to the construction of $(\psi_1,\psi_2)$ to conclude that $\partial_y \Phi_{-,2} - B\Phi_{-,2}$ vanishes identically. It also satisfies $\partial_x \Phi_{-,2} - A \Phi_{-,2} \equiv 0$ too, since $\Phi$ does. Thus $\nabla \Phi_{-,2} \equiv 0$ as claimed. This completes the proof.
\end{proof}

We have the following important (but simple) result.
\begin{lemm} \label{lemm:jost.wronskian}
The Wronskian
\[ \det (\Phi_{+,1}(x,y,\lambda),\Phi_{-,2}(x,y,\lambda)) \]
is independent of $(x,y)$.
\end{lemm}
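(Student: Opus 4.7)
The plan is to use the fact that the Wronskian of parallel sections of a connection obeys a first-order ODE whose coefficient is the trace of the connection form. Since the connection coefficients $A$ and $B$ in \eqref{eq:lax.pair.A}, \eqref{eq:lax.pair.B} are built entirely out of the Pauli matrices $\sigma_1, \sigma_2, \sigma_3$, each of which is traceless, we have $\tr A \equiv 0$ and $\tr B \equiv 0$ identically in $(x,y)$.

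Concretely, assemble the two parallel sections into a single $2 \times 2$ matrix $M(x,y) := (\Phi_{+,1}(x,y,\lambda),\Phi_{-,2}(x,y,\lambda))$. Because both columns are parallel with respect to the connection $\nabla$ defined by $(u,\lambda)$, we have the matrix-valued ODEs $\partial_x M = A M$ and $\partial_y M = B M$. Jacobi's formula (equivalently, differentiating the determinant column by column) gives
\[
\partial_x \det M = \det M \cdot \tr\bigl( (\partial_x M) M^{-1} \bigr) = (\tr A) \det M,
\]
at points where $M$ is invertible, and the analogous identity for $\partial_y \det M$ with $\tr B$. (When $M$ is not invertible the identity still holds by continuity or, more directly, by expanding each determinant column-wise using the product rule.) As noted, $\tr A = \tr B = 0$, so $\det M$ is annihilated by both $\partial_x$ and $\partial_y$, i.e., it is constant on $\RR^2$.

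There is no real obstacle here; the only thing to double-check is the tracelessness claim, which is immediate from \eqref{eq:lax.pair.A}--\eqref{eq:lax.pair.B}, since each of these expressions is a $\CC$-linear combination of $\sigma_1, \sigma_2, \sigma_3$ (and in particular contains no $\Id$-component). Thus $\det(\Phi_{+,1}(x,y,\lambda),\Phi_{-,2}(x,y,\lambda))$ depends only on $\lambda$. (In fact, by sending, say, $y \to \pm\infty$ along a suitable ray and using the asymptotic normalizations together with the decay of $u$ to its well-values given by \eqref{eq:exp.decay.entire}, one could compute the precise value of this constant in terms of the reflection/transmission data of $u$ at the spectral parameter $\lambda$, though this is not needed for the statement of the lemma.)
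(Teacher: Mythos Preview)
Your proof is correct and follows essentially the same approach as the paper: apply Jacobi's formula to the matrix of parallel sections and observe that $A$ and $B$ are traceless, being linear combinations of the Pauli matrices. The paper's argument is the same, only more terse.
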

\begin{proof}
It follows from Jacobi's formula for the derivative of the determinant, and \eqref{eq:lax.pair.A.B} that
\begin{align*}
\frac{\partial}{\partial x} \det (\Phi_{+,1}(x,y,\lambda),\Phi_{-,2}(x,y,\lambda)) & = (\tr A )\det (\Phi_{+,1}(x,y,\lambda),\Phi_{-,2}(x,y,\lambda))\\
\frac{\partial}{\partial y} \det (\Phi_{+,1}(x,y,\lambda),\Phi_{-,2}(x,y,\lambda)) & = (\tr B  )\det (\Phi_{+,1}(x,y,\lambda),\Phi_{-,2}(x,y,\lambda))
\end{align*}
and is easy to check that $\tr A = \tr B =0$.
\end{proof}

We thus define the $(x,y)$-independent quantity
\[ a(\lambda) : = \det (\Phi_{+,1}(x,y,\lambda),\Phi_{-,2}(x,y,\lambda)), \; \lambda \in \check{H}_+. \]
The function $a(\lambda)$ will play a key role in the proof of Theorem \ref{theo:SG.cone}. We will be able to use $a(\lambda)$ to relate the behavior of Jost solutions for an arbitrary entire solution that is regular at infinity to the behavior of the Jost solutions far out along an end of the solution, since $a(\lambda)$ is independent of $(x,y)$. (Note that $\Phi_{+,1}, \Phi_{-,2}$ do depend on $(x,y)$, in a nontrivial way.)

\subsubsection{Ends correspond to bound states}

This section is inspired by the proof of \cite[Lemmas 5.8, 5.9]{LiuWei}, but arguments are somewhat different, if only superficially.

We continue to assume, as in the previous section, that $u$ is an entire solution of \eqref{eq:simplified.SG} which is regular at infinity, and none of whose asymptotic directions $\{v_1,\dots,v_{2m}\}$ are $\pm (1,0)$. Using the standard identification of $\RR^2$ and $\CC$, we will consider the asymptotic directions as lying in $\SS^1\subset \CC$. 

Recall that for $\lambda \in \check{H}_+$ the Jost solutions $\Phi_{-,2}$ and $\Phi_{+,1}$ exist and satisfy the conclusions of Proposition \ref{prop:Jost}. Recall also that $a(\lambda) : = \det(\Phi_{-,2},\Phi_{+,1})$ is independent of $(x,y) \in \RR^2$. In this section we will identify the asymptotic directions of $u$ with solutions to $a(\lambda) = 0$ in a manner that will allow us to prove Theorem \ref{theo:SG.cone}. 

We first make several useful definitions. Define the set 
\[
\fB : = \{ \lambda \in \SS^1 \cap \mathring H_+ : a(\lambda) = 0\}.
\]
Define a function 
\[
\sigma : \{1,\dots,2m\} \to \{\pm 1\} \text{ so that } \sigma(j) v_{j} \in \SS^1 \cap \mathring H_+ \text{ for all } j \in \{ 1, \ldots, 2m \}.
\]
Also define
\[ R : \CC \to\CC, \; R(x+iy) = -x+iy \]
to be the reflection across the $y$-axis. (Note that this reflection already appeared implicitly in Corollaries \ref{coro:jost.soln.bds.line.trivial}, \ref{coro:jost.soln.bds.line.trivial.flipped}.) The following two propositions summarize the main ingredients needed to prove Theorem \ref{theo:SG.cone}.

\begin{prop}\label{prop:ends.to.bd.state}
$R(\{\sigma(1)v_1, \ldots, \sigma(2m) v_{2m} \}) \subset \fB$. 
\end{prop}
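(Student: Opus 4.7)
The plan is to show that for every asymptotic direction $v_j \in \mathring H_+$ of $u$ (the case $\sigma(j) = -1$ follows analogously, using the Lax-pair gauge symmetry $A(-\lambda) = \sigma_2 A(\lambda)\sigma_2$, $B(-\lambda) = \sigma_2 B(\lambda)\sigma_2$ to exchange the roles of $\mathring H_\pm$), the Jost columns $\Phi_{-,2}(\cdot,\cdot,\lambda_j)$ and $\Phi_{+,1}(\cdot,\cdot,\lambda_j)$ at $\lambda_j := R(v_j) \in \mathring H_+$ become proportional after pulling back along the $v_j$-end of $u$. Since $a(\lambda_j)$ is the $(x,y)$-constant Wronskian of those columns by Lemma \ref{lemm:jost.wronskian}, this will force $a(\lambda_j) = 0$ and hence $\lambda_j \in \fB$.

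Write $v_j = (a,b)$ with $b>0$ and pick $r_n \to \infty$. Set $u_n(x,y) := u((x,y)+r_n v_j)$; by Proposition \ref{prop:asymp.dir.behavior}(1) (applied with $z_i = v_j$), along a subsequence $u_n$ converges in $C^\infty_{\textrm{loc}}(\RR^2)$ to $\phi^*\HH$, where $\phi(z) := (b+ia)z$ is the rotation making $\phi^*\HH(x,y) = \HH(bx - ay + \eta_j)$ a heteroclinic with asymptotic direction $v_j$ (the orientation-flipped case $\HH(\cdot) \mapsto \HH(-\cdot)$ is treated in parallel via Corollary \ref{coro:one.bound.state.flipped}). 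The translates $\Phi^{(n)}_{-,2}(x,y) := \Phi_{-,2}((x,y)+r_n v_j, \lambda_j)$ and $\Phi^{(n)}_{+,1}(x,y) := \Phi_{+,1}((x,y)+r_n v_j, \lambda_j)$ are parallel with respect to $(u_n, \lambda_j)$, and Proposition \ref{prop:Jost} with $\lambda_j = -a + ib \in \SS^1 \cap \mathring H_+$ gives the bounds
\[ |\Phi_{-,2}(x,y,\lambda_j)| \leq C e^{(bx-ay)/2}, \qquad |\Phi_{+,1}(x,y,\lambda_j)| \leq C e^{-(bx-ay)/2}. \]
The linear form $bx - ay$ annihilates $v_j$, so these bounds are invariant under the shifts by $r_n v_j$; the translated columns are uniformly bounded on compact sets, and standard compactness for smooth linear first-order systems yields $C^\infty_{\textrm{loc}}$-subsequential limits $\Phi_\infty, \Psi_\infty$, parallel with respect to $(\phi^*\HH, \lambda_j)$ and inheriting the same bounds.

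Applying Lemma \ref{lemm:rotate.connection.lax.pair} with $\phi$, the section $(\phi^{-1})^*\Phi_\infty$ is parallel with respect to $(\HH, e^{-i\theta}\lambda_j)$, and the algebraic identity $e^{-i\theta}\lambda_j = (b - ia)(-a + ib) = i(a^2 + b^2) = i$ places us exactly at the heteroclinic bound-state parameter. The same coordinate change converts $bx - ay$ into $x'$, so the bound on $\Phi_\infty$ becomes $|(\phi^{-1})^*\Phi_\infty(x',y')| \leq C e^{x'/2}$, and similarly $|(\phi^{-1})^*\Psi_\infty(x',y')| \leq C e^{-x'/2}$. Corollary \ref{coro:one.bound.state} then identifies each of these pullbacks with a scalar multiple of a column of $\Phi^\HH(\cdot,\cdot,i)$, and by Lemma \ref{lemm:heteroclinic-jost} the two columns are linearly dependent at $\lambda = i$ (where $\det \Phi^\HH = (\lambda-i)/(\lambda+i) = 0$). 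Therefore $\Phi_\infty$ and $\Psi_\infty$ are themselves proportional.

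Finally, Lemma \ref{lemm:jost.wronskian} gives $\det(\Phi^{(n)}_{-,2}, \Phi^{(n)}_{+,1}) \equiv a(\lambda_j)$ for every $n$ and every $(x,y)$, so taking the $C^\infty_{\textrm{loc}}$-limit yields $a(\lambda_j) = \det(\Phi_\infty, \Psi_\infty) = 0$. The main delicacy is aligning the two rotation requirements: the rotation $\phi$ realizing the correct blow-down of $u_n$ along the $v_j$-end must simultaneously send the spectral parameter $\lambda_j = R(v_j)$ to $i$ in the rotated picture. This is precisely why the reflection $R$ across the imaginary axis appears in the statement, and is what makes the heteroclinic uniqueness in Corollary \ref{coro:one.bound.state} directly applicable.
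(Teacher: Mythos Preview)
Your proof is correct and follows essentially the same route as the paper: translate along the $v_j$-end, use the Jost bounds from Proposition~\ref{prop:Jost} to extract limits, rotate via Lemma~\ref{lemm:rotate.connection.lax.pair} so that the spectral parameter becomes $i$, and then invoke Corollary~\ref{coro:one.bound.state} to force the Wronskian to vanish. Two small remarks. First, your translations are exactly along $r_n v_j$, so the blow-down is $\HH(bx-ay+\eta_j)$ with a possibly nonzero shift $\eta_j$; strictly speaking this is not $\phi^*\HH$, and after rotating you land on $(\HH(x+\eta_j),i)$ rather than $(\HH(x),i)$. This is harmless (translate in $x$ or note that Corollary~\ref{coro:one.bound.state} only uses the $x\to\pm\infty$ asymptotics), but it should be said. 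The paper avoids this by translating along points $(x_k,y_k)\in u^{-1}(\pi)$, which kills $\eta_j$ at the cost of a bounded multiplicative constant $e^{\pm\kappa}$ in the Jost bounds; your choice is arguably cleaner since the form $bx-ay$ vanishes identically on $\RR v_j$. Second, the gauge symmetry $A(-\lambda)=\sigma_2 A(\lambda)\sigma_2$ you invoke for the case $\sigma(j)=-1$ is correct but unnecessary: writing $w=\sigma(j)v_j=(a',b')$ with $b'>0$ and translating along $r_n v_j = -r_n w$, the same linear form $b'x-a'y$ still annihilates the translation direction, and the rest of your argument goes through verbatim with $w$ in place of $v_j$.
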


\begin{prop}\label{prop:bd.state.to.end}
$R(\{ \pm \lambda : \lambda \in \fB \}) \subset \{ v_1,\dots,v_{2m}\}$. 
\end{prop}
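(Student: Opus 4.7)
The strategy is to exploit the structure of the Jost solutions from Proposition~\ref{prop:Jost} together with a compactness-and-contradiction argument. Fix $\lambda = q + ip \in \fB$. Since $a(\lambda) = 0$, the Jost solutions satisfy $\Phi_{+,1}(\cdot,\cdot,\lambda) = c\,\Phi_{-,2}(\cdot,\cdot,\lambda)$ for some $c \in \CC \setminus \{0\}$. Taking $\Phi := \Phi_{+,1}$ and combining the two uniform bounds in Proposition~\ref{prop:Jost} yields
\[ |\Phi(x,y)| \leq C\min\bigl(e^{-(px+qy)/2},\, e^{(px+qy)/2}\bigr) = Ce^{-|px+qy|/2}, \]
so $\Phi$ decays exponentially transverse to the line $\ell_\lambda := \{(x,y) : px+qy = 0\}$, whose two ends at infinity (identifying $\CC \cong \RR^2$) are precisely $\pm R(\lambda)$. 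The goal is to show both lie in $\{v_1,\ldots,v_{2m}\}$.

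Next, I would apply Lemma~\ref{lemm:rotate.connection.lax.pair} to rotate $\lambda$ to $i$. A direct computation with $e^{-i\theta} = p - iq$ (where $\theta$ satisfies $e^{i\theta}\lambda = i$) gives $e^{-i\theta}R(\lambda) = i$, which means this reduction sends $\pm R(\lambda)$ to $\pm(0,1)$ and $\ell_\lambda$ to the $y$-axis, while transforming the bound to $|\Phi(x,y)| \leq Ce^{-|x|/2}$. After the reduction it suffices to show that $(0,\pm 1)$ are asymptotic directions of the rotated solution (still denoted $u$). Suppose for contradiction that $(0,1)$ is not an asymptotic direction (the case of $(0,-1)$ is symmetric). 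Then by Proposition~\ref{prop:asymp.dir.behavior}(2), there is a wedge $W$ around the positive $y$-axis in which $u$ tends to $0$ or $2\pi$ exponentially fast.

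For $y_n \to +\infty$, define $\Phi_n(x,y) := \Phi(x,y+y_n)$; this is parallel with respect to $(u(\cdot,\cdot+y_n), i)$ and obeys the inherited uniform bound $|\Phi_n| \leq Ce^{-|x|/2}$. By ODE compactness, a subsequence converges in $C^\infty_{\textnormal{loc}}(\RR^2)$ to a section $\Phi_\infty$ parallel with respect to the trivial (or $u \equiv 2\pi$) connection at $\lambda = i$. From \eqref{eq:defn.jost.trivial.u.S1} at $\lambda = i$, such parallel sections take the form $ae^{-x/2}(1,0)^T + be^{x/2}(0,1)^T$, and the inherited bound $|\Phi_\infty(x,y)| \leq Ce^{-|x|/2}$ forces $a = b = 0$. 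Hence $\Phi_\infty \equiv 0$, so $\Phi(x,y) \to 0$ locally uniformly in $x$ as $y \to +\infty$.

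The main obstacle, and subtlest step, is to convert this decay into a genuine contradiction. The key is to upgrade Proposition~\ref{prop:Jost}'s normalization $e^{x/2}\Phi(x,y) \to (1,0)^T$ as $x \to +\infty$ (which is stated pointwise in $y$) to hold \emph{uniformly} for $y$ large inside $W$. Revisiting the Picard iteration used in the proof of Proposition~\ref{prop:Jost}, the error terms involve integrals of $|\tilde A(\cdot,y)|$ that are controlled by the exponential decay in \eqref{eq:exp.decay.entire}; the standing contradiction hypothesis that no end of $u$ points into $W$ yields exactly the uniform-in-$y$ $L^1$ smallness on tails needed for the normalization to be uniform. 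This uniform normalization then gives $|\Phi(M,y)| \geq \tfrac{1}{2}e^{-M/2}$ for some fixed large $M$ and all $y$ sufficiently large in $W$, contradicting $\Phi(M,y) \to 0$ as $y \to +\infty$ established above.
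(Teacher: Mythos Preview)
Your overall strategy is sound, and the reduction to $\lambda=i$ together with the step showing $\Phi_\infty\equiv 0$ is correct. The gap is in the final contradiction. After you rotate by $\phi(z)=e^{i\theta}z$, the section you carry is $\phi^*\Phi_{+,1}$, which is parallel for $(\phi^*u,i)$ but is \emph{not} known to be the Jost solution of $\phi^*u$; so Proposition~\ref{prop:Jost}'s normalization $e^{x/2}\Phi(x,y)\to(1,0)^T$ is not available for it. Even if it were, your uniformity claim for the Picard tails $\int_M^\infty|\tilde A(s,y)|\,ds$ would require the asymptotic directions of $\phi^*u$ to avoid the positive $x$-direction; your contradiction hypothesis only excludes $(0,1)$, and any end of $\phi^*u$ pointing into $\{x>0\}$ contributes a fixed (not $o(1)$) amount to the tail integral at height $y$, since the horizontal line $\{y=\text{const}\}$ meets such an end at a bounded distance from its support.

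There is, however, a simple repair that stays within your framework: use the $y$-ODE instead of the $x$-normalization. At $\lambda=i$ one has $J(i)=0$, so by \eqref{eq:SG.AB.0} the background coefficient $B^0$ vanishes and, from \eqref{eq:lax.pair.B}, $|B(x,y)|\le C\,e^{-\kappa\mathscr D(x,y)}$. Under the contradiction hypothesis $(0,1)\notin\{e^{-i\theta}v_j\}$, for any fixed $x_0$ one has $\mathscr D(x_0,y)\ge c\,y - C$ for large $y$, hence $\int_{y_0}^\infty|B(x_0,s)|\,ds\to 0$ as $y_0\to\infty$. A Gr\"onwall/bootstrap in the equation $\partial_y\Phi=B\Phi$ then gives $|\Phi(x_0,y)|\ge\tfrac12|\Phi(x_0,y_0)|>0$ for all $y\ge y_0$ (here $\Phi(x_0,y_0)\ne 0$ since $\Phi$ is a nontrivial solution of a first-order linear ODE), contradicting $\Phi(x_0,y_n)\to 0$.

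For comparison, the paper's proof does not rotate first. It translates along the lines $\{(-q,p)\cdot(x,y)=\eta_k\}$, normalizes $\Phi$ by its supremum on each such line to force a \emph{nontrivial} limit $\tilde\Phi$, and then uses Corollaries~\ref{coro:jost.soln.bds.line.trivial}/\ref{coro:jost.soln.bds.line.trivial.flipped} to rule out all heteroclinic directions for the limiting $\tilde u$ except the one aligned with $\lambda$; Proposition~\ref{prop:asymp.dir.behavior} then identifies $v_j=(-q,p)$ (and $(q,-p)$ by sending $\eta_k\to-\infty$). Your (repaired) route trades this normalization-plus-classification step for the single observation $J(i)=0$, yielding a shorter argument at the cost of relying on a spectral coincidence at $\lambda=i$ rather than on the heteroclinic rigidity lemmas.
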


We prove these below, but first we observe how Theorem \ref{theo:SG.cone} follows from them.

\begin{proof}[Proof of Theorem \ref{theo:SG.cone}]
For an asymptotic direction $v_j$, 
\[ R(\sigma(j)v_j) \in \fB \]
by Proposition \ref{prop:ends.to.bd.state}. Proposition \ref{prop:bd.state.to.end} implies that
\[ -v_j = R(  -\sigma(j) R(\sigma(j)v_j)) \]
is an asymptotic direction. Since the asymptotic directions $v_1,\dots,v_{2m}$ are distinct, this completes the proof. 
\end{proof}
\begin{proof}[Proof of Proposition \ref{prop:ends.to.bd.state}]
Fix $j \in \{1,\dots,2m\}$ and consider $\sigma(j)v_j = (-q,p)$ for $p>0$. Denote $\lambda : = q + ip \in \SS^1 \cap \mathring H_+$.

\begin{claim}
	There exist $(x_k,y_k) \in u^{-1}(\pi)$ with $(x_k,y_k) \to \infty$ and
	\[
		u(x + x_k, y + y_k) \to \pm \HH(px+qy) \text{ in } C^\infty_\textrm{loc}(\RR^2)
	\] 
	as $k \to \infty$.
\end{claim}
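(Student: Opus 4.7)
The plan is to transfer to $\tilde u := u/\pi - 1$, which satisfies $\Delta \tilde u = W'(\tilde u)$ with the sine-Gordon potential at $\eps = 1$ and inherits the asymptotic directions $v_1, \ldots, v_{2m}$; then locate points $(x_k, y_k) \in \tilde u^{-1}(0) = u^{-1}(\pi)$ marching out along the $j$-th end; and finally invoke Proposition \ref{prop:asymp.dir.behavior} at those points to obtain a heteroclinic blow-down limit.

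First I would \emph{produce the sequence}. Apply Proposition \ref{prop:asymp.dir.behavior} to $\tilde u$ with $r_k := k$ and $z_k \equiv v_j$. Since $r_k (v_j^\perp \cdot z_k) = 0$, the hypothesis of case (1) in \eqref{eq:ends.deviate.asymptot.dir.bd.dist} is satisfied, so along a subsequence $\tilde u(\cdot + k v_j)$ converges in $C^\infty_{\textrm{loc}}(\RR^2)$ to $\HH((x, y) \cdot v_j^\perp + \eta_j)$ for some $\eta_j \in \RR$, where $\HH$ denotes the $\tilde u$-heteroclinic of \eqref{eq:ac.heteroclinic}. The unique zero line of this limit is $(x, y) \cdot v_j^\perp = -\eta_j$, across which the limit changes sign; so by $C^0_{\textrm{loc}}$-convergence we can pick $(x_k, y_k) \in \tilde u^{-1}(0)$ with $|(x_k, y_k) - k v_j| \leq |\eta_j| + 1$ for each $k$ large. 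This gives $|(x_k, y_k)| \to \infty$, $(x_k, y_k)/|(x_k, y_k)| \to v_j$, and $|v_j^\perp \cdot (x_k, y_k)| \leq |\eta_j| + 1$.

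Second I would \emph{blow down at $(x_k, y_k)$}. After a further subsequence so that $v_j^\perp \cdot (x_k, y_k) \to \eta_j' \in \RR$, reapply Proposition \ref{prop:asymp.dir.behavior} with $r_k' := |(x_k, y_k)|$ and $z_k' := (x_k, y_k)/r_k'$: since $r_k' (v_j^\perp \cdot z_k') = v_j^\perp \cdot (x_k, y_k) \to \eta_j'$ is finite, case (1) applies again and $\tilde u(\cdot + (x_k, y_k)) \to \HH((x, y) \cdot v_j^\perp + \eta_j')$ in $C^\infty_{\textrm{loc}}(\RR^2)$. Evaluating at the origin, $\tilde u(x_k, y_k) = 0$ forces $\HH(\eta_j') = 0$, hence $\eta_j' = 0$.

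Finally, the transformation $u = \pi(1 + \tilde u)$ sends the $\tilde u$-heteroclinic of \eqref{eq:ac.heteroclinic} to the $u$-heteroclinic $4 \arctan(e^t)$ of \eqref{eq:sg.heteroclinic}, and $v_j^\perp \in \{\pm(p, q)\}$ (since $v_j$ is a unit vector parallel to $(-q, p)$) gives $(x, y) \cdot v_j^\perp = \pm(px + qy)$. Therefore $u(\cdot + (x_k, y_k)) \to \HH(\pm(px + qy))$ in $C^\infty_{\textrm{loc}}(\RR^2)$, which matches the $\pm \HH(px + qy)$ form of the claim after accounting for the parity via $\HH(-t) = 2\pi - \HH(t)$ together with the sine-Gordon symmetry $u \mapsto 2\pi - u$ of \eqref{eq:simplified.SG}. \emph{The only delicate step} is the first one, where one relies on case (1) of Proposition \ref{prop:asymp.dir.behavior} applying along the asymptotic direction $v_j$; the remaining steps are a direct reapplication of that same proposition.
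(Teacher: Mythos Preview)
Your proof is correct and lands on the same conclusion via essentially the same mechanism (Proposition~\ref{prop:asymp.dir.behavior} plus evaluation at the origin to kill the shift). The one genuine difference is in how you locate the sequence $(x_k,y_k)\in u^{-1}(\pi)$: the paper appeals to Propositions~\ref{prop:HT.theory} and~\ref{prop:tangent.cone.infty.AC} (Hausdorff convergence of level sets to the tangent cone at infinity) to produce points in $u^{-1}(\pi)$ drifting along $v_j$, then applies Proposition~\ref{prop:asymp.dir.behavior} once and rules out the trivial alternative by $u_\infty(0,0)=\pi$; you instead invoke Proposition~\ref{prop:asymp.dir.behavior} twice, first at the deterministic points $kv_j$ (where the transverse-distance condition of case~(1) holds trivially) to get a heteroclinic limit and then the intermediate value theorem to find nearby zeros, and second at those zeros. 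Your route is slightly more self-contained, trading the varifold input for a second use of Proposition~\ref{prop:asymp.dir.behavior}.

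Two cosmetic points: the shift you call $\eta_j'$ in the second blow-down need not literally equal $\lim v_j^\perp\cdot(x_k,y_k)$ (Proposition~\ref{prop:asymp.dir.behavior} does not assert that identification), but your evaluation at the origin shows the shift is zero regardless, so this does not affect the argument. And your reconciliation of $\HH(\pm(px+qy))$ with the claim's $\pm\HH(px+qy)$ via $\HH(-t)=2\pi-\HH(t)$ and the $2\pi$-periodicity of the Lax data is exactly right; the paper's own statement has the same harmless ambiguity.
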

\begin{proof}[Proof of claim]
 To find such a sequence, we note that Propositions \ref{prop:HT.theory} and \ref{prop:tangent.cone.infty.AC} yield $(x_k,y_k) \in u^{-1}(\pi)$ with 
\[
\frac{(x_k,y_k)}{|(x_k,y_k)|} \to v_j \text{ as } k \to \infty.
\]
Then, thanks to Proposition \ref{prop:asymp.dir.behavior} (see also Remark \ref{rema:sg.potential}), we know that up to passing to a subsequence, $u(x+x_k,y+y_k)$ limits to a function $u_\infty$ that is either (1) $u_\infty \equiv 0,2\pi$ or (2) $u_\infty(x, y) = \HH(p_\infty x + q_\infty y + \eta_\infty)$ for $(p_\infty,q_\infty)\in\SS^1,\eta_\infty\in\RR$. Since the limit has $u_\infty(0,0) = \pi$, we must be in case (2) and $\eta_\infty = 0$. By Proposition \ref{prop:asymp.dir.behavior} again, we have $(p_\infty,q_\infty) = \pm(p,q)$. 
\end{proof}

In what follows, we will assume that
\[ \lim_k u(x+x_k,y+y_k) = \HH(px+qy) \]
and will explain the modifications when the limit is $-\HH(px+qy)$ at the end of the proof. By \eqref{eq:ends.deviate.asymptot.dir.bd.dist}, we can assume that
\[
px_k + qy_k \to -2\kappa \text{ as } k \to \infty,
\]
for some $\kappa \in \RR$. Then,
\begin{equation}\label{eq:shift.jost.end.eta.limit}
\frac{iK(\lambda)}{4} x_k - \frac{J(\lambda)}{4} y_k \to \kappa \text{ as } k \to \infty.
\end{equation}
Now consider the Jost solutions $\Phi_{+,1},\Phi_{-,2}$ for the original data $(u, \lambda)$, as constructed by Proposition \ref{prop:Jost}, which guarantees that 
\begin{align*}
|\Phi_{+,1}(x,y) | & \leq C e^{\re(\frac{iK(\lambda)}{4} x - \frac{J(\lambda)}{4}y)}, \\
|\Phi_{-,2}(x,y)| & \leq C e^{\re(-\frac{iK(\lambda)}{4} x + \frac{J(\lambda)}{4}y)},
\end{align*}
for some $C < \infty$. Combined with \eqref{eq:shift.jost.end.eta.limit}, we find that, as $k \to \infty$,
\begin{align*}
|\Phi_{+,1}(x+x_k,y+y_k) | & \leq C e^{\kappa + o(1)} e^{\re(\frac{iK(\lambda)}{4} x - \frac{J(\lambda)}{4}y)}, \\
|\Phi_{-,2}(x+x_k,y+y_k)| & \leq C e^{-\kappa + o(1)} e^{\re(-\frac{iK(\lambda)}{4} x + \frac{J(\lambda)}{4}y)}.
\end{align*}
One can absorb the $e^{\pm\kappa + o(1)}$ factors into the constant $C$, and pass to these sections to subsequential $C^\infty_\textrm{loc}(\RR^2;\CC^2)$ limits $\hat\Phi_{+,1}, \hat \Phi_{-,2}$ that are parallel with respect to the limiting data ($\HH(px+qy),\lambda)$, and satisfy:
\begin{align}
|\hat \Phi_{+,1}(x,y) | & \leq C  e^{\re(\frac{iK(\lambda)}{4} x - \frac{J(\lambda)}{4}y)}\label{eq:ends.to.bd.est.lim.1}, \\
|\hat \Phi_{-,2}(x,y)| & \leq C   e^{\re(-\frac{iK(\lambda)}{4} x + \frac{J(\lambda)}{4}y)} \label{eq:ends.to.bd.est.lim.2}.
\end{align}
Note that
\begin{equation} \label{eq:ends.to.bd.state.det}
	\det(\hat \Phi_{+,1}, \hat \Phi_{-,2}) = \det(\Phi_{+,1}, \Phi_{-,2})
\end{equation}
due to the pointwise convergence and Lemma \ref{lemm:jost.wronskian}.

Consider the rotation $\phi(z) = (p+iq)z$. It is easy to see that
\[ (p + iq) \lambda = i \]
and
\[ \phi^*u_\infty = \HH(x), \]
the latter using, e.g., that $\phi(x,y) = (px-qy,qx+py)$ in real coordinates $(x, y)$ on $\CC$ and that $p (px-qy) + q(qx+py) = x$. By Lemma \ref{lemm:rotate.connection.lax.pair}, $\phi^*\hat\Phi_{+,1}$, $\phi^*\hat\Phi_{-,2}$ are parallel with respect to $(\HH(x),i)$. However, \eqref{eq:ends.to.bd.est.lim.1}, \eqref{eq:ends.to.bd.est.lim.2} imply  
\begin{align*}
|\phi^*\hat \Phi_{+,1}(x,y) | & \leq C  e^{-\frac x 2}, \\
|\phi^*\hat \Phi_{-,2}(x,y)| & \leq C   e^{\frac x 2},
\end{align*}
on $\RR^2$, since 
\[
\phi^* \left( - \frac{iK(\lambda)}{4} x + \frac{J(\lambda)}{4} y \right) = -\frac{iK(\lambda)}{4} (px-qy) + \frac{J(\lambda)}{4}(qx+py) = x.
\]
Corollary \ref{coro:one.bound.state} then implies that 
\[
\det(\phi^*\hat \Phi_{+,1}, \phi^*\hat \Phi_{-,2}) \equiv 0,
\]
and thus $\det(\Phi_{+,1}, \Phi_{-,2}) = 0$ by \eqref{eq:ends.to.bd.state.det}, so $\lambda \in \fB$ by definition of $\fB$.

If $u_\infty = - \HH(px+qy)$ we can use an identical argument, except at the very last step we apply Corollary \ref{coro:one.bound.state.flipped}. This completes the proof. 
\end{proof}

\begin{proof}[Proof of Proposition \ref{prop:bd.state.to.end}]
Fix $\lambda = q+ip \in \fB$, and note that $p>0$ since, by definition, $\fB \subset \mathring{H}_+$. We will show that
\[ R(\lambda) = (-q,p) \in \{v_1,\dots,v_{2m}\} \]
and describe below how to show that $-R(\lambda) = (q,-p) \in\{v_1,\dots,v_{2m}\}$.

Because $\lambda \in \cB$, the Jost solutions $\Phi_{+,1}, \Phi_{-,2}$ of Proposition \ref{prop:Jost} with data $(u, \lambda)$ satisfy $\det(\Phi_{+,1},\Phi_{-,2}) = 0$. Hence, $\Phi_{-,2} = c \Phi_{+,1}$ for some $c \in \CC\setminus\{0\}$ (it cannot hold that $c=0$ thanks to the asymptotic conditions of the Jost solutions as proven in Proposition \ref{prop:Jost}). 

Set $\Phi = \Phi_{-,2}$. By the bounds from Proposition \ref{prop:Jost}, we have 
\begin{equation}\label{eq:end.to.bd.state.Jost.bds}
(e^{\re(\frac{iK(\lambda)}{4} x - \frac{J(\lambda)}{4}y)} + e^{\re(-\frac{iK(\lambda)}{4} x + \frac{J(\lambda)}{4}y)}) |\Phi(x,y)| \in L^\infty(\RR^2)
\end{equation}
Observe that 
\begin{equation}\label{eq:end.to.bd.state.exp.to.line}
\frac{iK(\lambda)}{4} x - \frac{J(\lambda)}{4}y = - \frac{px+qy}{2},
\end{equation}
Note that for $\eta \in \RR $ fixed, the supremum
\[
\sup_{(-q,p) \cdot (x,y)=\eta} |\Phi(x,y)| 
\]
is attained at some point along the line $(-q,p) \cdot (x,y)=\eta$. Indeed, we can parametrize the line by $\gamma(t) := (pt,qt + \tfrac \eta p)$ so $p\gamma_1(t) + q\gamma_2(t) = p^2 t + q^2 t + \tfrac q p \eta = t +\tfrac q p \eta$, and combining \eqref{eq:end.to.bd.state.Jost.bds} with \eqref{eq:end.to.bd.state.exp.to.line} (and $p>0$) we get $|\Phi(\gamma(t))| \to 0$ as $t\to\pm\infty$, which verifies our assertion.

Hence, for any $\eta_k\to \infty$, there is $(x_k,y_k) \in \RR^2$ so that 
\[
\eta_k = (-q,p) \cdot (x_k, y_k)
\]
and
\[
0 < \mu_k : = |\Phi(x_k,y_k)| = \sup_{-qx+py=\eta_k} |\Phi(x,y)| < \infty.
\]
Set $\Phi_k(x,y) : = \mu_k^{-1} \Phi(x+x_k,y+y_k)$ and $u_k(x,y) = u(x+x_k,y+y_k)$. Passing to a subsequence, $\Phi_k$ and $u_k$ limit to $\tilde\Phi$ and $\tilde u$ in $C^\infty_\textrm{loc}$. By Proposition \ref{prop:asymp.dir.behavior}, we have that either $\tilde u \equiv 0,2\pi$ or $\tilde u= \pm \HH(\tilde px+\tilde q y+\tilde\eta)$ for some $(\tilde p,\tilde q) \in \SS^1$  $\tilde\eta \in \RR$. Also, $\tilde \Phi$ is parallel with respect to the limiting data $(\tilde u, \lambda)$, and 
\[
|\tilde \Phi(0,0)| = \max_{-qx+py=0} |\tilde\Phi(x,y)| = 1. 
\]
If $\tilde u \equiv 0$ or $\tilde u \equiv 2\pi$, we have that
\[
\tilde\Phi(x,y) =   \left( \begin{matrix} e^{\frac i 4 K(\lambda) x - \frac 1 4 J(\lambda)y} & 0 \\ 0 &  e^{-\frac i 4 K(\lambda) x + \frac 1 4 J(\lambda)y} \end{matrix} \right) \bv
\]
for some $\bv \in \RR^2\setminus\{0\}$ by \eqref{eq:defn.jost.trivial.u}. However, \eqref{eq:end.to.bd.state.exp.to.line} forces $\tilde \Phi$ to be unbounded (e.g., along one end of the line $(-q,p) \cdot (x,y) = 0$). This is a contradiction.

Thus, we find that $\tilde u = \pm \HH(\tilde p x + \tilde q y +\tilde\eta)$. We can translate to ensure  that $\tilde \eta = 0$ and 
\[
\max_{(-q,p) \cdot (x,y) =0} |\tilde\Phi(x,y)| = 1.
\]
(The maximum may no longer be attained at the origin.) By \eqref{eq:ends.deviate.asymptot.dir.bd.dist},
\[
\limsup_{k\to\infty} |(x_k,y_k)\cdot(\tilde p,\tilde q)| < \infty. 
\]
Thus, $(\tilde p,\tilde q) \neq \pm (-q,p)$ since $(x_k,y_k)\cdot(-q,p) = \eta_k \to \infty$.

Replacing $(\tilde p,\tilde q)$ by $(-\tilde p,-\tilde q)$ if necessary (this just changes the $\pm$ in front of $\HH(\tilde px+\tilde q y)$) we can assume that 
\begin{equation}\label{eq:bd.state.to.ends.tilde.pq.dot.prod.pos}
p\tilde p + q\tilde q > 0. 
\end{equation}
We now rotate the data. Set $\phi(z) := (\tilde p + i\tilde q)z$. Note that
 \[ \phi^* \tilde u(x,y) = \pm\HH(x) \]
and, by \eqref{eq:bd.state.to.ends.tilde.pq.dot.prod.pos},
\[
\mu : = (\tilde p + i\tilde q) \lambda =  \tilde p q - \tilde q p + i(\tilde p p + \tilde q q) \in \SS^1 \cap \mathring{H}_+.
\]
By Lemma \ref{lemm:rotate.connection.lax.pair}, $\phi^*\tilde \Phi$ is parallel with respect to $(\pm\HH(x),\mu)$. Furthermore, we observe that for $t \in\RR$,
\[
|\phi^*\tilde \Phi((p\tilde p+q\tilde q)t,(\tilde p q - \tilde q p)t)| = |\tilde\Phi(p t,qt)| \leq \max_{(-q,p) \cdot (x,y)=0} |\tilde\Phi(x,y)| = 1,
\]
so (recalling \eqref{eq:bd.state.to.ends.tilde.pq.dot.prod.pos}) if we define $\ell$ to be the line
\[
\ell : = \{ (x,y)\in\RR^2 :  (-\tilde p q + \tilde q p, p\tilde p + q \tilde q) \cdot (x, y) = 0 \} ,
\]
we see that 
\[
\sup_{(x,y) \in \ell} |\phi^*\Phi(x,y)| \leq 1. 
\]
By Corollary \ref{coro:jost.soln.bds.line.trivial} (if $\phi^* \tilde u(x,y) = +\HH(x)$) and Corollary \ref{coro:jost.soln.bds.line.trivial.flipped} (if $\phi^* \tilde u(x,y) = -\HH(x)$), we find that $\mu=i$ since $\phi^*\tilde\Phi \not \equiv 0$. Returning to the definition of $\mu$, we find that $(\tilde p,\tilde q) = \pm (p,q)$. By \eqref{eq:bd.state.to.ends.tilde.pq.dot.prod.pos}, we find that $(\tilde p,\tilde q) = (p,q)$.

We claim that $(-q,p) \in \{v_1,\dots,v_{2m}\}$. This will establish the claim (up to showing that $(q,-p)$ is also an asymptotic direction, which we do below). Returning to the limiting procedure used to find $\tilde u$, Proposition \ref{prop:asymp.dir.behavior} implies that up to passing to a subsequence, there is $j\in \{1,\dots,2m\}$ so that
\[
\frac{(x_k,y_k)}{|(x_k,y_k)|} \to v_j 
\]
as $k\to\infty$ and moreover $(p,q)\cdot v_j = 0$. This implies that $v_j \in\{ \pm (-q,p)\}$. The remaining issue is to show that the sign is determined by the geometric setup used above. Recalling that
\[
\eta_k = (-q,p)\cdot(x_k,y_k),
\]
and we chose $\eta_k\to \infty$, we find that
\[
0 \leq (-q,p) \cdot v_j = \pm 1,
\]
so we find that the sign must have been ``$+$,'' i.e. $v_j = (-q,p)$. Thus, we find that $(-q,p) \in \{v_1,\dots,v_{2m}\}$ as desired. 

Finally, we observe that if we had chosen $\eta_k\to-\infty$, the exact same argument as above would apply, except in the end we would find that $v_j = (q,-p)$. This completes the proof.
\end{proof}

\section{Immersed geodesics representing the $p$-widths}\label{sec:immersed.p.width} 

Fix $(M^2,g)$ a closed Riemannian manifold and $X$ a cubical subcomplex of $I^k$ with double cover $\pi : \tilde X\to X$, as in Section \ref{sec:AP.AC}. Recall the definition of phase transition critical set from Section \ref{subsec:dey}.

\begin{prop}\label{prop:phase.tran.crit.immersed}
If $V \in \bC_{\textnormal{PT}}(\tilde \Pi)$ and we work with the sine-Gordon double-well potential \eqref{eq:SG-potential}, then there exist primitive closed geodesics $\sigma_1,\dots,\sigma_N$  (repetitions allowed) so that 
\[
V = \sum_{j=1}^N \bv(\sigma_j,\mathbf{1}_{\sigma_j}). 
\]
\end{prop}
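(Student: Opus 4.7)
The plan is to show that this proposition is essentially an unpacking of the Sine--Gordon limit theorem (Theorem \ref{theo:SG-lim}) for the specific sequences arising from phase-transition min-max critical points. Since $V \in \bC_{\textnormal{PT}}(\tilde \Pi)$, by definition there exist $\eps_i \to 0$ and min-max critical points $u_i$ of $E_{\eps_i}$ (in the sense of Proposition \ref{prop:AC.min.max}) such that the $\eps_i$-phase transition 1-varifolds $V_{\eps_i}[u_i]$ converge to $V$. My goal is simply to verify the hypotheses of Theorem \ref{theo:SG-lim} for this sequence.

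First, I will extract the index bound: by Proposition \ref{prop:AC.min.max}, the min-max critical points $u_i$ satisfy
\[ \Index_{\eps_i}(u_i) \leq \dim X =: k, \]
which is independent of $i$. Next, I will extract the energy bound: since $u_i$ is a min-max critical point, $E_{\eps_i}[u_i] = \bL_{\eps_i}(\tilde \Pi)$; by Proposition \ref{prop:AC-AP}, $\bL_{\eps_i}(\tilde \Pi) \to h_0 \bL_{\textnormal{AP}}(\Pi)$ as $\eps_i \to 0$, and $\bL_{\textnormal{AP}}(\Pi) < \infty$ since $\Pi$ contains at least one continuous map into $\cZ_1(M; \bF; \ZZ_2)$ (all of whose slices have uniformly bounded mass). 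In particular, $E_{\eps_i}[u_i]$ is uniformly bounded. Combining,
\[ \Index_{\eps_i}(u_i) + E_{\eps_i}[u_i] \leq \Lambda \]
for some $\Lambda < \infty$ independent of $i$.

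With these bounds in hand and the sine-Gordon potential \eqref{eq:SG-potential} fixed, Theorem \ref{theo:SG-lim} applies directly. It yields a subsequence (not relabeled) along which $V_{\eps_i}[u_i]$ converges to a stationary integral 1-varifold of the form $\sum_{j=1}^N \bv(\sigma_j, \mathbf{1}_{\sigma_j})$ for primitive closed geodesics $\sigma_1, \ldots, \sigma_N$ (repetitions allowed). Since the full sequence $V_{\eps_i}[u_i]$ already converges to $V$ by hypothesis, this subsequential limit must equal $V$, completing the proof.

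There is no substantive obstacle: all of the analytic content — the Hutchinson--Tonegawa compactness, the Allard--Almgren structure of stationary integral 1-varifolds, the Wang--Wei curvature estimates, and crucially the Liu--Wei tangent cone classification specific to sine-Gordon — has already been absorbed into Theorem \ref{theo:SG-lim}. The only things one must be slightly careful about are (i) noting that the hypotheses on index and energy do hold uniformly along phase-transition min-max critical points, and (ii) observing that since $V$ is itself the limit, any subsequential limit produced by Theorem \ref{theo:SG-lim} must agree with $V$.
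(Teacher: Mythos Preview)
The proposal is correct and follows essentially the same approach as the paper: unpack the definition of $\bC_{\textnormal{PT}}(\tilde\Pi)$ to obtain min-max critical points $(u_i,\eps_i)$ with $V_{\eps_i}[u_i]\rightharpoonup V$, invoke Propositions \ref{prop:AC.min.max} and \ref{prop:AC-AP} for the uniform index and energy bounds, and then apply Theorem \ref{theo:SG-lim}. Your version simply spells out a few more details (why $\bL_{\textnormal{AP}}(\Pi)<\infty$, and why the subsequential limit agrees with $V$) than the paper's terse two-line proof.
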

\begin{proof}
By definition of $\bC_{\textnormal{PT}}(\tilde \Pi)$ there is a sequence of min-max critical points $\{(u_i,\eps_i)\}_{i=1}^\infty \subset C^\infty(M) \times (0,\infty)$ so that $\eps_i\to 0$ and $V_{\eps_i}[u_i]\rightharpoonup V$. By Propositions \ref{prop:AC.min.max} and \ref{prop:AC-AP}, we have that 
\[
\limsup_i (\Index_{\eps_i}(u_{\eps_i}) + E_{\eps_i}(u_i)) < \infty. 
\]
The assertion now follows from Theorem \ref{theo:SG-lim}. 
\end{proof}
This immediately implies Theorem \ref{theo:immersed.geo.min.max} as follows:
\begin{proof}[Proof of Theorem \ref{theo:immersed.geo.min.max}]
Invoke Lemma \ref{lemm:P.p.m} to find cubical subcomplexes $X_i$ with bounded dimension and homotopy classes $\Pi_i$ containing $\bF$-continuous $p$-sweepouts so that $\lim_i \bL_{\textrm{AP}}(\Pi_i) = \omega_p(M,g)$. By Propositions \ref{prop:AC.min.max}, \ref{prop:AC-AP} and \ref{prop:phase.tran.crit.immersed}, there exist primitive closed geodesics $\sigma_{i,1},\dots,\sigma_{i,N_i}$ (repetition allowed) so that:
\[
\bL_{\textrm{AP}}(\Pi_i) = \sum_{j=1}^{N_i} \length_g(\sigma_{i,j}). 
\]
Note that $N_i$ is uniformly bounded above because $\length_g(\sigma_{i,j}) \geq 2\inj (M,g)$ for all $j$. Thus, we can pass to a subsequence and take a limit of the $\sigma_{i,j}$ yielding the desired geodesics. The exact statement of Theorem \ref{theo:immersed.geo.min.max} follows by grouping together geodesics that are not geometrically distinct.
\end{proof}

\section{The space of geodesic networks}\label{sec:geo.net}

In this section, we prove a bumpy metrics theorem for stationary geodesic networks under a length constraint. The unconstrained version of this result has been independently proven (in all codimensions) by Staffa \cite{Staffa:bumpy.geodesic.nets}. 

\subsection{The strata}

We fix a smooth closed 2-dimensional manifold M and a $k \in \NN$, $k \geq 3$. 

\begin{defi} \label{defi:geodesic.network.metric}
	We write
	\[ \met^k(M) := \{ C^k \text{ metrics on } M \}, \]
	and, for $g \in \met^k(M)$, $\eps > 0$,
	\[ \met^k(M, g, \eps) := \{ g' \in \met^k(M) : \Vert g' - g \Vert_{C^k} < \eps \}, \]
	where the $C^k$ norms are computed with respect to a smooth background metric on $M$ that can be fixed throughout the paper. 
\end{defi}

\begin{defi} \label{defi:geodesic.network.stationary.lambda}
	For $g \in \met^k(M)$, $\Lambda > 0$, we define:
	\[ \cS^\Lambda(g) := \{ S \in \cI\cV_1(M) \text{ is } g\text{-stationary and } \# \sing S + \Vert S \Vert(M, g) < \Lambda \}. \]
\end{defi}

We intend to prove a stratification theorem for $\cS^\Lambda(g)$ and a trichotomy theorem that results from this stratification. We rely substantially on the work of Allard--Almgren \cite{AllardAlmgren:1varifold}, according to which elements of $\cS^\Lambda(g)$ look like networks of geodesics and  the closure of $\cS^\Lambda(g)$ (in the varifold topology) is contained in (see Proposition \ref{prop:geodesic.network.stationary.varifold.limit} on $\# \sing S$ control under limits):
\[ \bar \cS^\Lambda(g) := \{ S \in \cI\cV_1(M) \text{ is } g\text{-stationary and } \# \sing S + \Vert S \Vert(M, g) \leq \Lambda \}. \]
In what follows, we proceed to construct good models for $\cS^\Lambda(g)$.

\begin{defi}[Graph structure] \label{defi:geodesic.network.graph.structure}
	We say $G = (V, E, \omega)$ is a graph structure if $G$ is a finite undirected weighted simple graph with vertices $V$, edges $E$, and edge weights $\omega : E \to \NN^*$. For $v \in V$, $E_v \subset V$ denotes the set of vertices that are joined to $v$, and $\deg_G v := \# E_v$ is the degree of $v$ in the graph.
\end{defi}

Graph structures will model the ``topology'' of elements of $\cS^\Lambda(g)$. One complication in trying to model elements of $\cS^\Lambda(g)$ with graphs is that pairs of points on a manifold can often be connected by multiple geodesic segments, and Definition \ref{defi:geodesic.network.graph.structure} does not distinguish between these segments. Our solution to this is to subdivide our edges by introducing auxiliary vertices so that each edge in this subdivided graph structure can be taken to be the unique length-minimizing geodesic segment between its endpoints. 

\begin{defi}[Graph immersion] \label{defi:geodesic.network.immersion}
	Let $G$ be a graph structure, $g \in \met^k(M)$. We call $\mathbf{p} \in M^V$ an immersion of $G$ in $(M,g)$ provided it satisfies:
	\begin{enumerate}
		\item[(I$_1$)] $\mathbf{p}$ is injective, and
		\item[(I$_2$)] $\{u,v\} \in E \implies \dist_g(\mathbf{p}(u), \mathbf{p}(v)) < \inj(M,g)$ .
	\end{enumerate}
	The space of all immersions of $G$ in $(M, g)$ is denoted $\imm_g(G, M)$.
\end{defi}

\begin{defi}[Varifold associated to immersed graph]
	Given a graph structure $G$, $g \in \met^k(M)$, and $\mathbf{p} \in \imm_g(G, M)$, we define $\iota_{g}(G, \mathbf{p}) \in \cI \cV_1(M)$ to be the integral 1-varifold associated with $(G, \mathbf{p})$, i.e.:
	\[ \iota_{g}(G, \mathbf{p}) := \sum_{\{u,v\} \in E} \mathbf{v}(\sigma_g(\mathbf{p}(u), \mathbf{p}(v)), \omega(\{u,v\}) \mathbf{1}_{\sigma_g(\mathbf{p}(u), \mathbf{p}(v))}), \]
	where the notation $\mathbf{v}(\cdot, \cdot)$ is as in \cite[Chapter 4]{Simon83} and $\sigma_g(p, q)$ denotes the closed $g$-length minimizing geodesic segment from $p$ to $q$ in $(M, g)$ provided $0 < \dist_g(p, q) < \inj(M, g)$ (which immersions satisfy).
\end{defi}

Note that $\iota_{g}(G, \cdot)$ is far from injective on $\imm_g(G, M)$. We proceed to make various canonical choices to control this gauge freedom.

\begin{defi}[$Q$-subdivided graph structure] \label{defi:geodesic.network.subdivision}
	A graph structure $G$ is $Q$-subdivided for some fixed $Q \in \NN$, $Q \geq 2$, provided it satisfies:
	\begin{itemize}
		\item[(S$_1$)] If $u_1, \ldots, u_k \in V$ are distinct vertices with degree $2$, $u_0$, $u_{k+1} \in V$ are vertices (not necessarily distinct) with degree $\neq 2$, and
			\[ \{ u_0, u_1 \}, \{ u_1, u_2 \}, \ldots, \{ u_{k-1}, u_k \}, \{ u_k, u_{k+1} \} \in E, \]
			then $k=Q$. Such configurations will be called ``chains'' in $(V, E)$.
		\item[(S$_2$)] If $u_0, u_1, \ldots, u_k \in V$ are distinct vertices with degree $2$, and
			\[ \{ u_0, u_1 \}, \{ u_1, u_2 \}, \ldots, \{ u_{k-1}, u_k \}, \{ u_k, u_0 \} \in E, \]
			then $k=Q$. Such configurations will be called ``cycles'' in $(V, E)$.
	\end{itemize}
\end{defi}

\begin{defi}[Graph embedding] \label{defi:geodesic.network.embedding}
	Suppose $G$ is a $Q$-subdivided graph structure for some $Q \in \NN$, $Q \geq 2$, and $g \in \met^k(M)$. We call $\mathbf{p} \in M^V$ an embedding of $G$ in $(M, g)$ provided it satisfies:
	\begin{enumerate}
		\item[(E$_1$)] $\mathbf{p} \in \imm_g(G, M)$.
		\item[(E$_2$)]   for all $\{ u, v \} \neq \{u', v'\} \in E$,
		\[   \sigma_g(\mathbf{p}(u), \mathbf{p}(v)) \cap \operatorname{int} \sigma_g(\mathbf{p}(u'), \mathbf{p}(v')) =  \emptyset. \]
	\end{enumerate}
	We call $\mathbf{p}$ a \textit{balanced} embedding if it additionally satisfies:
	\begin{enumerate}
		\item[(E$_3$)] If $u \in V$ has $E_u = \{v, v'\}$, then $\dist_g(\mathbf{p}(u), \mathbf{p}(v)) = \dist_g(\mathbf{p}(u), \mathbf{p}(v'))$.
	\end{enumerate}
	The space of embeddings of $G$ in $(M, g)$ is denoted $\emb_g(G, M)$, and the space of balanced embeddings of $G$ in $(M, g)$ is denoted $\cB \emb_g(G, M)$.
\end{defi}

Note that $\imm_g(G, M)$ and $\emb_g(G, M)$ are open in the product topology on $M^V$, and $\cB \emb_g(G, M)$ is relatively closed in $\emb_g(G, M)$.

\begin{lemm}[Structure of stationary embeddings] \label{lemm:geodesic.network.embedding.structure}
	Let $G$ be a $Q$-subdivided graph structure, $Q \in \NN$, $Q \geq 2$, $g \in \met^k(M)$, $\mathbf{p} \in \emb_g(G, M)$ be so that $\iota_g(G, \mathbf{p})$ is $g$-stationary.
	\begin{enumerate}
		\item \begin{enumerate}
				\item There exists an $\eps > 0$ such that for every $u \in V$, $\{u', v'\} \in E$
					\[  B^g_\eps(\mathbf{p}(u)) \cap \sigma_g(\mathbf{p}(u'), \mathbf{p}(v')) \neq \emptyset \implies u \in \{u', v'\}. \]
				\item   $\sing \iota_g(G, \mathbf{p}) = \{ \mathbf{p}(u) : u \in V, \; \deg_G u \neq 2 \}$.
				\item For $u \in V$ with $\deg_G u = 2$, $\omega(\{u,v\})$ is independent of $v \in E_u$.
			\end{enumerate}
		\item Suppose $G$ is connected and contains a vertex of degree $\neq 2$. Then:
			\begin{enumerate}
				\item $\reg \iota_g(G, \mathbf{p})$ contains no closed connected components.
				\item Each $u \in V$ with $\deg_G u = 2$ is a $u_i$ with $i \in \{ 1, \ldots, Q \}$ in a chain as in Definition \ref{defi:geodesic.network.subdivision}; the chain is unique. Each $u \in V$ with $\deg_G u \neq 2$ is a $u_i$ with $i \in \{ 0, Q+1 \}$ in a chain as in Definition \ref{defi:geodesic.network.subdivision}; this chain need not be unique. Thus,
					\[ \qquad \qquad \quad \# V = \# \sing \iota_g(G, \mathbf{p}) + Q \cdot \# \{ \text{components of } \reg \iota_g(G, \mathbf{p}) \} \]
					in the configuration of 1 vertex per singular point and $Q$ vertices along each regular segment.
				\item Each $e \in E$ is of them form $\{ u_i, u_{i+1} \}$ with $i \in \{ 0, \ldots, Q \}$ in a chain as in Definition \ref{defi:geodesic.network.subdivision}; the chain is unique. Thus,
					\[ \#E = (Q+1) \cdot \# \{\text{components of } \reg \iota_g(G, \mathbf{p})\} \]
					in the configuration of $Q+1$ edges along each regular segment.
				\item For all $\{u, v\} \in E$,
					\[ \omega(\{u,v\}) = \Theta^1(\iota_g(G, \mathbf{p}), \cdot) \]
					evaluated at any interior point of $\sigma_g(\mathbf{p}(u), \mathbf{p}(v))$.
			\end{enumerate}
		\item Suppose $G$ is connected and only contains degree $2$ vertices. Then:
			\begin{enumerate}
				\item $\reg \iota_g(G, \mathbf{p})$ consists of a single closed geodesic loop.
				\item Each $u \in V$ is a $u_i$ with $i \in \{ 0, \ldots, Q \}$ in cycle as in  Definition \ref{defi:geodesic.network.subdivision}; the cycle is unique. Thus,
					\[ \# V = Q+1. \]
				\item Each $e \in E$ is of the form $\{ u_i, u_{i+1} \}$ (indices taken mod $Q+1$) with $i \in \{ 0, \ldots, Q \}$ in cycle as in Definition \ref{defi:geodesic.network.subdivision}. Thus,
					\[ \# E = Q+1. \]
				\item For all $\{u, v\} \in E$,
					\[ \omega(\{u,v\}) = \Theta^1(\iota_g(G, \mathbf{p}), \cdot) \]
					evaluated at any point on $\supp \iota_g(G, \mathbf{p})$.
			\end{enumerate}
		\item If $G' \subset G$ is a connected component, then $\mathbf{p} \vert {G'} \in \emb_g(G', M)$ and is  balanced if $\mathbf{p}$ is.
		\item If $G'$, $G'' \subset G$ are distinct connected components, then
				\[ \supp \iota_g(G', \mathbf{p} \vert {G'}) \cap \supp \iota_g(G'', \mathbf{p} \vert {G''}) = \emptyset. \]
	\end{enumerate}
\end{lemm}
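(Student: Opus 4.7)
\medskip

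\noindent\textbf{Proof plan.} My approach is to reduce everything to local structure at the vertex images $\mathbf{p}(u)$: once I know the shape of $\iota_g(G,\mathbf{p})$ near each $\mathbf{p}(u)$, the global statements (2)--(5) are bookkeeping through the chain/cycle axioms (S$_1$)--(S$_2$) and the injectivity/disjointness axioms (E$_1$)--(E$_2$).

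First I would dispatch part (1). For (1a), because $V$ is finite and each geodesic segment is length-minimizing and shorter than $\inj(M,g)$ by (I$_2$), I can choose a single $\eps > 0$ small enough that $B^g_\eps(\mathbf{p}(u))$ contains no vertex image other than $\mathbf{p}(u)$ (using injectivity (E$_1$)) and meets no edge $\sigma_g(\mathbf{p}(u'),\mathbf{p}(v'))$ with $u \notin \{u',v'\}$ (using interior-disjointness (E$_2$) and injectivity again). For (1b)--(1c), I would apply the first variation of length at $\mathbf{p}(u)$: stationarity of $\iota_g(G,\mathbf{p})$ is equivalent to the balancing
\[
\sum_{v \in E_u} \omega(\{u,v\})\, T_v = 0
\]
at every $u \in V$, where $T_v \in T_{\mathbf{p}(u)}M$ denotes the outgoing unit tangent to $\sigma_g(\mathbf{p}(u),\mathbf{p}(v))$ at $\mathbf{p}(u)$. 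If $\deg_G u = 2$, this forces the two $T_v$'s to be antipodal and the two weights to coincide, so the two incident segments concatenate into a single unit-speed geodesic through $\mathbf{p}(u)$ with constant multiplicity; hence $\mathbf{p}(u)$ is a regular point of the varifold. If $\deg_G u \neq 2$ (in particular $\geq 3$, since degree $1$ violates the balancing identity), then (E$_2$) together with uniqueness of geodesics from initial data forbids two incident edges from sharing an outgoing direction, so the tangent cone of $\iota_g(G,\mathbf{p})$ at $\mathbf{p}(u)$ is a stationary sum of at least three distinct half-lines, which by Allard--Almgren \cite{AllardAlmgren:1varifold} is singular.

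Parts (2) and (3) then follow by combinatorics on $G$. Under the $Q$-subdivision axioms, every maximal chain of degree-$2$ vertices has exactly length $Q$ and joins two vertices of degree $\neq 2$, while every cycle of degree-$2$ vertices has exactly length $Q+1$. Using part (1), the image of such a chain is a smooth embedded geodesic arc forming one connected component of $\reg \iota_g(G,\mathbf{p})$ (with constant multiplicity equal to the common weight along the chain), and its closure meets $\sing \iota_g(G,\mathbf{p})$ exactly at the images of the two endpoint vertices of degree $\neq 2$. When $G$ is connected with at least one degree-$\neq 2$ vertex, this rules out closed components of $\reg \iota_g(G,\mathbf{p})$ (case (2)), and the chain/edge counts follow by a direct tally. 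When $G$ is connected with only degree-$2$ vertices, (S$_2$) forces $G$ itself to be a cycle of length $Q+1$, whose image is a single closed geodesic with uniform multiplicity (case (3)); the density identifications in (2d) and (3d) follow from constancy of the weight along each chain/cycle together with the definition of $\iota_g$.

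Finally, parts (4) and (5) are essentially formal. Restriction to a connected subcomplex $G' \subset G$ preserves (E$_1$)--(E$_3$) trivially because they are pointwise/pairwise conditions on $V$ and $E$. For the disjointness of supports in (5), any intersection point $x \in \supp \iota_g(G',\mathbf{p}\vert_{G'}) \cap \supp \iota_g(G'',\mathbf{p}\vert_{G''})$ would lie on edges $e' \in E(G')$ and $e'' \in E(G'')$ with disjoint vertex sets (since $G'$, $G''$ are distinct components); interior-disjointness (E$_2$) forbids $x$ from being interior to both, and injectivity (E$_1$) forbids a vertex of $G'$ from coinciding with a vertex of $G''$, ruling out the endpoint case as well. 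The main obstacle I anticipate is executing part (1b) cleanly---specifically, making rigorous the claim that two balanced geodesic segments meeting tangentially with equal weights concatenate into a smooth geodesic, and simultaneously identifying the Allard--Almgren regular/singular dichotomy with the degree-$2$/degree-$\neq 2$ dichotomy on $V$; this requires a careful use of the balancing identity together with the geodesic ODE at each vertex.
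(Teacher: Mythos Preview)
Your proposal is correct and follows essentially the same approach as the paper: both localize at vertex images via the balancing identity $\sum_{v\in E_u}\omega(\{u,v\})\tau_{u,v}(\mathbf{p}(u))=0$ to establish the degree-$2$/degree-$\neq 2$ dichotomy in (1)(b)--(c), then deduce (2)--(3) from the chain/cycle axioms and (4)--(5) from (E$_1$)--(E$_2$). The paper argues (1)(a) by contradiction (letting $\eps\to 0$) rather than your direct compactness argument, and it explicitly isolates the preliminary step $\sing\iota_g(G,\mathbf{p})\subset\{\mathbf{p}(u):u\in V\}$ before the vertex analysis, but these are presentational differences only.
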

\begin{proof}
	(1)(a). If this failed with $\eps \to 0$, then by the finiteness of $V$, $E$, there would exist fixed $u \in V$, $\{u', v'\} \in E$ with
	\[ u \not \in \{u', v'\} \text{ and } \mathbf{p}(u) \in \sigma_g(\mathbf{p}(u'), \mathbf{p}(v')). \]
	In view of (E$_1$) (specifically (I$_1$)), this in turn implies that
	\[ u \not \in \{u', v'\} \text{ and } \mathbf{p}(u) \in \operatorname{int} \sigma_g(\mathbf{p}(u'), \mathbf{p}(v')), \]
	which violates (E$_2$).
	
	(1)(b), (1)(c) We first claim that
	\begin{equation} \label{eq:geodesic.network.embedding.structure.1.a.sing.in.p}
		\sing \iota_g(G, \mathbf{p}) \subset \{ \mathbf{p}(u) : u \in V \}.
	\end{equation}
	Take $p \in \sing \iota_g(G, \mathbf{p})$. There exists $\{u',v'\} \in E$ so that $p \in \sigma_g(\mathbf{p}(u'), \mathbf{p}(v'))$, otherwise $p \not \in \supp \iota_g(G, \mathbf{p}) \supset \sing \iota_g(G, \mathbf{p})$. Now if
	\[ p \in \sigma_g(\mathbf{p}(u'), \mathbf{p}(v')) \setminus \operatorname{int} \sigma_g(\mathbf{p}(u'), \mathbf{p}(v')) = \{ \mathbf{p}(u'), \mathbf{p}(v') \}, \]
	then \eqref{eq:geodesic.network.embedding.structure.1.a.sing.in.p} follows. So let us rule out
	\[ p \in \operatorname{int} \sigma_g(\mathbf{p}(u'), \mathbf{p}(v')). \]
	In this case there has to exist another $\{u, v\} \in E$ such that $p \in \sigma_g(\mathbf{p}(u), \mathbf{p}(v))$, otherwise $\iota_g(G, \mathbf{p})$ would locally equal the smooth curve $\operatorname{int} \sigma_g(\mathbf{p}(u), \mathbf{p}(v))$ near $p$, contradicting $p \in \sing \iota_g(G, \mathbf{p})$. But the existence of $\{u, v\}$ means 
	\[ p \in \sigma_g(\mathbf{p}(u), \mathbf{p}(v)) \cap \operatorname{int} \sigma_g(\mathbf{p}(u'), \mathbf{p}(v')), \]
	in violation of (E$_2$). This completes the proof of \eqref{eq:geodesic.network.embedding.structure.1.a.sing.in.p}.
	
	Given \eqref{eq:geodesic.network.embedding.structure.1.a.sing.in.p}, it remains to show that $\mathbf{p}(u)$ is singular if and only if $\deg_G u \neq 2$. Let $u \in V$. By  part (1)(a), there exists a ball around $\mathbf{p}(u)$ such that the only segments entering the ball are of the form $\sigma_g(\mathbf{p}(u), \mathbf{p}(v))$, $v \in E_u$. Then, the stationarity condition (\cite[(1)]{AllardAlmgren:1varifold}) applied to $\mathbf{p}(u)$ is
		\begin{equation} \label{eq:geodesic.network.embedding.structure.sum.tangents}
			\sum_{v \in E_u} \omega(\{u,v\}) \tau_{u,v}(\mathbf{p}(u)) = 0;
		\end{equation}
		here $\tau_{u,v}$ is the unit normal along $\sigma_g(\mathbf{p}(u), \mathbf{p}(v))$ oriented from $\mathbf{p}(u)$ to $\mathbf{p}(v)$. 
		
		Suppose $\deg_G u = 2$, and write $E_u = \{v_1, v_2\}$. Then \eqref{eq:geodesic.network.embedding.structure.sum.tangents} implies that $\tau_{u,v_1}(\mathbf{p}(u))$, $\tau_{u,v_2}(\mathbf{p}(u))$ are linearly dependent, so the curves $\sigma_g(\mathbf{p}(u), \mathbf{p}(v_1))$ and $\sigma_g(\mathbf{p}(u), \mathbf{p}(v_2))$ join smoothly at $\mathbf{p}(u)$ and $\mathbf{p}(u) \in \reg \iota_g(G, \mathbf{p})$. Now, $\omega(\{u,v_1\}) = \omega(\{u,v_2\})$ follows from \eqref{eq:geodesic.network.embedding.structure.sum.tangents} and the fact that $\tau_{u,v_1}(\mathbf{p}(u))$, $\tau_{u,v_2}(\mathbf{p}(u)$ have unit length.
		
		Suppose $\deg_G u \neq 2$. Then no two of the segments $\sigma_g(\mathbf{p}(u), \mathbf{p}(v))$, where $v \in E_u$, may overlap; otherwise, we contradict either the injectivity of $\mathbf{p}$ (Definition \ref{defi:geodesic.network.immersion}'s (I$_1$)) or Definition \ref{defi:geodesic.network.embedding}'s (E$_2$). 
		
		As a result, no two of the tangent vectors in \eqref{eq:geodesic.network.embedding.structure.sum.tangents} coincide, so the varifold tangent $\VarTan(\iota_g(G, \mathbf{p}), \mathbf{p}(u))$ isn't supported on a line, so $\mathbf{p}(u) \in \sing \iota_g(G, \mathbf{p})$.
		
	(2)(a). By construction, $\supp \iota_g(G, \mathbf{p})$ is connected. If $\reg \iota_g(G, \mathbf{p})$ contained a closed connected component, then it would have to coincide with $\supp \iota_g(G, \mathbf{p})$. This contradicts $\sing \iota_g(G, \mathbf{p}) \neq \emptyset$ by part (1)(b).
	
	(2)(b), (2)(c). It is well-known that the degree-2 vertices form a subgraph whose connected components form cycles or chains, i.e., (S$_1$)'s or (S$_1$)'s. By part (2)(a), only chains may occur, since cycles give rise to closed connected components of $\reg \iota_g(G, \mathbf{p})$ by part (1)(b). By Definition \ref{defi:geodesic.network.embedding} (E$_2$) and part (1)(b), distinct chains trace out distinct components of $\reg \iota_g(G, \mathbf{p})$. The rest follows from Definition \ref{defi:geodesic.network.graph.structure} (S$_1$).
	
	(2)(d). This follows from the definition of $\iota_g(G, \mathbf{p})$, part (1)(b), and constancy.
	
	(3)(a). The result follows from the fact that $\supp \iota_g(G, \mathbf{p})$ must be closed and connected, while $\sing \iota_g(G, \mathbf{p}) = \emptyset$ by part (1)(b).
	
	(3)(b), (3)(c). It is well-known that a connected graph with only degree-2 vertices must be a cycle. So, our graph is as in Definition \ref{defi:geodesic.network.graph.structure} (S$_2$).
	
	(3)(d). This follows from the definition of $\iota_g(G, \mathbf{p})$, part (1)(b), and constancy.
	
	(4). Trivial.
	
	(5). Definition \ref{defi:geodesic.network.embedding}'s (E$_2$) guarantees that distinct connected components have non-intersecting images.
\end{proof}

\begin{coro} \label{coro:geodesic.network.embedding.structure}
	Fix $Q \in \NN$, $Q \geq 2$, and $g \in \met^k(M)$. Suppose, for $i=1$, $2$, $G_i$ is a $Q$-subdivided graph structure and $\mathbf{p}_i \in \emb_g(G_i, M)$, and that
	\[ \iota_g(G_1, \mathbf{p}_1) = \iota_g(G_2, \mathbf{p}_2) \text{ is } g\text{-stationary}. \]
	Then, there exists a graph isomorphism $\varphi : G_1 \to G_2$ satisfying:
	\begin{enumerate}
		\item $\iota_g(G_1', (\mathbf{p}_2 \circ \varphi) \vert {G_1'}) = \iota_g(G_1', \mathbf{p}_1 \vert {G_1'})$ for every component $G_1' \subset G_1$.
		\item Suppose $G_1' \subset G_1$ is a component containing a degree $\neq 2$ vertex and that $\mathbf{p}_1$, $\mathbf{p}_2$ are both balanced. Then, $\mathbf{p}_2 \circ \varphi \equiv \mathbf{p}_1$ on $V(G_1')$.
		\item Suppose $G_1' \subset G_1$ is a component containing only degree $2$ vertices and that $\mathbf{p}_1$, $\mathbf{p}_2$ are both balanced. Then, there exists a parallel tangent vector field $\tau$ on $\iota_g(G_1', \mathbf{p}_1 \vert {G_1'})$ such that $\mathbf{p}_2 \circ \varphi \equiv \exp_{\mathbf{p}_1} \tau$ on $V(G_1')$ and $\Vert \tau \Vert < 2 \Vert \iota_g(G_1', \mathbf{p}_1 | G_1')(M, g)\Vert/(Q+1)$.
	\end{enumerate}
\end{coro}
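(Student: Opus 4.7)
First reduce to the connected case: by Lemma \ref{lemm:geodesic.network.embedding.structure} parts (4) and (5), distinct connected components of a $Q$-subdivided graph structure are mapped to disjoint connected components of $\supp \iota_g$, so connected components of $G_1$ and of $G_2$ are each in bijection with connected components of the common support $\supp \iota_g(G_1,\mathbf{p}_1) = \supp \iota_g(G_2,\mathbf{p}_2)$. Matching components across $G_1, G_2$ via their shared image in $M$, it suffices to construct $\varphi$ on each pair of matched connected components separately.

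\textbf{Component containing a degree $\neq 2$ vertex.} By Lemma \ref{lemm:geodesic.network.embedding.structure}(2)(b), the degree $\neq 2$ vertices of $G_i$ are injectively sent by $\mathbf{p}_i$ onto $\sing \iota_g$ restricted to this component; define $\varphi$ on degree $\neq 2$ vertices by the unique bijection making $\mathbf{p}_2 \circ \varphi = \mathbf{p}_1$. By Lemma \ref{lemm:geodesic.network.embedding.structure}(2)(b),(c), the degree-$2$ vertices of $G_i$ lie on uniquely determined length-$Q$ chains connecting pairs of singular points, each chain corresponding to a connected component of $\reg \iota_g$ along which the multiplicity $\omega$ agrees with $\Theta^1(\iota_g,\cdot)$ by Lemma \ref{lemm:geodesic.network.embedding.structure}(2)(d). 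Thus the chains of $G_1$ and $G_2$ are canonically paired by their common geodesic arcs in $\reg \iota_g$, and on each matched chain (each with exactly $Q$ interior vertices and $Q+1$ edges carrying the same multiplicity) extend $\varphi$ by any order-preserving bijection of interior vertices; this respects the edge weights and the graph structure. Property (1) holds because both sides integrate to the same multiplicity on the same geodesic arc, independently of where the interior vertices are placed along it. For property (2), balancedness (E$_3$) together with uniqueness of length-minimizing geodesics inductively forces each interior vertex to lie at the equidistributed position on the arc determined by its endpoints in $\sing \iota_g$; since endpoints already coincide, so do all interior vertices.

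\textbf{Component containing only degree-$2$ vertices.} By Lemma \ref{lemm:geodesic.network.embedding.structure}(3), each of $G_1', G_2'$ is a cycle on exactly $Q+1$ vertices carrying a common multiplicity $\omega = \Theta^1(\iota_g,\cdot)$, and $\supp \iota_g = \sigma$ is a single closed geodesic. Any cyclic graph isomorphism $\varphi : G_1' \to G_2'$ (of which there are $2(Q+1)$) preserves weights, so (1) holds automatically. For (3), note that (E$_3$) applied iteratively shows that for each $i$, the $Q+1$ vertices of $G_i'$ subdivide $\sigma$ into $Q+1$ arcs of equal length $L/(Q+1)$, where $L = \length_g(\sigma) = \omega^{-1} \Vert \iota_g(G_i',\mathbf{p}_i \vert G_i')\Vert(M,g)$. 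Hence $\mathbf{p}_1$ and $\mathbf{p}_2$ parametrize two equidistributed $(Q+1)$-point configurations on $\sigma$; they differ only by a rigid translation along $\sigma$. Choose the cyclic shift in $\varphi$ so that the shortest displacement from $\mathbf{p}_1(v)$ to $\mathbf{p}_2(\varphi(v))$ along $\sigma$ has magnitude $\leq L/(Q+1)$, and let $\tau$ be the parallel vector field along $\sigma$ with that (constant) value. Then $\mathbf{p}_2\circ\varphi = \exp_{\mathbf{p}_1}\tau$, and since $\omega \geq 1$,
\[
\Vert \tau \Vert \leq \tfrac{L}{Q+1} \leq \tfrac{\omega L}{Q+1} < \tfrac{2\Vert \iota_g(G_1',\mathbf{p}_1\vert G_1')\Vert(M,g)}{Q+1},
\]
giving the stated bound.

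\textbf{Main obstacle.} The only delicate point is verifying that the pairing of chains (in the first case) and the cyclic matching (in the second case) can be promoted to an actual graph isomorphism preserving both the combinatorial structure and edge weights; this relies crucially on the rigidity forced by the $Q$-subdivided hypothesis (Definition \ref{defi:geodesic.network.subdivision}) and on the constancy of multiplicity along regular arcs guaranteed by Lemma \ref{lemm:geodesic.network.embedding.structure}(1)(c),(2)(d),(3)(d).
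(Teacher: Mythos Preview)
Your proposal is correct and follows precisely the route the paper intends: the paper states this result as a corollary of Lemma~\ref{lemm:geodesic.network.embedding.structure} without supplying a proof, and your argument is the natural unpacking of that lemma's structural conclusions (bijection of singular vertices via $\sing\iota_g$, bijection of chains via components of $\reg\iota_g$, weight-matching via constancy of $\Theta^1$, and equidistribution from (E$_3$)). One cosmetic remark: in the cycle case you can sharpen $\Vert\tau\Vert \le L/(Q+1)$ to $\Vert\tau\Vert \le L/(2(Q+1))$ by also allowing reflections or the opposite direction of travel, but your weaker bound already yields the stated strict inequality since $\omega \ge 1$.
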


\begin{defi} \label{defi:geodesic.network.manifold.slice}
	Let $\Lambda > 0$, $g \in \met^k(M)$, and $G$ be a $Q$-subdivided graph structure with any $Q \in \NN$, $Q \geq 2$. We define:
	\[ \cS^\Lambda_{G}(g) := \cS^\Lambda(g) \cap \{ \iota_g(G, \mathbf{p}) : \mathbf{p} \in \emb_g(G, M) \}.  \]
\end{defi}

The following theorem tells us that our definitions capture $\cS^\Lambda(g)$ well as long as $Q$ is large, and that we may restrict to $\cB \emb_g(G, M) \subset \emb_g(G, M)$.

\begin{theo}[Finite stratification theorem] \label{theo:geodesic.network.finite.stratification}
	Suppose $g \in \met^k(M)$, $\Lambda > 0$, and $Q \in \NN$ satisfy
	\begin{equation} \label{eq:geodesic.network.finite.stratification.Q}
		Q \cdot \inj(M, g) > \Lambda.
	\end{equation}
	There exists a finite set $\cG = \cG(\Lambda, Q)$ of $Q$-subdivided graph structures with
	\begin{equation} \label{eq:geodesic.network.finite.stratification.bounds}
		\# \cG \leq C(\inj(M, g), \Lambda, Q),
	\end{equation}
	so that if $\cB \emb_g(\cG, M) := \cup_{G \in \cG} (\{G\} \times \cB \emb_g(G, M))$, then one can construct a map $j^{\Lambda}_Q : \cS^\Lambda(g) \to \cB \emb_g(\cG, M)$ satisfying
	\[ \iota_{g} \circ j^\Lambda_Q \equiv \operatorname{Id} \text{ on } \cS^\Lambda(g). \]
	Thus, $\cS^\Lambda(g) = \cup_{G \in \cG} \cS^\Lambda_G(g)$, i.e., $\cS^\Lambda(g)$ can be generated by balanced embeddings of finitely many $Q$-subdivided graphs structures.
\end{theo}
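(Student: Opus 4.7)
The plan is to explicitly construct $j^\Lambda_Q$ using the geodesic net structure of each $S \in \cS^\Lambda(g)$, and then bound the resulting combinatorial data. By Allard--Almgren \cite{AllardAlmgren:1varifold} (cf.\ \cite[Proposition 3.4]{ZhouZhu:networks}), any $S \in \cS^\Lambda(g)$ is a geodesic net: $\sing S$ is a finite set of isolated points; each connected component of $\reg S$ is either a maximal arc of embedded geodesic whose two (possibly coinciding) endpoints lie in $\sing S$, or an embedded primitive closed geodesic disjoint from $\sing S$; each such component carries a constant integer multiplicity, and the unit tangent rays at each $p \in \sing S$, weighted by the multiplicities of the incident arcs, sum to zero.

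Given $S$, I build a $Q$-subdivided graph structure $G(S) = (V, E, \omega)$ with $V$ consisting of (i) the points of $\sing S$ as the degree-$\neq 2$ vertices, (ii) for each maximal regular arc of length $L$, $Q$ degree-$2$ vertices inserted at equally-spaced arc-length parameters $jL/(Q+1)$ with $1 \leq j \leq Q$, and (iii) for each closed-loop component of $\reg S$, $Q+1$ degree-$2$ vertices placed at equally-spaced parameters starting from an arbitrarily fixed basepoint. The edge set $E$ connects consecutive vertices along each arc/loop, and $\omega$ inherits the multiplicity of the corresponding component. The tautological assignment $\mathbf{p}(S) : V \to M$ then lies in $\cB\emb_g(G(S), M)$: (E$_3$) is automatic from equal subdivision, while (I$_1$) and (E$_2$) follow from embeddedness and pairwise disjointness of the components of $\reg S$. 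The use of the hypothesis $Q \cdot \inj(M, g) > \Lambda$ is for (I$_2$): every subdivided edge has length at most $L/(Q+1) \leq \Vert S \Vert(M, g)/(Q+1) < \Lambda/(Q+1) < \inj(M, g)$. By construction $\iota_g(G(S), \mathbf{p}(S)) = S$, so setting $j^\Lambda_Q(S) := (G(S), \mathbf{p}(S))$ satisfies the required identity, and consequently $\cS^\Lambda(g) = \cup_{G \in \cG} \cS^\Lambda_G(g)$.

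The main obstacle and the heart of the argument is the quantitative finiteness \eqref{eq:geodesic.network.finite.stratification.bounds} of $\cG := \{G(S) : S \in \cS^\Lambda(g)\}/{\cong}$. For this I would invoke monotonicity for stationary $1$-varifolds in a Riemannian manifold at scales $r \leq c(g)\,\inj(M, g)$, yielding a pointwise bound $\Theta^1(S, p) \leq C(\inj(M, g))\,\Lambda$ uniformly in $S$ and $p$. At regular points $\Theta^1(S, \cdot)$ equals the local multiplicity, so $\omega \leq C(\inj(M, g))\,\Lambda$; at each $p \in \sing S$, $\Theta^1(S, p) = \tfrac12 \sum \omega(e)$ over edges incident to $p$ (self-loops counted twice), and since every weight is $\geq 1$, the degree of $p$ in the unsubdivided graph is also $\leq C(\inj(M, g))\,\Lambda$. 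Combined with $\# \sing S < \Lambda$ this bounds the number of unsubdivided edges by $C(\inj(M, g))\,\Lambda^2$, while closed-loop components number at most $\Lambda/(2\,\inj(M, g))$. After subdividing, $\#V$, $\#E$, and $\max \omega$ of $G(S)$ are all controlled by $(\inj(M, g), \Lambda, Q)$, so the number of isomorphism classes is finite with cardinality bounded in these quantities. The anticipated delicate point is the uniform density bound, which needs to be applied at a scale independent of $S$; everything else is bookkeeping ensured by Lemma \ref{lemm:geodesic.network.embedding.structure} and Definition \ref{defi:geodesic.network.subdivision}.
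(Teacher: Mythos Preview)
Your proposal is correct and follows essentially the same approach as the paper: build $G(S)$ and $\mathbf{p}(S)$ by placing $\sing S$ as the degree-$\neq 2$ vertices, inserting $Q$ equidistant interior vertices on each regular arc and $Q+1$ on each closed loop, then bound $\#\cG$ via the monotonicity density estimate, the a priori bound $\#\sing S<\Lambda$, and counting arcs (via the degree bound at singular points) and loops (via the $2\inj(M,g)$ lower length bound). The only point the paper makes more explicit than you do is that each subdivided sub-arc, having intrinsic length $<\inj(M,g)$, is in fact the minimizing geodesic $\sigma_g$ between its endpoints, so that $\iota_g(G(S),\mathbf{p}(S))=S$ genuinely holds; this is immediate but worth stating.
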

\begin{proof}
	It will suffice to define $j^\Lambda_Q$. To that end, fix some $S \in \cS^\Lambda(g)$. 
	
	\textbf{Step 1} (constructing $G$ and $\mathbf{p}$). It follows from \cite[Section 3]{AllardAlmgren:1varifold} that $S \in \cS^{\Lambda}(g)$ must be supported on a \textit{finite} geodesic network in $(M, g)$. We will construct $G = (V, E, \omega)$ and $\mathbf{p} \in \cB \emb_g(G, M)$ one component of $\reg S$ at a time. Let $C_\alpha$, $\alpha \in A$, be the connected components of $\reg S$. 
	\begin{itemize}
		\item Suppose $\bar C_\alpha \cap \sing S = \emptyset$. Then $C_\alpha$ is a loop with density $\theta_\alpha \in \NN^*$ by \cite[Section 3]{AllardAlmgren:1varifold}. Take $Q+1$ equidistant points $p^\alpha_0, \ldots, p^\alpha_Q$ along $C_\alpha$. From \eqref{eq:geodesic.network.finite.stratification.Q} and the equidistance of the $p_i$ along $C_\alpha$, 
		\[ \dist_{C_\alpha,g}(p^\alpha_i, p^\alpha_{i+1}) < Q^{-1} \Vert S \Vert(M,g) < Q^{-1} \Lambda < \inj(M, g), \]
		\[ \implies \dist_g(p^\alpha_i, p^\alpha_{i+1}) = \dist_{C_\alpha,g}(p^\alpha_i, p^\alpha_{i+1}) < \inj(M,g). \]
		Construct $G_\alpha := (V_\alpha, E_\alpha, \omega_\alpha)$ with
		\begin{align*}
			V_\alpha & := \{ p^\alpha_0, \ldots, p^\alpha_Q \}, \\
			E_\alpha & := \{ \{p^\alpha_0, p^\alpha_1\}, \ldots, \{ p^\alpha_{Q-1}, p^\alpha_Q\}, \{ p^\alpha_Q, p^\alpha_0 \} \}, \\
			\omega_\alpha & := \theta_\alpha \text{ on } E_\alpha, 
		\end{align*}
		and set $\mathbf{p}_\alpha(p^\alpha_i) := p^\alpha_i$  for all $i = 0, \ldots, Q$. 
		\item Suppose $\bar C_\alpha \cap \sing S \neq \emptyset$. Then $C_\alpha$ is a segment with density $\theta_\alpha \in \NN^*$ and $\bar C_\alpha \setminus C_\alpha = \{ s^\alpha_1, s^\alpha_2 \}$ for  $s^\alpha_1, s^\alpha_2 \in \sing S$ (not necessarily distinct) by \cite[Section 3]{AllardAlmgren:1varifold}. Consider $Q+2$ equidistant points $p^\alpha_0, \ldots, p^\alpha_{Q+1}$ along $\bar C_\alpha$, with $p^\alpha_0
= s^\alpha_1$, $p^\alpha_{Q+1} = s^\alpha_2$. From \eqref{eq:geodesic.network.finite.stratification.Q} and the equidistance of the $p_i$ along $S$, we get similarly to the bullet point above that
		\[ \dist_g(p^\alpha_i, p^\alpha_{i+1}) = \dist_{\bar C_\alpha,g}(p^\alpha_i, p^\alpha_{i+1}) < \inj(M,g). \]
		Construct $G_\alpha := (V_\alpha, E_\alpha, \omega_\alpha)$ so that
		\begin{align*}
			V_\alpha & := \{ p^\alpha_0, \ldots, p^\alpha_{Q+1} \}, \\
			E_\alpha & := \{ \{p^\alpha_0, p^\alpha_1\}, \ldots, \{p^\alpha_Q, p^\alpha_{Q+1}\} \}, \\
			\omega_\alpha & := \theta_\alpha \text{ on } E_\alpha, 
		\end{align*}
		and set $\mathbf{p}_\alpha(p^\alpha_i) := p^\alpha_i$ for all $i = 0, \ldots, Q+1$. 
	\end{itemize}
	After doing this for all $\alpha \in A$, then define $V := \cup_{\alpha \in A} V_\alpha \subset M$, $E := \cup_{\alpha \in A} E_\alpha$, $\omega := \cup_{\alpha \in A} \omega_\alpha$, $\mathbf{p} := \cup_{\alpha \in A} \mathbf{p}_\alpha$, where points that  correspond to the same point on $M$ are obviously identified in these unions. By construction, $G$ is a subdivided graph structure and $\mathbf{p} \in \cB \emb_g(G, M)$. 
	
	\textbf{Step 2} (controlling the graph size). To show that a finite set $\cG$ of graphs $G$ will be sufficient, with \eqref{eq:geodesic.network.finite.stratification.bounds} holding, it suffices to bound:
	\begin{equation} \label{eq:geodesic.network.stratification.size.bound}
		\# V + \max_E \omega \leq C(\inj(M, g_0), \Lambda, Q).
	\end{equation}
	The mass bound $\Vert S \Vert(M, g) < \Lambda$ for $S \in \cS^{\Lambda}(g)$ gives the density bound
	\begin{equation} \label{eq:geodesic.network.stratification.density}
		\Theta^1(S, \cdot) \leq \Theta_0 = \Theta_0(\inj(M, g), \Lambda)
	\end{equation}
	by the monotonicity formula for stationary 1-varifolds (\cite[Section 2]{AllardAlmgren:1varifold}).  
	
	Our bound on $\max_E \omega$ follows from \eqref{eq:geodesic.network.stratification.density} and Lemma  \ref{lemm:geodesic.network.embedding.structure}'s (2)(d), (3)(d). To bound $\# V$, Lemma   \ref{lemm:geodesic.network.embedding.structure}'s (2)(a), (3)(a) say we need to bound $\# \sing S$ and the number of components of $\reg S$. Certainly, $\# \sing S < \Lambda$ when $S \in \cS^{\Lambda}(g)$. Connected components of $\reg S$ are of two types: loops and segments. The number of closed loops is bounded from above in the desired form in view of the mass bound $\Vert S \Vert(M, g) < \Lambda$ and the monotonicity formula. Finally, since segments start and end at singular points and $\# \sing S < \Lambda$, and the number of distinct segments joining the same pair of singular points is controlled by the density of $S$'s vertices, the result follows from \eqref{eq:geodesic.network.stratification.density} again.
\end{proof}

\subsection{Manifold structure of strata and a trichotomy}

We direct the reader to \cite[II.1]{Lang:fundamentals.dg} for information on Banach manifolds.

\begin{theo} [Manifold structure theorem, cf. {\cite[Theorem 2.1]{White:bumpy.old}}] \label{theo:geodesic.network.manifold}
	Let $\Lambda > 0$, $G$ be a $Q$-subdivided graph structure with $Q$ satisfying \eqref{eq:geodesic.network.finite.stratification.Q}, 
	\[ \cS^\Lambda_G := \{ (g, S) : g \in \met^k(M), \; Q \cdot \inj(M, g) > \Lambda, \; S \in \cS^\Lambda_G(g) \}, \]
	and suppose $\pi^\Lambda_G : \cS^\Lambda_G \to \met^k(M)$ is the projection $(g, S) \mapsto g$. Then, there exists an atlas for $\cS^\Lambda_G$ with respect to which:
	\begin{enumerate}
		\item $\cS^\Lambda_G$ is a second countable\footnote{\cite[Theorem 2.1]{White:bumpy.old} discusses separability but second countability holds too, e.g., via a simple metrizability verification \cite{White:bumpy.separable}. The stronger conclusion of second countability is necessary for Sard--Smale applications  \cite{Smale:sard}.} $C^{k-1}$ Banach manifold.
		\item $\pi^\Lambda_{G}$ is $C^{k-1}$ and Fredholm, with Fredholm index zero.
		\item For every $(g, S) \in \cS^\Lambda_G$,
		\begin{align*}
			& (g, S) \text{ is a singular point for } \pi^\Lambda_G \\
			& \qquad \iff S \text{ has a not-everywhere-tangential} \\
			& \qquad \qquad \text{ stationary varifold Jacobi field in } (M,g).
		\end{align*}
		(See Definition \ref{defi:geodesic.network.jacobi.field.geom}.) 
		\item The set of regular values of $\pi^\Lambda_{G}$ is comeager (``Baire generic'') in $\met^k(M)$.
	\end{enumerate}
\end{theo}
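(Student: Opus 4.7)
Following the template of White's bumpy-metric theorem \cite{White:bumpy.old}, I would parametrize $\cS^\Lambda_G$ by the universal space of balanced embeddings
\[
\cU := \{(g, \mathbf{p}) \in \met^k(M) \times M^V : \mathbf{p} \in \cB\emb_g(G, M), \; Q\cdot\inj(M, g) > \Lambda\}.
\]
Since $\cB\emb_g(G, M)$ is cut out of the open set $\emb_g(G, M) \subset M^V$ by the smooth distance-equality equations (E$_3$), and after a finite-dimensional gauge fix for translation along each purely degree-$2$ loop component of $G$ (permitted by Corollary \ref{coro:geodesic.network.embedding.structure}(3)), $\cU$ is an open subset of a second countable $C^{k-1}$ Banach manifold modeled locally on $C^k$ symmetric $2$-tensors crossed with $\RR^N$ for some $N$ depending on $G$. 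The identification of $\cS^\Lambda_G$ with a finite-index quotient of $\cU$ is provided by Theorem \ref{theo:geodesic.network.finite.stratification} in one direction and by $\iota_g$ together with Corollary \ref{coro:geodesic.network.embedding.structure} in the other.

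Next, I would define a $C^{k-1}$ section $\cF$ of a finite-dimensional smooth vector bundle $\bE \to \cU$ whose fiber rank at $(g, \mathbf{p})$ matches the local dimension of the fiber of $\cU \to \met^k(M)$, encoding the stationarity equations at every vertex via Lemma \ref{lemm:geodesic.network.embedding.structure}: at each degree-$\neq 2$ vertex $u$ use
\[
\cF_u(g, \mathbf{p}) := \sum_{v \in E_u} \omega(\{u, v\}) \tau^g_{u,v}(\mathbf{p}(u)) \in T_{\mathbf{p}(u)} M,
\]
where $\tau^g_{u,v}$ is the normalized inverse exponential at $\mathbf{p}(u)$ pointing toward $\mathbf{p}(v)$, and at each chain-interior vertex use the scalar geodesic-smoothness mismatch transverse to the chain. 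By Lemma \ref{lemm:geodesic.network.embedding.structure}, $\cF(g, \mathbf{p}) = 0$ is exactly the condition that $\iota_g(G, \mathbf{p})$ be $g$-stationary, so $\cS^\Lambda_G \cong \cF^{-1}(0)$ modulo the graph-automorphism quotient.

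The heart of the proof is verifying that $\cF$ is a submersion at every zero; the implicit function theorem then delivers item (1) together with the identification $T_{(g_0, \mathbf{p}_0)} \cS^\Lambda_G = \ker d\cF_{(g_0, \mathbf{p}_0)}$. By duality, the submersion claim reduces to: any $\eta \in \bE_{(g_0, \mathbf{p}_0)}^*$ that annihilates the image of $d\cF_{(g_0, \mathbf{p}_0)}$ must vanish. Setting $\dot g = 0$ identifies $\eta$ with a stationary-varifold Jacobi field $\xi$ along $\iota_{g_0}(G, \mathbf{p}_0)$. To then force $\xi \equiv 0$, I would perturb $g_0$ by a symmetric $2$-tensor $\dot g$ supported in a small ball meeting a single regular arc and use the first-variation-of-length formula $\partial_{\dot g} L^g = \tfrac12 \sum_e \omega_e \int_{\sigma_e} \dot g(\dot\sigma_e, \dot\sigma_e)/|\dot\sigma_e|_g\,ds$ to reduce the pairing of $d\cF(\dot g, 0)$ with $\eta$ to an explicit weighted integral of $\xi$ over that arc. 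This is the main obstacle, and I expect it to require splitting $\xi$ into transverse and tangential components: the transverse component is eliminated directly by such localized metric perturbations, while the tangential component is propagated to zero using unique continuation for the Jacobi equation along each arc, together with perturbations concentrated near the singular vertices to transmit the vanishing across the junctions.

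With $\cF$ a submersion, items (2) and (3) follow from a dimension count. The differential $d\pi^\Lambda_G$ at $(g_0, S_0) \in \cS^\Lambda_G$ is the restriction of $(\dot g, \dot{\mathbf{p}}) \mapsto \dot g$ to $\ker d\cF_{(g_0, \mathbf{p}_0)}$; its kernel consists of the pairs $(0, \dot{\mathbf{p}})$ with $d\cF(0, \dot{\mathbf{p}}) = 0$, which is precisely the finite-dimensional space of balanced infinitesimal variations of $\mathbf{p}_0$ preserving stationarity, i.e., the not-everywhere-tangential stationary varifold Jacobi fields of $S_0$ in $(M, g_0)$ in the sense of Definition \ref{defi:geodesic.network.jacobi.field.geom}. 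The cokernel is canonically the annihilator in $\bE^*$ of the image of the same variation map, which by the submersion property has the same finite dimension as the kernel, giving Fredholm index zero. Item (4) is then the Sard--Smale theorem \cite{Smale:sard}, applicable because $\cS^\Lambda_G$ is second countable and $\pi^\Lambda_G$ is $C^{k-1}$ Fredholm of index $0$ with $k - 1 \geq 2$.
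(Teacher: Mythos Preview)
Your proposal follows the same White-style template as the paper and the overall architecture is correct, but there is one genuine gap and one unnecessary complication worth flagging.

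\textbf{The gap.} When you write ``setting $\dot g = 0$ identifies $\eta$ with a stationary-varifold Jacobi field $\xi$,'' you are implicitly using that the vertical linearization $D_2\cF$ is self-adjoint with respect to the metric pairing on the fiber. Without self-adjointness, an annihilator $\eta \in (\image D_2\cF)^\perp \subset \bE^*$ has the same dimension as $\ker D_2\cF$ but no canonical identification with it, so there is no reason $\eta$ should satisfy the Jacobi conditions. The paper makes this step explicit (Lemma~\ref{lemm:geodesic.network.manifold.linearization}): the gauge-fixed map $\mathring{\cH}$ is the formal gradient of the length functional $\cL$, so $\mathring{J} := D_2\mathring{\cH}$ is symmetric, whence $\image \mathring{J} = \mathring{K}^\perp$ and cokernel elements \emph{are} kernel elements, i.e.\ discrete Jacobi fields. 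Your $\cF$ is likewise a gradient of length after gauge-fixing, so self-adjointness holds, but it must be invoked.

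\textbf{The unnecessary complication.} You anticipate having to kill a tangential component of $\xi$ by ``unique continuation'' and ``perturbations concentrated near the singular vertices.'' Neither is needed, and the latter is dubious since a perturbation near a junction couples all incident edges. The paper's observation (Claim~\ref{clai:geodesic.network.manifold.jacobi.field.nontrivial}) is that, once the gauge is fixed, any nonzero element of the gauged parameter space has a nontrivial \emph{normal} component at some vertex: at singular vertices the incident edge directions span $T_{\mathbf{p}(u)}M$, so a vector tangent to all of them vanishes; along chains and cycles, tangential Jacobi fields are affine in arclength and are forced to zero by the endpoint conditions and the loop gauge respectively. Your balanced-plus-loop gauge yields the same conclusion. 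Thus only the normal component needs to be detected, and this is done (Claim~\ref{clai:geodesic.network.manifold.submersion.nondegenerate}) by a single conformal perturbation $g(s)=(1+sF)g$ with $F$ supported in the \emph{interior} of one edge where $(J^{\mathbf{q}})^\perp \neq 0$ and $\nabla^\perp F$ aligned with it; the pairing reduces to $\tfrac12\omega\int \langle \nabla^\perp F, (J^{\mathbf{q}})^\perp\rangle\,d\ell_g \neq 0$.

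A minor difference in implementation: the paper's gauge at each degree-$2$ vertex is a fixed transversal geodesic segment $\mathring{B}^{\mathfrak{g}}_\eps(\mathfrak{p}(u))$ rather than the balancing condition (E$_3$); the resulting parameter spaces have the same local dimension and either choice works, though the transversal one makes the self-adjointness computation slightly more transparent.
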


\begin{rema} \label{rema:geodesic.network.manifold.regularity}
	While we did not explicitly indicate so in the notation, $\cS^\Lambda_G$ also depends on the number $k \in \NN$, $k \geq 3$, albeit in a mild manner. If we were to write $\cS^{\Lambda,k}_G$ rather than $\cS^\Lambda_G$ for the space in Theorem \ref{theo:geodesic.network.manifold}, then it is straightforward to check directly from the definition that for every $k' \geq k$
	\begin{equation} \label{eq:geodesic.network.manifold.regularity.S}
		\cS^{\Lambda,k'}_G = \cS^{\Lambda,k}_G \cap (\met^{k'}(M) \times \cI\cV_1(M)).
	\end{equation}
	Likewise, if the projection to $\met^k(M)$ is denoted by $\pi^{\Lambda,k}_G$ and the set of its regular values by $\cR^{\Lambda,k}_G \subset \met^k(M)$, then for every $k' \geq k$
	\begin{equation} \label{eq:geodesic.network.manifold.regularity.R}
		\cR^{\Lambda,k}_G = \cR^{\Lambda,k'}_G \cap \met^{k'}(M),
	\end{equation}
	by \eqref{eq:geodesic.network.manifold.regularity.S} and Theorem \ref{theo:geodesic.network.manifold}'s (3).
\end{rema}

If we denote
\[ \met(M) := \{ C^\infty \text{ metrics on } M\}, \]
and endow it with the usual $C^\infty$ topology, then abstract arguments imply: 

\begin{coro} \label{coro:geodesic.network.manifold.generic.smooth}
	The set of regular values of $\pi^\Lambda_G$ that are also in $\met(M)$ is comeager (``Baire generic'') in $\met(M)$.
\end{coro}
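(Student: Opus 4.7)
The plan is a Baire category bootstrap from the $C^k$-generic regularity in Theorem \ref{theo:geodesic.network.manifold}(4) to $C^\infty$-generic regularity, relying crucially on the consistency identity \eqref{eq:geodesic.network.manifold.regularity.R} of Remark \ref{rema:geodesic.network.manifold.regularity}. Setting
\[
\cR^\infty := \{ g \in \met(M) : g \text{ is a regular value of } \pi^\Lambda_G \},
\]
that identity gives $\cR^\infty = \cR^{\Lambda,k}_G \cap \met(M)$ for every $k \geq 3$, so it suffices to prove that $\cR^{\Lambda,3}_G \cap \met(M)$ is comeager in $\met(M)$, which is complete metrizable (hence Baire) in the $C^\infty$ topology.

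I would start by writing $\met^3(M) \setminus \cR^{\Lambda,3}_G = \bigcup_n F_n$ with each $F_n \subset \met^3(M)$ closed and $C^3$-nowhere-dense. Then $\met(M) \setminus \cR^\infty \subset \bigcup_n (F_n \cap \met(M))$, and each $F_n \cap \met(M)$ is $C^\infty$-closed in $\met(M)$ by continuity of the inclusion $\met(M) \hookrightarrow \met^3(M)$. The corollary will follow once I show each $F_n \cap \met(M)$ has empty $C^\infty$-interior in $\met(M)$.

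For this, fix $g_0 \in \met(M)$ and a basic $C^\infty$-neighborhood $\{g : \Vert g - g_0 \Vert_{C^K} < \eps\}$ with $K \geq 3$; I need to produce some $g_2 \in \met(M) \setminus F_n$ in this neighborhood. The key move is to reapply Theorem \ref{theo:geodesic.network.manifold} at the \emph{matching} regularity $k = K$: this yields $\cR^{\Lambda,K}_G$ comeager, hence $C^K$-dense, in $\met^K(M)$. Pick $g_1 \in \cR^{\Lambda,K}_G$ with $\Vert g_1 - g_0 \Vert_{C^K} < \eps/2$; by Remark \ref{rema:geodesic.network.manifold.regularity} we have $g_1 \in \cR^{\Lambda,3}_G$, so $g_1 \notin F_n$. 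Standard mollification on the closed manifold $M$ produces a smooth $g_2 \in \met(M)$ with $\Vert g_2 - g_1 \Vert_{C^K}$ arbitrarily small; since $\met^3(M) \setminus F_n$ is $C^3$-open and $\Vert \cdot \Vert_{C^3} \leq \Vert \cdot \Vert_{C^K}$, one can arrange $g_2 \notin F_n$, and the triangle inequality gives $\Vert g_2 - g_0 \Vert_{C^K} < \eps$, as required.

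The main (minor) obstacle is that applying Theorem \ref{theo:geodesic.network.manifold} only once, at $k = 3$, yields $C^3$-density of regular values in $\met^3(M)$, which is strictly weaker than the $C^\infty$-density in $\met(M)$ that we need, and straight mollification would only control $\Vert g_2 - g_0 \Vert_{C^3}$. Remark \ref{rema:geodesic.network.manifold.regularity} is precisely what lets us sidestep this: by invoking the theorem at a regularity matching the order $K$ of the target $C^\infty$-neighborhood, one obtains $C^K$-dense regular values which nonetheless remain regular values at the ambient regularity $3$, so the mollification step can fit inside the $C^3$-open complement of $F_n$ without destroying the regularity condition.
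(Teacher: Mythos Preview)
Your proof is correct and takes essentially the same approach as the paper: the crucial inputs are the consistency identity of Remark \ref{rema:geodesic.network.manifold.regularity} (so that being a regular value is independent of the ambient regularity $k$) together with the density of $\met(M)$ in each $\met^k(M)$, combined in a Baire-category bootstrap. The paper simply cites an external lemma (and, in a commented-out alternative, uses the ``Taubes trick'' with an explicit exhaustion of $\cS^\Lambda_G$ to produce open dense sets), whereas you use an abstract meager decomposition of $\met^3(M)\setminus\cR^{\Lambda,3}_G$; both are minor organizational variants of the same idea.
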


 This follows, e.g., from \cite[Lemma 6.2]{Staffa:bumpy.geodesic.nets} with $\mathcal{M}^k = \met^k(M)$, $\mathcal{M}^\infty = \met(M)$, $\mathcal{N}^k =$ regular values of $\pi^\Lambda_G$ in $\met^k(M)$ (i.e., $\cR^{\Lambda,k}_G$), and $\mathcal{N}^\infty =$ regular values of $\pi^\Lambda_G$ in $\met(M)$. The key is that $\cN^{k'} = \cN^k \cap \cM^{k'}$ for $k' \geq k$ by \eqref{eq:geodesic.network.manifold.regularity.R} and that $\cM^\infty \subset \cM^k$ is dense. See also \cite[Theorem 2.10]{White:bumpy.new}.

The proof of the main theorem is postponed until the next subsection. We owe the following definition:

\begin{defi}[Stationary varifold Jacobi field] \label{defi:geodesic.network.jacobi.field.geom}
	Let $g \in \met^k(M)$, $\Lambda > 0$, $S \in \cS^\Lambda(g)$. A stationary varifold Jacobi field along $S$ is a section $J$ of $TM$ along $\supp S$ with the following properties:
	\begin{enumerate}
		\item[(J$_1$)] $J \in C^0(\supp S)$;
		\item[(J$_2$)] $J \in C^2(\reg S)$ and satisfies the Jacobi equation along $\reg S$; and,
		\item[(J$_3$)] for all $u \in V$,
			\[ \sum_\tau \omega_\tau \nabla_\tau^\perp J(\mathbf{p}(u)) = 0, \]
			where the sum is over all unit vectors $\tau$ in the support of the tangent cone of $S$ at $\mathbf{p}(u)$, $\omega_\tau$ is the density of the cone in the direction $\tau$, and $\nabla_\tau^\perp$ is the unit speed covariant derivative in the $\tau$ direction, projected onto $\{ \tau \}^\perp$.
	\end{enumerate}
	A stationary varifold Jacobi field is said to be:
	\begin{enumerate}
		\item[(J$_4$)] (\textit{not-})\textit{everywhere-tangential} if $J^\perp \equiv 0$ (resp. $J^\perp \not \equiv 0$) along $\reg S$, where $\perp$ denotes the projection onto the normal bundle $N(\reg S)$.
	\end{enumerate}
\end{defi}

It will be more convenient for us to work with discrete Jacobi fields that are only defined along the $\sigma_g(\mathbf{p}(u), \mathbf{p}(v))$, $\{u, v\} \in E$. The following lemma helps us go back and forth between the continuous and discrete settings:

\begin{lemm} \label{lemm:geodesic.network.jacobi.field}
	Take $g \in \met^k(M)$, $Q \in \NN$, $Q \geq 2$, a $Q$-subdivided graph structure $G$, $\mathbf{p} \in \emb_g(G, M)$ such that $S := \iota_g(G, \mathbf{p})$ is $g$-stationary, and a collection of Jacobi fields $\{ J_{u,v} : \{u,v\} \in E \}$ on the segments $\sigma_g(\mathbf{p}(u), \mathbf{p}(v))$, $\{u,v\} \in E$, with $J_{u,v} = J_{v,u}$. Assume our collection of Jacobi fields satisfies:
	\begin{enumerate}
		\item[(J$_1$')] for all $u \in V$, $J_{u,v}(\mathbf{p}(u))$ is independent of $v \in E_u$;
		\item[(J$_2$')] for all $u \in V$ with $\deg_G u = 2$, $\sum_{v \in E_u} \nabla_{u,v}^T J_{u,v}(\mathbf{p}(u)) = 0$, where $T$ is the projection to the tangent bundle of $\sigma_g(\mathbf{p}(u), \mathbf{p}(v))$;
		\item[(J$_3$')] for all $u \in V$, $\sum_{v \in E_u} \omega(\{u,v\}) \nabla_{u,v}^\perp J_{u,v}(\mathbf{p}(u)) = 0$, where $\nabla_{u,v}$ denotes the unit speed covariant derivative along $\sigma_g(\mathbf{p}(u), \mathbf{p}(v))$ oriented from $\mathbf{p}(u)$ to $\mathbf{p}(v)$ and $\perp$ denotes the projection onto the normal bundle along $\sigma_g(\mathbf{p}(u), \mathbf{p}(v))$.
	\end{enumerate}
	Then, there exists a stationary varifold Jacobi field $J$ along $S$ such that:
	\begin{enumerate}
		\item[(1')] $J \equiv J_{u,v}$ along $\sigma_g(\mathbf{p}(u), \mathbf{p}(v))$ for all $\{u,v\} \in E$.
	\end{enumerate}
	If the collection of Jacobi fields only satisfies (J$_1$'), (J$_3$'), then there exists a stationary varifold Jacobi field $J$ along $S$ such that
	\begin{enumerate}
		\item[(1'')] $J^\perp \equiv J_{u,v}^\perp$ along $\sigma_g(\mathbf{p}(u), \mathbf{p}(v))$ for all $\{u,v\} \in E$, where $\perp$ denotes projection onto the normal bundle of $\sigma_g(\mathbf{p}(u), \mathbf{p}(v))$;
		\item[(2'')] $J(\mathbf{p}(u)) = J_{u,v}(\mathbf{p}(u))$ for all $\{u,v\} \in E$ with $\deg_G u \neq 2$;
		\item[(3'')] $J(\mathbf{p}(u_0)) = J_{u_0,u_1}(\mathbf{p}(u_0))$ on components as in Definition \ref{defi:geodesic.network.subdivision}'s (S$_2$).
	\end{enumerate}
\end{lemm}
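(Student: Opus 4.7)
The plan is to assemble $J$ piecewise from the $J_{u,v}$, using Lemma \ref{lemm:geodesic.network.embedding.structure} to track how edges meet. In the first case, where (J$_1$'), (J$_2$'), (J$_3$') all hold, set $J \equiv J_{u,v}$ on each segment $\sigma_g(\mathbf{p}(u),\mathbf{p}(v))$. Continuity at every vertex follows from (J$_1$'). At a degree-$2$ vertex $u$, Lemma \ref{lemm:geodesic.network.embedding.structure}(1)(b)--(c) says the two adjacent edges fit into a single smooth geodesic $\gamma$ with $\omega$ constant across $u$; then (J$_2$') (tangential) and (J$_3$') (normal) give $C^1$-matching of $J$ across $u$, which second-order linear ODE uniqueness promotes to a $C^2$ Jacobi field along $\gamma$. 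At vertices $u$ with $\deg_G u \neq 2$ --- which by Lemma \ref{lemm:geodesic.network.embedding.structure}(1)(b) are precisely $\sing S$ --- (J$_3$') is exactly the balancing (J$_3$) of Definition \ref{defi:geodesic.network.jacobi.field.geom}. Conclusion (1') follows.

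For the second case, only (J$_1$'), (J$_3$') are assumed, so the tangential parts must be adjusted while preserving the normals. Along each edge decompose $J_{u,v} = J_{u,v}^\perp + f_{u,v}(s)\gamma'(s)$, where $s$ is arc length on $\sigma_g(\mathbf{p}(u),\mathbf{p}(v))$; the Jacobi equation together with the parallelism of $\gamma'$ forces $f_{u,v}$ to be affine and $J_{u,v}^\perp$ to satisfy the normal Jacobi equation. By Lemma \ref{lemm:geodesic.network.embedding.structure}(2)(b)--(c),(3)(b)--(c), each connected component of $\reg S$ is parametrized by a single geodesic $\gamma:[0,L]\to M$ (closed in the cycle case), traversing successive vertices $u_0,u_1,\ldots$ forming either a chain (with $u_0,u_{Q+1}$ singular and interior vertices degree-$2$) or a cycle (all degree-$2$, with a distinguished base vertex $u_0$). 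On each such component, first define a global normal field $J^\perp$ on $\gamma$ by concatenating the pieces $J^\perp_{u_i,u_{i+1}}$: continuity at an interior $u_i$ follows from (J$_1$'), and $C^1$-matching of $\nabla_{\gamma'}J^\perp$ across $u_i$ follows from (J$_3$') combined with $\omega(\{u_{i-1},u_i\}) = \omega(\{u_i,u_{i+1}\})$ (Lemma \ref{lemm:geodesic.network.embedding.structure}(1)(c)), after tracking the sign $\tau_{u_i,u_{i-1}} = -\gamma'(s_i) = -\tau_{u_i,u_{i+1}}$. Next, replace the pieces $f_{u_i,u_{i+1}}$ by a single scalar function $F$ on $[0,L]$: in the chain case take $F$ affine with $F(s_0) = f_{u_0,u_1}(s_0)$ and $F(s_{Q+1}) = f_{u_Q,u_{Q+1}}(s_{Q+1})$; in the cycle case take $F \equiv f_{u_0,u_1}(s_0)$, since constants are the only periodic affine functions. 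Then set $J := J^\perp + F(s)\gamma'(s)$ on the component.

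Distinct components of $\reg S$ meet only at singular vertices (Lemma \ref{lemm:geodesic.network.embedding.structure}(2)(a),(5)), so we must verify global continuity and balancing there. The boundary prescription of $F$ is arranged precisely so that the limit of $J$ from each segment $\sigma_g(\mathbf{p}(u),\mathbf{p}(v))$ at $\mathbf{p}(u)$ equals $J_{u,v}^\perp(\mathbf{p}(u)) + f_{u,v}(s_u)\gamma'_v(s_u) = J_{u,v}(\mathbf{p}(u))$, which is independent of $v$ by (J$_1$'); this delivers global continuity, (2''), and (3''), while (1'') holds tautologically. For balancing, note that the tangential perturbation $F(s)\gamma'(s)$ contributes only $F'(s)\gamma'(s)$ to $\nabla_{\tau_{u,v}}J(\mathbf{p}(u))$, which is tangent to $\tau_{u,v}$ and thus vanishes under $\perp$-projection; hence $\sum_v \omega(\{u,v\}) \nabla_{\tau_{u,v}}^\perp J(\mathbf{p}(u)) = \sum_v \omega(\{u,v\}) \nabla_{u,v}^\perp J_{u,v}(\mathbf{p}(u)) = 0$ by (J$_3$'), giving (J$_3$). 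The main subtlety --- and the reason cycles admit only the weaker (3'') rather than a full two-endpoint prescription --- is that only constants are simultaneously affine and periodic, so $F$ can be independently prescribed at only one point of a closed loop; everything else is careful bookkeeping of signs and orientations.
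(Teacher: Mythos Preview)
Your proof is correct and follows essentially the same approach as the paper's: verify (J$_1$)--(J$_3$) directly for the concatenation in the first case, and in the second case correct only the tangential parts so as to recover (J$_2$') while preserving (J$_1$') and (J$_3$'). The paper phrases the second step more abstractly (``it is easy to find everywhere-tangential Jacobi fields $\tilde J_{u,v}$ with the following properties \ldots''), whereas you explicitly construct the replacement tangential part as a single affine (resp.\ constant) function $F$ along each chain (resp.\ cycle), but this is exactly the content of the paper's asserted correction.
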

\begin{proof}
	We start with assuming (J$_1$'), (J$_2$'), (J$_3$'). Obviously, (J$_1$') implies Definition \ref{defi:geodesic.network.jacobi.field.geom}'s (J$_1$) for the concatenation $J$ of $(J_{u,v})_{\{u,v\} \in E}$. Moreover, (J$_2$') implies that $J$ is $C^1$ on $\reg S$, and thus at least $C^2$ on $\reg S$ by relying on ODE existence and uniqueness applied to the Jacobi equation. Finally, it is clear that (J$_3$') implies (J$_3$). This completes the proof of (1').
	
	Now assume the weaker hypotheses (J$_1$'), (J$_3$'). Note that since the discrepancy in (J$_2$') is due to a tangential term, it is easy to find everywhere-tangential Jacobi fields $\{ \tilde J_{u,v} : \{u,v\} \in E \}$ so that $\tilde J_{u,v} = \tilde J_{v,u}$ and
	\begin{itemize}
		\item $\{ \tilde J_{u,v} : \{ u,v \} \in E \}$ satisfies (J$_1$'),
		\item $\tilde J_{u,v}(\mathbf{p}(u)) = 0$ whenever $u \in V$ has $\deg_G u \neq 2$,
		\item $\tilde J_{u_0, u_1}(\mathbf{p}(u_0)) = 0$ on all components as in Definition \ref{defi:geodesic.network.subdivision}'s (S$_2$),
		\item $\{ J_{u,v} + \tilde J_{u,v} : \{u,v\} \in E \}$ satisfies (J$_2$').
	\end{itemize}
	Then, $\{ J_{u,v} + \tilde J_{u,v} : \{u,v\} \in E \}$ clearly satisfies (J$_1$'), (J$_2$'), (J$_3$'), and the lemma follows by applying the previous case to $\{ J_{u,v} + \tilde J_{u,v} : \{u,v\} \in E \}$.
\end{proof}

Our work culminates in the following trichotomy theorem:

\begin{theo}[Trichotomy theorem] \label{theo:geodesic.network.trichotomy}
	Suppose $g \in \met^k(M)$ and $\Lambda > 0$. Then, at least one of the following is true:
	\begin{enumerate}
		\item there exists a not-everywhere-tangential stationary varifold Jacobi field along some $S \in \cS^\Lambda(g)$, or
		\item $(M, g)$ contains a sequence of simple closed geodesics of length $< \Lambda$ converging to another such with multiplicity $\geq 2$, or
		\item for every $0 < \Lambda' < \Lambda$, the compact subset $\bar \cS^{\Lambda'}(g) \subset \cS^{\Lambda}(g)$ has no $N$-th iterated limit points, where $N = N(\inj(M, g), \Lambda)$.
	\end{enumerate}
\end{theo}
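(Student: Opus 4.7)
The plan is to argue the contrapositive: assume both (1) and (2) fail and deduce (3). First fix $Q \in \NN$ with $Q \cdot \inj(M,g) > \Lambda$ and apply Theorem \ref{theo:geodesic.network.finite.stratification} to decompose $\cS^\Lambda(g) = \bigcup_{G \in \cG} \cS^\Lambda_G(g)$ with $\cG = \cG(\Lambda, Q)$ finite. Since (1) fails, Theorem \ref{theo:geodesic.network.manifold}(3) makes $g$ a regular value of each Fredholm-index-zero projection $\pi^\Lambda_G$, so each fiber $\cS^\Lambda_G(g)$ is a discrete $0$-dimensional submanifold. I then introduce the complexity
\[
c(S) := \#\sing S + \#\{\text{connected components of } \reg S\},
\]
which the monotonicity formula and the $\#\sing + \Vert\cdot\Vert$ bound control by some $N_0 = N_0(\inj(M,g),\Lambda)$ uniformly on $\cS^\Lambda(g)$.

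The whole argument reduces to the following \emph{key claim}: whenever $\{S_i\} \subset \cS^{\Lambda'}(g)$ varifold-converges to $S \in \bar\cS^{\Lambda'}(g)$ and $S_i \neq S$ eventually, we have $c(S) < c(S_i)$ for all large $i$. Granting this, and writing $A_0 := \bar\cS^{\Lambda'}(g)$ and $A_{n+1} := \{\text{limit points of }A_n\}$, the claim forces $\max_{A_{n+1}} c \leq \max_{A_n} c - 1$, so $A_{N_0+1} = \emptyset$; this is exactly (3) with $N := N_0 + 1$. The containment $\bar\cS^{\Lambda'}(g) \subset \cS^\Lambda(g)$ follows from Allard compactness together with the joint lower semicontinuity of $\#\sing$ and $\Vert\cdot\Vert(M,g)$ along varifold limits of stationary integral $1$-varifolds.

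To prove the key claim, use finiteness of $\cG$ to pass to a subsequence so that $S_i \in \cS^\Lambda_{G'}(g)$ for a fixed $G' \in \cG$ and $S \in \cS^\Lambda_G(g)$ for a fixed $G \in \cG$. Suppose for contradiction $c(S) = c(S_i)$ for all large $i$. Mass continuity $\Vert S_i\Vert(M,g) \to \Vert S\Vert(M,g)$ (valid for stationary integral $1$-varifolds) together with the matching $c$-values forces singular points of $S_i$ to match up bijectively with those of $S$, and components of $\reg S_i$ to match bijectively with components of $\reg S$; a local density calculation at an interior point of each such pair of components then preserves multiplicities, and this is the step where the failure of (2) enters in a clean way to exclude the only residual possibility of a multiplicity jump along a simple-closed-geodesic component that could slip past a naive length comparison. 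Hence $G \cong G'$ as $Q$-subdivided weighted graphs, and Corollary \ref{coro:geodesic.network.embedding.structure} lets me reidentify the underlying balanced embeddings and view the whole sequence as lying in the common stratum $\cS^\Lambda_G(g)$. Varifold convergence in this finite-dimensional stratum coincides with convergence of the balanced embeddings in $M^V$, hence with convergence in the Banach manifold topology of $\cS^\Lambda_G$, which contradicts the discreteness of $\cS^\Lambda_G(g)$.

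The main technical obstacle I foresee is the rigorous passage from varifold convergence to manifold-topology convergence within a single stratum: one must track singular points and degree-$2$ interior vertices along the limit, upgrading Hausdorff-convergence of supports to $C^k$-convergence of the individual geodesic segments. This should be accomplished via the uniqueness of length-minimizing geodesic segments below the injectivity radius (Definition \ref{defi:geodesic.network.immersion}(I$_2$)), the structural description of embeddings in Lemma \ref{lemm:geodesic.network.embedding.structure}, and standard ODE-stability for the geodesic equation, but writing it carefully requires some bookkeeping.
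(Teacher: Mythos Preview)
Your overall strategy matches the paper's: assume (1) and (2) fail, then establish (3) by showing a complexity invariant strictly drops along any nontrivial convergent sequence in $\bar\cS^{\Lambda'}(g)$, and iterate. The paper packages the drop statement as Lemma~\ref{lemm:geodesic.network.trichotomy} and uses $\#V(G)$ (the vertex count of the $Q$-subdivided model graph) rather than your $c(S)=\#\sing S+\#\{\text{components of }\reg S\}$; by Lemma~\ref{lemm:geodesic.network.embedding.structure} these differ precisely by a weighted count of closed versus non-closed regular components.

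There is one genuine gap in your key claim. The implication ``$c(S)=c(S_i)$ plus bijections of singular points and regular components forces $G\cong G'$'' can fail, because your $c$ does not distinguish closed from non-closed regular components. Concretely, take $S_i$ with two disjoint support-components, a simple closed geodesic $C_i$ and a singular net $T_i$; if in the limit $C$ passes through an existing singular point of $T$ and nowhere else on $\supp T$, then both $\#\sing$ and $\#\{\text{reg components}\}$ are preserved (so $c$ is constant), yet $G$ has a cycle component and $G_\infty$ does not. The paper's $\#V(G)$ drops here because a cycle contributes $Q+1$ vertices while the resulting chain contributes only $Q$ interior ones. This scenario \emph{is} ultimately excluded under failure of (1) and (2) --- one applies the discreteness argument separately to $C_i\to C$ and $T_i\to T$ as elements of $\cS^\Lambda(g)$ in their own right --- but that requires an extra layer of casework you do not indicate, and it is exactly the sort of component-merging that makes the bookkeeping delicate.

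The ``main technical obstacle'' you flag is exactly the content of the paper's proof of Lemma~\ref{lemm:geodesic.network.trichotomy}: once $\#V(G)=\#V(G_\infty)$ it shows $\lim_i\mathbf p_i\in\cB\emb_g(G,M)$ directly and then builds a not-everywhere-tangential Jacobi field from the normalized differences $(\exp^g_{\mathbf p_\infty(u)})^{-1}(\mathbf p_i(u))$ rescaled by $\max_u\dist_g(\mathbf p_i(u),\mathbf p_\infty(u))$. Your appeal to fiber discreteness is logically equivalent via Theorem~\ref{theo:geodesic.network.manifold}(3), but the passage from varifold convergence to $M^V$-convergence of the embeddings is the same work either way.
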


Note that alternative (2) can only hold if $M$ is non-orientable (e.g., $\RR P^2$). When it does hold, it only does so non-generically by \cite{White:bumpy.new} (we will not need this, however).

This theorem will be a consequence of the following technical lemma:

\begin{lemm} \label{lemm:geodesic.network.trichotomy}
	Suppose $g$, $\Lambda$, $\Lambda'$ are as in Theorem \ref{theo:geodesic.network.trichotomy}, $G$ is a $Q$-subdivided graph structure with a $Q \in \NN$ such that \eqref{eq:geodesic.network.finite.stratification.Q} holds with $g$, $\Lambda$, $Q$, and that $\{ \mathbf{p}_i \}_{i=1}^\infty \subset \cB \emb_g(G,M)$ satisfy 
	\begin{equation} \label{eq:geodesic.network.trichotomy.distinct}
		\iota_g(G, \mathbf{p}_i) \neq \iota_g(G, \mathbf{p}_j) \text{ whenever } i \neq j, \text{ and}
	\end{equation}
	\begin{equation} \label{eq:geodesic.network.trichotomy.convergence}
		\bar \cS^{\Lambda'}(g) \ni \iota_{g}(G, \mathbf{p}_i) \rightharpoonup S_\infty \text{ as } i \to \infty.
	\end{equation}
	Then, at least one of the following is true:
	\begin{enumerate}
		\item $g$ is a singular value of $\pi^{\Lambda}_G$, or
		\item there are closed and connected $C_i \subset \reg \iota_g(G, \mathbf{p}_i)$ subsequentially converging (as $i \to \infty$) with multiplicity $\geq 2$, or
		\item $j^{\Lambda}_Q(S_\infty) =: (G_\infty, \mathbf{p}_\infty)$ has $\# V(G_\infty) < \# V(G)$.
	\end{enumerate}
\end{lemm}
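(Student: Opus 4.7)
My plan is to pass to a subsequence along which the discrete embedding data converges pointwise, $\mathbf{p}_i \to \mathbf{p}_\infty \in M^{V(G)}$ (possible since $V(G)$ is finite and $M$ compact), and then split into two cases depending on whether $\mathbf{p}_\infty$ itself lies in $\cB\emb_g(G, M)$. The first case will yield alternative (1) via a Jacobi field extraction; the second will yield (2) or (3) via the rigidity of $j^\Lambda_Q$.

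In the first case, with $\mathbf{p}_\infty \in \cB\emb_g(G, M)$, I would set $\eps_i := \max_{v \in V(G)} \dist_g(\mathbf{p}_i(v), \mathbf{p}_\infty(v))$, which is eventually positive by the distinctness hypothesis \eqref{eq:geodesic.network.trichotomy.distinct} (if $\mathbf{p}_i = \mathbf{p}_\infty$ for some $i$, the varifold sequence would contain a repeat of its limit). Define the normalized deviations $J_i(v) := \eps_i^{-1} \exp_{\mathbf{p}_\infty(v)}^{-1}(\mathbf{p}_i(v))$ and pass to a further subsequence so that $J_i \to J$ with $|J(v_\ast)| = 1$ for some $v_\ast$. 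I would extend $J$ along each edge $\{u,v\} \in E$ to a Jacobi field $J_{u,v}$ on $\sigma_g(\mathbf{p}_\infty(u), \mathbf{p}_\infty(v))$, obtained as the limit of the normalized variations of the segments $\sigma_g(\mathbf{p}_i(u), \mathbf{p}_i(v))$. Linearizing the vertex-balance equation \eqref{eq:geodesic.network.embedding.structure.sum.tangents} (which holds at each $\mathbf{p}_i$ by stationarity) gives Lemma \ref{lemm:geodesic.network.jacobi.field}'s hypothesis (J$_3$'); linearizing the balanced condition (E$_3$) (which holds by $\mathbf{p}_i \in \cB\emb_g(G, M)$) gives (J$_2$'); and (J$_1$') is immediate from the well-definedness of $J$ at vertices. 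Lemma \ref{lemm:geodesic.network.jacobi.field} then produces a nontrivial stationary varifold Jacobi field along $S_\infty = \iota_g(G, \mathbf{p}_\infty)$, and Theorem \ref{theo:geodesic.network.manifold}(3) delivers alternative (1), provided that the Jacobi field is not-everywhere-tangential.

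In the second case, $\mathbf{p}_\infty \notin \cB\emb_g(G, M)$. The balanced condition (E$_3$) passes to the limit by continuity, so the failure is of $\mathbf{p}_\infty$'s being an embedding. I would apply $j^\Lambda_Q$ to $S_\infty$ to obtain its canonical balanced-embedding representative $(G_\infty, \mathbf{p}_\infty') \in \cB\emb_g(\cG, M)$. Using the structural description of Lemma \ref{lemm:geodesic.network.embedding.structure} together with the fact that $j^\Lambda_Q$ places exactly one vertex per singular point of $S_\infty$ and exactly $Q$ auxiliary vertices per connected regular segment, each genuine failure mode --- non-injectivity (I$_1$), edge-interior collision (E$_2$), or edge-length degeneration in (I$_2$) --- strictly reduces the number of singular points and/or regular components recorded in $G_\infty$ compared with $G$, yielding $\# V(G_\infty) < \# V(G)$, i.e., alternative (3). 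The one remaining scenario is that two distinct closed-loop components of $G$ (as classified in Lemma \ref{lemm:geodesic.network.embedding.structure}(3)) have images under $\mathbf{p}_\infty$ that coincide in $(M,g)$: then the corresponding closed loops $C_i \subset \reg \iota_g(G, \mathbf{p}_i)$ subsequentially converge to a common limit loop with multiplicity $\geq 2$, which is alternative (2) (this scenario also gives (3), but only the weaker (2) is needed).

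The main obstacle will be verifying, in the first case, that $J$ is not-everywhere-tangential, so that Theorem \ref{theo:geodesic.network.manifold}(3) applies. At any singular vertex $u$ of $S_\infty$ (i.e., $\deg_G u \neq 2$), the only tangent vector in $T_{\mathbf{p}_\infty(u)} M$ tangent to every emanating edge is $0$; so everywhere-tangentiality forces $J(\mathbf{p}_\infty(u)) = 0$ at all such $u$. Combined with the balanced equidistance condition along chain components described in Lemma \ref{lemm:geodesic.network.embedding.structure}(2), this would force $J \equiv 0$ along every chain component of $G$. Along closed-loop components (Lemma \ref{lemm:geodesic.network.embedding.structure}(3)), a purely tangential Jacobi field corresponds to an infinitesimal reparametrization, under which the associated varifold is unchanged. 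If $J$ were everywhere tangential, then $\iota_g(G, \mathbf{p}_i) = \iota_g(G, \mathbf{p}_\infty)$ for all large $i$, contradicting \eqref{eq:geodesic.network.trichotomy.distinct}. The other delicate point is in the second case: cleanly matching the failure modes of $\mathbf{p}_\infty$ with the vertex count produced by $j^\Lambda_Q$, which relies on $Q$ being large enough (via \eqref{eq:geodesic.network.finite.stratification.Q}) to prevent additional edge-length degenerations in the limit.
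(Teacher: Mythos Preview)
Your overall strategy matches the paper's, but your Case 1 tangentiality argument has a genuine gap. A purely tangential $J$ on a cyclic component is an infinitesimal reparametrization, so the varifold is unchanged \emph{to first order}; this does not let you conclude $\iota_g(G,\mathbf{p}_i)=\iota_g(G,\mathbf{p}_\infty)$ exactly. (On chains, $J\equiv 0$ only says the endpoints agree to $o(\eps_i)$; on cycles, $J$ tangential only says $C_i$ agrees with $C_\infty$ to $o(\eps_i)$ in the normal direction. Neither forces the varifolds to coincide.) The paper closes this gap by first \emph{redefining} $\mathbf{p}_i(u_0)$ on each cyclic component so that it lies on the normal line to $\supp S_\infty$ at $\mathbf{p}_\infty(u_0)$; this uses the gauge freedom and leaves $\iota_g(G,\mathbf{p}_i)$ unchanged. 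With this normalization $J(u_0)$ is forced to be normal, so $J^\perp\equiv 0$ gives $J(u_0)=0$, and since a tangential Jacobi field along a closed geodesic is a constant multiple of the tangent, $J\equiv 0$ on that cycle. Combined with $J\equiv 0$ on chains, this contradicts $|J(v_*)|=1$.

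Your Case 2 analysis also misses a scenario. It is not true that every (I$_1$) or (E$_2$) failure forces $\#V(G_\infty)<\#V(G)$: a \emph{single} cyclic component of $G$ can have its loop $C_i$ converge with multiplicity $2$ to a simple closed geodesic $C_\infty$ (when the tubular neighborhood of $C_\infty$ is a M\"obius band). Then (I$_1$) or (E$_2$) fails for $\mathbf{p}_\infty$, yet $G_\infty$ is again a cycle with $Q+1$ vertices (just with doubled weight), so $\#V(G_\infty)=\#V(G)$. This falls under alternative (2), not (3), and is not your ``two distinct closed-loop components colliding'' case. The paper sidesteps this by arguing contrapositively: it first proves $\#V(G_\infty)\le\#V(G)$ via a component-by-component count, then assumes both (2) and (3) fail, uses the failure of (2) explicitly to rule out any multiplicity-$\ge 2$ loop convergence, and only then deduces $\bar{\mathbf{p}}_\infty\in\cB\emb_g(G,M)$.
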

\begin{proof}
	We know from Proposition \ref{prop:geodesic.network.stationary.varifold.limit} that \eqref{eq:geodesic.network.trichotomy.convergence} implies
	\begin{equation} \label{eq:geodesic.network.trichotomy.supp.set}
		\supp S_\infty = \lim_i \supp S_i
	\end{equation}
	and
	\begin{equation} \label{eq:geodesic.network.trichotomy.sing.set}
		\sing S_\infty \subset \lim_i \sing S_i
	\end{equation}
	in the Hausdorff sense. These readily imply that
	\begin{equation} \label{eq:geodesic.network.trichotomy.sing}
		\# \sing S_\infty \leq \# \sing S_i \text{ for large } i,
	\end{equation}
	and that connected components of $\supp S_i$ and $\reg S_i$ do not disconnect as $i \to \infty$. Therefore, 
	\begin{equation} \label{eq:geodesic.network.trichotomy.reg}
		\# \{ \text{components of } \reg S_\infty \} \leq \# \{ \text{components of } \reg S_i \} \text{ for large } i,
	\end{equation}	
	and the convergence of components comes in three forms:
	\begin{itemize}	
		\item[(a)] A singular component of $\supp S_i$ converging to a smooth component of $\supp S_\infty$ Let $S_i'$ be the restriction of our varifold $S_i$ to the singular component we're studying, and let $G_i' \subset G$ be the corresponding graph component. By passing to a subsequence, we may assume that $G_i'$ is independent of $i$ and label it $G'$. Let $G_\infty'$ be the corresponding connected component of $G_\infty$. By Lemma \ref{lemm:geodesic.network.embedding.structure}'s (2)(a), (3)(a):
			\begin{align} \label{eq:geodesic.network.trichotomy.1.v}
				\# V(G') - \# V(G_\infty') 
					& = \# \sing S_i' - 1  \\
					& \qquad + Q \cdot (\# \{ \text{components of } \reg S_i' \} - 1) \nonumber \\
					& \geq 0 + Q \cdot 1 = Q, \nonumber
			\end{align}
			where we used the fact that $\reg S_i'$ has to have $\geq 2$ components.
		\item[(b)] A smooth component of $\supp S_i$ converging to a smooth component of $\supp S_\infty$. If $G' \subset G$, $G_\infty' \subset G_\infty$ are defined analogously, then by Lemma \ref{lemm:geodesic.network.embedding.structure}'s (3)(a):
			\begin{equation} \label{eq:geodesic.network.trichotomy.2.v}
				\# V(G') = \# V(G_\infty').
			\end{equation}
		\item[(c)] A singular component of $\supp S_i$ converging to a singular component of $\supp S_\infty$. If $G' \subset G$, $G_\infty' \subset G_\infty$ are defined analogously, then by Lemma \ref{lemm:geodesic.network.embedding.structure}'s (2)(a):
			\begin{align} \label{eq:geodesic.network.trichotomy.3.v}
				\# V(G') - \# V(G_\infty') 
					& = \# \sing S_i' - \# \sing S_\infty'  \\
					& \qquad + Q \cdot (\# \{ \text{components of } \reg S_i' \} \nonumber \\
					& \qquad \qquad - \# \{ \text{components of } \reg S_\infty' \}) \geq 0, \nonumber
			\end{align}
			where the inequality follows as with \eqref{eq:geodesic.network.trichotomy.sing}, \eqref{eq:geodesic.network.trichotomy.reg}, since Proposition \ref{prop:geodesic.network.stationary.varifold.limit} applies to components as well.
	\end{itemize}
	It follows from \eqref{eq:geodesic.network.trichotomy.sing}, \eqref{eq:geodesic.network.trichotomy.reg}, \eqref{eq:geodesic.network.trichotomy.1.v}, \eqref{eq:geodesic.network.trichotomy.2.v}, \eqref{eq:geodesic.network.trichotomy.3.v}, and Lemma \ref{lemm:geodesic.network.embedding.structure}'s (2)(a), (3)(a), that
	\[ \# V(G_\infty) \leq \# V(G). \]
	Let us assume that alternatives (2) and (3) of the lemma both fail. Then, 
	\begin{equation} \label{eq:geodesic.network.trichotomy.equality}
		\# V(G_\infty) = \# V(G)
	\end{equation}
	and adding \eqref{eq:geodesic.network.trichotomy.1.v}, \eqref{eq:geodesic.network.trichotomy.2.v}, \eqref{eq:geodesic.network.trichotomy.3.v} over all converging components, we deduce:
	\begin{itemize}
		\item Case (a) never occurs and case (b) never occurs  more than once for each cycle $G_\infty' \subset G_\infty$ (or we would have had a vertex drop of $\geq Q+1$ elements in the limit, contradicting  \eqref{eq:geodesic.network.trichotomy.3.v}, \eqref{eq:geodesic.network.trichotomy.equality}).
		\item Consequently, each smooth component of $\supp S_\infty$ must be the limit of precisely one smooth component of $\supp S_i$ and the convergence of supports holds with multiplicity one because of the failure of (2).
		\item Case (c) must always occur with equality in \eqref{eq:geodesic.network.trichotomy.3.v}. Thus, each singular component of $\supp S_\infty$ must be the limit of precisely one singular component of $\supp S_i$, and their singular points and regular segments must be in bijection and thus converge with multiplicity one.
	\end{itemize}
	After passing to a subsequence (not relabeled), define
	\[ \bar{\mathbf{p}}_\infty := \lim_i \mathbf{p}_i \in M^{V(G)}. \]
	\begin{claim}
		$\bar{\mathbf{p}}_\infty \in \cB \emb_g(G, M)$ and $\iota_g(G, \bar{\mathbf{p}}_\infty) = \iota_g(G_\infty, \mathbf{p}_\infty)$.
	\end{claim}
	\begin{proof}[Proof of claim]
		Definition \ref{defi:geodesic.network.immersion}'s (I$_1$) and Definition \ref{defi:geodesic.network.embedding}'s (E$_2$) holds because singular points converge to singular points and regular points converge to regular points, all with multiplicity-one. Definition \ref{defi:geodesic.network.immersion}'s (I$_2$) and Definition \ref{defi:geodesic.network.embedding}'s (E$_3$) hold as in Step 1 of the proof of Theorem \ref{theo:geodesic.network.finite.stratification}. This completes the proof that, $\bar{\mathbf{p}}_\infty \in \cB \emb_g(G, M)$. 
		
		By construction, the singular and regular parts of $\iota_g(G, \bar{\mathbf{p}}_\infty)$, $\iota_g(G_\infty, \mathbf{p}_\infty)$ coincide. The varifold densities coincide from the multiplicity-one convergence and Lemma \ref{lemm:geodesic.network.embedding.structure}'s (2)(d) and (3)(d). This completes the proof.
	\end{proof}

	As a consequence of the claim and Corollary \ref{coro:geodesic.network.embedding.structure}'s (1), we may redefine
	\[ G_\infty := G \text{ and  } \mathbf{p}_\infty := \bar{\mathbf{p}}_\infty. \]
	Next, we redefine $\mathbf{p}_i$ on cyclic components $G' \subset G$ as in Definition \ref{defi:geodesic.network.subdivision}'s (S$_2$). In the notation $V(G') = \{ u_0,  \ldots, u_Q \}$, redefine $\mathbf{p}_i(u_0)$ as being the unique point satisfying
	\begin{equation} \label{eq:geodesic.network.stratification.loop.redefined}
		\mathbf{p}_i(u_0) \in \supp S_i \cap N_{\mathbf{p}_\infty(u_0)} \supp S_\infty
	\end{equation}
	while being $o(1)$ close to $\mathbf{p}_\infty(u_0)$ as $i \to \infty$. (Here, $N$ denotes the normal line, locally.) Then, redefine $\mathbf{p}_i(u_j)$, $j \in \{ 1, \ldots, Q \}$, as being the corresponding equidistant points along the component of $\supp S_i$ traced by $G'$. Note that $\mathbf{p}_i \in \cB \emb_g(G, M)$ still and that $\iota_g(G, \mathbf{p}_i)$ is unchanged.
	
	We can finally proceed to construct a nontrivial Jacobi field along $S_\infty$. For all sufficiently large $i = 1, 2, \ldots$, $\{ u, v \} \in E$, there exists a unique Jacobi field $J^{(i)}_{u,v}$ along $\sigma_g(\mathbf{p}_\infty(u), \mathbf{p}_\infty(v))$ with the boundary conditions
	\begin{equation} \label{eq:geodesic.network.finite.stratification.jf.bc}
		\begin{gathered}
		J^{(i)}_{u,v}(\mathbf{p}_\infty(u)) = (\exp^g_{\mathbf{p}_\infty(u)})^{-1} (\mathbf{p}_i(u)), \\
		J^{(i)}_{u,v}(\mathbf{p}_\infty(v)) = (\exp^g_{\mathbf{p}_\infty(v)})^{-1} (\mathbf{p}_i(v)),
		\end{gathered}
	\end{equation}
	and further satisfies, for a uniform $c_0 \in \RR$,
	\begin{equation} \label{eq:geodesic.network.finite.stratification.jf.bound}
		\Vert J^{(i)}_{u,v} \Vert_{C^3} \leq c_0 \max_{z \in \{u,v\}} \dist_g(\mathbf{p}_i(z), \mathbf{p}_\infty(z)),
	\end{equation}
	in view of \eqref{eq:geodesic.network.finite.stratification.Q}, which guarantees we are working strictly away from $\inj(M,g)$. Moreover, by elementary Jacobi field analysis we know that the unit tangent vectors
	\begin{itemize}
		\item $\tau^{(i)}_{u,v}$ along $\sigma_g(\mathbf{p}_i(u), \mathbf{p}_i(v))$ oriented from $\mathbf{p}_i(u)$ to $\mathbf{p}_i(v)$, and
		\item $\tau^{(\infty)}_{u,v}$ along $\sigma_g(\mathbf{p}_\infty(u), \mathbf{p}_\infty(v))$ oriented from $\mathbf{p}_\infty(u)$ to $\mathbf{p}_\infty(v)$,
	\end{itemize}
	satisfy
	\begin{multline} \label{eq:geodesic.network.finite.stratification.jf.tangent}
		d (\exp^g_{\mathbf{p}_\infty(u)})^{-1} \tau^{(i)}_{u,v}(\mathbf{p}_i(u)) - \tau^{(\infty)}_{u,v}(\mathbf{p}_\infty(u)) \\
		= \nabla_{u,v}^\perp J^{(i)}_{u,v}(\mathbf{p}_\infty(u)) + O(\Vert J^{(i)}_{u,v} \Vert_{C^2}^2),
	\end{multline}
	where $\nabla^\perp_{u,v}$ is the unit speed covariant derivative along $\sigma_g(\mathbf{p}_\infty(u), \mathbf{p}_\infty(v))$, oriented as before, and projected to the normal bundle. Now define
	\[ \lambda_i := \max_{z \in V} \dist_g(\mathbf{p}_i(z), \mathbf{p}_\infty(z)). \]
	It is straightforward to show that
	\[ c_1^{-1} \lambda_i \leq \dist_{\mathbf{F}}(\iota_g(G, \mathbf{p}_i), \iota_g(G_\infty, \mathbf{p}_\infty)) \leq c_1 \lambda_i \]
	for a uniform $c_1 \in \RR$, so, by \eqref{eq:geodesic.network.trichotomy.distinct} and \eqref{eq:geodesic.network.trichotomy.convergence},
	\[ 0 < \lambda_i \to 0 \text{ as } i \to \infty. \]
	From \eqref{eq:geodesic.network.finite.stratification.jf.bc}, \eqref{eq:geodesic.network.finite.stratification.jf.bound}, and Arzel\`a--Ascoli, after passing to a subsequence,
	\[ \lambda_i^{-1} J^{(i)}_{u,v} \to J_{u,v} \text{ in } C^2, \]
	where $J_{u,v}$ is a Jacobi field along $\sigma_g(\mathbf{p}_\infty(u), \mathbf{p}_\infty(v))$.
	
	We will complete our proof of this lemma by showing that $\{ J_{u,v} :  \{u,v\} \in E \}$ concatenate to a stationary varifold Jacobi field via Lemma \ref{lemm:geodesic.network.jacobi.field}. We have
	\begin{equation} \label{eq:geodesic.network.finite.stratification.jf.c0}
		J_{u,v}(\mathbf{p}_\infty(u)) = J_{u,v'}(\mathbf{p}_\infty(u)) \text{ whenever } v, v' \in E_u
	\end{equation}
	by \eqref{eq:geodesic.network.finite.stratification.jf.bc}. This implies Lemma  \ref{lemm:geodesic.network.jacobi.field}'s (J$_1$'). Next, the stationarity of $S_i$ and $S_\infty$ implies that
	\begin{align*}
		0 & = \sum_{v \in E_u} \omega_i(\{u,v\}) \tau^{(i)}_{u,v}(\mathbf{p}_i(u)) \\
		0 & = \sum_{v \in E_u} \omega_\infty(\{u,v\}) \tau^{(\infty)}_{u,v}(\mathbf{p}_\infty(u)).
	\end{align*}
	Apply $d (\exp^g_{\mathbf{p}_\infty(u)})^{-1}$ to the first equation and then subtract the second from it, using $\omega_i = \omega_\infty$ from the convergence. Invoke \eqref{eq:geodesic.network.finite.stratification.jf.tangent}, divide by $\lambda_i$, and send $i \to \infty$ to deduce
	\begin{equation} \label{eq:geodesic.network.finite.stratification.jf.c1.normal}
		0 = \sum_{v \in E_u} \omega_\infty(\{u,v\}) \nabla_{u,v}^\perp J_{u,v}(\mathbf{p}_\infty(u)).
	\end{equation}
	This is Lemma \ref{lemm:geodesic.network.jacobi.field}'s (J$_3$'). The fact that, whenever $\deg_G u = 2$,
	\begin{equation} \label{eq:geodesic.network.finite.stratification.jf.c1.tangential}
		\sum_{v \in E_u} \nabla_{u,v}^T J_{u,v}(\mathbf{p}_\infty(u)) = 0
	\end{equation}
	follows from that tangential derivatives of Jacobi fields measure infinitesimal changes in length, and Definition \ref{defi:geodesic.network.embedding}'s (E$_3$) balancing applying to each of $\mathbf{p}_i$, $\mathbf{p}_\infty$. This gives Lemma \ref{lemm:geodesic.network.jacobi.field}'s (J$_2$'), completing our proof that $\{ J_{u,v} : \{u,v\}\in E \}$ concatenates to a stationary varifold Jacobi field $J$ along $S$.
	
	It remains to prove that $J$, or equivalently $\{ J_{u,v} : \{u,v\}\in E \}$, isn't everywhere tangential. Suppose, for the sake of contradiction, that
	\begin{equation} \label{eq:geodesic.network.finite.stratification.jf.tangent.everywhere}
		J^\perp \equiv J_{u,v}^\perp \equiv 0 \text{ along } \sigma_g(\mathbf{p}(u), \mathbf{p}(v)) \text{ for all } \{u,v\} \in E,
	\end{equation}
	where $\perp$ is as before. Take $u \in V$ with $\deg_G u \neq 2$. Then $\mathbf{p}_\infty(u) \in \sing S_\infty$ by Lemma \ref{lemm:geodesic.network.embedding.structure}'s (1)(b), so $\operatorname{span}\{ \nu_{u,v}(\mathbf{p}_\infty(u)) : v \in E_u \} = \Tan_{\mathbf{p}_\infty(u)} M$ by the stationarity condition of $S_\infty$ evaluated at $\mathbf{p}_\infty(u)$. Thus, \eqref{eq:geodesic.network.finite.stratification.jf.tangent.everywhere} forces:
	\[ J_{u,v}(\mathbf{p}_\infty(u)) = 0 \text{ for all } u \in V, \; \deg_G u \neq 2, \; v \in E_u, \]
	\[ \implies J(\mathbf{p}(u)) = 0 \text{ for all } u \in V, \; \deg_G u \neq 2. \]
	Together with the known fact that the tangential portion of Jacobi fields is a linear function and Lemma \ref{lemm:geodesic.network.embedding.structure}'s (2), this implies
	\begin{equation} \label{eq:geodesic.network.finite.stratification.jf.zero.1}
		J \equiv 0 \text{ on all non-closed components of } \reg S.
	\end{equation}
	
	Now take $u = u_0$ from a component $G'' \subset G$ as in Definition \ref{defi:geodesic.network.subdivision}'s (S$_2$) and  \eqref{eq:geodesic.network.stratification.loop.redefined} above. By construction, $J^T_{u_0,v}(\mathbf{p}_\infty(u_0)) = 0$ for both $v \in E_{u_0}$, and thus $J(\mathbf{p}(u_0)) = 0$ by \eqref{eq:geodesic.network.finite.stratification.jf.tangent.everywhere}. Reusing the fact that the tangential portion of Jacobi fields is a linear function, now along the geodesic loop around $u_0$:
	\begin{equation} \label{eq:geodesic.network.finite.stratification.jf.zero.2}
		J \equiv 0 \text{ on all closed components of } \reg S.
	\end{equation}
	Together, \eqref{eq:geodesic.network.finite.stratification.jf.zero.1} and  \eqref{eq:geodesic.network.finite.stratification.jf.zero.2} imply that $J \equiv 0$. This contradicts the definition of $\lambda_i$ and \eqref{eq:geodesic.network.finite.stratification.jf.bc}. Thus, $J$ isn't everywhere tangential, as claimed. The fact that $g$ is a singular value of $\pi^\Lambda_G$ now follows from Theorem \ref{theo:geodesic.network.manifold}.
\end{proof}

\begin{proof}[Proof of Theorem \ref{theo:geodesic.network.trichotomy}]
	Assume that alternatives (1) and (2) both fail. Take $Q \in \NN$ so that \eqref{eq:geodesic.network.finite.stratification.Q} holds with $g$, $\Lambda$, $Q$, and set $N$ to be the maximum number of vertices among all graphs in $\cG(\Lambda, Q)$, which is of the desired form due to \eqref{eq:geodesic.network.stratification.size.bound}. 
	
	We claim that, for every $k \in \{ 0, 1, \ldots, N \}$,
	\begin{equation} \label{eq:geodesic.network.trichotomy.induction}
		S \in \operatorname{Lim}^{(k)}(\bar \cS^{\Lambda'}(g)), \; j^{\Lambda}_Q(S) =: (G, \mathbf{p}) \implies \# V(G) \leq N-k,
	\end{equation}
	where $\operatorname{Lim}^{(k)}(\bar \cS^{\Lambda'}(g))$ denotes the set of $k$-th iterated limit points of $\bar \cS^{\Lambda'}(g)$. (By definition, $\operatorname{Lim}^{(0)}(\bar \cS^{\Lambda'}(g)) = \bar \cS^{\Lambda'}(g)$.) This will complete the proof. 
	
	We prove \eqref{eq:geodesic.network.trichotomy.induction} by induction on $k$. The base case ($k=0$) is trivial from our definition of $N$ and Theorem \ref{theo:geodesic.network.manifold}. So we assume \eqref{eq:geodesic.network.trichotomy.induction} holds for some $k \in \{ 0, 1, \ldots, N-1\}$, and we prove it for $k+1$. Pick $S_\infty \in \operatorname{Lim}^{(k+1)}(\bar \cS^{\Lambda'}(g))$, which by definition means that there exist $S_i \in \operatorname{Lim}^{(k)}(\bar \cS^{\Lambda'}(g))$ with $S_i \rightharpoonup S_\infty$. Suppose $j^{\Lambda}_Q(S_i) =: (G_i, \mathbf{p}_i)$. Since $\{ G_i \}_{i=1}^{\infty} \subset \cG(\Lambda, Q)$ is finite, we pass to a subsequence (not relabeled) along which $G_i \equiv G \in \cG$. By \eqref{eq:geodesic.network.trichotomy.induction},
	\begin{equation} \label{eq:geodesic.network.trichotomy.induction.implies}
		\# V(G) \leq N-k.
	\end{equation}
	We now apply Lemma \ref{lemm:geodesic.network.trichotomy}. Since the first two alternatives of the lemma are assumed false (the first by way of Theorem \ref{theo:geodesic.network.manifold}'s (3)), Lemma \ref{lemm:geodesic.network.trichotomy} implies that, for $j^{\Lambda}_Q(S_\infty) =: (G_\infty, \mathbf{p}_\infty)$,
	\[ \# V(G_\infty) \leq \# V(G) - 1 \leq N-k-1, \]
	where the last inequality used was \eqref{eq:geodesic.network.trichotomy.induction.implies}. The induction is complete and the theorem follows.
\end{proof}

\subsection{Proof of Theorem \ref{theo:geodesic.network.manifold}} 

Fix $(g_0, \iota_g(G, \mathbf{p}_0)) \in \cS^\Lambda_G$ with $\mathbf{p}_0 \in \cB \emb_g(G, M)$, as we allowed to do by Theorem \ref{theo:geodesic.network.finite.stratification}.
	
Using the continuous dependence of the injectivity radius and the exponential map on $\met^k(M)$ for $k \geq 2$ (recall, we have $k \geq 3$), choose a smooth background metric $\mathfrak{g}$ on $M$, $\mathfrak{p} := \mathbf{p}_0$, and an $\eps > 0$ so that:
\begin{equation} \label{eq:geodesic.network.manifold.small.frakg}
	\mathfrak{g} \in \met^k(M, g_0, \tfrac12 \eps),
\end{equation}
\begin{equation} \label{eq:geodesic.network.manifold.small.inj}
	\eps < \inj(M, \mathfrak{g}) \text{ and } \eps < \min_{\substack{u' \in V(G')\\u'' \in V(G'')}} \dist_{\mathfrak{g}}(\mathfrak{p}(u'), \mathfrak{p}(u'')),
\end{equation}
where $G'$, $G''$ range over all distinct connected components of $G$, and finally
\begin{multline} \label{eq:geodesic.network.manifold.small.imm}
	g \in \met^k(M, \mathfrak{g}, \eps), \; \mathbf{p} \in B^{\mathfrak{g}}_{\eps}(\mathfrak{p}) \\
	\implies \mathbf{p} \in \emb_g(G, M) \text{ and } \\
	\# (V \setminus V_{\reg}) + \Vert \iota_g(G, \mathbf{p}) \Vert(M, g) < \Lambda,
\end{multline}
where $V_{\reg} := \{ u \in V : \deg_G u = 2 \}$, $B^{\mathfrak{g}}_\eps(\mathfrak{p}) := \times_{u \in V} B^{\mathfrak{g}}_\eps(\mathfrak{p}(u))$. Now define
\[ \cL^0 : \met^k(M, \mathfrak{g}, \eps) \times B^{\mathfrak{g}}_{\eps}(\mathfrak{p}) \to \RR \]
as follows: 
\[ \cL^0(g, \mathbf{p}) := \Vert \iota_g(G, \mathbf{p}) \Vert(M, g) = \sum_{\{u,v\} \in E} \omega(\{u,v\}) \length_g(\sigma_g(\mathbf{p}(u), \mathbf{p}(v))). \]
	
Notice that $\cL^0$ is well-defined by \eqref{eq:geodesic.network.manifold.small.imm}. At the moment, the domain of $\cL^0$ is a $C^k$ Banach manifold. We will prefer to work with functionals on open subsets of Banach spaces, so we seek to replace $B^{\mathfrak{g}}_\eps(\mathfrak{p})$ with
\[ B_\eps(\mathbf{0}) := \times_{u \in V} (B_\eps(0) \subset \Tan_{\mathfrak{p}(u)} M). \]
To do so, we will use the $C^\infty$ exponential map
\[ \mathfrak{e}(\mathbf{p})(u) := \exp^{\mathfrak{g}}_{\mathfrak{p}(u)} \mathbf{p}(u), \; \mathbf{p} \in B_\eps(\mathbf{0}), \; u \in V, \]
which is a $C^\infty$ diffeomorphism $\mathfrak{e} : B_\eps(\mathbf{0}) \to B^{\mathfrak{g}}_\eps(\mathfrak{p})$ in view of \eqref{eq:geodesic.network.manifold.small.inj}. We often occasionally write $\mathfrak{e} \mathbf{p}$ in place of $\mathfrak{e}(\mathbf{p})$, just like one normally doesn't use parentheses in the exponential map. This map allows us to work with:
\[ \cL : \met^k(M, \mathfrak{g}, \eps) \times B_\eps(\mathbf{0}) \to \RR, \; \cL(g, \mathbf{p}) := \cL^0(g, \mathfrak{e}\mathbf{p}). \]

\begin{lemm} \label{lemm:geodesic.network.manifold.L.regularity}
	$\cL : \met^k(M, \mathfrak{g}, \eps) \times B_\eps(\mathbf{0}) \to \RR$ is a $C^{k-1}$ Banach map.
\end{lemm}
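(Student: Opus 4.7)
The plan is to decompose $\cL$ as a finite sum
\[ \cL(g, \mathbf{p}) = \sum_{\{u,v\} \in E} \omega(\{u,v\}) \, \ell\bigl(g,\, \mathfrak{e}\mathbf{p}(u),\, \mathfrak{e}\mathbf{p}(v)\bigr), \]
where $\ell(g, p, q) := \length_g(\sigma_g(p, q)) = \dist_g(p, q)$, and then to show that each summand is $C^{k-1}$ in $(g, \mathbf{p})$. Since the background metric $\mathfrak{g}$ is smooth and fixed, $\mathfrak{e}$ is a $C^\infty$ map $B_\eps(\mathbf{0}) \to M^V$; moreover, \eqref{eq:geodesic.network.manifold.small.inj}--\eqref{eq:geodesic.network.manifold.small.imm} place us inside an open set on which each pair $\mathfrak{e}\mathbf{p}(u),\, \mathfrak{e}\mathbf{p}(v)$ (for $\{u,v\} \in E$) is distinct and strictly within the $g$-injectivity radius. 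Composition then reduces the lemma to the claim that $\ell$ is jointly $C^{k-1}$ on the open set $\{(g, p, q) : 0 < \dist_g(p, q) < \inj(M, g)\}$.

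Next, I would write $\ell(g, p, q) = |(\exp_p^g)^{-1}(q)|_g$ and invoke the Banach-space inverse function theorem, valid strictly inside the injectivity radius, to further reduce the question to $C^{k-1}$ regularity of the exponential map
\[ (g, p, v) \longmapsto \exp_p^g(v), \]
viewed as a map from the Banach manifold $\met^k(M, \mathfrak{g}, \eps) \times TM$ to $M$.

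For this core regularity statement, I would observe that the geodesic vector field on $TM$ is
\[ X(g; x, v) = v^i \, \partial_{x^i} - \Gamma^k_{ij}(g)(x)\, v^i v^j \, \partial_{v^k}, \]
and use the explicit formula $\Gamma^k_{ij}(g) = \tfrac{1}{2} g^{kl}(\partial_i g_{jl} + \partial_j g_{il} - \partial_l g_{ij})$ to identify $g \mapsto \Gamma(g)$ as a smooth map $\met^k(M) \to C^{k-1}(M;\, T^*M \otimes T^*M \otimes TM)$ (it is obtained by composing the bounded linear operator $g \mapsto (g, \partial g)$ with the smooth Banach-algebra operations of matrix inversion and multiplication in $C^{k-1}(M)$). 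Combining this with the standard fact that the evaluation pairing $C^{k-1}(M; E) \times M \to E$ is itself jointly $C^{k-1}$, one obtains that $X$ is jointly $C^{k-1}$ in $(g, x, v)$. Standard ODE theory with Banach-valued parameters (e.g.\ \cite[II.3]{Lang:fundamentals.dg}) then yields that the time-one flow $(g, x, v) \mapsto \exp_x^g(v)$ is $C^{k-1}$ on its open domain, and tracing everything back gives the lemma.

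The only delicate point, and the reason the regularity is exactly $C^{k-1}$ rather than $C^k$, is the loss of precisely one derivative in the passage $g \in C^k \rightsquigarrow \Gamma(g) \in C^{k-1}$; this is cleanly handled by Banach-parameter ODE theory. Everything else is a bookkeeping composition of $C^\infty$ and $C^{k-1}$ maps.
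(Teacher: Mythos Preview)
Your proposal is correct and follows the same approach as the paper: the paper's proof is a one-line remark that the loss of one derivative comes from the dependence of $\sigma_g$ on the Christoffel symbols, and your argument is a careful unpacking of exactly that mechanism (via the geodesic vector field, the exponential map, and Banach-parameter ODE theory). There is no discrepancy in strategy, only in level of detail.
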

\begin{proof}
	This is clear from the definition of $\cL$ and the dependence on $\sigma_g$, which requires one derivative on $g$ due to the Christoffel symbols.
\end{proof}

We compute the rate of change of $\cL(g, \mathbf{p})$ in $\mathbf{p}$. Write $Y$ for the direct sum of tangent spaces that $B_\eps(\mathbf{0})$ is inside of. Fix $g \in \met^k(M, \mathfrak{g}, \eps)$, $\mathbf{p} \in B_\eps(\mathbf{0})$, and $\mathbf{q} \in Y$. The first variation formula in Riemannian geometry gives
\begin{equation} \label{eq:geodesic.network.manifold.first.variation.p}
	\Big[ \tfrac{d}{dt} \cL(g, \mathbf{p} + t \mathbf{q}) \Big]_{t=0} = - \sum_{u \in V} \langle \sum_{v \in E_u} \omega(\{u,v\}) \tau^{g}_{u,v}(\mathfrak{e}\mathbf{p}(u)), \mathfrak{e}\mathbf{p}(u)_* \big[ \mathbf{q}(u) \big] \rangle_{g}
\end{equation}
where:
\begin{itemize}
	\item $\mathfrak{e}\mathbf{p}(u)_* = d(\exp^{\mathfrak{g}}_{\mathfrak{p}(u)})_{\mathbf{p}(u)}$, and 
	\item $\tau^{g}_{u,v}$ is the unit tangent vector along $\sigma_g(\mathfrak{e}\mathbf{p}(u), \mathfrak{e}\mathbf{p}(v))$, taken with respect to $g$ and oriented from $\mathfrak{e}\mathbf{p}(u)$ to $\mathfrak{e}\mathbf{p}(v)$.
\end{itemize}
The ``formal gradient'' of $\cL(g, \mathbf{p})$ with respect to $\mathbf{p}$ is the unique function
\begin{equation} \label{eq:geodesic.network.manifold.H}
	\cH : \met^k(M, \mathfrak{g}, \eps) \times B_{\eps}(\mathbf{0}) \to Y
\end{equation}
determined by
\begin{equation} \label{eq:geodesic.network.manifold.H.variational}
	\Big[ \tfrac{d}{dt} \cL(g, \mathbf{p} + t \mathbf{q}) \Big]_{t=0} =: \langle \cH(g, \mathbf{p}), \mathbf{q} \rangle_{\mathfrak{e}\mathbf{p}^* g};
\end{equation}
the right hand side's inner product is that induced on $Y$ by the embedding $\mathbf{p}$ and $g$, i.e. the right hand side is short for
\[ \sum_{u \in V} \langle \mathfrak{e}\mathbf{p}(u)_* \big[ \cH(g, \mathbf{p})(u) \big], \mathfrak{e}\mathbf{p}(u)_* \big[ \mathbf{q}(u) \big] \rangle_g. \]
In any case, \eqref{eq:geodesic.network.manifold.first.variation.p} implies
\begin{equation} \label{eq:geodesic.network.manifold.H}
	\mathfrak{e}\mathbf{p}(u)_* \big[ \cH(g, \mathbf{p})(u) \big] = - \sum_{v \in E_u} \omega(\{u,v\}) \tau^g_{u,v}(\mathfrak{e}\mathbf{p}(u)).
\end{equation}
The following justifies our introduction of the functional $\cL$:

\begin{lemm} \label{lemm:geodesic.network.manifold.equivalence}
	For $(g, \mathbf{p}) \in \met^k(M, \mathfrak{g}, \eps) \times B_\eps(\mathbf{0})$:
	\begin{equation} \label{eq:geodesic.network.manifold.H.equivalence}
		\iota_g(G, \mathfrak{e} \mathbf{p}) \text{ is } g\text{-stationary} \iff \cH(g, \mathbf{p}) = \mathbf{0}.
	\end{equation}
\end{lemm}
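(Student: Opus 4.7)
The plan is to identify both sides of the equivalence with the same vertex-wise balancing condition, leveraging the fact that each edge of $\iota_g(G, \mathfrak{e}\mathbf{p})$ is, by construction, a $g$-minimizing geodesic segment, which makes the first variation of mass a pure boundary term.

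First, I would unwind the definition of $\cH$. Since $\mathbf{p}(u) \in B_\eps(0) \subset \Tan_{\mathfrak{p}(u)}M$ with $\eps < \inj(M,\mathfrak{g})$ by \eqref{eq:geodesic.network.manifold.small.inj}, the differential $\mathfrak{e}\mathbf{p}(u)_*$ is a linear isomorphism for each $u \in V$. Combined with \eqref{eq:geodesic.network.manifold.H}, this gives
\[
\cH(g,\mathbf{p}) = \mathbf{0} \iff \sum_{v \in E_u} \omega(\{u,v\})\,\tau^g_{u,v}(\mathfrak{e}\mathbf{p}(u)) = 0 \text{ for every } u \in V. \qquad (\star)
\]

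Next, I would compute $\delta V_{\iota_g(G,\mathfrak{e}\mathbf{p})}(X)$ for an arbitrary $X \in C^1_c(M; TM)$. Because each $\sigma_g(\mathfrak{e}\mathbf{p}(u),\mathfrak{e}\mathbf{p}(v))$ is a $g$-geodesic parametrized by arclength, $\nabla^g_{\tau^g_{u,v}} \tau^g_{u,v} = 0$, and hence $\Div_{\sigma_g(\mathfrak{e}\mathbf{p}(u),\mathfrak{e}\mathbf{p}(v))}^g X = \tfrac{d}{ds}\langle X, \tau^g_{u,v}\rangle_g$ along the segment. Integrating edge by edge and rearranging contributions by vertex yields
\[
\delta V_{\iota_g(G,\mathfrak{e}\mathbf{p})}(X) = -\sum_{u\in V} \Big\langle X(\mathfrak{e}\mathbf{p}(u)),\ \sum_{v\in E_u} \omega(\{u,v\})\,\tau^g_{u,v}(\mathfrak{e}\mathbf{p}(u))\Big\rangle_{\!g},
\]
where we used $\mathfrak{e}\mathbf{p} \in \emb_g(G,M)$ from \eqref{eq:geodesic.network.manifold.small.imm} to ensure that the only contributions come from the vertex points $\mathfrak{e}\mathbf{p}(u)$ (no edge interior ever lies on another edge).

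Finally, I would invoke the fact that cut-off vector fields can realize any prescribed value in $\Tan_{\mathfrak{e}\mathbf{p}(u)} M$ at any single vertex $\mathfrak{e}\mathbf{p}(u)$ while vanishing at the others --- this is possible since the points $\{\mathfrak{e}\mathbf{p}(u)\}_{u\in V}$ are pairwise distinct by Definition \ref{defi:geodesic.network.immersion} (I$_1$), applicable via \eqref{eq:geodesic.network.manifold.small.imm}. Consequently, $\iota_g(G,\mathfrak{e}\mathbf{p})$ is $g$-stationary iff the inner sum above vanishes at each $u \in V$, which is precisely $(\star)$, giving \eqref{eq:geodesic.network.manifold.H.equivalence}.

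There is no real obstacle here beyond bookkeeping; the only point worth double-checking is that the boundary terms assembled at each vertex $u$ correctly combine with the $g$-inner product induced via $\mathfrak{e}\mathbf{p}(u)_*$ used to \emph{define} $\cH$ in \eqref{eq:geodesic.network.manifold.H.variational}, so that the two pairings on $Y$ coincide---this is immediate since both use the same metric $g$ transported through the same exponential map $\mathfrak{e}\mathbf{p}(u)_*$.
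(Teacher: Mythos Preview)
Your proposal is correct and follows essentially the same approach as the paper: both reduce \eqref{eq:geodesic.network.manifold.H.equivalence} to the vertex-wise balancing condition $\sum_{v\in E_u}\omega(\{u,v\})\tau^g_{u,v}(\mathfrak{e}\mathbf{p}(u))=0$, with the geodesic nature of the edges forcing the first variation to concentrate at the vertices. The paper is terser (it localizes near each vertex via Lemma~\ref{lemm:geodesic.network.embedding.structure}(1)(a) and cites the Allard--Almgren stationarity criterion), whereas you compute the global first variation explicitly and then localize with cut-off fields; the only unnecessary step in your write-up is the appeal to the embedding condition (E$_2$), since the edge-by-edge boundary-term computation is valid regardless of whether distinct edges overlap.
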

\begin{proof}
	Let $u \in V$. By Lemma \ref{lemm:geodesic.network.embedding.structure} (1)(a), there exists a ball around $\mathfrak{e} \mathbf{p}(u)$ so that the only segments entering the ball are $\sigma_g(\mathfrak{e} \mathbf{p}(u), \mathfrak{e} \mathbf{p}(v))$, $v \in E_u$. Then, the stationarity condition (\cite[(1)]{AllardAlmgren:1varifold}) shows
	\begin{equation} \label{eq:geodesic.network.manifold.stationarity}
		\cH(g, \mathbf{p})(u) = 0 \iff \delta(\iota_g(G, \mathfrak{e} \mathbf{p}))|_{\mathfrak{e} \mathbf{p}(u)} = 0
	\end{equation}
	by way of \eqref{eq:geodesic.network.manifold.H}. Since the segments joining the $\mathfrak{e}\mathbf{p}(u)$ are $g$-geodesics, it follows that the left hand side of \eqref{eq:geodesic.network.manifold.stationarity} is true for all $u \in V$ if and only if the right hand side vanishes throughout $M$.
\end{proof}

In order to make our definitions more useful toward establishing Theorem \ref{theo:geodesic.network.manifold}, we need to eliminate some of the gauge freedom in them. Every $u \in V_{\reg}$ has $E_u = \{ n_1(u), n_2(u) \}$. We distinguish these two neighbors by introducing a total order $<$ on $V$ and requiring that $n_1(u) < n_2(u)$. For all $u \in V_{\reg}$ we fix
\begin{multline*}
	\mathring{B}^{\mathfrak{g}}_\eps(\mathfrak{p}(u)) := \text{1-dimensional } \mathfrak{g}\text{-geodesic segment of } \mathfrak{g} \text{-length } 2 \eps \\
	\text{ centered at } \mathfrak{p}(u) \text{ and transverse to } \sigma_{\mathfrak{g}}(\mathfrak{p}(n_1(u)), \mathfrak{p}(n_2(u))),
\end{multline*}
e.g., the normal to $\sigma_{\mathfrak{g}}(\mathfrak{p}(n_1(u)), \mathfrak{p}(n_2(u)))$ at $\mathfrak{p}(u)$ intersected with $B^{\mathfrak{g}}_\eps(\mathfrak{p}(u))$, then we can require that $\eps > 0$ is also small enough that, in addition to \eqref{eq:geodesic.network.manifold.small.frakg}, \eqref{eq:geodesic.network.manifold.small.inj}, \eqref{eq:geodesic.network.manifold.small.imm}, we also have
\begin{multline} \label{eq:geodesic.network.manifold.small.transverse}
	g \in \met^k(M, \mathfrak{g}, \eps), \; \mathbf{p} \in B^{\mathfrak{g}}_\eps(\mathfrak{p}), \; u \in V_{\reg}, \; \mathbf{p}(u) \in \mathring{B}^{\mathfrak{g}}_\eps(\mathfrak{p}(u)) \\
	\implies \sigma_g(\mathfrak{e} \mathbf{p}(u), \mathfrak{e} \mathbf{p}(n_1(u))), \; \sigma_g(\mathfrak{e} \mathbf{p}(u), \mathfrak{e} \mathbf{p}(n_2(u))) \text{ are transverse to } \mathring{B}^{\mathfrak{g}}_\eps(\mathfrak{p}(u)).
\end{multline}
Then, we write
\[ \mathring{B}^{\mathfrak{g}}_\eps(\mathfrak{p}) := \Big( \times_{u \in V_{\reg}} \mathring{B}^{\mathfrak{g}}_{\eps}(\mathfrak{p}(u)) \Big) \times \Big( \times_{u \in V \setminus V_{\reg}} B^{\mathfrak{g}}_{\eps}(\mathfrak{p}(u)) \Big) \subset B^{\mathfrak{g}}_\eps(\mathfrak{p}), \]
and analogously define 
\[ \mathring{B}_\eps(\mathbf{0}) := (\times_{u \in V_{\reg}} \mathring{B}_\eps(0) \subset \Tan_{\mathfrak{p}(u)} M) \times ( \times_{u \in V \setminus V_{\reg}} B_\eps(0) \subset \Tan_{\mathfrak{p}(u)} M), \]
where the $\mathring{B}_\eps(0)$'s are simply pullbacks of $\mathring{B}^{\mathfrak{g}}_\eps(\mathfrak{p}(u))$, $u \in V_{\reg}$, under $\mathfrak{e}$.

Obviously, we may restrict $\cL$ and $\cH$ to the subset $\met^k(M, \mathfrak{g}, \eps) \times \mathring{B}_\eps(\mathbf{0})$, but it will also be important to restrict the target space of $\cH$ to the Banach subspace $\mathring{Y} \subset Y$ inside of which $\mathring{B}_\eps(\mathbf{0})$ lies. We proceed to define
\[ \mathring{\cH} : \met^k(M, \mathfrak{g}, \eps) \times \mathring{B}_\eps(\mathbf{0}) \to \mathring{Y} \]
as 
\begin{equation} \label{eq:geodesic.network.manifold.H.breve}
	\mathring{\cH}(g, \mathbf{p})(u) := \begin{cases}
		\cH(g, \mathbf{p})(u) & \text{ if } u \in V \setminus V_{\reg}, \\
		\proj^{\mathfrak{e} \mathbf{p}(u)^* g}_{\mathring{B}_\eps(0)}  \cH(g, \mathbf{p})(u) & \text{ if } u \in V_{\reg}. 
	\end{cases}
\end{equation}
We emphasize that $\proj^{\mathfrak{e}\mathbf{p}^* g}_{\mathring{B}_\eps(0)}$ projects onto the fixed $\mathring{B}_\eps(0)$ using the variable metric $g$. Our definitions of $\mathring{Y}$ and $\mathring{\cH}$ imply that, for all $\mathbf{q} \in \mathring{Y}$:
\begin{equation} \label{eq:geodesic.network.manifold.H.breve.projection}
	\langle \mathring{\cH}(g, \mathbf{p}), \mathbf{q} \rangle_{\mathfrak{e}\mathbf{p}^* g} = \langle \cH(g, \mathbf{p}), \mathbf{q} \rangle_{\mathfrak{e}\mathbf{p}^* g}
\end{equation}
and thus, by \eqref{eq:geodesic.network.manifold.H.variational},
\begin{equation} \label{eq:geodesic.network.manifold.H.breve.variational}
	\langle \mathring{\cH}(g, \mathbf{p}), \mathbf{q} \rangle_{\mathfrak{e}\mathbf{p}^* g} = \big[ \tfrac{d}{dt} \cL(g, \mathbf{p} + t\mathbf{q}) \big]_{t=0}.
\end{equation}

\begin{lemm}[cf. Lemma {\ref{lemm:geodesic.network.manifold.equivalence}}]  \label{lemm:geodesic.network.manifold.equivalence.breve}
	For $(g, \mathbf{p}) \in \met^k(M, \mathfrak{g}, \eps) \times \mathring{B}_\eps(\mathbf{0})$:
	\begin{equation} \label{eq:geodesic.network.manifold.H.equivalence.breve}
		\iota_g(G, \mathfrak{e} \mathbf{p}) \text{ is } g\text{-stationary} \iff \mathring{\cH}(g, \mathbf{p}) = \mathbf{0}.
	\end{equation}
\end{lemm}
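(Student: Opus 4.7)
The ``$\Rightarrow$'' direction is immediate from Lemma \ref{lemm:geodesic.network.manifold.equivalence}: if $\iota_g(G, \mathfrak{e}\mathbf{p})$ is $g$-stationary, then $\cH(g, \mathbf{p}) = \mathbf{0}$, and since $\mathring{\cH}(g, \mathbf{p})(u)$ is either $\cH(g, \mathbf{p})(u)$ itself (for $u \in V \setminus V_{\reg}$) or a projection thereof (for $u \in V_{\reg}$) by \eqref{eq:geodesic.network.manifold.H.breve}, we get $\mathring{\cH}(g, \mathbf{p}) = \mathbf{0}$ as well.

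For the ``$\Leftarrow$'' direction, suppose $\mathring{\cH}(g, \mathbf{p}) = \mathbf{0}$. By Lemma \ref{lemm:geodesic.network.manifold.equivalence}, it is enough to show $\cH(g, \mathbf{p})(u) = 0$ for every $u \in V$. The case $u \in V \setminus V_{\reg}$ is immediate from \eqref{eq:geodesic.network.manifold.H.breve}. Now fix $u \in V_{\reg}$, and write $E_u = \{n_1, n_2\}$. Since the setup of the proof began with the choice of $(g_0, \iota_{g_0}(G, \mathbf{p}_0)) \in \cS^\Lambda_G$ with $\mathbf{p}_0 \in \cB\emb_{g_0}(G, M)$, Lemma \ref{lemm:geodesic.network.embedding.structure} (1)(c) forces $\omega(\{u, n_1\}) = \omega(\{u, n_2\}) =: \omega > 0$, a property purely of $G$. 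Then \eqref{eq:geodesic.network.manifold.H} reads
\[
\mathfrak{e}\mathbf{p}(u)_*[\cH(g, \mathbf{p})(u)] = -\omega\bigl[\tau^g_{u,n_1}(\mathfrak{e}\mathbf{p}(u)) + \tau^g_{u,n_2}(\mathfrak{e}\mathbf{p}(u))\bigr].
\]
Set $\tilde\tau_i := (\mathfrak{e}\mathbf{p}(u)_*)^{-1}\tau^g_{u,n_i}(\mathfrak{e}\mathbf{p}(u))$; these are $\mathfrak{e}\mathbf{p}(u)^* g$-unit vectors in $\Tan_{\mathfrak{p}(u)} M$. The goal becomes: show $\tilde\tau_1 + \tilde\tau_2 = 0$.

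Let $e_1$ be a unit $\mathfrak{e}\mathbf{p}(u)^* g$-vector spanning $\mathring{B}_\eps(0)$, and $e_2$ its $\mathfrak{e}\mathbf{p}(u)^* g$-orthogonal unit complement in $\Tan_{\mathfrak{p}(u)} M$. Decompose $\tilde\tau_i = \alpha_i e_1 + \beta_i e_2$; the unit constraint gives $\alpha_i^2 + \beta_i^2 = 1$. The hypothesis $\mathring{\cH}(g, \mathbf{p})(u) = 0$ unpacks (via \eqref{eq:geodesic.network.manifold.H.breve.variational} and the projection definition) to $\alpha_1 + \alpha_2 = 0$, and hence $\beta_1^2 = \beta_2^2$, i.e., $\beta_1 = \pm\beta_2$. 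If $\beta_1 = -\beta_2$, then $\tilde\tau_1 + \tilde\tau_2 = 0$ and we are done. So it suffices to rule out the ``reflected'' case $\beta_1 = \beta_2$ (with $\beta_1 \neq 0$). This is where the smallness of $\eps$ enters: because $\mathbf{p}_0$ is $g_0$-balanced and stationary along the chain containing $u$, the points $\mathfrak{p}(n_1), \mathfrak{p}(u), \mathfrak{p}(n_2)$ are collinear along a $g_0$-geodesic with $\mathfrak{p}(u)$ strictly between $\mathfrak{p}(n_1)$ and $\mathfrak{p}(n_2)$. By further shrinking $\eps$ in the setup \eqref{eq:geodesic.network.manifold.small.frakg}--\eqref{eq:geodesic.network.manifold.small.transverse}, we may arrange that for every admissible $(g, \mathbf{p})$ the vectors $\tilde\tau_1, \tilde\tau_2$ stay close to the antipodal $g_0$-unit tangents at $\mathfrak{p}(u)$ pointing toward $\mathfrak{p}(n_1), \mathfrak{p}(n_2)$; in particular, $\beta_1$ and $\beta_2$ have opposite signs throughout. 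This excludes $\beta_1 = \beta_2$ and forces $\beta_1 = -\beta_2$, giving $\tilde\tau_1 + \tilde\tau_2 = 0$, hence $\cH(g, \mathbf{p})(u) = 0$. Applying Lemma \ref{lemm:geodesic.network.manifold.equivalence} concludes the proof.

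The main technical point is the sign/antipodality argument in the last paragraph: it is what ensures that collapsing to the $1$-dimensional gauge slice $\mathring{B}_\eps(0)$ at each degree-$2$ vertex does not lose information relative to the full stationarity equation. The key ingredients underpinning it are (i) the equality of weights along each chain (Lemma \ref{lemm:geodesic.network.embedding.structure} (1)(c)), which is where the nonemptiness of $\cS^\Lambda_G$ is really used; and (ii) the proximity of $(g, \mathfrak{e}\mathbf{p})$ to the stationary $(g_0, \mathfrak{p})$, which rules out the degenerate ``reflected'' case in the two-unit-vector dichotomy. Once $\eps$ has been chosen small enough to secure (ii) uniformly, the lemma follows.
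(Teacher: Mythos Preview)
Your proof is correct and follows the same route as the paper: reduce to showing $\cH(g,\mathbf{p})=\mathbf{0}\iff\mathring{\cH}(g,\mathbf{p})=\mathbf{0}$, then invoke Lemma~\ref{lemm:geodesic.network.manifold.equivalence}. The paper's proof is extremely terse, citing only \eqref{eq:geodesic.network.manifold.H}, \eqref{eq:geodesic.network.manifold.small.transverse}, \eqref{eq:geodesic.network.manifold.H.breve.projection}; you have correctly unpacked what is actually needed at a degree-$2$ vertex, namely the equal-weights fact from Lemma~\ref{lemm:geodesic.network.embedding.structure}~(1)(c) and the sign/antipodality argument ruling out the reflected case $\beta_1=\beta_2$.

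One small streamlining: you do not need to ``further shrink $\eps$.'' The transversality condition \eqref{eq:geodesic.network.manifold.small.transverse} already in the setup guarantees $\beta_1,\beta_2\neq 0$ for \emph{every} admissible $(g,\mathbf{p})$; since these depend continuously on $(g,\mathbf{p})$ over the connected domain $\met^k(M,\mathfrak{g},\eps)\times\mathring{B}_\eps(\mathbf{0})$, and since at the center point $(g_0,\mathbf{0})$ the tangents are antipodal (because $\iota_{g_0}(G,\mathbf{p}_0)$ is stationary and $\mathbf{p}_0(u)\in\reg\iota_{g_0}(G,\mathbf{p}_0)$), the signs of $\beta_1,\beta_2$ are opposite throughout by the intermediate value theorem. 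This is presumably what the paper has in mind when it cites \eqref{eq:geodesic.network.manifold.small.transverse}.
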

\begin{proof}
	Using \eqref{eq:geodesic.network.manifold.H},  \eqref{eq:geodesic.network.manifold.small.transverse}, and \eqref{eq:geodesic.network.manifold.H.breve.projection} we have
	\begin{equation} \label{eq:geodesic.network.manifold.equivalence.breve}
		\cH(g, \mathbf{p}) = \mathbf{0} \iff \mathring{\cH}(g, \mathbf{p}) = \mathbf{0}.
	\end{equation}
	The result then follows from Lemma \ref{lemm:geodesic.network.manifold.equivalence}. 
\end{proof}

\begin{lemm}[cf. {\cite[Theorem 1.2 (1)]{White:bumpy.old}}] \label{lemm:geodesic.network.manifold.submersion}
	If $(g, \mathbf{p}) \in \met^k(M, \mathfrak{g}, \eps) \times \mathring{B}_\eps(\mathbf{0})$ satisfies $\mathring{\cH}(g, \mathbf{p}) = \mathbf{0}$, then $\mathring{\cH}$ is a $C^{k-1}$ submersion at $(g, \mathbf{p})$.
\end{lemm}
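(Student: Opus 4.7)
The plan is to follow a White--style bumpy-metrics argument, exploiting the infinite-dimensional freedom in $T_g\met^k(M)$ against the finite-dimensional target $\mathring{Y}$. Since any surjection from a Banach space onto a finite-dimensional space automatically splits (finite-codimensional closed subspaces admit topological complements), it suffices to prove surjectivity of the partial linearization $D_g \mathring{\cH}(g, \mathbf{p}) \colon T_g \met^k(M) \to \mathring{Y}$. The $C^{k-1}$-regularity of $\mathring{\cH}$ itself is immediate from Lemma \ref{lemm:geodesic.network.manifold.L.regularity} and the definition \eqref{eq:geodesic.network.manifold.H.breve}.

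To prove this surjectivity I would argue by a local perturbation construction. The core lemma is the following: for each edge $\{u,v\} \in E$ and each vector $X \in \{\tau^g_{u,v}(\mathfrak{e}\mathbf{p}(u))\}^\perp$, there exists a $C^k$ symmetric $2$-tensor $\dot g$ supported in an arbitrarily thin tubular neighborhood of the \emph{interior} of $\sigma_g(\mathfrak{e}\mathbf{p}(u), \mathfrak{e}\mathbf{p}(v))$---bounded away from the endpoints and from all other edges---satisfying $D_g \tau^g_{u,v}(\mathfrak{e}\mathbf{p}(u)) \cdot \dot g = X$. This reduces to an inhomogeneous Jacobi boundary-value problem along $\sigma_g(\mathfrak{e}\mathbf{p}(u), \mathfrak{e}\mathbf{p}(v))$ whose source is linear in $\dot g$; non-conjugacy of the endpoints (guaranteed by $\dist_g(\mathfrak{e}\mathbf{p}(u), \mathfrak{e}\mathbf{p}(v)) < \inj(M,g)$, from \eqref{eq:geodesic.network.manifold.small.inj}) makes the BVP uniquely solvable, and since interior-supported $\dot g$ can realize arbitrary $C^0$ sources, the resulting map $\dot g \mapsto X$ surjects onto the (1-dimensional, since $\dim M = 2$) transverse space. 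Disjointness of supports across edges is arranged using Lemma \ref{lemm:geodesic.network.embedding.structure}(1)(a) and \eqref{eq:geodesic.network.manifold.small.transverse}.

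Given this local lemma, any target $\mathbf{w} \in \mathring{Y}$ is realized as $D_g\mathring{\cH}(g,\mathbf{p}) \cdot \dot g$ for a $\dot g$ obtained as a finite superposition $\sum_{\{u,v\}\in E} \dot g_{u,v}$ of such interior-supported perturbations. The mild cross-effect---a perturbation $\dot g_{u,v}$ alters $\tau^g$ at both endpoints of $\{u,v\}$---is handled because each edge affords two independent scalar controls (one at each endpoint, from independently prescribing the initial and terminal normal derivatives in the Jacobi BVP), giving ample freedom to solve the resulting finite linear system expressing $\mathbf{w}(u)$ for each $u \in V$.

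The main obstacle is the local lemma together with the bookkeeping to decouple per-vertex contributions. The ODE analysis is tractable because $\dim M = 2$ forces each segment's normal bundle to be $1$-dimensional, reducing the relevant Jacobi equation to a scalar second-order linear ODE. A cleaner alternative bypasses the explicit bookkeeping via duality: if $D_g\mathring{\cH}(g,\mathbf{p})$ missed some nontrivial $\mathbf{w} \in \mathring{Y}$, then by \eqref{eq:geodesic.network.manifold.H.breve.variational} and the vanishing $\mathring{\cH}(g,\mathbf{p}) = \mathbf{0}$, the orthogonality $\langle D_g\mathring{\cH} \cdot \dot g, \mathbf{w}\rangle_{\mathfrak{e}\mathbf{p}^* g} = 0$ for every $\dot g$ translates (after commuting derivatives) into the vanishing of an integral $\int \dot g(\tau^g, J_\mathbf{w})\, ds$ along each edge, where $J_\mathbf{w}$ is the Jacobi field interpolating the endpoint data prescribed by $\mathbf{w}$; this forces $J_\mathbf{w}\equiv 0$ along every edge and, via Lemma \ref{lemm:geodesic.network.jacobi.field}, $\mathbf{w} = \mathbf{0}$, the desired contradiction.
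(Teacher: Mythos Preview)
Your duality alternative in the final paragraph is essentially the paper's proof. The paper also computes $\langle D_1\mathring{\cH}(g,\mathbf{p})\{\dot g\}, \mathbf{q}\rangle_{\mathfrak{e}\mathbf{p}^*g} = [\partial_s\partial_t \cL(g(s),\mathbf{p}+t\mathbf{q})]_{s=t=0}$, takes $\dot g = Fg$ conformal with $F$ supported in the interior of a single edge, and obtains the formula $\tfrac{\omega}{2}\int_\sigma \langle \nabla_g^\perp F, (J_{u,v}^{\mathbf{q}})^\perp\rangle\,d\ell_g$; this detects $(J^{\mathbf{q}})^\perp$ and forces $\mathbf{q}=\mathbf{0}$ by the observation that any nonzero $\mathbf{q}\in\mathring{Y}$ has $(\mathfrak{e}\mathbf{p}(u)_*[\mathbf{q}(u)])^\perp \neq 0$ at some endpoint (the paper's Claim~\ref{clai:geodesic.network.manifold.jacobi.field.nontrivial}).

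Two minor points. First, your integrand $\dot g(\tau^g, J_{\mathbf{w}})$ is not the correct expression; after integration by parts the relevant quantity involves the \emph{normal} component $J_{\mathbf{w}}^\perp$ paired with the normal derivative of the conformal factor, as above. Second, the implication $J_{\mathbf{w}}^\perp\equiv 0 \Rightarrow \mathbf{w}=\mathbf{0}$ does not come from Lemma~\ref{lemm:geodesic.network.jacobi.field} (which goes the other direction) but from the transversality \eqref{eq:geodesic.network.manifold.small.transverse} at regular vertices together with the stationarity condition at singular vertices; this is the content of Claim~\ref{clai:geodesic.network.manifold.jacobi.field.nontrivial}. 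Neither point is a genuine gap.

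One organizational difference: the paper first uses the self-adjointness of $\mathring{J}=D_2\mathring{\cH}(g,\mathbf{p})$ (Lemma~\ref{lemm:geodesic.network.manifold.linearization}) to get $\image D\mathring{\cH}\supset \image \mathring{J}=\mathring{K}^\perp$, and then only needs $\pi_{\mathring{K}}\circ D_1\mathring{\cH}$ to surject onto $\mathring{K}$. You instead prove the stronger statement that $D_1\mathring{\cH}$ alone surjects onto all of $\mathring{Y}$; both are correct, but the paper's factoring through $\mathring{K}$ is reused in the subsequent Fredholm index computation (Lemma~\ref{lemm:geodesic.network.manifold.fredholm}). Your direct-construction route via inhomogeneous Jacobi BVPs would also work but, as you note, involves more bookkeeping than the duality argument.
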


We direct the reader to \cite[II.2]{Lang:fundamentals.dg} for information on Banach submersions. The proof of Lemma \ref{lemm:geodesic.network.manifold.submersion} hinges on the following technical lemma:

\begin{lemm}[cf. {\cite[Theorem 1.1]{White:bumpy.old}}] \label{lemm:geodesic.network.manifold.linearization}
	$\mathring{\cH}$ is a $C^{k-1}$ map of Banach spaces and, when $\mathring{\cH}(g, \mathbf{p}) = \mathbf{0}$, the linearization
	\[ \mathring{J} = D_2 \mathring{\cH}(g, \mathbf{p}) : \mathring{Y} \to \mathring{Y} \]
	is a self-adjoint map with respect to the inner product $\mathfrak{e}\mathbf{p}^* g$ on $\mathring{Y}$.
\end{lemm}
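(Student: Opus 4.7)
The plan is to prove the two claims separately: $C^{k-1}$ regularity of $\mathring{\cH}$ will be read off the explicit formula \eqref{eq:geodesic.network.manifold.H} together with ODE regularity for the geodesic BVP, while self-adjointness of $\mathring{J}$ at a critical point will be derived from the equality of mixed partials of $\cL$, after checking that the critical-point hypothesis kills the boundary term coming from the $\mathbf{p}$-dependence of the inner product $\mathfrak{e}\mathbf{p}^{*} g$.

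For regularity, I would first treat $\cH$ via \eqref{eq:geodesic.network.manifold.H},
\[
\mathfrak{e}\mathbf{p}(u)_{*}\big[\cH(g,\mathbf{p})(u)\big] \;=\; -\sum_{v\in E_{u}} \omega(\{u,v\})\, \tau^{g}_{u,v}(\mathfrak{e}\mathbf{p}(u)).
\]
The crucial observation is that $\tau^{g}_{u,v}(\mathfrak{e}\mathbf{p}(u))$ is the initial unit velocity of the $g$-geodesic from $\mathfrak{e}\mathbf{p}(u)$ to $\mathfrak{e}\mathbf{p}(v)$, and these endpoints lie at distance less than $\inj(M,g)$ by \eqref{eq:geodesic.network.manifold.small.imm} together with \eqref{eq:geodesic.network.manifold.small.frakg}. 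The Christoffel symbols of $g \in \met^{k}(M,\mathfrak{g},\eps)$ are $C^{k-1}$ functions of $g$, so standard smooth dependence on parameters for the geodesic boundary value problem (together with the inverse/implicit function theorem applied to the exponential-map chart) yields that $\tau^{g}_{u,v}(\mathfrak{e}\mathbf{p}(u))$ is $C^{k-1}$ in $(g,\mathbf{p})$. The pushforward $\mathfrak{e}\mathbf{p}(u)_{*}$ and its inverse are smooth in $\mathbf{p}$ and independent of $g$, and the projection $\proj^{\mathfrak{e}\mathbf{p}^{*}g}_{\mathring{B}_{\eps}(0)}$ onto the fixed subspace is algebraic (in fact rational) in the components of $g$ and polynomial in the components of $d\mathfrak{e}\mathbf{p}$. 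Composing these, the definition \eqref{eq:geodesic.network.manifold.H.breve} shows $\mathring{\cH}$ is $C^{k-1}$.

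For self-adjointness, fix $(g,\mathbf{p}) \in \met^{k}(M,\mathfrak{g},\eps) \times \mathring{B}_{\eps}(\mathbf{0})$ with $\mathring{\cH}(g,\mathbf{p}) = \mathbf{0}$, and take arbitrary $\mathbf{q},\mathbf{r}\in\mathring{Y}$. I would start from \eqref{eq:geodesic.network.manifold.H.breve.variational}, which holds for each $t$ in a neighborhood of $0$:
\[
\big\langle \mathring{\cH}(g,\mathbf{p}+t\mathbf{q}),\mathbf{r}\big\rangle_{\mathfrak{e}(\mathbf{p}+t\mathbf{q})^{*}g} \;=\; \Big[\tfrac{d}{ds}\cL(g,\mathbf{p}+t\mathbf{q}+s\mathbf{r})\Big]_{s=0}.
\]
Differentiate both sides in $t$ at $t=0$. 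On the left, the product rule yields $\big\langle D_{2}\mathring{\cH}(g,\mathbf{p})[\mathbf{q}],\mathbf{r}\big\rangle_{\mathfrak{e}\mathbf{p}^{*}g}$ plus a term that pairs $\mathring{\cH}(g,\mathbf{p})$ against the $\mathbf{q}$-derivative of the inner product; the latter vanishes because $\mathring{\cH}(g,\mathbf{p})=\mathbf{0}$. On the right, one obtains the mixed partial
\[
\Big[\tfrac{\partial^{2}}{\partial t\,\partial s}\cL(g,\mathbf{p}+t\mathbf{q}+s\mathbf{r})\Big]_{t=s=0}.
\]
Since $\cL$ is $C^{k-1}$ with $k\geq 3$ by Lemma \ref{lemm:geodesic.network.manifold.L.regularity}, it is in particular $C^{2}$, so Schwarz's theorem makes this mixed partial symmetric in $(\mathbf{q},\mathbf{r})$. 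This gives the asserted self-adjointness of $\mathring{J}$ with respect to $\mathfrak{e}\mathbf{p}^{*}g$.

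The main subtlety, though a mild one, is the dependence of the inner product on $\mathbf{p}$: without the critical-point condition, differentiating the left-hand side in $t$ introduces a symmetric-looking correction term that is not obviously symmetric, and it is precisely the critical-point hypothesis $\mathring{\cH}(g,\mathbf{p})=\mathbf{0}$ that annihilates it. This is also the sole reason the lemma asserts self-adjointness only at critical points and not globally.
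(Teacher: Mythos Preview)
Your proposal is correct and follows essentially the same approach as the paper: regularity is read off \eqref{eq:geodesic.network.manifold.H} via $C^{k-1}$ dependence of the geodesic boundary value problem on $g$ and of the projection on $(g,\mathbf{p})$, and self-adjointness is obtained by differentiating \eqref{eq:geodesic.network.manifold.H.breve.variational} and using the critical-point condition to kill the term coming from the $\mathbf{p}$-dependent inner product, reducing to equality of mixed partials of the $C^{k-1}$ functional $\cL$ with $k\geq 3$. The paper's write-up packages the critical-point step as the equality $\big[\langle \tfrac{d}{ds}\mathring{\cH}(g,\mathbf{p}+s\mathbf{q}),\mathbf{r}\rangle_{\mathfrak{e}\mathbf{p}^*g}\big]_{s=0} = \big[\tfrac{d}{ds}\langle \mathring{\cH}(g,\mathbf{p}+s\mathbf{q}),\mathbf{r}\rangle_{\mathfrak{e}(\mathbf{p}+s\mathbf{q})^*g}\big]_{s=0}$, which is exactly your product-rule observation.
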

\begin{proof}
	The fact that $\cH$ is $C^{k-1}$ on $\met^k(M, \mathfrak{g}, \eps) \times B_\eps(\mathbf{0})$ follows from \eqref{eq:geodesic.network.manifold.H} and that $\met^k(M, \mathfrak{g}, \eps) \ni g \mapsto \exp^g$ and the $g$-projection maps are $C^{k-1}$. 
	
	Now, fix $(g, \mathbf{p})$ with $\mathring{\cH}(g, \mathbf{p}) = \mathbf{0}$. By \eqref{eq:geodesic.network.manifold.H.breve.variational} we have
	\begin{align*}
		\langle \mathring{J} \mathbf{q}, \mathbf{r} \rangle_{\mathfrak{e}(\mathbf{p})^*g}
			& = \langle D_2 \mathring{\cH}(g, \mathbf{p})\{\mathbf{q}\}, \mathbf{r} \rangle_{\mathfrak{e}\mathbf{p}^*g} \\
			& = \big[ \langle \tfrac{d}{ds} \mathring{\cH}(g, \mathbf{p} + s \mathbf{q}), \mathbf{r} \rangle_{\mathfrak{e}\mathbf{p}^* g} \big]_{s=0} \\
			& = \big[ \tfrac{d}{ds} \langle \mathring{\cH}(g, \mathbf{p} + s \mathbf{q}), \mathbf{r} \rangle_{\mathfrak{e}(\mathbf{p} + s \mathbf{q})^* g} \big]_{s=0} \\
			& = \big[ \tfrac{\partial^2}{\partial s \partial t} \cL(g, \mathbf{p} + s \mathbf{q} + t \mathbf{r}) \big]_{s=t=0}.
	\end{align*}
	Since $\cL$ is $C^{k-1}$ (Lemma \ref{lemm:geodesic.network.manifold.L.regularity}) and $k \geq 3$, we may swap the order of differentiation in $s$ and $t$, and working backwards similarly we get $\langle \mathbf{q}, \mathring{J}\mathbf{r} \rangle_{\mathfrak{e}\mathbf{p}^* g}$. This gives the desired self-adjointness.
\end{proof}

\begin{proof}[Proof of Lemma \ref{lemm:geodesic.network.manifold.submersion}]
	Since $\mathring{\cH}$'s target space is finite dimensional and $\mathring{\cH}$ is $C^{k-1}$ by Lemma \ref{lemm:geodesic.network.manifold.linearization}, it suffices to prove that $D\mathring{\cH}(g, \mathbf{p})$ is surjective (\cite[Proposition II.2.3 (ii)]{Lang:fundamentals.dg}). For this, note that $D\mathring{\cH}(g, \mathbf{p})$ acts as
	\begin{align} \label{eq:geodesic.network.manifold.submersion.action}
		D\mathring{\cH}(g, \mathbf{p})\{ h, \mathbf{q} \} 
			& = D_1 \mathring{\cH}(g, \mathbf{p}) \{h\} + D_2 \mathring{\cH}(g, \mathbf{p}) \{\mathbf{q}\} \\
			& = D_1 \mathring{\cH}(g, \mathbf{p}) \{h\} + \mathring{J} \mathbf{q}, \nonumber
	\end{align}
	where $\mathring{J}$ is as in Lemma \eqref{lemm:geodesic.network.manifold.linearization}. Then, \eqref{eq:geodesic.network.manifold.submersion.action} and the self-adjointness of $J$ imply
	\[ \image D\mathring{\cH}(g, \mathbf{p}) \supset \image \mathring{J} = \mathring{K}^\perp, \]
	where 
	\[ \mathring{K} := \ker \mathring{J} \]
	and $\perp$ is taken with respect to the inner product $\mathfrak{e}\mathbf{p}^* g$ on $\mathring{Y}$. The surjectivity will then follow from \eqref{eq:geodesic.network.manifold.submersion.action} if we can show:
	\begin{claim}[cf. {\cite[p. 168, (3)]{White:bumpy.old}}] \label{clai:geodesic.network.manifold.submersion.full.img}
		If $\pi_{\mathring{K}} : \mathring{Y} \to \mathring{K}$ is the projection onto $\mathring{K}$ in $\mathring{Y}$ with respect to the inner product $\mathfrak{e}\mathbf{p}^* g$ on $\mathring{Y}$, then
		\[ \pi_{\mathring{K}} \circ D_1 \mathring{\cH}(g, \mathbf{p}) : \Tan_g \met^k(M, \mathfrak{g}, \eps) \to \mathring{K} \]
		is surjective.
	\end{claim}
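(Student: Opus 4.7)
The plan is to prove the claim by duality using the self-adjointness of $\mathring J$ established in Lemma \ref{lemm:geodesic.network.manifold.linearization}. Since $\mathring K$ is finite-dimensional and self-adjointly paired with itself inside $\mathring Y$, surjectivity of $\pi_{\mathring K} \circ D_1 \mathring{\cH}(g, \mathbf{p}) : \Tan_g \met^k(M, \mathfrak{g}, \eps) \to \mathring K$ is equivalent to the statement that any $\mathbf{q}_0 \in \mathring K$ satisfying
\[ \langle D_1 \mathring{\cH}(g, \mathbf{p})\{h\}, \mathbf{q}_0 \rangle_{\mathfrak{e}\mathbf{p}^* g} = 0 \text{ for all } h \in \Tan_g \met^k(M, \mathfrak{g}, \eps) \]
must vanish. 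Using $\mathring{\cH}(g, \mathbf{p}) = 0$ together with \eqref{eq:geodesic.network.manifold.H.breve.variational} (and that $\cL$ is $C^{k-1}$ with $k \geq 3$, so mixed partials may be swapped), this pairing coincides with
\[ \big[\tfrac{\partial^2}{\partial s \, \partial t} \cL(g + sh, \mathbf{p} + t \mathbf{q}_0)\big]_{s=t=0}. \]

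Next, I would interpret this mixed partial in terms of a Jacobi field. Given such a $\mathbf{q}_0 \in \mathring K$, Lemma \ref{lemm:geodesic.network.jacobi.field} extends it to a stationary varifold Jacobi field $J$ along $S = \iota_g(G, \mathfrak{e}\mathbf{p})$, whose edge-restrictions $J_{u,v}$ interpolate the endpoint data $\mathfrak{e}\mathbf{p}(u)_*[\mathbf{q}_0(u)]$, $\mathfrak{e}\mathbf{p}(v)_*[\mathbf{q}_0(v)]$. A direct second-variation-of-arclength computation — first differentiating in $s$ along fixed endpoints (which gives $\tfrac12 \int_\sigma h(\tau,\tau)\, ds_g$ since the curves remain $g$-geodesics) and then differentiating in $t$ (producing a Jacobi variation of the curve) — rewrites the mixed partial as
\[ \big[\tfrac{\partial^2}{\partial s \, \partial t} \cL(g + sh, \mathbf{p} + t\mathbf{q}_0)\big]_{s=t=0} = \sum_{\{u,v\} \in E} \omega(\{u,v\}) \int_{\sigma_g(\mathfrak{e}\mathbf{p}(u), \mathfrak{e}\mathbf{p}(v))} \cQ_h(J_{u,v}) \, ds_g, \]
where $\cQ_h(J_{u,v})$ is a first-order linear differential expression in $J_{u,v}$ with coefficients depending on $h$, $\nabla h$, and $\tau$. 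The boundary terms at the vertices cancel by the vertex balance conditions (J$_1$')--(J$_3$') and the stationarity of $S$.

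I would then exploit the freedom in choosing $h$: take $h$ supported in a small ball $B$ meeting a single edge $e = \{u, v\}$ transversely in an open subarc and avoiding all other edges and all vertices. The sum then collapses to the $e$-term, and by varying $h$ within a rich family of symmetric $(0,2)$-tensors compactly supported in $B$ — conformal bumps $h = 2\varphi g$ combined with shear-type perturbations in the $\{\tau, \tau^\perp\}$-frame along $\sigma_g(\mathfrak{e}\mathbf{p}(u), \mathfrak{e}\mathbf{p}(v))$ — a standard cutoff/density argument forces $J^\perp_{u,v} \equiv 0$ along every subarc, hence on every edge. Producing enough linearly independent metric perturbations to pin down $J^\perp$ precisely is the main technical obstacle, and is essentially the heart of White's bumpy-metrics machinery \cite{White:bumpy.old}; a version appears in Staffa \cite{Staffa:bumpy.geodesic.nets}.

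With $J$ shown to be everywhere tangential on $\reg S$, the conclusion $\mathbf{q}_0 = 0$ follows vertex-by-vertex. At a singular vertex $u$ ($\deg_G u \neq 2$), the branches at $\mathfrak{e}\mathbf{p}(u)$ have unit tangents whose span is $\Tan_{\mathfrak{e}\mathbf{p}(u)} M$ by the proof of Lemma \ref{lemm:geodesic.network.embedding.structure}(1)(b); since $J(\mathfrak{e}\mathbf{p}(u))$ must be tangent to each such branch, $J(\mathfrak{e}\mathbf{p}(u)) = 0$ and thus $\mathbf{q}_0(u) = 0$. At a degree-2 vertex $u$, $\mathbf{q}_0(u) \in \mathring B_\eps(0)$ is normal to $\sigma$ by construction of $\mathring Y$, so tangentiality of $J$ at $\mathfrak{e}\mathbf{p}(u)$ yields $\mathbf{q}_0(u) = 0$. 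On any closed-loop component in the sense of Definition \ref{defi:geodesic.network.subdivision}(S$_2$), the tangential Jacobi field along a closed geodesic has constant tangential component, which must vanish at the base vertex via the same normal constraint. Hence $\mathbf{q}_0 \equiv 0$, completing the proof of the claim.
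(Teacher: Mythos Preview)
Your approach is essentially the same as the paper's: both reduce surjectivity by duality to showing that any $\mathbf{q}_0 \in \mathring K$ with $\langle D_1\mathring{\cH}(g,\mathbf{p})\{h\},\mathbf{q}_0\rangle_{\mathfrak{e}\mathbf{p}^*g}=0$ for all $h$ must vanish, both interpret this pairing as the mixed partial $\big[\partial_s\partial_t \cL(g(s),\mathbf{p}+t\mathbf{q}_0)\big]_{s=t=0}$, and both finish by first forcing $J^\perp\equiv 0$ through metric variations and then using the transversality of $\mathring B_\eps(0)$ at degree-$2$ vertices together with the spanning of tangent directions at singular vertices to conclude $\mathbf{q}_0=0$.

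The only substantive difference is that where you defer the ``force $J^\perp\equiv 0$'' step to a general $\cQ_h$ expression and the machinery of \cite{White:bumpy.old,Staffa:bumpy.geodesic.nets}, the paper carries out a short self-contained computation: for a conformal variation $g(s)=(1+sF)g$ with $F$ supported near the interior of a single edge $\sigma_g(\mathfrak{e}\mathbf{p}(u),\mathfrak{e}\mathbf{p}(v))$, one gets exactly
\[
\big[\tfrac{\partial^2}{\partial s\,\partial t}\cL(g(s),\mathbf{p}+t\mathbf{q}_0)\big]_{s=t=0}
=\tfrac{1}{2}\,\omega(\{u,v\})\int_{\sigma_g(\mathfrak{e}\mathbf{p}(u),\mathfrak{e}\mathbf{p}(v))}\langle \nabla^\perp_g F,\ (J^{\mathbf{q}_0}_{u,v})^\perp\rangle\, d\ell_g,
\]
so a single conformal $F$ with suitably prescribed normal derivative already detects any nonzero $(J^{\mathbf{q}_0})^\perp$ --- no shear-type tensors are needed. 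Also, note that $\mathring B_\eps(0)$ is only required to be \emph{transverse} (not normal) to the edge; your conclusion at degree-$2$ vertices still goes through because $q\mapsto(\mathfrak{e}\mathbf{p}(u)_*q)^\perp$ is an isomorphism on that transverse line by \eqref{eq:geodesic.network.manifold.small.transverse}.
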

	
	This claim will require some effort to prove, so we break up its proof in smaller claims. The theme is that we wish to understand the implications of $\mathbf{q} \in \mathring{K}$, i.e., $\mathring{J}\mathbf{q} = \mathbf{0}$. To that end, let us pick $\mathbf{q} \in \mathring{K}$. 
	
	It follows from elementary Jacobi field analysis (cf. \eqref{eq:geodesic.network.finite.stratification.jf.tangent}) and \eqref{eq:geodesic.network.manifold.H} that, with $D_2$ indicating differentiation with respect to $\mathbf{p}$,
	\[ D_2 \big( \mathfrak{e}\mathbf{p}_* \big[  \cH(g, \mathbf{p}) \big] \big)\{\mathbf{q}\}(u) = - \sum_{v \in E_u} \omega(\{u,v\}) \nabla_{u,v}^\perp J^{\mathbf{q}}_{u,v}(\mathfrak{e}\mathbf{p}(u)), \]
	where
	\begin{multline} \label{eq:geodesic.network.manifold.jacobi.field}
		J_{u,v}^{\mathbf{q}} \text{ is the unique Jacobi field along } \sigma_g(\mathfrak{e}\mathbf{p}(u), \mathfrak{e}\mathbf{p}(v)) \\
		\text{satisfying } J^{\mathbf{q}}_{u,v}(\mathfrak{e}\mathbf{p}(u)) = \mathfrak{e}\mathbf{p}(u)_* \big[ \mathbf{q}(u) \big], \; J^{\mathbf{q}}_{u,v}(\mathfrak{e}\mathbf{p}(v)) = \mathfrak{e}\mathbf{p}(u)_* \big[ \mathbf{q}(v) \big],
	\end{multline}
	and $\nabla_{u,v}$ is the unit speed covariant differentiation along $\sigma_g(\mathfrak{e}\mathbf{p}(u), \mathfrak{e}\mathbf{p}(v))$, oriented from $\mathfrak{e}\mathbf{p}(u)$ to $\mathfrak{e}\mathbf{p}(v)$, and $\perp$ is the projection onto the normal bundle with respect to $g$. (The existence and uniqueness of $J_{u,v}^{\mathbf{q}}$ are due to the fact that we are working below the injectivity radius.) 
	
	Since $\cH(g, \mathbf{p}) = \mathbf{0}$, we deduce
	\begin{equation} \label{eq:geodesic.network.manifold.J.geom}
		\mathfrak{e}\mathbf{p}(u)_* \big[  D_2 \cH(g, \mathbf{p})\{\mathbf{q}\}(u) \big] = - \sum_{v \in E_u} \omega(\{u,v\}) \nabla_{u,v}^\perp J^{\mathbf{q}}_{u,v}(\mathfrak{e}\mathbf{p}(u)),
	\end{equation}
	Therefore, for $u \in V \setminus V_{\reg}$:
	\begin{equation} \label{eq:geodesic.network.manifold.J.sing}
		\mathfrak{e}\mathbf{p}(u)_* \big[ (\mathring{J} \mathbf{q})(u) \big] = - \sum_{v \in E_u} \omega(\{u,v\}) \nabla_{u,v}^\perp J^{\mathbf{q}}_{u,v}(\mathfrak{e}\mathbf{p}(u)),
	\end{equation}
	while for $u \in V_{\reg}$:
	\begin{equation} \label{eq:geodesic.network.manifold.J.reg}
			\mathfrak{e}\mathbf{p}(u)_* \big[ (\mathring{J} \mathbf{q})(u) \big] = - \proj^{\mathfrak{e}\mathbf{p}(u)^* g}_{\mathring{B}_\eps(0)} \sum_{v \in E_u} \omega(\{u,v\}) \nabla_{u,v}^\perp J^{\mathbf{q}}_{u,v}(\mathfrak{e}\mathbf{p}(u)).
	\end{equation}
	
	\begin{claim} \label{clai:geodesic.network.manifold.jacobi.field.nontrivial}
		Suppose $\mathbf{q} \in \mathring{Y} \setminus \{\mathbf{0}\}$. Then, 
		\begin{equation} \label{eq:geodesic.network.manifold.jacobi.field.nontrivial}
			\mathfrak{e} \mathbf{p}(u)_* \big[ \mathbf{q}(u) \big]^\perp \neq 0 \text{ for some } \{ u, v \} \in E,
		\end{equation}
		where $\perp$ denotes projection onto the normal bundle of $\sigma_g(\mathfrak{e} \mathbf{p}(u), \mathfrak{e} \mathbf{p}(v))$ with respect to $g$.
	\end{claim}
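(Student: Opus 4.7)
The plan is a direct case analysis. Since $\mathbf{q} \neq \mathbf{0}$, pick some $u \in V$ with $\mathbf{q}(u) \neq 0$; because $\mathfrak{e}\mathbf{p}(u)_* = d(\exp^{\mathfrak{g}}_{\mathfrak{p}(u)})_{\mathbf{p}(u)}$ is a linear isomorphism (using $|\mathbf{p}(u)| < \eps < \inj(M, \mathfrak{g})$ from \eqref{eq:geodesic.network.manifold.small.inj}), also $\mathfrak{e}\mathbf{p}(u)_*[\mathbf{q}(u)] \neq 0$. The task reduces to producing some $v \in E_u$ for which this nonzero vector fails to be collinear with the oriented unit tangent $\tau^g_{u,v}(\mathfrak{e}\mathbf{p}(u))$ of $\sigma_g(\mathfrak{e}\mathbf{p}(u), \mathfrak{e}\mathbf{p}(v))$, because on the $2$-manifold $M$ non-collinearity forces the $g$-normal component to be nonzero. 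The two cases correspond to $\deg_G u \neq 2$ (i.e. $u \in V \setminus V_{\reg}$) and $\deg_G u = 2$ (i.e. $u \in V_{\reg}$).

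When $\deg_G u \neq 2$, Lemma \ref{lemm:geodesic.network.embedding.structure}(1)(b) identifies $\mathfrak{e}\mathbf{p}(u) \in \sing \iota_g(G, \mathfrak{e}\mathbf{p})$, so the support of the varifold tangent cone at $\mathfrak{e}\mathbf{p}(u)$ is not contained in a single line through the origin. Equivalently, the unit vectors $\{ \tau^g_{u,v}(\mathfrak{e}\mathbf{p}(u)) : v \in E_u \}$ do not all lie on a common line of $\Tan_{\mathfrak{e}\mathbf{p}(u)} M$, so they determine at least two distinct $1$-dimensional subspaces. Since any nonzero vector is collinear with at most one line, at least one $v \in E_u$ has $\tau^g_{u,v}(\mathfrak{e}\mathbf{p}(u))$ not collinear with $\mathfrak{e}\mathbf{p}(u)_*[\mathbf{q}(u)]$, yielding the desired nontrivial normal component.

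When $\deg_G u = 2$, the constraint $\mathbf{q} \in \mathring{Y}$ forces $\mathbf{q}(u)$ to lie in the fixed $1$-dimensional linear subspace $L_u \subset \Tan_{\mathfrak{p}(u)} M$ whose image under $\exp^{\mathfrak{g}}_{\mathfrak{p}(u)}$ is the $\mathfrak{g}$-geodesic segment $\mathring{B}^{\mathfrak{g}}_\eps(\mathfrak{p}(u))$. Since $\exp^{\mathfrak{g}}_{\mathfrak{p}(u)}$ sends radial lines to $\mathfrak{g}$-geodesics and $\mathring{B}^{\mathfrak{g}}_\eps(\mathfrak{p}(u))$ was chosen to be a $\mathfrak{g}$-geodesic through $\mathfrak{p}(u)$, the pushforward $\mathfrak{e}\mathbf{p}(u)_*[\mathbf{q}(u)]$ is a nonzero vector tangent to $\mathring{B}^{\mathfrak{g}}_\eps(\mathfrak{p}(u))$ at $\mathfrak{e}\mathbf{p}(u)$. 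Transversality \eqref{eq:geodesic.network.manifold.small.transverse} then guarantees this vector is not collinear with $\tau^g_{u, n_i(u)}(\mathfrak{e}\mathbf{p}(u))$ for either $i \in \{1, 2\}$, and we may take $v := n_1(u)$. The only mildly delicate step in the whole argument is this pushforward computation; it rests entirely on the geometric fact that $\mathring{B}^{\mathfrak{g}}_\eps(\mathfrak{p}(u))$ was \emph{chosen} to be a $\mathfrak{g}$-geodesic, so its $\exp^{\mathfrak{g}}_{\mathfrak{p}(u)}$-preimage is a straight line through $0$ in $\Tan_{\mathfrak{p}(u)} M$, namely $L_u$ itself.
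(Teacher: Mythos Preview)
Your proof is correct and uses essentially the same approach as the paper, just framed as a direct case analysis rather than by contradiction. Both arguments rest on the identical two facts: at a vertex $u$ with $\deg_G u \neq 2$ the tangent directions $\{\tau^g_{u,v}(\mathfrak{e}\mathbf{p}(u)) : v \in E_u\}$ do not lie on a single line (Lemma~\ref{lemm:geodesic.network.embedding.structure}(1)(b)), and at a vertex $u$ with $\deg_G u = 2$ the transversality condition \eqref{eq:geodesic.network.manifold.small.transverse} makes $q \mapsto (\mathfrak{e}\mathbf{p}(u)_* q)^\perp$ an isomorphism on the $1$-dimensional $\mathring{B}_\eps(0)$.
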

	\begin{proof}[Proof of claim]
		We argue by contradiction, supposing that \eqref{eq:geodesic.network.manifold.jacobi.field.nontrivial} fails for all $\{u,v\} \in E$. Then, $\cH(g, \mathbf{p}) = \mathbf{0}$, \eqref{eq:geodesic.network.manifold.H.equivalence}, and Lemma \ref{lemm:geodesic.network.embedding.structure}'s (1)(b)-(c) imply
		\begin{equation} \label{eq:geodesic.network.manifold.q.bdry}
			\mathbf{q}(u) = 0 \text{ for all } u \in V \setminus V_{\reg}.
		\end{equation}
		The additional fact that $\mathring{B}_\eps(0) \ni q \mapsto (\mathfrak{e} \mathbf{p}(u)_* q)^\perp$ is an isomorphism when $u \in V_{\reg}$, as a consequence of \eqref{eq:geodesic.network.manifold.small.transverse}, gives
		\begin{equation} \label{eq:geodesic.network.manifold.q.intr}
			\mathbf{q}(u) = 0 \text{ for all } u \in V_{\reg}.
		\end{equation}
		The combination of \eqref{eq:geodesic.network.manifold.q.bdry}, \eqref{eq:geodesic.network.manifold.q.intr} contradicts $\mathbf{q} \neq \mathbf{0}$. 
	\end{proof}

	\begin{claim} \label{clai:geodesic.network.manifold.submersion.nondegenerate}
		Suppose $\mathbf{q} \in \mathring{Y}$ satisfies \eqref{eq:geodesic.network.manifold.jacobi.field.nontrivial}. Then, 
		\begin{equation} \label{eq:geodesic.network.manifold.submersion.nondegenerate}
			\big[ \tfrac{\partial^2}{\partial s \partial t} \cL(g(s), \mathbf{p} + t \mathbf{q}) \big]_{s=t=0} \neq \mathbf{0}
		\end{equation}		
		for at least one $C^1$ path $s \mapsto g(s)$ with $g(0) = g$.
	\end{claim}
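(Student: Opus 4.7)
The hypothesis yields $\{u_0, v_0\} \in E$ with $\mathfrak{e}\mathbf{p}(u_0)_*[\mathbf{q}(u_0)]^\perp \neq 0$ transverse to $\sigma_0 := \sigma_g(\mathfrak{e}\mathbf{p}(u_0), \mathfrak{e}\mathbf{p}(v_0))$. The normal component $J^\perp$ of the Jacobi field $J := J^{\mathbf{q}}_{u_0,v_0}$ from \eqref{eq:geodesic.network.manifold.jacobi.field} satisfies a second-order linear ODE along $\sigma_0$ with nonzero data at $\mathfrak{e}\mathbf{p}(u_0)$, so $J^\perp$ vanishes only at a discrete subset and we may pick an interior point $x_0 \in \operatorname{int}\sigma_0$ with $J^\perp(x_0) \neq 0$. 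By Lemma \ref{lemm:geodesic.network.embedding.structure}(1)(a) and Definition \ref{defi:geodesic.network.embedding}(E$_2$), there is a small open neighborhood $U$ of $x_0$ meeting $\supp\iota_g(G,\mathfrak{e}\mathbf{p})$ only in $\operatorname{int}\sigma_0$, on which we fix Fermi coordinates $(\xi,\eta)$ for $g$ along $\sigma_0$ (so $\sigma_0 \cap U = \{\eta=0\}$ and $\xi$ is $g$-arc length).

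Pick a compactly supported bump $\phi : \RR \to [0,\infty)$ with $\phi(\xi_0) > 0$ (writing $\xi_0 := \xi(x_0)$) and $\supp\phi$ narrow around $\xi_0$. Define the candidate path $g(s) := g + sh$, where
\[ h := \phi(\xi)\, \eta\, d\xi \otimes d\xi \quad \text{on } U, \]
extended by zero off of $U$. Then $h$ is a smooth symmetric $(0,2)$-tensor compactly supported in $U$, and $g(s) \in \met^k(M, \mathfrak{g}, \eps)$ for $|s|$ small.

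For fixed small $t$, the envelope principle (applied to the $g(s)$-length minimizer $\sigma_{g(s)}$ joining its endpoints) gives
\[ \partial_s \cL(g(s), \mathbf{p}+t\mathbf{q})\big|_{s=0} = \tfrac12 \sum_{\{u,v\}\in E} \omega(\{u,v\}) \int_{\sigma_g(\mathfrak{e}(\mathbf{p}+t\mathbf{q})(u), \mathfrak{e}(\mathbf{p}+t\mathbf{q})(v))} h(\tau_t,\tau_t)\, dL_g. \]
Since $\supp h \subset U$ meets only $\sigma_t := \sigma_g(\mathfrak{e}(\mathbf{p}+t\mathbf{q})(u_0), \mathfrak{e}(\mathbf{p}+t\mathbf{q})(v_0))$ for small $t$, this collapses to $\tfrac{\omega_0}{2} F(t)$ with $F(t) := \int_{\sigma_t} h(\tau_t,\tau_t)\, dL_g$ and $\omega_0 := \omega(\{u_0,v_0\})$.

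Finally, we compute $F'(0)$ in Fermi coordinates. As $h_{\xi\xi}|_{\sigma_0} = \phi(\xi) \cdot 0 = 0$, the $F(0)$ term vanishes, and since all $g$-Christoffel symbols vanish along $\sigma_0 = \{\eta=0\}$ in these coordinates, the constant-$g$-speed parametrization of $\sigma_t$ over $[0,1]$ allows an explicit differentiation that isolates the contribution of the normal component $V^\eta$ of the variation field of the geodesic family. This normal component equals $J^\perp$ (the tangential reparametrization piece inherent to constant-speed parametrization is invisible to $\partial_\eta$), and the surviving term $V^\eta \cdot \partial_\eta h_{\xi\xi}|_{\sigma_0} = J^\perp(\xi)\cdot\phi(\xi)$ yields
\[ F'(0) = \int_{\sigma_0} \phi(\xi)\, J^\perp(\xi)\, d\xi, \]
which is nonzero once $\supp\phi$ is chosen narrow enough around $\xi_0$ that $\phi \cdot J^\perp$ has a single sign. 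Hence $\partial_s\partial_t \cL(g(s),\mathbf{p}+t\mathbf{q})|_{s=t=0} = \tfrac{\omega_0}{2}F'(0) \neq 0$. The main subtlety is the identification of the normal component of the variation field with $J^\perp$ combined with the systematic use of the vanishing of the Fermi-coordinate Christoffels along $\sigma_0$; together they cleanly isolate the pairing against $J^\perp$, turning the judicious choice of $h$ into a nonzero integral.
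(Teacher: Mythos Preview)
Your argument is correct and follows essentially the same approach as the paper: both perturb the metric by a tensor supported near an interior point of an edge where $(J^{\mathbf{q}}_{u,v})^\perp \neq 0$, compute the mixed partial via the first variation of length in the metric, and reduce to a pairing with $J^\perp$ that can be made nonzero by the choice of perturbation. The paper uses a conformal perturbation $g(s) = (1+sF)g$ and arrives (after an integration by parts) at $\tfrac{\omega}{2}\int \langle \nabla^\perp_g F, (J^{\mathbf{q}}_{u,v})^\perp\rangle\, d\ell_g$, whereas you work in Fermi coordinates with the non-conformal tensor $h = \phi(\xi)\,\eta\,d\xi\otimes d\xi$ and obtain $\tfrac{\omega}{2}\int \phi\, J^\perp\, d\xi$ directly; these are the same computation (take $F = \phi(\xi)\eta$ in Fermi coordinates in the paper's formula).

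One minor fix: as written, $h = \phi(\xi)\,\eta\,d\xi\otimes d\xi$ does not vanish at the $\eta$-boundary of $U$, so ``extended by zero off of $U$'' is not smooth; multiply by a cutoff $\psi(\eta)$ with $\psi \equiv 1$ near $\eta = 0$, which leaves $F'(0)$ unchanged. Also, Lemma~\ref{lemm:geodesic.network.embedding.structure}(1)(a) concerns balls around \emph{vertices}; for the neighborhood $U$ of the interior point $x_0$ you only need (E$_2$) and the finiteness of $E$.
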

	\begin{proof}[Proof of claim]
		By virtue of \eqref{eq:geodesic.network.manifold.jacobi.field.nontrivial}, there exists $\{u,v\} \in E$ along which the Jacobi field $J^{\mathbf{q}}_{u,v}$ of \eqref{eq:geodesic.network.manifold.jacobi.field} has a nontrivial normal component $(J^{\mathbf{q}}_{u,v})^\perp$. Set:
		\[ g(s) := (1 + sF) g, \]
		where $F : M \to \RR$ is $C^k$ is arbitrary for now but supported on an open set $U \subset M$ satisfying		
		\begin{equation} \label{eq:geodesic.network.manifold.submersion.U}
			\emptyset \neq U \cap (\supp \iota_g(G, \mathfrak{e} \mathbf{p})) \Subset \operatorname{int} \sigma_g(\mathfrak{e}\mathbf{p}(u), \mathfrak{e}\mathbf{p}(v)) \cap \{ (J^{\mathbf{q}}_{u,v})^\perp \neq 0 \}.
		\end{equation}
		By \eqref{eq:geodesic.network.manifold.submersion.U}, the chain rule, the variation formula for induced volume forms under changes of metric, the first variation formula combined with the geodesic nature of $\sigma_g(\mathfrak{e}\mathbf{p}(u), \mathfrak{e}\mathbf{p}(v))$, and integration by parts:
		\begin{align} \label{eq:geodesic.network.manifold.submersion.nondegenerate.computation}
			& \big[ \tfrac{\partial^2}{\partial t \partial s} \cL(g(s), \mathbf{p} + t \mathbf{q}) \big]_{s=t=0} \\
			& = \omega(\{u,v\}) \Big[ \tfrac{\partial}{\partial t} \tfrac{\partial}{\partial s} \int_{\sigma_{g(s)}(\mathfrak{e}(\mathbf{p}+t\mathbf{q})(u), \mathfrak{e}(\mathbf{p}+t\mathbf{q})(v))} d\ell_{g(s)} \Big]_{s=t=0} \nonumber \\
			& = \omega(\{u,v\}) \Big[ \tfrac{\partial}{\partial t} \int_{\sigma_{g}(\mathfrak{e}(\mathbf{p}+t\mathbf{q})(u), \mathfrak{e}(\mathbf{p}+t\mathbf{q})(v))} \tfrac12 F \, d\ell_{g} \Big]_{t=0} \nonumber \\
			& = \tfrac12 \omega(\{u,v\}) \int_{\sigma_g(\mathfrak{e} \mathbf{p}(u), \mathfrak{e} \mathbf{p}(v))} F \Div_{\sigma_g(\mathfrak{e} \mathbf{p}(u), \mathfrak{e} \mathbf{p}(v))} J^{\mathbf{q}}_{u,v} + dF(J^{\mathbf{q}}_{u,v}) \, d\ell_g \nonumber \\
			& = \tfrac12 \omega(\{u,v\}) \int_{\sigma_g(\mathfrak{e} \mathbf{p}(u), \mathfrak{e} \mathbf{p}(v))} \langle \nabla^\perp_g F, (J^{\mathbf{q}}_{u,v})^\perp \rangle \, d\ell_g, \nonumber
		\end{align}		
		where $\perp$ is the orthogonal projection to the normal bundle of $\sigma_g(\mathfrak{e}\mathbf{p}(u), \mathfrak{e}\mathbf{p}(v))$ with respect to $g$. 
		
		Pick any $F$ subject to \eqref{eq:geodesic.network.manifold.submersion.U} and extend it off $\sigma_g(\mathfrak{e}\mathbf{p}(u), \mathfrak{e}\mathbf{p}(v))$ in such a way so as to  additionally satisfy $\langle \nabla^\perp_g F, (J^{\mathbf{q}}_{u,v})^\perp \rangle_g \geq 0$ along  $\sigma_g(\mathfrak{e}\mathbf{p}(u), \mathfrak{e}\mathbf{p}(v))$, with strict inequality at some interior point. Since the curve is itself $C^{k+1}$, this is always feasible as long as $\supp F$ satisfies  \eqref{eq:geodesic.network.manifold.submersion.U}. Then,  \eqref{eq:geodesic.network.manifold.submersion.nondegenerate.computation} implies \eqref{eq:geodesic.network.manifold.submersion.nondegenerate} after swapping the $s$ and $t$ using $\cL$'s $C^2$ regularity (Lemma \ref{lemm:geodesic.network.manifold.L.regularity}, $k \geq 3$).
	\end{proof}
	
	\begin{claim} \label{clai:geodesic.network.manifold.submersion.degenerate}
		Suppose $\mathbf{q} \in \mathring{K} \cap (\image \pi_{\mathring{K}} \circ D_1 \mathring{\cH}(g, \mathbf{p}))^\perp$. Then
		\[ \big[ \tfrac{\partial^2}{\partial s \partial t} \cL(g(s), \mathbf{p} + t \mathbf{q}) \big]_{s=t=0} = \mathbf{0} \]
		for all $C^1$ paths $s \mapsto g(s)$ with $g(0) = g$.
	\end{claim}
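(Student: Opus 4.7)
The plan is to reduce the mixed derivative to $\langle D_1 \mathring{\cH}(g, \mathbf{p})\{h\}, \mathbf{q} \rangle_{\mathfrak{e}\mathbf{p}^* g}$, where $h := g'(0) \in \Tan_g \met^k(M, \mathfrak{g}, \eps)$, and then exploit both assumptions on $\mathbf{q}$ to kill this quantity. Since $\cL$ is $C^{k-1}$ with $k \geq 3$ by Lemma \ref{lemm:geodesic.network.manifold.L.regularity}, I may swap the order of differentiation, and by \eqref{eq:geodesic.network.manifold.H.breve.variational} (applied with the metric $g(s)$) the quantity of interest equals
\[ \big[ \tfrac{d}{ds}\big|_{s=0} \langle \mathring{\cH}(g(s), \mathbf{p}), \mathbf{q} \rangle_{\mathfrak{e}\mathbf{p}^* g(s)} \big]. \]

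Next, I will unpack the $s$-derivative above while carefully tracking the $s$-dependence of the inner product. Writing $\langle v, w \rangle_{\mathfrak{e}\mathbf{p}^* g(s)} = \langle A(s) v, w \rangle_{\mathfrak{e}\mathbf{p}^* g}$ for a $C^1$ family of endomorphisms $A(s) \in \End(\mathring{Y})$ with $A(0) = \operatorname{Id}$, the Leibniz rule yields
\[ \big[ \tfrac{d}{ds}\big|_{s=0} \langle \mathring{\cH}(g(s), \mathbf{p}), \mathbf{q} \rangle_{\mathfrak{e}\mathbf{p}^* g(s)} \big] = \langle A'(0) \mathring{\cH}(g, \mathbf{p}), \mathbf{q} \rangle_{\mathfrak{e}\mathbf{p}^* g} + \langle D_1 \mathring{\cH}(g, \mathbf{p})\{h\}, \mathbf{q} \rangle_{\mathfrak{e}\mathbf{p}^* g}. \]
The first summand vanishes thanks to the standing assumption $\mathring{\cH}(g, \mathbf{p}) = \mathbf{0}$ of Lemma \ref{lemm:geodesic.network.manifold.submersion}, so only the second survives.

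Finally, since $\mathbf{q} \in \mathring{K}$ and $\pi_{\mathring{K}}$ is the $\mathfrak{e}\mathbf{p}^* g$-orthogonal projection onto $\mathring{K}$, every $w \in \mathring{Y}$ satisfies $\langle w, \mathbf{q} \rangle_{\mathfrak{e}\mathbf{p}^* g} = \langle \pi_{\mathring{K}} w, \mathbf{q} \rangle_{\mathfrak{e}\mathbf{p}^* g}$. Taking $w := D_1 \mathring{\cH}(g, \mathbf{p})\{h\}$ and invoking the orthogonality hypothesis $\mathbf{q} \perp \image(\pi_{\mathring{K}} \circ D_1 \mathring{\cH}(g, \mathbf{p}))$ yields the desired vanishing for the chosen path; since $h = g'(0)$ was arbitrary, the conclusion follows for every $C^1$ path of metrics.

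The argument is largely bookkeeping, but the key conceptual point — which is also what makes the statement non-vacuous — is that the two assumptions on $\mathbf{q}$ exactly neutralize the two potentially non-zero contributions: the standing hypothesis $\mathring{\cH}(g, \mathbf{p}) = \mathbf{0}$ kills the $s$-variation of the inner product, while the orthogonality to $\image(\pi_{\mathring{K}} \circ D_1 \mathring{\cH}(g, \mathbf{p}))$ together with $\mathbf{q} \in \mathring{K}$ kills the differentiated $\mathring{\cH}$ term. I do not anticipate any serious obstacle here beyond carefully justifying the Leibniz computation in the variable metric.
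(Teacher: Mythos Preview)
Your proof is correct and follows essentially the same route as the paper: differentiate in $t$ first via \eqref{eq:geodesic.network.manifold.H.breve.variational}, then in $s$, use $\mathring{\cH}(g,\mathbf{p})=\mathbf{0}$ to discard the metric-variation term, and finally use $\mathbf{q}\in\mathring{K}$ together with the orthogonality hypothesis to kill the remaining $\langle D_1\mathring{\cH}(g,\mathbf{p})\{h\},\mathbf{q}\rangle$. Your explicit Leibniz bookkeeping with $A(s)$ is a slightly more detailed version of the paper's passage from $\langle\mathring{\cH}(g(s),\mathbf{p}),\mathbf{q}\rangle_{\mathfrak{e}\mathbf{p}^*g(s)}$ to $\langle D_1\mathring{\cH}(g,\mathbf{p})\{\dot g(0)\},\mathbf{q}\rangle_{\mathfrak{e}\mathbf{p}^*g}$, but the content is identical.
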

	\begin{proof}[Proof of claim]
		By $\mathring{\cH}(g, \mathbf{p}) = \mathbf{0}$ and \eqref{eq:geodesic.network.manifold.H.breve.variational},
		\begin{align*}
			\big[ \tfrac{\partial^2}{\partial s \partial t} \cL(g(s), \mathbf{p} + t \mathbf{q}) \big]_{s=t=0} 
				& = \big[ \tfrac{d}{ds} \langle \mathring{\cH}(g(s), \mathbf{p}), \mathbf{q} \rangle_{\mathfrak{e}\mathbf{p}^* g(s)} \big]_{s=0} \\
				& = \langle \big[ \tfrac{d}{ds} \mathring{\cH}(g(s), \mathbf{p}) \big]_{t=0}, \mathbf{q} \rangle_{\mathfrak{e} \mathbf{p}^* g} \\
				& = \langle D_1 \mathring{\cH}(g, \mathbf{p}) \{\dot{g}(0)\}, \mathbf{q} \rangle_{\mathfrak{e} \mathbf{p}^* g} \\
				& = \langle \pi_{\mathring{K}} (D_1 \mathring{\cH}(g, \mathbf{p}) \{\dot{g}(0)\}), \mathbf{q} \rangle_{\mathfrak{e} \mathbf{p}^* g} = 0,
		\end{align*}
		as claimed.
	\end{proof}
	
	We finally arrive to:
	
	\begin{proof}[Proof of Claim \ref{clai:geodesic.network.manifold.submersion.full.img}]
		By Claims \ref{clai:geodesic.network.manifold.submersion.nondegenerate} and \ref{clai:geodesic.network.manifold.submersion.degenerate}, 
		\[
		\mathring{K} \cap (\image \pi_{\mathring{K}} \circ D_1 \mathring{\cH}(g, \mathbf{p}))^\perp = \{ \mathbf{0} \}.
		\]
		This concludes the proof of the claim.
	\end{proof}

	Our proof of Lemma \ref{lemm:geodesic.network.manifold.submersion} is complete.
\end{proof}

\begin{rema} \label{rema:geodesic.network.manifold.jacobi.field.construction}
	It follows from \eqref{eq:geodesic.network.manifold.jacobi.field}, \eqref{eq:geodesic.network.manifold.J.sing}, \eqref{eq:geodesic.network.manifold.J.reg} that
	\[ \mathbf{q} \in \mathring{K} \implies (J^{\mathbf{q}}_{u,v})_{\{u,v\} \in E} \text{ fulfills Lemma } \ref{lemm:geodesic.network.jacobi.field} \text{'s (J$_1$'), (J$_3$'),} \]
	Indeed: (J$_1$') follows from \eqref{eq:geodesic.network.manifold.jacobi.field}, and (J$_3$') follows from \eqref{eq:geodesic.network.manifold.J.sing}, \eqref{eq:geodesic.network.manifold.J.reg}, and the fact that $\mathring{B}_\eps(0) \ni q \mapsto (\mathfrak{e} \mathbf{p}(u)_* q)^\perp$ is an isomorphism when $u \in V_{\reg}$, due to \eqref{eq:geodesic.network.manifold.small.transverse}. Thus, by Lemma \ref{lemm:geodesic.network.jacobi.field}, there exists a stationary varifold Jacobi field along $S$ with the same normal components as $\{ J_{u,v} : \{ u,v \} \in E \}$. By Claim \ref{clai:geodesic.network.manifold.jacobi.field.nontrivial}, this Jacobi field is not-everywhere-tangential if and only if $\mathbf{q} \neq \mathbf{0}$.
\end{rema}

As a consequence of Lemma \ref{lemm:geodesic.network.manifold.submersion},
\[ \cS_{\textrm{par}} := \{ (g, \mathbf{p}) \in \met^k(M, \mathfrak{g}, \eps) \times \mathring{B}_\eps(\mathbf{0}) : \mathring{\cH}(g, \mathbf{p}) = \mathbf{0} \} \]
is a $C^{k-1}$ Banach submanifold of $\met^k(M, \mathfrak{g}, \eps) \times \mathring{B}_\eps(\mathbf{0})$ satisfying
\[ \Tan_{(g, \mathbf{p})} \cS_{\textrm{par}} = \ker D\mathring{\cH}(g, \mathbf{p}) \text{ for all } (g, \mathbf{p}) \in \cS_{\textrm{par}}. \]
We proceed to build our atlas for $\cS^\Lambda_G$ locally near $(g_0, \mathbf{p}_0)$, as in \cite[p. 179-180]{White:bumpy.old}.\footnote{Our $\cS_{\textrm{par}}$ corresponds to $\cS$ in the reference, and our $\cS^\Lambda_G$ to $\cM$.} Note that we may work with the convention $\mathbf{p}_0 \in B_\eps(\mathbf{0})$ in view of \eqref{eq:geodesic.network.manifold.small.frakg}, though it might happen that $\mathbf{p}_0 \in B_\eps(\mathbf{0}) \setminus \mathring{B}_\eps(\mathbf{0})$.

Our chart near our center point will be
\begin{equation} \label{eq:geodesic.network.manifold.chart}
	\varphi : \cS_{\textrm{par}} \to \cS^\Lambda_G, \; (g, \mathbf{p}) \mapsto (g, \iota_g(G, \mathfrak{e}\mathbf{p})).
\end{equation}
It is well-defined and bijects onto its image by \eqref{eq:geodesic.network.manifold.small.frakg}, \eqref{eq:geodesic.network.manifold.small.inj}, \eqref{eq:geodesic.network.manifold.small.imm}. Its image indeed contains $(g_0, \iota_g(G, \mathbf{p}_0))$ by \eqref{eq:geodesic.network.manifold.small.frakg} and the fact that we chose $\mathfrak{p} := \mathbf{p}_0$. Since the images of these charts cover $\cS_G^\Lambda$ as $(g_0, \mathbf{p}_0)$ varies (this is due to Theorem \ref{theo:geodesic.network.finite.stratification}), all that remains to note, for the manifold structure, is that the transition maps are $C^{k-1}$. This is done as in \cite[p. 179-180]{White:bumpy.old}, which we omit.

\begin{proof}[Proof of Theorem \ref{theo:geodesic.network.manifold}]
	(1) The Banach manifold structure was built above. The fact that $\cS^\Lambda_G$ is second countable will be a consequence of the following observation. By Definitions \ref{defi:geodesic.network.stationary.lambda}, \ref{defi:geodesic.network.manifold.slice}, and Theorem \ref{theo:geodesic.network.finite.stratification}:
	\begin{align*}
		\cS^\Lambda_G 
			& = \{ \iota_g(G, \mathbf{p}) : g \in \met^k(M), \; Q \cdot \inj(M, g) > \Lambda, \; \mathbf{p} \in \emb_g(G, M), \nonumber \\
			& \qquad \qquad \qquad \iota_g(G, \mathbf{p}) \text{ is } g\text{-stationary} \} \nonumber \\
			& = \{ \iota_g(G, \mathbf{p}) : g \in \met^k(M), \; Q \cdot \inj(M, g) > \Lambda, \; \mathbf{p} \in \cB \emb_g(G, M), \nonumber \\
			& \qquad \qquad \qquad \iota_g(G, \mathbf{p}) \text{ is } g\text{-stationary} \}.
	\end{align*}
	Therefore, $\cS^\Lambda_G$ can also be endowed with the topology induced by the subset
	\begin{multline*}
		P := \{ (g, \mathbf{p}) : g \in \met^k(M), \; Q \cdot \inj(M, g) > \Lambda, \; \mathbf{p} \in \cB \emb_g(G, M), \\
			\iota_g(G, \mathbf{p}) \text{ is } g\text{-stationary} \} \subset \met^k(M) \times M^V
	\end{multline*}
	under the quotient
	\[ [(g, \mathbf{p})] = [(g, \mathbf{p}')] \text{ in } P/\sim \iff \iota_g(G, \mathbf{p}) = \iota_g(G, \mathbf{p}') \text{ in } \cI \cV_1(M). \]
	Notice that the latter topology is second countable, since $\met^k(M)$, $M^V$ are themselves second countable, and thus so is their product and its subset $P$. Its quotient $P/\sim$ is second countable because, by Corollary \ref{coro:geodesic.network.embedding.structure} parts (2), (3), $\sim$ is a group action that acts by homeomorphisms, the group being that of graph isomorphisms of $G$ times one $\SS^1$ factor per cyclic component of $G$.
	
	\begin{claim} \label{clai:geodesic.network.manifold.second.countable.topology}
		The Banach manifold topology on $\cS^\Lambda_G$ coincides with the quotient topology.
	\end{claim}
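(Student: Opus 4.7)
The plan is to verify agreement of the Banach manifold topology $\tau_{\textrm{BM}}$ (induced by the charts $\varphi$ of \eqref{eq:geodesic.network.manifold.chart}) with the quotient topology $\tau_Q$ (induced by $q : P \to \cS^\Lambda_G$, $(g, \mathbf{p}) \mapsto (g, \iota_g(G, \mathbf{p}))$). By the respective universal properties, $\tau_Q \subseteq \tau_{\textrm{BM}}$ reduces to continuity of $q : P \to (\cS^\Lambda_G, \tau_{\textrm{BM}})$, and $\tau_{\textrm{BM}} \subseteq \tau_Q$ reduces to continuity of each chart $\varphi : \cS_{\textrm{par}} \to (\cS^\Lambda_G, \tau_Q)$. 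Since both $P \subset \met^k(M) \times M^V$ and $\cS_{\textrm{par}} \subset \met^k(M) \times \mathring{B}_\eps(\mathbf{0})$ are metrizable and hence first countable, both continuity statements can be checked sequentially.

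For $\tau_{\textrm{BM}} \subseteq \tau_Q$, I would produce a continuous lift $\tilde{b} : \cS_{\textrm{par}} \to P$ with $q \circ \tilde{b} = \varphi$. The raw candidate $b(g, \mathbf{p}) := (g, \mathfrak{e}\mathbf{p})$ is continuous and, by \eqref{eq:geodesic.network.manifold.small.imm}, lands in $\met^k(M) \times \emb_g(G, M)$, but $\mathfrak{e}\mathbf{p}$ is generally not balanced. Applying the equidistribution procedure of Step 1 in the proof of Theorem \ref{theo:geodesic.network.finite.stratification} componentwise --- fixing the positions of degree-$\neq 2$ vertices and redistributing degree-$2$ vertices equidistantly along the corresponding geodesic arcs in $\supp \iota_g(G, \mathfrak{e}\mathbf{p})$ --- produces a continuous $\tilde{b}(g, \mathbf{p}) \in P$ with $\iota_g(G, \tilde{b}(g, \mathbf{p})) = \iota_g(G, \mathfrak{e}\mathbf{p})$. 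Continuity follows from continuous dependence of length-minimizing geodesic segments on their endpoints and on the metric.

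For $\tau_Q \subseteq \tau_{\textrm{BM}}$, I would take a convergent sequence $(g_i, \mathbf{p}_i) \to (g_\infty, \mathbf{p}_\infty)$ in $P$ with $q(g_\infty, \mathbf{p}_\infty)$ in the image of a chart $\varphi$ centered at some $(g_0, \mathbf{p}_0) = (g_0, \mathfrak{p})$, and produce $\mathbf{p}_i^* \in \mathring{B}_\eps(\mathbf{0})$ with $(g_i, \mathbf{p}_i^*) \in \cS_{\textrm{par}}$, $\iota_{g_i}(G, \mathfrak{e}\mathbf{p}_i^*) = \iota_{g_i}(G, \mathbf{p}_i)$, and $\mathbf{p}_i^* \to \mathbf{p}_\infty^*$, where $\mathbf{p}_\infty^* := \varphi^{-1}(q(g_\infty, \mathbf{p}_\infty))$. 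Given an appropriate gauge choice for $\mathbf{p}_i$ (see below), I would set $\mathbf{p}_i^*(u) := \mathfrak{e}^{-1} \mathbf{p}_i(u)$ for each $u \in V \setminus V_{\reg}$, and for each $u \in V_{\reg}$ would set $\mathbf{p}_i^*(u) := \mathfrak{e}^{-1}$ applied to the unique intersection of the longer smooth geodesic through $\mathbf{p}_i(u)$, guaranteed by Lemma \ref{lemm:geodesic.network.embedding.structure} (1)(b), with the fixed transversal slice $\mathring{B}^\mathfrak{g}_\eps(\mathfrak{p}(u))$. By \eqref{eq:geodesic.network.manifold.small.transverse}, this intersection exists uniquely and depends continuously on $(g_i, \mathbf{p}_i)$ for large $i$, so $(g_i, \mathbf{p}_i^*) \in \cS_{\textrm{par}}$ converge to $(g_\infty, \mathbf{p}_\infty^*)$.

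The main obstacle will be the gauge freedom in $\sim$: by Corollary \ref{coro:geodesic.network.embedding.structure}, two balanced embeddings giving the same varifold differ by a graph automorphism of $G$ and, on cyclic components, by an $\SS^1$ rotation. The construction above for $\mathbf{p}_i^*$ must first place $\mathbf{p}_i$ in the correct gauge relative to $\mathbf{p}_\infty^*$. For the finite graph-automorphism group this is handled by a pigeonhole argument, passing to a subsequence along which the needed automorphism is constant. For each cyclic component, I would select the unique $\SS^1$ rotation placing the distinguished vertex $u_0$ onto the fixed transversal slice $\mathring{B}^\mathfrak{g}_\eps(\mathfrak{p}(u_0))$; this is well-defined and continuous in $i$ for large $i$, exactly as in \eqref{eq:geodesic.network.stratification.loop.redefined}.
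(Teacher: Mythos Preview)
Your approach is essentially the same as the paper's: both argue sequentially, use equidistribution of degree-$2$ vertices to pass from chart coordinates to balanced embeddings in one direction, and intersection with the fixed transversal slices $\mathring{B}^{\mathfrak{g}}_\eps(\mathfrak{p}(u))$ (together with a gauge correction by graph automorphisms and $\SS^1$-rotations on cycles) to pass back in the other.

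One cosmetic correction: your inclusion labels in the first paragraph are swapped. By the universal property of the quotient, continuity of $q : P \to (\cS^\Lambda_G, \tau_{\textrm{BM}})$ is equivalent to $\tau_{\textrm{BM}} \subseteq \tau_Q$ (not $\tau_Q \subseteq \tau_{\textrm{BM}}$), and continuity of each $\varphi$ into $(\cS^\Lambda_G, \tau_Q)$ is equivalent to $\tau_Q \subseteq \tau_{\textrm{BM}}$. Your paragraphs 2 and 3 then inherit the swapped labels. Since you prove both directions anyway, the argument is still complete; just relabel. Also, in your final paragraph the pigeonhole/subsequence step for the automorphism is unnecessary in this direction: the needed graph automorphism is the one relating $\mathbf{p}_\infty$ to $\mathfrak{e}\mathbf{p}_\infty^*$ on $V \setminus V_{\reg}$, which is fixed once and for all and can be applied uniformly to every $\mathbf{p}_i$.
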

	\begin{proof}
		Consider a convergent sequence $S_i \to S_\infty$ with respect to the manifold topology. By virtue of the atlas we defined, if we set $S_\infty = \iota_{g_\infty}(G, \mathbf{p}_\infty)$ with $\mathbf{p}_\infty \in \cB \emb_{g_\infty}(G, \mathbf{p}_\infty)$, manifold convergence implies $S_i = \iota_{g_i}(G, \mathbf{p}_i)$ with $\met^k(M) \ni g_i \to g_\infty$ in $\met^k(M)$, $\emb_{g_i}(G, M) \ni \mathbf{p}_i \to \mathbf{p}_\infty$ in $M^V$. By equidistancing vertices, it is easy to see that for large $i$ there is $\mathbf{p}_i' \in \cB \emb_{g_i}(G, M)$ with $\iota_{g_i}(G, \mathbf{p}_i') = S_i$ and $\mathbf{p}_i' \to \mathbf{p}_\infty$. Then $[(g_i, \mathbf{p}_i')] \to [(g_\infty, \mathbf{p}_\infty)]$ in $P/\sim$, so $S_i \to S_\infty$ with respect to the quotient topology.
		
		Conversely, consider a convergent sequence $S_i \to S_\infty$ with respect to the quotient topology. Then $S_\infty = (g_\infty, \mathbf{p}_\infty)$, $S_i = (g_i, \mathbf{p}_i)$, $g_i \to g_\infty$ in $\met^k(M)$, and $\cB \emb_{g_i}(G, M) \ni \mathbf{p}_i \to \mathbf{p}_\infty$ in $M^V$ after pulling back $\mathbf{p}_i$ by an isometry element. Recall that to construct our chart near $(g_\infty, \mathbf{p}_\infty)$, we relied on a nearby smooth $\mathfrak{g}$ and $\mathfrak{p} := \mathbf{p}_\infty$ and a direct sum of tangent spaces and transverse segments $\mathring{B}^{\mathfrak{g}}_\eps(\mathfrak{p})$ to $\mathfrak{p}$. It is straightforward to show that, for large $i$, there is $\mathbf{p}_i' \in \mathring{B}^{\mathfrak{g}}_\eps(\mathfrak{p})$ with $\iota_{g_i}(G, \mathbf{p}_i') = S_i$ and $\mathbf{p}_i' \to \mathbf{p}_\infty$. So, $S_i \to S_\infty$ in the manifold topology.
	\end{proof}

	(2), (3). These are the content of the lemma below viewed in the light of the chart $\varphi$ above, together with Remark \ref{rema:geodesic.network.manifold.jacobi.field.construction}.

\begin{lemm}[cf. {\cite[Theorem 1.2 (2)]{White:bumpy.old}}] \label{lemm:geodesic.network.manifold.fredholm}
	The projection
	\[ \mathring{\Pi} : \cS_{\textrm{par}} \to \met^k(M, \mathfrak{g}, \eps), \; (g, \mathbf{p}) \mapsto g, \]
	is a $C^{k-1}$ and Fredholm map with Fredholm index zero and
	\[ \ker D\mathring{\Pi}(g, \mathbf{p}) = \{0\} \times \mathring{K} \]
	where, as before, $\mathring{K} := \ker D_2 \cH(g, \mathbf{p})$.
\end{lemm}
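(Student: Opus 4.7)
The plan is to treat $\mathring{\Pi}$ as the restriction to $\cS_{\textnormal{par}}$ of the canonical $C^\infty$ projection $\met^k(M, \mathfrak{g}, \eps)\times\mathring{B}_\eps(\mathbf{0})\to\met^k(M, \mathfrak{g}, \eps)$ and then read off its linearization, kernel, and image directly from the linearization of $\mathring{\cH}$ studied in Lemma~\ref{lemm:geodesic.network.manifold.linearization} and its proof. Since Lemma~\ref{lemm:geodesic.network.manifold.submersion} establishes that $\mathring{\cH}$ is a $C^{k-1}$ Banach submersion at every zero, the implicit function theorem in Banach spaces gives that $\cS_{\textnormal{par}} = \mathring{\cH}^{-1}(\mathbf{0})$ is a $C^{k-1}$ Banach submanifold with
\[
    \Tan_{(g,\mathbf{p})}\cS_{\textnormal{par}} \;=\; \ker D\mathring{\cH}(g,\mathbf{p})
    \;=\; \{(h,\mathbf{q}) : D_1\mathring{\cH}(g,\mathbf{p})\{h\} + \mathring{J}\mathbf{q} = 0\},
\]
using the decomposition $D\mathring{\cH}(g,\mathbf{p})\{h,\mathbf{q}\} = D_1\mathring{\cH}(g,\mathbf{p})\{h\} + \mathring{J}\mathbf{q}$ from the proof of Lemma~\ref{lemm:geodesic.network.manifold.submersion}. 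The restriction $\mathring{\Pi}$ is then $C^{k-1}$ for free.

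Next, I would compute the kernel and image of $D\mathring{\Pi}(g,\mathbf{p})$ on this tangent space. A pair $(h,\mathbf{q})$ lies in $\ker D\mathring{\Pi}(g,\mathbf{p})$ precisely when $h=0$, and then the tangency condition $D_1\mathring{\cH}(g,\mathbf{p})\{0\} + \mathring{J}\mathbf{q} = 0$ reduces to $\mathring{J}\mathbf{q}=0$, giving $\ker D\mathring{\Pi}(g,\mathbf{p}) = \{0\}\times \mathring{K}$. In particular, since $\mathring{Y}$ is finite-dimensional, so is $\mathring{K}$. For the image, an element $h\in\Tan_g\met^k$ lies in $\image D\mathring{\Pi}(g,\mathbf{p})$ iff there is $\mathbf{q}\in\mathring{Y}$ with $D_1\mathring{\cH}(g,\mathbf{p})\{h\}\in\image\mathring{J}$. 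Using the self-adjointness of $\mathring{J}$ on $\mathring{Y}$ from Lemma~\ref{lemm:geodesic.network.manifold.linearization} (together with $\mathring{Y}$ being finite-dimensional, so the image is closed), one has $\image\mathring{J} = \mathring{K}^\perp$ with respect to the inner product $\mathfrak{e}\mathbf{p}^*g$, hence
\[
    \image D\mathring{\Pi}(g,\mathbf{p}) \;=\; \bigl(\pi_{\mathring{K}}\circ D_1\mathring{\cH}(g,\mathbf{p})\bigr)^{-1}(\{0\}) \;\subset\; \Tan_g\met^k(M,\mathfrak{g},\eps).
\]

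The main (only) non-formal ingredient is then the surjectivity of $\pi_{\mathring{K}}\circ D_1\mathring{\cH}(g,\mathbf{p})$ onto $\mathring{K}$, but this is exactly Claim~\ref{clai:geodesic.network.manifold.submersion.full.img} from the previous proof. Surjectivity combined with $\dim\mathring{K}<\infty$ gives that $\image D\mathring{\Pi}(g,\mathbf{p})$ is closed of finite codimension equal to $\dim\mathring{K}$. Thus $D\mathring{\Pi}(g,\mathbf{p})$ is Fredholm with
\[
    \Index D\mathring{\Pi}(g,\mathbf{p}) \;=\; \dim\ker - \dim\coker \;=\; \dim\mathring{K} - \dim\mathring{K} \;=\; 0,
\]
which is the desired conclusion. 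The hardest conceptual point, the identification between kernel-cokernel of $D\mathring{\Pi}$ and the kernel of $\mathring{J}$, is precisely what self-adjointness buys, so once Lemmas~\ref{lemm:geodesic.network.manifold.linearization} and~\ref{lemm:geodesic.network.manifold.submersion} are in hand the present lemma is a short computation rather than a new analytic input.
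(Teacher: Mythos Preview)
Your proof is correct and follows essentially the same approach as the paper: both identify $\mathring{\Pi}$ as the restriction of the ambient projection, compute $\ker D\mathring{\Pi}=\{0\}\times\mathring{K}$ directly, use self-adjointness of $\mathring{J}$ to get $\image\mathring{J}=\mathring{K}^\perp$ and hence $\image D\mathring{\Pi}=\ker(\pi_{\mathring{K}}\circ D_1\mathring{\cH})$, and then invoke Claim~\ref{clai:geodesic.network.manifold.submersion.full.img} to conclude this has codimension $\dim\mathring{K}$.
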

\begin{proof}
	Note that $\mathring{\Pi}$ is the restriction of 
	\begin{equation*} 
		\Pi : \met^k(M, \mathfrak{g}, \eps) \times \mathring{B}_\eps(\mathbf{0}) \to \met^k(M, \mathfrak{g}, \eps), \; (g, \mathbf{p}) \mapsto g,
	\end{equation*}
	to the $C^{k-1}$ Banach manifold $\cS_{\textrm{par}}$, so the regularity of $\mathring{\Pi}$ follows from the regularity of $\cS_{\textrm{par}}$. We now check the Fredholm index zero property. To that end, for every $(g, \mathbf{p}) \in \cS_{\textrm{par}}$:
	\begin{align*}
		\ker D\mathring{\Pi}(g, \mathbf{p})
			& = \ker D(\Pi \vert \cS_{\textrm{par}})(g, \mathbf{p}) \\
			& = \ker \Pi \cap \Tan_{(g,\mathbf{p})} \cS_{\textrm{par}} \\
			& = (\{0\} \times \mathring{Y}) \cap \ker D\mathring{\cH}(g, \mathbf{p}) \\
			& = \{0\} \times \ker D_2 \mathring{\cH}(g, \mathbf{p}) = \{0\} \times \mathring{K}.
	\end{align*}
	It remains to show that the codimension of $\image D(\Pi \vert \cS_{\textrm{par}})(g, \mathbf{p})$ is $\dim \mathring{K}$. Indeed, using the self-adjointness of $\mathring{J}$ from Lemma \ref{lemm:geodesic.network.manifold.linearization}, and also \eqref{eq:geodesic.network.manifold.submersion.action}:
	\begin{align*}
		\image D\mathring{\Pi}(g, \mathbf{p})
			& = \image D(\Pi \vert \cS_{\textrm{par}})(g, \mathbf{p}) \\
			& = \Pi(\Tan_{(g, \mathbf{p})} \cS_{\textrm{par}}) \\
			& = \Pi(\ker D\mathring{\cH}(g, \mathbf{p})) \\
			& = \Pi \{ (h, \mathbf{q}) : D_1 \mathring{\cH}(g, \mathbf{p})\{h\} + \mathring{J}\mathbf{q} = \mathbf{0} \} \\
			& = \Pi \{ (h, \mathbf{q}) : (\pi_{\mathring{K}} \circ D_1 \mathring{\cH}(g, \mathbf{p}))\{h\} = \mathbf{0} \} \\
			& = \ker \pi_{\mathring{K}} \circ D_1 \mathring{\cH}(g, \mathbf{p}),
	\end{align*}
	which does indeed have codimension $\dim \mathring{K}$ in $\mathring{Y}$ by Claim \ref{clai:geodesic.network.manifold.submersion.full.img}.
\end{proof}

	(4). This is a consequence of the Sard--Smale theorem \cite{Smale:sard} and parts (1), (2).
\end{proof}

\subsection{Constrained genericity}

We will seek to apply a refinement of Theorem \ref{theo:geodesic.network.manifold}, where we study genericity over a refined moduli space of metrics, where the length of a certain finite subset of stationary integral 1-varifolds is held fixed. Rather than start from scratch, we obtain this as a corollary of what has already been shown. Fix $g_0 \in \met^k(M)$, $\Lambda > 0$, and $S^0_i \in \cS^{\Lambda}(g_0)$, $i = 1, \ldots, m$ such that
\begin{equation} \label{eq:geodesic.network.constrained.assumption.regular}
	\sing S^0_i = \emptyset \text{ and } \Theta^1(S_0^i) = 1 \text{ along } \supp S_0^i \text{ for all } i = 1, \ldots, m,
\end{equation}
\begin{multline} \label{eq:geodesic.network.constrained.assumption.generic}
	S^0_i, i = 1, \ldots, m, \text{ has no not-everywhere-tangential} \\
	\text{stationary varifold Jacobi fields in } (M, g_0),
\end{multline}
\begin{equation} \label{eq:geodesic.network.constrained.assumption.discrete}
	\supp S^0_i \cap \supp S^0_i \text{ is discrete whenever } i \neq j \in \{ 1, \ldots, m\}.
\end{equation}
In what follows, there is no loss of generality in assuming each $S^0_i$ is connected and multiplicity-one, so we proceed to do so. Fix $Q$ such that \eqref{eq:geodesic.network.finite.stratification.Q} holds with $\Lambda$, $Q$, and $g_0$, and suppose
\[ j^\Lambda_Q(S^0_i) =: (G_0, \mathbf{p}_i^0) \]
for a fixed cyclic graph $G_0$ whose edges have weight one. By Remark \ref{rema:geodesic.network.manifold.regularity} Theorem \ref{theo:geodesic.network.manifold} applies with $k=3$ and implies
\begin{equation} \label{eq:geodesic.network.constrained.generic.k3}
	(g_0, S^0_i) \text{ is a regular point of }  \pi^{\Lambda,3}_{G_0} \text{ for all } i = 1, \ldots, m,
\end{equation}
so there exist $\eps > 0$ and open neighborhood $\cU_i^3 \subset  \cS^{\Lambda,3}_{G_0}$ of $(g_0, S^0_i)$ such that, for each $i = 1, \ldots, m$,
\[ (\pi^{\Lambda,3}_{G_0} \vert {\cU_i^3}) : \cU_i^3 \to \met^3(M, g_0, \eps) \]
is a $C^2$ diffeomorphism. We further shrink $\eps > 0$ so \eqref{eq:geodesic.network.finite.stratification.Q} holds for all $g \in \met^3(M, g_0, \eps)$. Label the inverses of the diffeomorphisms above as
\[ \kappa_i : \met^3(M, g_0, \eps) \to \cU_i^3, \; i=1, \ldots, m, \]
(they are $C^2$, as the inverses of $C^2$ maps) and write
\[ \kappa_i =: (g, S_i(g)). \]
By construction, $S_i(g_0) = S_i^0$.

\begin{claim} \label{clai:geodesic.network.constrained.eps.k3}
	With $S_1(g), \ldots, S_m(g)$ in place of $S_1^0, \ldots, S_m^0$:
	\begin{itemize}
		\item \eqref{eq:geodesic.network.constrained.assumption.regular},  \eqref{eq:geodesic.network.constrained.assumption.generic} hold for all $g \in \met^3(M, g_0, \eps)$, and
		\item \eqref{eq:geodesic.network.constrained.assumption.discrete}  holds for all $g \in \met^3(M, g_0, \eps)$ if $\eps$ is sufficiently small.
	\end{itemize}
\end{claim}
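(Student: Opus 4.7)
The plan is to verify the three conditions in turn, invoking the structural results already established.

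\textbf{Paragraph 1 (verifying \eqref{eq:geodesic.network.constrained.assumption.regular}).} I would note that by construction $G_0$ is a cyclic graph whose edges all have unit weight, and that by our shrinking of $\eps$, \eqref{eq:geodesic.network.finite.stratification.Q} holds with $Q$, $\Lambda$, and every $g \in \met^3(M,g_0,\eps)$. Since $S_i(g) \in \cS^{\Lambda,3}_{G_0}(g)$ by the definition of $\kappa_i$, Lemma \ref{lemm:geodesic.network.embedding.structure}(3) immediately forces $S_i(g)$ to be a single connected closed geodesic loop with multiplicity-one density throughout its support, so $\sing S_i(g) = \emptyset$ and $\Theta^1(S_i(g),\cdot) \equiv 1$ on $\supp S_i(g)$.

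\textbf{Paragraph 2 (verifying \eqref{eq:geodesic.network.constrained.assumption.generic}).} Here I would use that, by construction, $\kappa_i$ is a $C^2$-diffeomorphism onto the open neighborhood $\cU_i^3 \subset \cS^{\Lambda,3}_{G_0}$, so $\pi^{\Lambda,3}_{G_0}$ restricts to a $C^2$-diffeomorphism on $\cU_i^3$; in particular every $(g, S_i(g))$ with $g \in \met^3(M,g_0,\eps)$ is a regular point of $\pi^{\Lambda,3}_{G_0}$. By Theorem \ref{theo:geodesic.network.manifold}(3) this is precisely equivalent to $S_i(g)$ admitting no not-everywhere-tangential stationary varifold Jacobi fields in $(M,g)$, which is \eqref{eq:geodesic.network.constrained.assumption.generic}. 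Note that this step does not require further shrinking $\eps$.

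\textbf{Paragraph 3 (verifying \eqref{eq:geodesic.network.constrained.assumption.discrete}).} Since $\kappa_i$ is continuous, one has $S_i(g) \rightharpoonup S_i^0$ as $g \to g_0$ in the varifold topology for every $i$, and hence $\supp S_i(g) \to \supp S_i^0$ in Hausdorff distance (using Proposition \ref{prop:geodesic.network.stationary.varifold.limit}, or alternatively the continuous graph-embedding parametrization supplied by the chart). Now \eqref{eq:geodesic.network.constrained.assumption.discrete} applied to $S_1^0, \ldots, S_m^0$ together with the fact that each $\supp S_i^0$ is a connected closed curve forces $\supp S_i^0 \neq \supp S_j^0$ for all $i \neq j$. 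Hausdorff-continuity then implies that for $\eps$ sufficiently small, $\supp S_i(g) \neq \supp S_j(g)$ for all $g \in \met^3(M,g_0,\eps)$ and all $i \neq j$. Since by Paragraph 1 both $\supp S_i(g)$ and $\supp S_j(g)$ are smooth closed geodesics in $(M,g)$, and since two geodesics sharing a tangent line element at a common point must coincide globally by ODE uniqueness, the supports intersect transversally at every intersection point and hence their intersection is discrete.

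The only step that requires any real work is the third one, and the only potential subtlety there is justifying the Hausdorff-convergence of supports from the varifold convergence of $S_i(g)$ to $S_i^0$; fortunately this is routine in our setting since $S_i^0$ is a smooth embedded closed geodesic and the elements of $\cU_i^3$ are parametrized by balanced graph embeddings $\mathbf{p}_i(g)$ depending continuously on $g$, so $\supp S_i(g) = \cup_{\{u,v\} \in E(G_0)} \sigma_g(\mathbf{p}_i(g)(u), \mathbf{p}_i(g)(v))$ varies continuously in Hausdorff distance with $g$.
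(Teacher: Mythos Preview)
Your proof is correct and follows essentially the same approach as the paper: both invoke Lemma \ref{lemm:geodesic.network.embedding.structure}(3) for \eqref{eq:geodesic.network.constrained.assumption.regular}, the fact that $\kappa_i$ is a diffeomorphism together with Theorem \ref{theo:geodesic.network.manifold}(3) for \eqref{eq:geodesic.network.constrained.assumption.generic}, and the transversality of distinct-geodesic intersections for \eqref{eq:geodesic.network.constrained.assumption.discrete}. Your treatment is somewhat more detailed (in particular, you spell out the Hausdorff-continuity of supports and the ODE-uniqueness argument), but the underlying ideas are identical.
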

\begin{proof}[Proof of claim]
	The persistence of \eqref{eq:geodesic.network.constrained.assumption.regular} across $g$ is a trivial consequence of, e.g., our construction of $S_i(g) \in \cS^{\Lambda,3}_{G_0}(g)$ and Lemma \ref{lemm:geodesic.network.embedding.structure}'s (3). The persistence of \eqref{eq:geodesic.network.constrained.assumption.generic} across $g$ is a consequence of Theorem \ref{theo:geodesic.network.manifold}'s (3) and that $\kappa_i$ is a diffeomorphism, so that its images are regular points of $\pi^{\Lambda,3}_{G_0}$. The persistence of \eqref{eq:geodesic.network.constrained.assumption.discrete} near $g = g_0$ follows from that \eqref{eq:geodesic.network.constrained.assumption.discrete} and the no-tangential-intersections nature of geodesics implies that intersections are transversal.
\end{proof}

In that follows, $\eps > 0$ is small enough for Claim \ref{clai:geodesic.network.constrained.eps.k3} to hold. Next, for every $i = 1, \ldots, m$, consider the corresponding total mass functional ($\cL$ in the notation of the previous section in coordinates) near $S^0_m$, pushed forward to the manifold and restrict to $\cU_i^3$:
\[ \lambda_i : \cU_i^3 \to \RR. \]
In the notation of Claim \ref{clai:geodesic.network.constrained.eps.k3}, $\lambda_i(g) = \length_g(\supp S_i(g))$. By Lemma \ref{lemm:geodesic.network.manifold.L.regularity}, $\lambda_i$ is $C^2$. 

So far, we have forced our setup to work with $k = 3$. We do, however, wish to prove a theorem for all $k \in \NN$, $k \geq 3$, and a \textit{uniform} choice of $\eps$ that is dictated from $k=3$. To that end, let us make the necessary adjustments. In all that follows, wherever we write $\cS^\Lambda_G$ and $\pi^\Lambda_G$ with no reference to regularity, it will be for regularity $k \in \NN$, $k \geq 3$, exactly as in the previous subsections.

It follows from Theorem \ref{theo:geodesic.network.manifold}, Remark \ref{rema:geodesic.network.manifold.regularity}, and \eqref{eq:geodesic.network.constrained.generic.k3}, that
\begin{equation} \label{eq:geodesic.network.constrained.generic.k}
	(g_0, S^0_i) \text{ is a regular point of } \pi^{\Lambda}_{G_0} \text{ for all } i = 1, \ldots, m,
\end{equation}
and therefore that, for $\cU_i := \cU_i^3 \cap (\met^k(M) \times \cI \cV_1(M))$,
\[ (\pi^\Lambda_{G_0} \vert \cU_i) : \cU_i \to \met^3(M, g_0, \eps) \cap \met^k(M) \]
is a $C^{k-1}$ diffeomorphism, where $\cU_i \subset \cS^\Lambda_G$ is open and the target space $\met^3(M, g_0, \eps) \cap \met^k(M)$ is from here on out always endowed with the induced topology as a subset of $\met^k(M)$. Observe that its inverse is merely the restriction of $\kappa_i$ to $\met^k(M, g_0, \eps) \cap \met^k(M)$. Henceforth we will only ever write $\kappa_i$ to mean this restriction, i.e.,
\[ \kappa_i : \met^3(M, g_0, \eps) \cap \met^k(M) \to \cU_i, \]
which is a $C^{k-1}$ map (as the inverse of a $C^{k-1}$ map). Likewise, we restrict $\lambda_i$ to $\cU_i$ and from now on will always write $\lambda_i$ to mean
\[ \lambda_i : \cU_i \to \RR, \]
which is a $C^{k-1}$ map on $\cU_i$, by Lemma \ref{lemm:geodesic.network.manifold.L.regularity}. If we denote
\[ \vec{\kappa} := (\kappa_1, \ldots, \kappa_m) : \met^3(M, g_0, \eps) \cap \met^k(M) \to \cU_1 \times \cdots \times \cU_m, \]
\[ \vec{\lambda} := (\lambda_1, \ldots, \lambda_m) : \cU_1 \times \cdots \times \cU_m \to \RR^m, \]
then the composite Banach map below is $C^{k-1}$, too:
\begin{equation} \label{eq:geodesic.network.constrained.submersion}
	\vec{\lambda} \circ \vec{\kappa} : \met^3(M, g_0, \eps) \cap \met^k(M) \to \RR^m.
\end{equation}

\begin{theo} [cf. Theorem {\ref{theo:geodesic.network.manifold}}] \label{theo:geodesic.network.constrained}
	Suppose $\Lambda > 0$, $g_0 \in \met^k(M)$, and $S^0_1, \ldots, S^0_m \in \cS^\Lambda(g_0)$ satisfy \eqref{eq:geodesic.network.constrained.assumption.regular}, \eqref{eq:geodesic.network.constrained.assumption.generic}, \eqref{eq:geodesic.network.constrained.assumption.discrete}. Fix $\eps > 0$ per Claim \ref{clai:geodesic.network.constrained.eps.k3}, and $Q \in \NN$ so that \eqref{eq:geodesic.network.finite.stratification.Q} holds with $\Lambda$, $Q$, and every $g \in \met^3(M, g_0, \eps) \cap \met^k(M)$. Define:
	\[ \cC \met^k := \met^3(M, g_0, \eps) \cap \met^k(M) \cap (\vec{\lambda} \circ \vec{\kappa})^{-1} \big[ (\vec{\lambda} \circ \vec{\kappa})(g_0) \big] \]
	Then:
	\begin{enumerate}
		\item $\cC \met^k$ is a $C^{k-1}$ Banach submanifold of $\met^3(M, g_0, \eps) \cap \met^k(M)$ with
			\[ \Tan_g (\cC \met^k) = \ker D(\vec{\lambda} \circ \vec{\kappa})(g) \text{ for all } g \in \cC \met^k. \]
		\item For every $G \in \cG(\Lambda, Q)$,
			\[ \cS^\Lambda_{G,\cC} := (\pi^\Lambda_{G})^{-1}(\cC \met^k) \]
			is a $C^{k-1}$ Banach submanifold of $\cS^\Lambda_{G}$.
		\item For every $G \in \cG(\Lambda, Q)$, the restriction
			\[ \pi^\Lambda_{G,\cC} := (\pi^\Lambda_{G} \vert \cS^\Lambda_{G,\cC}) : \cS^\Lambda_{G,\cC} \to \cC \met^k \]
		is $C^{k-1}$ with Fredholm index zero.
		\item For every $G \in \cG(\Lambda, Q)$, $(g, S) \in \cS^\Lambda_{G,\cC}$:
		\begin{align*}
			& (g, S) \text{ is a singular point of } \pi^\Lambda_{G,\cC} \\
			& \qquad \iff (g, S) \text{ is a singular point of } \pi^\Lambda_{G}.
		\end{align*}
		\item For every $G \in \cG(\Lambda, Q)$, the set of regular values of $\pi^\Lambda_{G,\cC}$ is comeager (``Baire generic'') in $\cC \met^k$.
	\end{enumerate}
\end{theo}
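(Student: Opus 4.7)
\medskip

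\noindent \textbf{Proof proposal.} The plan is to obtain (1) and (5) from classical implicit function / Sard--Smale arguments, and concentrate effort on (2), which is the substantive claim; parts (3) and (4) will be formal consequences of the structure revealed in the proof of (2). For (1), I will show directly that $D(\vec\lambda \circ \vec\kappa)(g_0) : T_{g_0} \met^k(M) \to \RR^m$ is surjective, so that (after possibly shrinking $\eps$) the implicit function theorem applies and $\cC \met^k$ is a $C^{k-1}$ Banach submanifold with $T_g \cC \met^k = \ker D(\vec\lambda \circ \vec\kappa)(g)$. Surjectivity at $g_0$ is seen as follows: using \eqref{eq:geodesic.network.constrained.assumption.discrete} and the smoothness of each $S^0_i$, choose pairwise disjoint tubular neighborhoods $U_i$ of $\supp S^0_i$, and take a conformal perturbation $h_i = F_i g_0$ with $F_i \geq 0$ compactly supported in $U_i$ and not identically zero on $\supp S^0_i$. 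Then, by the standard variation formula for conformal changes of Riemannian length, $D\lambda_j(h_i) = \tfrac{1}{2}\int_{\supp S_j(g_0)} F_i \, d\ell_{g_0} > 0$ if $j = i$ and $= 0$ if $j \neq i$. So $h_1, \ldots, h_m$ map to a basis of $\RR^m$, giving the submersion.

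For (2), view $\cS^\Lambda_{G,\cC}$ as the zero set of the $C^{k-1}$ map
\[ F := (\vec\lambda \circ \vec\kappa - (\vec\lambda \circ \vec\kappa)(g_0)) \circ \pi^\Lambda_G : \cS^\Lambda_G \to \RR^m. \]
To show this is a regular zero set, I must prove $DF(g,S) : T_{(g,S)}\cS^\Lambda_G \to \RR^m$ is surjective for every $(g, S) \in \cS^\Lambda_{G,\cC}$. By Lemma \ref{lemm:geodesic.network.manifold.fredholm}, $\image D\pi^\Lambda_G(g,\mathbf{p}) = \ker(\pi_{\mathring K} \circ D_1 \mathring{\cH}(g, \mathbf{p}))$, so this reduces to the transversality statement
\[ D(\vec\lambda \circ \vec\kappa)(g) \bigl( \ker(\pi_{\mathring K} \circ D_1\mathring{\cH}(g,\mathbf{p})) \bigr) = \RR^m, \]
which in turn is equivalent (by a simple linear-algebra dualization using Claim \ref{clai:geodesic.network.manifold.submersion.full.img}) to showing that the restriction
\[ \pi_{\mathring K} \circ D_1 \mathring{\cH}(g, \mathbf{p}) : \ker D(\vec\lambda \circ \vec\kappa)(g) \to \mathring K \]
is surjective. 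This is the main technical obstacle: it is a constrained version of Claim \ref{clai:geodesic.network.manifold.submersion.full.img}, where the admissible metric variations are now forced to be tangent to $\cC \met^k$.

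The plan to establish this constrained surjectivity is to rerun the proof of Claim \ref{clai:geodesic.network.manifold.submersion.nondegenerate} with an added orthogonality condition on the conformal factor. Fix $\mathbf{q} \in \mathring K \setminus \{ \mathbf{0} \}$; by Claim \ref{clai:geodesic.network.manifold.jacobi.field.nontrivial}, the associated Jacobi field data $(J^{\mathbf{q}}_{u,v})_{\{u,v\} \in E}$ has nonzero normal component on some edge $\sigma_g(\mathfrak{e}\mathbf{p}(u), \mathfrak{e}\mathbf{p}(v))$. As in the cited claim, the variation $g(s) = (1+sF)g$ satisfies
\[ \bigl[ \tfrac{\partial^2}{\partial s \partial t} \cL(g(s), \mathbf{p} + t\mathbf{q}) \bigr]_{s=t=0} = \tfrac{1}{2}\omega(\{u,v\}) \int_{\sigma_g(\mathfrak{e}\mathbf{p}(u), \mathfrak{e}\mathbf{p}(v))} \langle \nabla^\perp_g F, (J^{\mathbf{q}}_{u,v})^\perp \rangle_g \, d\ell_g, \]
while the tangent-to-$\cC\met^k$ condition $Fg \in \ker D(\vec\lambda \circ \vec\kappa)(g)$ translates into the $m$ linear constraints $\int_{\supp S_i(g)} F \, d\ell_g = 0$, one per $i \in \{1,\dots,m\}$. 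The key point is that $F$ is a function on $M$ --- infinite-dimensional freedom --- and the second-derivative integrand involves the \emph{normal} derivative of $F$ on an arc that can be chosen arbitrarily short (by shrinking the support of $F$, as in the proof of Claim \ref{clai:geodesic.network.manifold.submersion.nondegenerate}); whereas the constraints are integrals of $F$ itself along $1$-dimensional curves. These are transverse conditions, and in particular one can choose $F$ supported in an arbitrarily small tubular neighborhood of an interior point of $\sigma_g(\mathfrak{e}\mathbf{p}(u), \mathfrak{e}\mathbf{p}(v))$ that realizes the desired sign of the second derivative \emph{and} is mean-zero on each $\supp S_i(g)$ it intersects --- e.g., by postcomposition with a radial cutoff that averages to zero along each intersecting curve, exploiting that $\supp S_i(g) \cap \supp S$ is either a discrete set, all of $\supp S_i(g)$ (in which case the mean-zero constraint is a single linear condition easily imposed), or can be arranged to be empty by a careful choice of support. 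This will give the constrained nondegeneracy needed to conclude the proof.

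Granted (2), parts (3) and (4) fall out: $\pi^\Lambda_{G,\cC}$ is the restriction of the Fredholm index $0$ map $\pi^\Lambda_G$ to a codimension-$m$ submanifold whose image is codimension $m$, so its index is again $0$ and its kernel/cokernel at any point coincide with those of $\pi^\Lambda_G$ restricted to $T\cS^\Lambda_{G,\cC}$; combining with Theorem \ref{theo:geodesic.network.manifold}(3) gives the Jacobi-field characterization in (4). Finally, (5) follows from applying the Sard--Smale theorem \cite{Smale:sard} to the $C^{k-1}$ Fredholm map $\pi^\Lambda_{G,\cC}$ between second countable $C^{k-1}$ Banach manifolds. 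The main obstacle throughout is the constrained surjectivity described above, for which the geometric freedom in choosing the test function $F$ must be carefully reconciled with the finitely many length constraints; the remaining steps are adaptations of the arguments already developed for Theorem \ref{theo:geodesic.network.manifold}.
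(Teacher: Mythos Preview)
Your proposal is essentially correct and follows the same strategy as the paper: prove $\vec\lambda\circ\vec\kappa$ is a submersion for (1), establish transversality of $\pi^\Lambda_G$ over $\cC\met^k$ for (2) via a constrained version of Claim~\ref{clai:geodesic.network.manifold.submersion.full.img}, and deduce (3)--(5) from the resulting structure and Sard--Smale. Your observation that the two needed surjectivity statements (the paper's Claims~\ref{clai:geodesic.network.constrained.submersion.h.construction} and~\ref{clai:geodesic.network.constrained.submersion.full.img}) are linear-algebraic duals of one another is a nice simplification over the paper, which proves them separately.

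One slip to correct in (1): you cannot take \emph{pairwise disjoint tubular neighborhoods} $U_i$ of the full supports $\supp S^0_i$, since \eqref{eq:geodesic.network.constrained.assumption.discrete} only says the pairwise intersections are discrete, not empty --- the curves typically do intersect. Instead, choose small disjoint open sets $V_i$ each meeting $\supp S^0_i$ but avoiding $\cup_{j\neq i}\supp S^0_j$; this is what the paper does and your argument goes through unchanged. Similarly, in the constrained surjectivity step the dichotomy is not ``discrete vs.\ all of $\supp S_i(g)$'' but ``discrete vs.\ contains a full edge of $S$''; in the latter case the paper sets $F\equiv 0$ along that edge (so the length constraint is trivially zero) while prescribing $\nabla^\perp_g F$ freely --- exactly the decoupling of normal derivative from trace that you identified as the key point.
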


\begin{rema} [cf. Remark \ref{rema:geodesic.network.manifold.regularity}] \label{rema:geodesic.network.constrained.regularity}
	As before, the regularity $k \in \NN$, $k \geq 3$, enters into $\cS^{\Lambda}_{G,\cC}$, albeit mildly once again. If we were to write $\cS^{\Lambda,k}_{G,\cC}$ rather than $\cS^\Lambda_{G,\cC}$ for the space in Theorem \ref{theo:geodesic.network.constrained}, then it is straightforward to check directly from the definition and Remark \ref{rema:geodesic.network.manifold.regularity}'s \eqref{eq:geodesic.network.manifold.regularity.S} that for every $k' \geq k$,
	\begin{equation} \label{eq:geodesic.network.constrained.regularity.M}
		\cC \met^{k'} = \cC \met^k \cap \met^{k'}(M)
	\end{equation}
	and thus
	\begin{equation} \label{eq:geodesic.network.constrained.regularity.S}
		\cS^{\Lambda,k'}_{G,\cC} = \cS^{\Lambda,k}_{G,\cC} \cap (\met^{k'}(M) \times \cI\cV_1(M)).
	\end{equation}
	Likewise, if the projection to $\cC \met^k$ is denoted by $\pi^{\Lambda,k}_{G,\cC}$ and the set of its regular values by $\cR^{\Lambda,k}_{G,\cC} \subset \cC \met^k$, then for every $k' \geq k$
	\begin{equation} \label{eq:geodesic.network.constrained.regularity.R}
		\cR^{\Lambda,k}_{G,\cC} = \cR^{\Lambda,k'}_{G,\cC} \cap \met^{k'}(M)
	\end{equation}
	by virtue of \eqref{eq:geodesic.network.constrained.regularity.M}, Theorem \ref{theo:geodesic.network.constrained}'s (4), and Remark \ref{rema:geodesic.network.manifold.regularity}'s \eqref{eq:geodesic.network.manifold.regularity.R}.
\end{rema}

As before, the theorem readily implies a genericity result for smooth metrics. To that end, we denote
\[ \cC \met := \cC \met^k \cap \met(M), \]
and endow it with the usual subset topology induced from the $C^\infty$ topology of $\met(M)$.

\begin{coro} [cf. Corollary \ref{coro:geodesic.network.manifold.generic.smooth}] \label{coro:geodesic.network.constrained.generic.smooth}
	The set of regular values of $\pi^\Lambda_{G,\cC}$ that are also in $\cC \met$ is comeager (``Baire generic'') in $\cC \met$.
\end{coro}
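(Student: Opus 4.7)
The plan is to mimic the proof of Corollary \ref{coro:geodesic.network.manifold.generic.smooth}, i.e., to invoke the abstract Taubes-style lemma \cite[Lemma 6.2]{Staffa:bumpy.geodesic.nets} with $\cM^k := \cC\met^k$, $\cN^k := \cR^{\Lambda,k}_{G,\cC}$, $\cM^\infty := \cC\met$, and $\cN^\infty$ equal to the set of regular values of $\pi^\Lambda_{G,\cC}$ that lie in $\cC\met$. The conclusion of that lemma would be comeagerness of $\cN^\infty$ in $\cM^\infty$, which is exactly the statement we want.

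Two of the three hypotheses of \cite[Lemma 6.2]{Staffa:bumpy.geodesic.nets} will come for free: the compatibility $\cN^{k'} = \cN^k \cap \cM^{k'}$ for $k' \geq k$ is Remark \ref{rema:geodesic.network.constrained.regularity}'s \eqref{eq:geodesic.network.constrained.regularity.R}, while the comeagerness of $\cN^k$ in $\cM^k$ for each finite $k \geq 3$ is Theorem \ref{theo:geodesic.network.constrained}(5). The main obstacle will be the remaining hypothesis: density of the smooth constrained metrics $\cC\met$ in $\cC\met^k$ in the $C^k$ topology. This is where the constrained case differs meaningfully from the unconstrained one, because a naive mollification of some $g \in \cC\met^k$ will generically fall off the constraint level set $(\vec\lambda\circ\vec\kappa)^{-1}\big[(\vec\lambda\circ\vec\kappa)(g_0)\big]$.

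To handle this I would use the submersion property of $\vec\lambda\circ\vec\kappa$ underlying Theorem \ref{theo:geodesic.network.constrained}(1), together with a smooth correction. Fix $g \in \cC\met^k$. First I would choose smooth symmetric $(0,2)$-tensors $h_1,\dots,h_m$ on $M$ whose images $D(\vec\lambda\circ\vec\kappa)(g)\{h_i\}$ form a basis of $\RR^m$; such $h_i$ exist because $D(\vec\lambda\circ\vec\kappa)(g)$ is surjective onto $\RR^m$, smooth tensors are $C^k$-dense in $C^k$ tensors, and the spanning condition is open. Next, by standard mollification produce smooth $\tilde g_j \to g$ in $C^k$. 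For $j$ large the $C^{k-1}$ implicit function theorem applied to
\[ F_j(t_1,\dots,t_m) := (\vec\lambda\circ\vec\kappa)\Bigl(\tilde g_j + \textstyle\sum_{i=1}^m t_i h_i\Bigr) - (\vec\lambda\circ\vec\kappa)(g_0) \]
yields small $t^{(j)}_1,\dots,t^{(j)}_m \in \RR$ with $F_j(t^{(j)}_1,\dots,t^{(j)}_m) = 0$, i.e., $g_j := \tilde g_j + \sum_i t^{(j)}_i h_i \in \cC\met^k$. Since $\tilde g_j$ and each $h_i$ are smooth, so is $g_j$, so $g_j \in \cC\met$ and $g_j \to g$ in $C^k$ (because $t^{(j)}_i \to 0$). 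This will complete the density argument and hence the proof.

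The only subtlety to check along the way is that all of the perturbed metrics $\tilde g_j + \sum_i t^{(j)}_i h_i$ remain inside the domain $\met^3(M,g_0,\eps) \cap \met^k(M)$ on which $\vec\lambda\circ\vec\kappa$ is defined; this is automatic because $g \in \cC\met^k$ lies in $\met^3(M,g_0,\eps)$, $C^k$ convergence implies $C^3$ convergence, and the corrections are $o(1)$ in $C^k$, hence in $C^3$.
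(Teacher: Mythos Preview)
Your proposal is correct and follows the same overall architecture as the paper: reduce to the abstract Taubes-type lemma in \cite{Staffa:bumpy.geodesic.nets}, verify compatibility of regular values across $k$ via Remark~\ref{rema:geodesic.network.constrained.regularity}, and identify the density of $\cC\met$ in $\cC\met^k$ as the only nontrivial ingredient.

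The difference lies in how you establish density. You use the submersion property of $\vec\lambda\circ\vec\kappa$ abstractly: pick smooth $h_1,\dots,h_m$ whose images under $D(\vec\lambda\circ\vec\kappa)(g)$ span $\RR^m$, mollify $g$ to smooth $\tilde g_j$, and invoke the implicit function theorem to find small $t^{(j)}\in\RR^m$ with $\tilde g_j + \sum_i t^{(j)}_i h_i \in \cC\met$. The paper instead constructs an explicit conformal correction: it takes smooth $g_n\to g$ and multiplies by $(1+u_n)^2$ with $u_n\in C^\infty(M)$ chosen so that $\nabla_{g_n}u_n$ vanishes along each $\supp S_i(g_n)$ (so the distinguished geodesics are unchanged) and $\int_{\supp S_i(g_n)} u_n\, d\ell_{g_n}$ exactly compensates the length defect. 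Your argument is cleaner and uses less of the specific geometry; the paper's construction is more hands-on but has the incidental feature that the corrected metric shares the same distinguished geodesics as $g_n$. Either route is valid. One small remark: what you call the ``implicit function theorem'' is really the IFT applied to $(g',t)\mapsto(\vec\lambda\circ\vec\kappa)(g'+\sum t_i h_i)-(\vec\lambda\circ\vec\kappa)(g_0)$ at $(g,0)$, solving $t$ as a $C^{k-1}$ function of $g'$; this is implicit in your write-up but worth stating precisely.
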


Before setting out to prove the theorem and corollary, we prove:

\begin{lemm} \label{lemm:geodesic.network.constrained.submersion}
	$\vec{\lambda} \circ \vec{\kappa} : \met^3(M, g_0, \eps) \cap \met^k(M) \to \RR^m$ is a $C^{k-1}$ submersion.
\end{lemm}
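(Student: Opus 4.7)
The $C^{k-1}$ regularity of $\vec{\lambda}\circ\vec{\kappa}$ was already recorded in \eqref{eq:geodesic.network.constrained.submersion}, so only surjectivity of the differential remains. Since the target $\RR^m$ is finite dimensional, it suffices to produce, at each fixed $g \in \met^3(M,g_0,\eps)\cap\met^k(M)$, an $m$-tuple of perturbations $h_1,\dots,h_m \in \Tan_g\met^k(M)$ such that the matrix
\[
	A_{ij} := D(\lambda_i\circ\kappa_i)(g)\{h_j\}, \quad i,j \in \{1,\dots,m\},
\]
is nonsingular. The plan is to choose each $h_j$ conformal and compactly supported in a carefully chosen small region, so that $A$ comes out to be diagonal with positive diagonal entries.

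The key geometric input is that, after shrinking $\eps$ as in Claim \ref{clai:geodesic.network.constrained.eps.k3}, condition \eqref{eq:geodesic.network.constrained.assumption.discrete} persists for the perturbed family: $\supp S_i(g)\cap\supp S_j(g)$ is discrete for $i\neq j$. Since each $\supp S_i(g)$ is a smooth closed primitive $g$-geodesic by \eqref{eq:geodesic.network.constrained.assumption.regular}, this lets me choose, for every $i\in\{1,\dots,m\}$, a point
\[
	p_i \in \supp S_i(g)\setminus \bigcup_{j\neq i}\supp S_j(g),
\]
together with a $g$-geodesic ball $U_i$ around $p_i$ that is disjoint from every $\supp S_j(g)$ with $j\neq i$ and meets $\supp S_i(g)$ in a relatively open arc containing $p_i$. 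Then pick $f_i \in C^\infty_c(U_i)$ with $f_i \geq 0$ and $f_i(p_i)>0$, and set $h_j := f_j \cdot g \in \Tan_g\met^k(M)$.

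To compute $A_{ij}$, note that for $i\neq j$, the support of $h_j$ is disjoint from $\supp S_i(g)$, so the metric $g+th_j$ coincides with $g$ in a neighborhood of $\supp S_i(g)$. Hence $S_i(g)$ remains stationary for $g+th_j$; by uniqueness of the local inverse $\kappa_i$ (i.e.\ $(g,S_i(g))$ being a regular point of $\pi^\Lambda_{G_0}$, cf.\ \eqref{eq:geodesic.network.constrained.generic.k}), this forces $\kappa_i(g+th_j)=\kappa_i(g)$ for small $t$, and therefore $A_{ij}=0$. For $i=j$, stationarity of the geodesic $\supp S_i(g)$ kills the variation coming from the motion of the curve, so only the direct contribution from the metric change remains:
\[
	A_{ii} = \tfrac{d}{dt}\Big|_{t=0}\length_{g+th_i}(\supp S_i(g)) = \tfrac12 \int_{\supp S_i(g)} f_i\, (g+th_i)(\tau_i,\tau_i)\Big|_{t=0} d\ell_g = \tfrac12 \int_{\supp S_i(g)} f_i\, d\ell_g,
\]
which is strictly positive by our choice of $f_i$. (This is the same computation underlying \eqref{eq:geodesic.network.manifold.submersion.nondegenerate.computation}, but restricted to a variation in $g$ alone.) Thus $A$ is diagonal with positive diagonal, hence nonsingular, and $D(\vec{\lambda}\circ\vec{\kappa})(g)$ is surjective.

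The only genuine subtlety is the off-diagonal vanishing: it rests on the fact that a metric perturbation supported away from $\supp S_j(g)$ does not move $S_j(g)$ at all, which I justify by invoking the local diffeomorphism $\kappa_j$ rather than any delicate ODE argument. Everything else is routine first variation. The step requiring the most care in a formal write-up is checking that the chosen $U_i$ can indeed be arranged as claimed; this is immediate from discreteness of pairwise intersections and compactness of each $\supp S_i(g)$.
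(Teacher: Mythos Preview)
Your proof is correct and follows essentially the same strategy as the paper: choose conformal perturbations $h_j = f_j g$ supported near a point of $\supp S_j(g)$ away from the other curves, using the discreteness of pairwise intersections \eqref{eq:geodesic.network.constrained.assumption.discrete} (persisted via Claim \ref{clai:geodesic.network.constrained.eps.k3}), and conclude surjectivity from the resulting diagonal-dominant structure.

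The one minor methodological difference is in how you handle the off-diagonal entries. The paper derives the closed formula \eqref{eq:geodesic.network.constrained.submersion.derivative},
\[
	\big[D(\vec{\lambda}\circ\vec{\kappa})(g)\{h\}\big]_i = \int_{\supp S_i(g)} \tfrac12 \bigl(\tr_{g|\supp S_i(g)} h\bigr)\, d\ell_g,
\]
which already absorbs the stationarity of $S_i(g)$ and makes both diagonal and off-diagonal computations immediate (the integral vanishes when $\supp h \cap \supp S_i(g) = \emptyset$). You instead argue that $\kappa_i(g+th_j)=\kappa_i(g)$ via the local diffeomorphism property of $\kappa_i$. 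This is perfectly valid but slightly more indirect; the first-variation formula route is cleaner since it treats all entries uniformly and avoids invoking uniqueness of the nearby geodesic.
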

\begin{proof}
	The regularity statement follows from Theorem \ref{theo:geodesic.network.manifold} and the regularity of $\cL$. The submersion statement will follow one we verify that $D(\vec{\lambda} \circ \vec{\kappa})(g)$, $g \in \met^3(M, g_0, \eps) \cap \met^k(M)$, is surjective (since the target space is finite dimensional). 
	
	To that end, let $(\ell_1, \ldots, \ell_m) \in \RR^m$ be arbitrary. We compute via the variation formula for induced volume forms under metric changes that the $i$-th coordinate of $D(\vec{\lambda} \circ \vec{\kappa})(g)\{h\} \in \RR^m$, $i = 1, \ldots, m$, is
	\begin{equation} \label{eq:geodesic.network.constrained.submersion.derivative}
		\big[ D(\vec{\lambda} \circ \vec{\kappa})(g)\{h\} \big]_i = \int_{\supp S_i(g)} \tfrac12 (\tr_{g|\supp S_i(g)} h) \, d\ell_g.
	\end{equation}
	Now take open sets $V_1, \ldots, V_m \subset M$ such that
	\begin{equation} \label{eq:geodesic.network.constrained.submersion.supp.1}
		V_i \cap \supp S_i(g) \neq \emptyset \text{ for all } i = 1, \ldots, m,
	\end{equation}
	\begin{equation} \label{eq:geodesic.network.constrained.submersion.supp.2}
		V_i \cap V_j = \emptyset \text{ for all } i \neq j \in \{1, \ldots, m \}.
	\end{equation}
	The latter is possible due to \eqref{eq:geodesic.network.constrained.assumption.discrete} and Claim \ref{clai:geodesic.network.constrained.eps.k3}. For each $i = 1, \ldots, m$, \eqref{eq:geodesic.network.constrained.submersion.supp.1} allows us to construct $F_i \in C^k(M)$ compactly supported in $V_i$ with
	\begin{equation} \label{eq:geodesic.network.constrained.submersion.int}
		\int_{\supp S_i(g)} \tfrac12 F_i \, d\ell_g = \ell_i.
	\end{equation} 
	It is easy to see then, using \eqref{eq:geodesic.network.constrained.submersion.derivative}, \eqref{eq:geodesic.network.constrained.submersion.supp.2}, that $h := \sum_{i=1}^m F_i g$ satisfies $D(\vec{\lambda} \circ \vec{\kappa})(g)\{h\} = (\ell_1, \ldots, \ell_m)$, as desired.
\end{proof}

\begin{proof}[Proof of Theorem \ref{theo:geodesic.network.constrained}]
	(1). This is a consequence of Lemma \ref{lemm:geodesic.network.constrained.submersion}.
	
	(2). For this we need to show that $\pi^\Lambda_{G}$ is transversal over $\cC \met^k$, i.e., that for every $(g, S) \in \cS^\Lambda_{G,\cC}$ the composite map
	\begin{multline} \label{eq.geodesic.network.constrained.transversal}
		\Tan_{(g,S)} \cS^\Lambda_{G} \longrightarrow \Tan_g (\met^3(M, g_0, \eps) \cap \met^k(M)) \\
			\longrightarrow \Tan_g (\met^3(M, g_0, \eps) \cap \met^k(M)) / \Tan_g (\cC \met^k)
	\end{multline}
	is surjective and that its kernel splits (\cite[Proposition II.2.4]{Lang:fundamentals.dg}). The latter is of course a consequence of how, by part (1), $\Tan_g (\cC \met^k) = \ker D(\vec{\lambda} \circ \vec{\kappa})(g)$ is a finite codimension (equal to $m$) subspace of $\Tan_g (\met^3(M, g_0, \eps) \cap \met^k(M))$. So what is left to show is surjectivity in \eqref{eq.geodesic.network.constrained.transversal}. 
	
	Fix $(g, S) \in \cS^\Lambda_{G,\cC}$. We work in local coordinates of $\cS^\Lambda_G$ using one of our charts  constructed in the previous section via $\mathfrak{g}$, $\mathfrak{p}$, $\mathfrak{e}$, $\mathring{B}_{\eps}(\mathbf{0})$, $\mathring{Y}$, $\cL$, $\mathring{\cH}$, $\mathring{J}$, $\mathring{K}$, $\Pi$ and $\cS_{\textrm{par}} = \{ \mathring{\cH} = \mathbf{0} \} \subset \met^k(M, \mathfrak{g}, \eps) \times \mathring{B}_\eps(\mathbf{0})$. In these local coordinates, $(g, S)$ becomes $(g, \mathbf{p})$, for some $\mathbf{p} \in \mathring{B}_\eps(\mathbf{0})$.
		
	For every $(h, \mathbf{q}) \in \Tan_{(g, \mathbf{p})} \cS_{\textrm{par}} = \ker D\mathring{\cH}$,
	\[ D(\vec{\lambda} \circ \vec{\kappa})(g)\{ D(\Pi \vert \cS_{\textrm{par}})(g, \mathbf{p})\{h, \mathbf{q}\} \} = D(\vec{\lambda} \circ \vec{\kappa})(g)\{h\}, \]
	so the surjectivity will be a consequence of \eqref{eq:geodesic.network.manifold.submersion.action} and:
	
	\begin{claim} \label{clai:geodesic.network.constrained.submersion.h.construction}
		The restriction of $D(\vec{\lambda} \circ \vec{\kappa})(g)$ to $\ker D_1 \mathring{\cH}(g, \mathbf{p})$ is surjective.
	\end{claim}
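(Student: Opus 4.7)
The plan is to construct, for each $i = 1, \ldots, m$, an $h_i \in \ker D_1 \mathring{\cH}(g, \mathbf{p})$ with $D(\vec{\lambda} \circ \vec{\kappa})(g)\{h_i\} = c_i e_i$ for some $c_i > 0$, where $e_i$ is the $i$-th standard basis vector in $\RR^m$. Because $\ker D_1 \mathring{\cH}(g, \mathbf{p})$ is a linear subspace, the combination $\sum_i c_i^{-1} \ell_i h_i$ lies in it and maps to an arbitrary $(\ell_1, \ldots, \ell_m) \in \RR^m$, giving the desired surjectivity.

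Each $h_i$ will be conformal: $h_i = F_i g$ for a nonnegative $F_i \in C_c^k(M)$. For such $h_i$, formula \eqref{eq:geodesic.network.constrained.submersion.derivative} reads
\[
	D(\vec{\lambda} \circ \vec{\kappa})(g)\{F_i g\}_j = \tfrac12 \int_{\supp S_j(g)} F_i \, d\ell_g,
\]
while repeating the computation in Claim \ref{clai:geodesic.network.manifold.submersion.nondegenerate} (no longer restricting $F_i$ to a single edge) together with the identity $\langle D_1 \mathring{\cH}(g,\mathbf{p})\{h\}, \mathbf{q}\rangle_{\mathfrak{e}\mathbf{p}^*g} = \partial_s \partial_t \cL(g+sh, \mathbf{p}+t\mathbf{q})\big|_{s=t=0}$, which follows from \eqref{eq:geodesic.network.manifold.H.breve.variational} and $\mathring{\cH}(g,\mathbf{p}) = \mathbf{0}$, yields
\[
	\langle D_1 \mathring{\cH}(g,\mathbf{p})\{F_i g\}, \mathbf{q}\rangle = \tfrac12 \sum_{\{u,v\} \in E} \omega(\{u,v\}) \int_{\sigma_g(\mathfrak{e}\mathbf{p}(u), \mathfrak{e}\mathbf{p}(v))} \langle \nabla_g^\perp F_i, (J^{\mathbf{q}}_{u,v})^\perp \rangle \, d\ell_g
\]
for all $\mathbf{q} \in \mathring{Y}$, where $\perp$ denotes projection to the normal bundle of the edge and $J^{\mathbf{q}}_{u,v}$ is as in \eqref{eq:geodesic.network.manifold.jacobi.field}.

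To place $F_i$, we exploit the geometry of $\supp S_i(g)$ versus $\supp \iota_g(G, \mathfrak{e}\mathbf{p})$. By uniqueness of geodesics with prescribed $1$-jets, every edge $\sigma_g$ of $\iota_g(G, \mathfrak{e}\mathbf{p})$ either lies entirely inside $\supp S_i(g)$ or meets it transversally in finitely many points. Combining with \eqref{eq:geodesic.network.constrained.assumption.discrete} (preserved by Claim \ref{clai:geodesic.network.constrained.eps.k3}), Lemma \ref{lemm:geodesic.network.embedding.structure} (5), and the finiteness of $E$, we may choose a short open subarc $A_i \subset \supp S_i(g)$ disjoint from the (finitely many) crossings of $\supp S_i(g)$ with $\cup_{j \neq i} \supp S_j(g)$ and with all edges $\sigma_g$ not contained in $\supp S_i(g)$. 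Introduce Fermi coordinates $(x,y)$ for $g$ in a neighborhood of $A_i$ under which $\supp S_i(g)$ is locally $\{y = 0\}$, and let $V_i$ be a sufficiently thin open tubular neighborhood of a relatively compact subarc of $A_i$. By shrinking $V_i$, arrange $V_i \cap \sigma_g = \emptyset$ for every $\sigma_g \not\subset \supp S_i(g)$ (possible since each such $\sigma_g$ has positive distance to $A_i$) and $V_i \cap \supp S_j(g) = \emptyset$ for all $j \neq i$. Finally, pick $F_i \in C_c^k(V_i)$ with $F_i \geq 0$, with the normal reflection symmetry $F_i(x, -y) = F_i(x, y)$, and with $\int_{A_i} F_i(x, 0) \, dx > 0$.

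With $h_i := F_i g$, we verify: (i) $\langle D_1 \mathring{\cH}(g,\mathbf{p})\{h_i\}, \mathbf{q}\rangle = 0$ for every $\mathbf{q} \in \mathring{Y}$, since edges with $\sigma_g \not\subset \supp S_i(g)$ contribute nothing by disjointness from $V_i$, while edges with $\sigma_g \subset \supp S_i(g)$ meet $V_i$ only along $\{y = 0\}$, where $\partial_y F_i$ vanishes by the normal symmetry, making $\nabla_g^\perp F_i = 0$ pointwise along the integration domain; (ii) $D(\vec{\lambda} \circ \vec{\kappa})(g)\{h_i\}_j$ vanishes for $j \neq i$ by $V_i \cap \supp S_j(g) = \emptyset$, and equals $\tfrac12 \int_{A_i} F_i(x, 0) \, dx > 0$ for $j = i$. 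The main obstacle is the case when $\supp S_i(g)$ actually coincides with (part of) $\supp \iota_g(G, \mathfrak{e}\mathbf{p})$, rendering impossible the naive strategy of supporting $F_i$ strictly off $\supp \iota_g(G, \mathfrak{e}\mathbf{p})$; imposing normal reflection symmetry is the key trick, since only $\nabla^\perp F_i$---and not $F_i$ itself---enters the formula for $D_1 \mathring{\cH}$.
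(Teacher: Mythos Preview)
Your argument is correct and follows essentially the same approach as the paper: both construct conformal variations $h_i = F_i g$ with $\nabla_g^\perp F_i = 0$ along $\supp S$ (you via even reflection in Fermi coordinates, the paper via the equivalent requirement ``$\nabla_g F_i$ tangential''). The main cosmetic difference is that the paper separates the cases where $\supp S_i(g)\cap\supp S$ is discrete versus not, whereas your even-symmetry construction handles both at once. One small point: your formula $\langle D_1\mathring{\cH}\{F_ig\},\mathbf{q}\rangle = \tfrac12\sum_{\{u,v\}\in E}\omega(\{u,v\})\int_{\sigma_g}\langle\nabla_g^\perp F_i,(J^{\mathbf{q}}_{u,v})^\perp\rangle\,d\ell_g$ summed over all edges tacitly relies on the vertex boundary terms $F_i(\mathfrak{e}\mathbf{p}(u))\langle\mathfrak{e}\mathbf{p}(u)_*[\mathbf{q}(u)],\sum_{v\in E_u}\omega(\{u,v\})\tau_{u,v}\rangle$ cancelling by stationarity \eqref{eq:geodesic.network.embedding.structure.sum.tangents}; alternatively (and more in the spirit of the paper's single-edge computation) you could simply add the finitely many vertices $\mathfrak{e}\mathbf{p}(V)\cap\supp S_i(g)$ to the list of points that $A_i$ avoids.
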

	\begin{proof}[Proof of claim]
		We proceed as in Lemma \ref{lemm:geodesic.network.constrained.submersion} except we need to choose our $h$ with some extra care due to the potential interactions between $S_i$ and $S$. Take any $(\ell_1, \ldots, \ell_m) \in \RR^m$. We will construct $F_i \in C^k(M)$ with
		\begin{equation} \label{eq:geodesic.network.constrained.submersion.h.Si}
			D(\vec{\lambda} \circ \vec{\kappa})(g)\{F_i g\} = \ell_i \vec{e}_i,
		\end{equation}
		where $\vec{e}_i \in \RR^m$ is the $i$-th standard basis vector of $\RR^m$, and
		\begin{equation} \label{eq:geodesic.network.constrained.submersion.h.S}
			D_1 \mathring{\cH}(g, \mathbf{p})\{F_i g\} = \mathbf{0}.
		\end{equation}
		The claim clearly follows from \eqref{eq:geodesic.network.constrained.submersion.h.Si}, \eqref{eq:geodesic.network.constrained.submersion.h.S} with $h := \sum_{i=1}^m F_i g$. 
		
		First, let $i = 1, \ldots, m$ be such that $\supp S_i(g) \cap \supp S$ is discrete. Then, take $V_i \subset M$ to be an open set such that
		\begin{equation} \label{eq:geodesic.network.constrained.submersion.h.disc.supp.1}
			V_i \cap S_i(g) \neq \emptyset,
		\end{equation}
		\begin{equation} \label{eq:geodesic.network.constrained.submersion.h.disc.supp.2}
			V_i \cap \supp S = V_i \cap (\cup_{j \neq i} \supp S_i(g)) = \emptyset.
		\end{equation}
		This is easy to do by \eqref{eq:geodesic.network.constrained.assumption.discrete} and Claim \ref{clai:geodesic.network.constrained.eps.k3} when $\supp S_i(g) \cap \supp S$ is discrete. As in the proof of Lemma \ref{lemm:geodesic.network.constrained.submersion}, the combination of \eqref{eq:geodesic.network.constrained.submersion.derivative} and  \eqref{eq:geodesic.network.constrained.submersion.h.disc.supp.1} allows us to construct $F_i$ supported in $V_i$ such that  $D(\lambda_i \circ \kappa_i)(g)\{F_i g\} = \ell_i$. By \eqref{eq:geodesic.network.constrained.submersion.derivative} and \eqref{eq:geodesic.network.constrained.submersion.h.disc.supp.2}, we get \eqref{eq:geodesic.network.constrained.submersion.h.Si} for this $i$ as well as \eqref{eq:geodesic.network.constrained.submersion.h.S}.
		
		Now let $i = 1, \ldots, m$ be such that $\supp S_i(g) \cap \supp S$ is not discrete. Then, by \eqref{eq:geodesic.network.constrained.assumption.regular}, \eqref{eq:geodesic.network.constrained.assumption.discrete}, and Claim \ref{clai:geodesic.network.constrained.eps.k3}, it follows that $\supp S_i(g) \cap \supp S$ contains a segment $\sigma_g(\mathfrak{e}\mathbf{p}(u), \mathfrak{e}\mathbf{p}(v))$, $\{u,v\} \in E$, such that
		\begin{equation} \label{eq:geodesic.network.constrained.submersion.h.nondisc.intersection}
			\sigma_g(\mathfrak{e}\mathbf{p}(u), \mathfrak{e}\mathbf{p}(v)) \cap \supp S_j(g) \text{ is discrete whenever } j \neq i.
		\end{equation}
		We now take $V_i \subset M$ to be an open set such that
		\begin{equation} \label{eq:geodesic.network.constrained.submersion.h.nondisc.supp.1}
			V_i \cap (\operatorname{int} \sigma_g(\mathfrak{e}\mathbf{p}(u), \mathfrak{e}\mathbf{p}(v))) \neq \emptyset,
		\end{equation}
		\begin{multline} \label{eq:geodesic.network.constrained.submersion.h.nondisc.supp.2}
			V_i \cap (\supp S \setminus \sigma_g(\mathfrak{e}\mathbf{p}(u), \mathfrak{e}\mathbf{p}(v))) = V_i \cap (\supp S_i(g) \setminus \sigma_g(\mathfrak{e}\mathbf{p}(u), \mathfrak{e}\mathbf{p}(v))) \\
			= V_i \cap (\cup_{j \neq i} \supp S_i(g)) = \emptyset.
		\end{multline}
		This is possible due to \eqref{eq:geodesic.network.constrained.assumption.regular}, \eqref{eq:geodesic.network.constrained.assumption.discrete}, Claim \ref{clai:geodesic.network.constrained.eps.k3}, and \eqref{eq:geodesic.network.constrained.submersion.h.nondisc.intersection}. Now use \eqref{eq:geodesic.network.constrained.submersion.h.nondisc.supp.1} to construct a $C^k$ function $F_i : M \to \RR$ supported on $V_i$ such that
		\begin{equation} \label{eq:geodesic.network.constrained.submersion.h.nondisc.int}
			\int_{\sigma_g(\mathfrak{e}\mathbf{p}(u), \mathfrak{e}\mathbf{p}(v))} \tfrac12 F_i \, d\ell_g = \ell_i,
		\end{equation}
		and
		\begin{equation} \label{eq:geodesic.network.constrained.submersion.h.nondisc.tang}
			\nabla_g F_i \in \Tan(\operatorname{int} \sigma_g(\mathfrak{e}\mathbf{p}(u), \mathfrak{e}\mathbf{p}(v))) \text{ along } \operatorname{int} \sigma_g(\mathfrak{e}\mathbf{p}(u), \mathfrak{e}\mathbf{p}(v)).
		\end{equation}
		As usual, the combination of \eqref{eq:geodesic.network.constrained.submersion.derivative}, \eqref{eq:geodesic.network.constrained.submersion.h.nondisc.supp.2}, \eqref{eq:geodesic.network.constrained.submersion.h.nondisc.int} implies \eqref{eq:geodesic.network.constrained.submersion.h.Si}, so it only remains to verify \eqref{eq:geodesic.network.constrained.submersion.h.S}. Let $\mathbf{q} \in \mathring{B}_\eps(\mathbf{0})$ be arbitrary and $t \mapsto \mathbf{p}(t) \in B_\eps$ be any $C^1$ path with $\mathbf{p}(0) = \mathbf{p}$, $\dot{\mathbf{p}}(0) = \mathbf{q}$. If $\omega$ denotes the invariant $\omega(\{u,v\})$ along $\{u,v\} \in E_C(G)$, and $(J^{\mathbf{q}}_{u,v})_{\{u,v\} \in E}$ is as in \eqref{eq:geodesic.network.manifold.jacobi.field}, then \eqref{eq:geodesic.network.manifold.submersion.nondegenerate.computation}, \eqref{eq:geodesic.network.constrained.submersion.h.nondisc.supp.2},  \eqref{eq:geodesic.network.constrained.submersion.h.nondisc.tang} imply
		\begin{align*}
			\langle D_1 \mathring{\cH}(g, \mathbf{p})\{F_i g\}, \mathbf{q} \rangle 
				& = \Big[ \tfrac{\partial^2}{\partial s \partial t} \cL(g(s), \mathbf{p}(t)) \Big]_{s=t=0} \\
				& = \tfrac{\omega}{2} \sum_{\{u,v\} \in E_C(G)} \int_{\sigma_g(\mathbf{p}(u), \mathbf{p}(v))} \langle \nabla^\perp_g F, (J^{\mathbf{q}}_{u,v})^\perp \rangle \, d\ell_g = 0.
		\end{align*}
		Since this is true for all $\mathbf{q}$, it follows that $D_1 \cH(g, \mathbf{p})\{F_i g\} = \mathbf{0}$. This completes the proof of \eqref{eq:geodesic.network.constrained.submersion.h.S}, and thus the claim.
	\end{proof}
	
	(3). We proceed as in Lemma \ref{lemm:geodesic.network.manifold.fredholm}. We choose $(g, S) \in \cS^\Lambda_{G,\cC}$ and, as in part (2), we continue to work in local coordinates of $\cS^\Lambda_G$ near $(g, S)$ using one of our charts as constructed in the previous section. Abusing notation, let us still write $\cC \met^k$, $\vec{\lambda}$, $\vec{\kappa}$ for the local objects. In coordinates, $\ker D\pi^\Lambda_{G,\cC}$ equals:
	\begin{align} \label{eq:geodesic.network.constrained.ker.pi}
		& \ker D(\Pi \vert (\cS_{\textrm{par}} \cap \Pi^{-1}(\cC \met^k)))(g, \mathbf{p}) \\
		& \qquad = \ker \Pi \cap \Tan_{(g, \mathbf{p})} \cS_{\textrm{par}} \cap \Pi^{-1}(\Tan_g (\cC \met^k)) \nonumber \\
		& \qquad = \ker \Pi \cap \ker D\mathring{\cH}(g, \mathbf{p}) \cap \Pi^{-1}(\ker D(\vec{\lambda} \circ \vec{\kappa})(g)) \nonumber \\
		& \qquad = (\{0\} \times \mathring{K}) \cap (\ker D(\vec{\lambda} \circ \vec{\kappa})(g) \times \mathring{Y}) = \{0\} \times \mathring{K}. \nonumber
	\end{align}
	It remains to show that $\image D\pi^\lambda_{G,\cC}(g,S)$ has codimension $\dim \mathring{K}$ in $\Tan_g (\cC \met^k)$. In coordinates, the prior equals:
	\begin{align*}
		& \image D(\Pi \vert (\cS_{\textrm{par}} \cap \Pi^{-1}(\cC \met^k)))(g, \mathbf{p}) \\
		& \qquad = \Pi(\Tan_{(g, \mathbf{p})} \cS_{\textrm{par}}) \cap \Tan_g (\cC \met^k) \\
		& \qquad = \ker(\pi_{\mathring{K}} \circ D_1 \cH)(g, \mathbf{p}) \cap \ker D(\vec{\lambda} \circ \vec{\kappa})(g)
	\end{align*}
	The result follows from the following:
	\begin{claim} \label{clai:geodesic.network.constrained.submersion.full.img}
		$\pi_{\mathring{K}} \circ D_1 \mathring{\cH}(g, \mathbf{p})$ restricted to $\ker D(\vec{\lambda} \circ \vec{\kappa})(g)$ remains surjective.
	\end{claim}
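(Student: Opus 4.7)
\begin{proof}[Proof proposal for Claim \ref{clai:geodesic.network.constrained.submersion.full.img}]
The plan is to combine the unrestricted surjectivity already established in Claim \ref{clai:geodesic.network.manifold.submersion.full.img} with the special test-tensors produced in Claim \ref{clai:geodesic.network.constrained.submersion.h.construction} as a ``correction term.'' The heavy lifting has already been carried out in those two claims; all that remains is to assemble them.

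Fix an arbitrary $\mathbf{k} \in \mathring{K}$. By Claim \ref{clai:geodesic.network.manifold.submersion.full.img}, there exists $h_0 \in \Tan_g \met^k(M, \mathfrak{g}, \eps)$ with $\pi_{\mathring{K}} \circ D_1 \mathring{\cH}(g, \mathbf{p})\{h_0\} = \mathbf{k}$. Set
\[ (\ell_1, \ldots, \ell_m) := D(\vec{\lambda} \circ \vec{\kappa})(g)\{h_0\} \in \RR^m. \]
Applying Claim \ref{clai:geodesic.network.constrained.submersion.h.construction} to the target vector $(\ell_1, \ldots, \ell_m)$, we obtain $h_1 = \sum_{i=1}^m F_i g$ with
\[ D(\vec{\lambda} \circ \vec{\kappa})(g)\{h_1\} = (\ell_1, \ldots, \ell_m) \quad \text{and} \quad D_1 \mathring{\cH}(g, \mathbf{p})\{h_1\} = \mathbf{0}. \]
Now let $h := h_0 - h_1$. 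By construction, $D(\vec{\lambda} \circ \vec{\kappa})(g)\{h\} = 0$, so $h \in \ker D(\vec{\lambda} \circ \vec{\kappa})(g)$; and by linearity,
\[ \pi_{\mathring{K}} \circ D_1 \mathring{\cH}(g, \mathbf{p})\{h\} = \pi_{\mathring{K}} \circ D_1 \mathring{\cH}(g, \mathbf{p})\{h_0\} - \pi_{\mathring{K}}(\mathbf{0}) = \mathbf{k}. \]
Since $\mathbf{k} \in \mathring{K}$ was arbitrary, this proves the asserted surjectivity.
\end{proof}

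The only real obstacle in the chain of reasoning was already surmounted inside Claim \ref{clai:geodesic.network.constrained.submersion.h.construction}, where one had to produce metric perturbations whose infinitesimal effect on the length functionals $\vec{\lambda} \circ \vec{\kappa}$ is prescribed but which simultaneously do not perturb the stationarity condition via $D_1 \mathring{\cH}$. This required the careful case distinction between whether $\supp S_i(g) \cap \supp S$ is discrete (where one simply picks $F_i$ supported away from $\supp S$) or contains a full edge of $G$ (where one must additionally arrange $\nabla_g F_i$ to be tangent to that edge, using \eqref{eq:geodesic.network.manifold.submersion.nondegenerate.computation} to ensure the $D_1 \mathring{\cH}$-pairing vanishes for \emph{every} Jacobi field $(J^{\mathbf{q}}_{u,v})$). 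Once those building blocks are in hand, the claim at hand reduces to the straightforward subtraction argument above.
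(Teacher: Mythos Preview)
Your proof is correct and takes a genuinely different route from the paper. The paper reproves the surjectivity from scratch by adapting the original three-subclaim argument (Claims \ref{clai:geodesic.network.manifold.jacobi.field.nontrivial}, \ref{clai:geodesic.network.manifold.submersion.nondegenerate}, \ref{clai:geodesic.network.manifold.submersion.degenerate}) to the constrained setting, replacing the latter two by constrained analogues (Claims \ref{clai:geodesic.network.constrained.submersion.nondegenerate}, \ref{clai:geodesic.network.constrained.submersion.degenerate}) whose proofs essentially repeat the discrete/non-discrete case analysis already performed in Claim \ref{clai:geodesic.network.constrained.submersion.h.construction}. You instead observe that Claim \ref{clai:geodesic.network.constrained.submersion.h.construction} already furnishes, for any target in $\RR^m$, a perturbation lying in $\ker D_1 \mathring{\cH}(g,\mathbf{p})$; subtracting such a correction from an unrestricted preimage $h_0$ (produced by Claim \ref{clai:geodesic.network.manifold.submersion.full.img}) lands in $\ker D(\vec\lambda\circ\vec\kappa)(g)$ without altering $\pi_{\mathring K}\circ D_1\mathring\cH$. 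Your linear-algebra subtraction is more economical and avoids introducing the two auxiliary claims; the paper's approach, by contrast, keeps part (3) self-contained and makes the structural parallel with the unconstrained Theorem \ref{theo:geodesic.network.manifold} explicit.
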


	To prove it, we follow the same proof strategy as in Claim \ref{clai:geodesic.network.manifold.submersion.full.img}, which had been split up in three subclaims. The first, Claim \ref{clai:geodesic.network.manifold.jacobi.field.nontrivial}, will be applied unchanged. The second and third need to be replaced:
		
	\begin{claim}[cf. Claim \ref{clai:geodesic.network.manifold.submersion.nondegenerate}]  \label{clai:geodesic.network.constrained.submersion.nondegenerate}
		Suppose $\mathbf{q} \in \mathring{Y}$ satisfies \eqref{eq:geodesic.network.manifold.jacobi.field.nontrivial}. Then, 
		\begin{equation} \label{eq:geodesic.network.constrained.submersion.nondegenerate}
			\big[ \tfrac{\partial^2}{\partial s \partial t} \cL(g(s), \mathbf{p} + t \mathbf{q}) \big]_{s=t=0} \neq \mathbf{0}
		\end{equation}		
		for at least one $C^1$ path $s \mapsto g(s)$ with $g(0) = g$ and $\dot{g}(0) \in \ker D(\vec{\lambda} \circ \vec{\kappa})(g)$.
	\end{claim}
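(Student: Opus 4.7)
The plan is to build on Claim \ref{clai:geodesic.network.manifold.submersion.nondegenerate} by correcting its output via ``length-preserving'' compensators. That claim furnishes $F_0 \in C^k(M)$ such that the conformal path $g_0(s) := (1+sF_0)g$ gives $[\partial^2_{s,t}\cL(g_0(s), \mathbf{p}+t\mathbf{q})]_{s=t=0} \neq 0$; the only issue is that the numbers $a_j := \int_{\supp S_j(g)} F_0 \, d\ell_g$ may be nonzero, which by \eqref{eq:geodesic.network.constrained.submersion.derivative} and $\tr_{g|\supp S_j(g)}(F_0 g) = F_0$ means $F_0 g \notin \ker D(\vec\lambda\circ\vec\kappa)(g)$. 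I will construct compensators $F_j \in C^k(M)$, $j=1,\ldots,m$, and take $F := F_0 - \sum_j a_j F_j$, with each $F_j$ designed so that: (i) $\int_{\supp S_k(g)} F_j \, d\ell_g = \delta_{jk}$, which via \eqref{eq:geodesic.network.constrained.submersion.derivative} yields $Fg \in \ker D(\vec\lambda\circ\vec\kappa)(g)$; and (ii) each $F_j$ contributes zero to the mixed partial, i.e., $\int_{\sigma_g(\mathfrak{e}\mathbf{p}(u'), \mathfrak{e}\mathbf{p}(v'))}\langle \nabla^\perp_g F_j, (J^\mathbf{q}_{u',v'})^\perp\rangle \, d\ell_g = 0$ along every edge of $\iota_g(G, \mathfrak{e}\mathbf{p})$, in the notation of \eqref{eq:geodesic.network.manifold.submersion.nondegenerate.computation}. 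Then $g(s) := (1+sF)g$ does the job.

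I would partition $\{1,\ldots,m\} = I_A \sqcup I_B$, where $j \in I_A$ iff $\supp S_j(g) \cap \supp\iota_g(G, \mathfrak{e}\mathbf{p})$ contains an arc of positive length. Since each $S_j(g)$ is a smooth multiplicity-one closed geodesic (by \eqref{eq:geodesic.network.constrained.assumption.regular} together with Claim \ref{clai:geodesic.network.constrained.eps.k3}), uniqueness of geodesic continuation forces $\supp S_j(g) \subset \supp\iota_g(G, \mathfrak{e}\mathbf{p})$ whenever $j \in I_A$, while $\supp S_j(g) \cap \supp\iota_g(G, \mathfrak{e}\mathbf{p})$ is discrete when $j \in I_B$. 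For $j \in I_B$ the construction is straightforward: using \eqref{eq:geodesic.network.constrained.assumption.discrete} together with Claim \ref{clai:geodesic.network.constrained.eps.k3} to pairwise separate the $\supp S_k(g)$, combined with the prior discreteness of $\supp S_j(g) \cap \supp\iota_g(G, \mathfrak{e}\mathbf{p})$, pick a small ball $V_j \subset M$ meeting $\supp S_j(g)$ but disjoint from $\supp\iota_g(G, \mathfrak{e}\mathbf{p}) \cup \bigcup_{k\neq j} \supp S_k(g)$, and take any nonnegative $C^k$ bump $F_j$ supported in $V_j$ normalized so (i) holds; then (ii) is automatic since $\supp F_j$ is disjoint from $\supp\iota_g(G, \mathfrak{e}\mathbf{p})$.

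The main obstacle is the case $j \in I_A$, where any compensator is forced to meet $\supp\iota_g(G, \mathfrak{e}\mathbf{p})$. Here I would borrow the trick from Case 2 of Claim \ref{clai:geodesic.network.constrained.submersion.h.construction}: choose $p_j \in \supp S_j(g) \setminus (\sing \iota_g(G, \mathfrak{e}\mathbf{p}) \cup \bigcup_{k \neq j} \supp S_k(g))$, which exists because $\sing \iota_g(G, \mathfrak{e}\mathbf{p})$ is finite and the remaining sets are discrete in $\supp S_j(g)$, while $\supp S_j(g)$ is a positive-length curve. In a sufficiently small tubular neighborhood $W_j$ of $\supp S_j(g)$ around $p_j$, $W_j \cap \supp\iota_g(G, \mathfrak{e}\mathbf{p}) = W_j \cap \supp S_j(g)$ is a single smooth geodesic arc. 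Take $F_j$ supported in $W_j$ and depending only on the arclength parameter along this arc (constant in the Fermi-normal direction), normalized so that (i) holds; then $\nabla^\perp_g F_j \equiv 0$ along the arc, which forces the contribution of $F_j$ to the mixed-partial integrand on every edge of $\iota_g(G, \mathfrak{e}\mathbf{p})$ to vanish (an edge either misses $W_j$ or, inside $W_j$, lies on the arc), verifying (ii). Finally, choosing $F_0$'s support in \eqref{eq:geodesic.network.manifold.submersion.U} small enough to additionally avoid all the $V_j$'s and $W_j$'s, the composite $F := F_0 - \sum_j a_j F_j$ yields $h = Fg \in \ker D(\vec\lambda \circ \vec\kappa)(g)$ whose mixed partial coincides with the nonzero value produced by Claim \ref{clai:geodesic.network.manifold.submersion.nondegenerate}.
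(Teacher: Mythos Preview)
Your compensator strategy is a valid alternative to the paper's argument, but differs from it. The paper builds a single $F$ directly: it fixes an edge $\{u,v\}$ with $(J^{\mathbf q}_{u,v})^\perp \not\equiv 0$, and if that edge overlaps some $\supp S_i(g)$ on an arc, takes $F\equiv 0$ along the edge while arranging $\langle\nabla_g^\perp F,(J^{\mathbf q}_{u,v})^\perp\rangle\geq 0$ (and $>0$ somewhere) there; this makes $\dot g(0)=Fg$ vanish identically on $\cup_i\supp S_i(g)$, hence lie in $\ker D(\vec\lambda\circ\vec\kappa)(g)$ by \eqref{eq:geodesic.network.constrained.submersion.derivative}, while still producing a nonzero mixed partial via \eqref{eq:geodesic.network.manifold.submersion.nondegenerate.computation}. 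Your route---take $F_0$ from the unconstrained claim and subtract compensators $F_j$ with $\nabla^\perp_g F_j\equiv 0$ along the network---separates the two requirements and is more modular; the paper's is shorter. Note that your final sentence about shrinking $\supp F_0$ to avoid the $V_j$'s and $W_j$'s is unnecessary: the mixed partial is linear in $\dot g(0)$, and condition (ii) already makes each $F_j$ contribute zero regardless of overlap.

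There is one genuine slip. Your assertion that ``uniqueness of geodesic continuation forces $\supp S_j(g)\subset\supp\iota_g(G,\mathfrak e\mathbf p)$ whenever $j\in I_A$'' is false in general: $S_j(g)$ may share an arc with one edge of the stationary network, run into a singular vertex, and then exit the support (no outgoing edge need lie in its continuation direction). Consequently your chosen $p_j\in\supp S_j(g)\setminus(\sing\iota_g(G,\mathfrak e\mathbf p)\cup\bigcup_{k\neq j}\supp S_k(g))$ could lie off the network, or on it at a transversal crossing, and then the claimed equality $W_j\cap\supp\iota_g(G,\mathfrak e\mathbf p)=W_j\cap\supp S_j(g)$ fails. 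The fix is immediate: choose $p_j$ in the \emph{interior of the overlapping arc itself} (which exists by definition of $I_A$, and is not exhausted by the finite sets $\sing\iota_g(G,\mathfrak e\mathbf p)$, $\mathfrak e\mathbf p(V)$, or by the discrete sets $\supp S_k(g)\cap\supp S_j(g)$, $k\neq j$). Near such a $p_j$ the network and $S_j(g)$ genuinely coincide, $W_j$ meets at most one edge, and your construction of $F_j$ goes through.
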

	\begin{proof}[Proof of claim]
		We proceed as in Claim \ref{clai:geodesic.network.manifold.submersion.nondegenerate} except we need to watch out for potential interactions between $S_i$ and $S$. However, as long as 
		\begin{equation} \label{eq:geodesic.network.constrained.submersion.nondegenerate.zero}
			\dot{g}(0) \equiv 0 \text{ along } \cup_{i=1}^m \supp S_i(g),
		\end{equation}
		then the constraint $\dot{g}(0) \in \ker D(\vec{\lambda} \circ \vec{\kappa})(g)$ is guaranteed by \eqref{eq:geodesic.network.constrained.submersion.derivative}.
		
		By virtue of \eqref{eq:geodesic.network.manifold.jacobi.field.nontrivial}, there exists $\{u,v\} \in E$ along which the Jacobi field $J^{\mathbf{q}}_{u,v}$ of \eqref{eq:geodesic.network.manifold.jacobi.field} has a nontrivial normal component $(J^{\mathbf{q}}_{u,v})^\perp$. Set:
		\[ g(s) := (1 + sF) g, \]
		where $F : M \to \RR$ is $C^k$ is supported on an open set $U \subset M$ satisfying
		\begin{equation} \label{eq:geodesic.network.constrained.submersion.U.1}
			\emptyset \neq U \cap \supp S \Subset \operatorname{int} \sigma_g(\mathfrak{e}\mathbf{p}(u), \mathfrak{e}\mathbf{p}(v)) \cap \{ (J^{\mathbf{q}}_{u,v})^\perp \neq 0 \}.
		\end{equation}
		
		If $\operatorname{int} \sigma_g(\mathfrak{e}\mathbf{p}(u), \mathfrak{e}\mathbf{p}(v)) \cap (\cup_{i=1}^m \supp S_i(g))$ is discrete, then we may take
		\begin{equation} \label{eq:geodesic.network.constrained.submersion.U.2}
			U \cap (\cup_{i=1}^m \supp S_i(g)) = \emptyset,
		\end{equation}
		and the remainder of the proof of Claim \ref{clai:geodesic.network.manifold.submersion.nondegenerate} applies verbatim with $U$ satisfying \eqref{eq:geodesic.network.constrained.submersion.U.1}, \eqref{eq:geodesic.network.constrained.submersion.U.2} instead of just \eqref{eq:geodesic.network.manifold.submersion.U}. Note that \eqref{eq:geodesic.network.constrained.submersion.U.2} guarantees \eqref{eq:geodesic.network.constrained.submersion.nondegenerate.zero}, and the proof of claim is complete.
		
		So now assume $\operatorname{int} \sigma_g(\mathfrak{e}\mathbf{p}(u), \mathfrak{e}\mathbf{p}(v)) \cap (\cup_{i=1}^m \supp S_i(g))$ isn't discrete. Note that \eqref{eq:geodesic.network.constrained.assumption.discrete} and Claim \ref{clai:geodesic.network.constrained.eps.k3} allow us to shrink $U$ so that, for some $i \in \{1, \ldots, m\}$:
		\begin{equation} \label{eq:geodesic.network.constrained.submersion.U.3}
			U \cap (\cup_{j \neq i} \supp S_j(g)) = U \cap (\supp S_i(g) \setminus \sigma_g(\mathfrak{e}\mathbf{p}(u), \mathfrak{e}\mathbf{p}(v)) = \emptyset.
		\end{equation}
		Recall that, by \eqref{eq:geodesic.network.manifold.submersion.nondegenerate.computation}, 
		\begin{equation} \label{eq:geodesic.network.constrained.submersion.nondegenerate.computation}
			\big[ \tfrac{\partial^2}{\partial s \partial t} \cL(g(s), \mathbf{p} + t \mathbf{q}) \big]_{s=t=0} = \tfrac{\omega(\{u,v\})}{2} \int_{\sigma_g(\mathfrak{e}\mathbf{p}(u), \mathfrak{e}\mathbf{p}(v))} \langle \nabla^\perp_g F, (J^{\mathbf{q}}_{u,v})^\perp \rangle \, d\ell_g, \nonumber
		\end{equation}		
		where $\perp$ denotes orthogonal projection (with respect to $g$) to the normal bundle of $\sigma_g(\mathfrak{e}\mathbf{p}(u), \mathfrak{e}\mathbf{p}(v))$. We choose $F = 0$ along $\operatorname{int} \sigma_g(\mathfrak{e}\mathbf{p}(u), \mathfrak{e}\mathbf{p}(v))$, so that \eqref{eq:geodesic.network.constrained.submersion.U.1}, \eqref{eq:geodesic.network.constrained.submersion.U.3} imply \eqref{eq:geodesic.network.constrained.submersion.nondegenerate.zero}. We additionally require that $\langle \nabla^\perp_g F, (J^{\mathbf{q}}_{u,v})^\perp \rangle_g \geq 0$ along  $\sigma_g(\mathfrak{e}\mathbf{p}(u), \mathfrak{e}\mathbf{p}(v))$, with strict inequality at some interior point as is allowed by \eqref{eq:geodesic.network.constrained.submersion.U.1}. Then,  \eqref{eq:geodesic.network.constrained.submersion.nondegenerate.computation}  implies \eqref{eq:geodesic.network.constrained.submersion.nondegenerate}. 
	\end{proof}
	
	\begin{claim}[cf. Claim \ref{clai:geodesic.network.manifold.submersion.degenerate}]  \label{clai:geodesic.network.constrained.submersion.degenerate}
		Suppose
		\[ \mathbf{q} \in \mathring{K} \cap (\image \pi_{\mathring{K}} \circ D_1 \mathring{\cH}(g, \mathbf{p}) \vert \ker D(\vec{\lambda} \circ \vec{\kappa})(g))^\perp. \]
		Then
		\[ \big[ \tfrac{\partial^2}{\partial s \partial t} \cL(g(s), \mathbf{p} + t \mathbf{q}) \big]_{s=t=0} = \mathbf{0} \]
		for all $C^1$ paths $s \mapsto g(s)$ with $g(0) = g$ and $\dot{g}(0) \in \ker D(\vec{\lambda} \circ \vec{\kappa})(g)$.
	\end{claim}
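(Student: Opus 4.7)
The plan is to imitate the proof of the unconstrained Claim \ref{clai:geodesic.network.manifold.submersion.degenerate} verbatim, merely tracking where the constraint $\dot g(0) \in \ker D(\vec{\lambda} \circ \vec{\kappa})(g)$ enters. The computation in that earlier claim never used anything about $\dot g(0)$ other than that it was a tangent vector to the space of metrics, so imposing this linear constraint on it poses no obstacle; it only restricts the image on the right-hand side of the final identity, which is precisely what allows us to weaken the orthogonality hypothesis on $\mathbf{q}$ from $\mathring K \cap (\image \pi_{\mathring K} \circ D_1 \mathring{\cH}(g,\mathbf{p}))^\perp$ to $\mathring K \cap (\image \pi_{\mathring K} \circ D_1 \mathring{\cH}(g,\mathbf{p})\vert \ker D(\vec{\lambda} \circ \vec{\kappa})(g))^\perp$.

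Concretely, I would first invoke $\mathring{\cH}(g,\mathbf{p}) = \mathbf{0}$ together with the variational identity \eqref{eq:geodesic.network.manifold.H.breve.variational} to rewrite
\[
\bigl[\tfrac{\partial^2}{\partial s\partial t}\cL(g(s),\mathbf{p}+t\mathbf{q})\bigr]_{s=t=0} = \bigl[\tfrac{d}{ds}\langle \mathring{\cH}(g(s),\mathbf{p}),\mathbf{q}\rangle_{\mathfrak{e}\mathbf{p}^*g(s)}\bigr]_{s=0}.
\]
Since $\mathring{\cH}(g,\mathbf{p}) = \mathbf{0}$, the derivative of the inner product factor vanishes, so the right-hand side reduces to $\langle D_1 \mathring{\cH}(g,\mathbf{p})\{\dot g(0)\},\mathbf{q}\rangle_{\mathfrak{e}\mathbf{p}^*g}$.

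Next, using that $\mathbf{q} \in \mathring K$ so we may freely insert the orthogonal projection $\pi_{\mathring K}$ onto its left factor without changing the inner product, I would rewrite this as $\langle \pi_{\mathring K}(D_1 \mathring{\cH}(g,\mathbf{p})\{\dot g(0)\}),\mathbf{q}\rangle_{\mathfrak{e}\mathbf{p}^*g}$. The constraint $\dot g(0) \in \ker D(\vec{\lambda}\circ\vec{\kappa})(g)$ means exactly that the left entry of this inner product lies in the set $\image(\pi_{\mathring K}\circ D_1\mathring{\cH}(g,\mathbf{p})\vert \ker D(\vec{\lambda}\circ\vec{\kappa})(g))$, against which $\mathbf{q}$ is assumed to be orthogonal. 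Hence the inner product vanishes, completing the claim.

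There is no substantive obstacle here; the only point worth double-checking is that each of the manipulations (differentiating through the inner product, inserting $\pi_{\mathring K}$, and using the constrained orthogonality) is applied in the correct order, and that the $C^2$ regularity of $\cL$ (Lemma \ref{lemm:geodesic.network.manifold.L.regularity}, valid since $k \geq 3$) justifies swapping $s$ and $t$ in the mixed partial derivative if needed. This identical three-line computation will then feed into the analogue of Claim \ref{clai:geodesic.network.manifold.submersion.full.img}, yielding $\mathring K \cap (\image \pi_{\mathring K}\circ D_1\mathring{\cH}(g,\mathbf{p})\vert \ker D(\vec{\lambda}\circ\vec{\kappa})(g))^\perp = \{\mathbf{0}\}$ and thus Claim \ref{clai:geodesic.network.constrained.submersion.full.img}.
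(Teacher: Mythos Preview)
Your proposal is correct and follows exactly the approach the paper takes: the paper's own proof of this claim simply reads ``The computation in the proof of Claim \ref{clai:geodesic.network.manifold.submersion.degenerate} applies,'' and you have correctly unpacked that computation, noting precisely where the constraint $\dot g(0)\in\ker D(\vec{\lambda}\circ\vec{\kappa})(g)$ enters (namely, in restricting the image against which $\mathbf{q}$ is assumed orthogonal).
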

	\begin{proof}[Proof of claim]
		The computation in the proof of Claim \ref{clai:geodesic.network.manifold.submersion.degenerate} applies.
	\end{proof}
 
 	\begin{proof}[Proof of Claim \ref{clai:geodesic.network.manifold.submersion.full.img}]
 		The claim follows as before from Claims \ref{clai:geodesic.network.manifold.jacobi.field.nontrivial}, \ref{clai:geodesic.network.constrained.submersion.nondegenerate}, \ref{clai:geodesic.network.constrained.submersion.degenerate}.
	\end{proof}

	(4). This is a consequence of \eqref{eq:geodesic.network.constrained.ker.pi} and Theorem \ref{theo:geodesic.network.manifold}'s (3).
	
	(5). This is a consequence of the Sard--Smale theorem \cite{Smale:sard}, Theorem \ref{theo:geodesic.network.constrained}'s (1), (2), (3), and the finiteness of $\# \cG(\Lambda, Q)$.
\end{proof}

\begin{proof}[Proof of Corollary \ref{coro:geodesic.network.constrained.generic.smooth}]

This follows similarly to Corollary \ref{coro:geodesic.network.manifold.generic.smooth}. One can invoke the proof of \cite[Lemma 7.2]{Staffa:bumpy.geodesic.nets} verbatim with $\cM^k = \cC \met^k$, $\cM^\infty = \cC \met$, $\cN^k =$ regular values of $\pi^\Lambda_{G,\cC}$ in $\cC \met^k$ (i.e., $\cR^{\Lambda,k}_{G,\cC}$), and $\cN^\infty =$ regular values of $\pi^\Lambda_{G,\cC}$ in $\cC \met$. The key is that $\cN^{k'} = \cN^k \cap \cC \met^{k'}$ whenever $k' \geq k$ by \eqref{eq:geodesic.network.constrained.regularity.R} and that $\cM^\infty \subseteq \cM^k$ is still dense, i.e.,
	\[ \cC \met \subset \cC \met^k \text{ is dense}. \]
%
	To see this, fix $g \in \cC \met^k$. By the denseness of $\met(M) \subset \met^k(M)$, there exists a sequence $\{ g_n \}_{n=1}^\infty \subset \met(M)$ such that $g_n \to g$ in $\met^k(M)$. For $n$ large and $u \in C^\infty(M)$ small in $C^k$ so that $\vec{\lambda} \circ \vec{\kappa}$ is well-defined at $g_n$ and $(1+u)^2 g_n$, note that, for every $i \in \{ 1, \ldots, m \}$:
	\begin{equation} \label{eq:geodesic.network.constrained.generic.smooth.u.length}
		\int_{\supp S_i(g_n)} d\ell_{(1+u)^2 g_n} = (\lambda_i \circ \kappa_i)(g_n) + \int_{\supp S_i(g_n)} u \, d\ell_{g_n}.
	\end{equation}
	Since $(\lambda_i \circ \kappa_i)(g_n) \to (\lambda_i \circ \kappa_i)(g)$ as $n \to \infty$, and $\supp S_i(g_n)$ is $C^\infty$, there exist $u = u_n \in C^\infty(M)$ such that $u_n \to 0$ in $C^k(M)$ as $n \to \infty$ and
	\begin{equation} \label{eq:geodesic.network.constrained.generic.smooth.un.length}
		\int_{\supp S_i(g_n)} u_n \, d\ell_{g_n} = (\lambda_i \circ \kappa_i)(g) - (\lambda_i \circ \kappa_i)(g_n) \text{ for all } i = 1, \ldots, m,	
	\end{equation}
	\begin{equation} \label{eq:geodesic.network.constrained.generic.smooth.un.geodesic}
		\nabla_{g_n} u_n = 0 \text{ along } \supp S_i(g_n) \text{ for all } i = 1, \ldots, m.
	\end{equation}
	It is easiest to have $u_n$ be compactly supported away from the pairwise intersections of the $S_i$, which are discrete by \eqref{eq:geodesic.network.constrained.assumption.discrete}.
	
	It follows from \eqref{eq:geodesic.network.constrained.generic.smooth.un.geodesic} that all $\supp S_i(g_n)$ are geodesics in $(M, g_n)$ as well as in $(M, (1+u_n)^2 g_n)$. Moreover, since $u_n \to 0$ in $C^k$ as $n \to \infty$, for each $i \in \{1, \ldots, m\}$ we see that $\supp S_i(g_n)$ and $\supp S_i((1+u_n)^2 g_n)$ are $C^{k+1}$-$o(1)$-close as $n \to \infty$. However, by our definition of $\vec{\kappa}$, $\vec{\lambda}$ (which hinges on the non-degeneracy of $S^0_i$) it follows then that $S_i((1+u_n)^2 g_n) = S_i(g_n)$ as integral varifolds, for all $i \in \{1, \ldots, m\}$. Thus, by \eqref{eq:geodesic.network.constrained.generic.smooth.u.length}, \eqref{eq:geodesic.network.constrained.generic.smooth.un.length}, 
	\[ (\vec{\lambda} \circ \vec{\kappa})((1+u_n)^2 g_n) = (\vec{\lambda} \circ \vec{\kappa})(g). \]
	Thus, $(1+u_n)^2 g_n \in \cC \met^k$. This completes the proof of the denseness of $\cC \met \subset \cC \met^k$, and thus the corollary.
\end{proof}

\section{Lusternik--Schnirelmann theory on a perturbed $(\SS^2, g_0)$}\label{sec:LS}

\subsection{Choosing good metrics on $\SS^2$}

Consider the ellipsoids
\[ E(a_1, a_2, a_3) := \{ (x_1, x_2, x_3) \in \RR^3 : a_1 x_1^2 + a_2 x_2^2 + a_3 x_3^2 = 1 \} \subset \RR^3 \]
and the three geodesics
\[ \gamma_i(a_1, a_2, a_3) := E(a_1, a_2, a_3) \cap \{ x_i = 0 \}, \; i = 1, 2, 3 \]
on them. It is shown in \cite[Theorems IX 3.3, 4.1]{Morse:calculus.variations} that for every $\Lambda > 2\pi$, if $a_1 < a_2 < a_3$ are sufficiently close to $1$ (depending on $\Lambda$), then every closed connected immersed geodesic $\gamma \subset E(a_1, a_2, a_3)$ (coverings allowed) satisfies:
\begin{equation} \label{eq:geodesic.network.ellipsoid.geodesic.genericity}
	\gamma \text{ has no nontrivial normal Jacobi fields if } \length(\gamma) < 2\Lambda,
\end{equation} 
and
\begin{equation} \label{eq:geodesic.network.ellipsoid.geodesic.uniqueness}
	\gamma \text{ is an iterate of one of } \gamma_i(a_1, a_2, a_3), \; i = 1, 2, 3, \text{ if } \length(\gamma) < 2\Lambda.
\end{equation}
For $(a_1, a_2, a_3) \in \RR^3$, consider the vector $\vec{\ell}(a_1, a_2, a_3) \in \RR^3$ whose $i$-th component, $i=1, 2, 3$, is
\[ \big[ \vec{\ell}(a_1, a_2, a_3) \big]_i := \length(\gamma_i(a_1, a_2, a_3)). \]
It is easy to see that $(a_1, a_2, a_3) \mapsto \vec{\ell}(a_1, a_2, a_3)$ is smooth near $(1, 1, 1)$, and
\[ D\vec{\ell}(1, 1, 1) = \begin{pmatrix} 0 & \pi & \pi \\ \pi & 0 & \pi \\ \pi & \pi & 0 \end{pmatrix}. \]
Then, by the inverse function theorem there are smooth functions $\mu \mapsto a_i(\mu)$, $a_i(0) = 1$, $i = 1, 2, 3$, so that, for $\mu$ near $0$,
\[ \vec{\ell}(a_1(\mu), a_2(\mu), a_3(\mu)) = (2\pi, 2\pi + \mu, 2\pi+2\mu). \]
Thus, for sufficiently small $\mu$, 
\begin{equation} \label{eq:geodesic.network.ellipsoid.geodesic.lengths}
	\length(\gamma_i(a_1, a_2, a_3)) = 2\pi + (i-1) \mu.
\end{equation}
Then Corollary \ref{coro:geodesic.network.constrained.generic.smooth} implies:

\begin{theo}[Choosing a good metric] \label{theo:good.metric.final}
	Let $\Lambda > 0$ and $U$ be any neighborhood, in the $C^\infty$ topology, of the unit round metric $g_0 \in \met(\SS^2)$. There is a $\mu_0 = \mu_0(\Lambda, U) > 0$, so that for all $\mu \in (0, \mu_0)$, there exists $g_\mu \in U$ with all these properties:
	\begin{enumerate}
		\item There are simple closed geodesics $\gamma_1$, $\gamma_2$, $\gamma_3 \subset (\SS^2, g_\mu)$ so that $\length_{g_\mu}(\gamma_i) = 2\pi + (i-1) \mu$.
		\item If a closed connected geodesic in $(\SS^2, g_\mu)$ has $\length_{g_\mu} < \Lambda$, then it is an iterate of $\gamma_i$, $i = 1$, $2$, $3$.
		\item There are no not-everywhere-tangential stationary varifold Jacobi fields along any $S \in \cS^{\Lambda}(g_\mu)$.
	\end{enumerate}
	Moreover, $g_\mu \to g_0$ as $\mu \to 0$ in the $C^\infty$ topology.
\end{theo}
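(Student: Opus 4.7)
The plan is to combine Morse's ellipsoidal perturbations of $(\SS^2, g_0)$ with the constrained bumpy-metrics theorem, Corollary \ref{coro:geodesic.network.constrained.generic.smooth}. First I would let $g_\mu'$ denote the round metric on the ellipsoid $E(a_1(\mu), a_2(\mu), a_3(\mu))$ for the smooth paths $a_i(\mu)$ produced by the inverse function theorem leading to \eqref{eq:geodesic.network.ellipsoid.geodesic.lengths}. Then $g_\mu' \to g_0$ in $C^\infty$ as $\mu \to 0$, and by \eqref{eq:geodesic.network.ellipsoid.geodesic.genericity}--\eqref{eq:geodesic.network.ellipsoid.geodesic.uniqueness} the metric $g_\mu'$ already satisfies properties (1) and (2); in fact, it enjoys the stronger conclusion that \emph{every} closed connected immersed geodesic of length $< 2\Lambda$ (iterates included) is a bumpy iterate of one of the three principal ellipses $\gamma_1, \gamma_2, \gamma_3$. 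Only (3) remains to be arranged, and this must be done without destroying (1) or (2).

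To arrange (3), I would apply Corollary \ref{coro:geodesic.network.constrained.generic.smooth} with base metric $g_\mu'$ and constrained varifolds $S_i^0 := \bv(\gamma_i, \mathbf{1}_{\gamma_i})$, $i = 1, 2, 3$. The three hypotheses are easy to verify: \eqref{eq:geodesic.network.constrained.assumption.regular} holds since each $\gamma_i$ is a simple, multiplicity-one closed geodesic; \eqref{eq:geodesic.network.constrained.assumption.generic} follows from bumpiness of $\gamma_i$ (the $k=1$ case of \eqref{eq:geodesic.network.ellipsoid.geodesic.genericity}) together with Lemma \ref{lemm:geodesic.network.jacobi.field}, since the absence of singular points and nontrivial normal Jacobi fields forces any stationary varifold Jacobi field to be everywhere tangential; and \eqref{eq:geodesic.network.constrained.assumption.discrete} holds since any two of the three coordinate ellipses meet in just a pair of antipodal points. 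The corollary then yields an $\eps > 0$, a Banach submanifold $\cC \met$ of smooth metrics in which the three principal lengths $\vec\lambda \circ \vec\kappa$ are pinned to $(2\pi, 2\pi + \mu, 2\pi + 2\mu)$, and a comeager subset consisting of common regular values of $\pi^\Lambda_{G, \cC}$ for all $G$ in the finite set $\cG(\Lambda, Q)$, where $Q$ is any integer so that \eqref{eq:geodesic.network.finite.stratification.Q} holds uniformly in a neighborhood of $g_\mu'$.

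I would then choose $g_\mu$ from this dense set, arbitrarily close to $g_\mu'$ in $C^\infty$. Property (1) is automatic from the defining pinning of $\cC \met$. For (3), I would chain together Theorem \ref{theo:geodesic.network.finite.stratification} (every $S \in \cS^\Lambda(g_\mu)$ lies in some $\cS^\Lambda_G(g_\mu)$, $G \in \cG(\Lambda, Q)$), Theorem \ref{theo:geodesic.network.constrained}'s (4) (at $g_\mu \in \cC \met$, regular values of $\pi^\Lambda_{G,\cC}$ coincide with regular values of $\pi^\Lambda_G$), and Theorem \ref{theo:geodesic.network.manifold}'s (3) (the criterion for regularity is the absence of not-everywhere-tangential stationary varifold Jacobi fields). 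The final conclusion $g_\mu \to g_0$ is obtained by picking $g_\mu$ within a $C^\infty$-ball around $g_\mu'$ of radius $\to 0$ with $\mu \to 0$.

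The main obstacle is ensuring that property (2) survives the passage $g_\mu' \rightsquigarrow g_\mu$, since the Sard--Smale step supplies only topological genericity. I would handle this by a soft compactness argument within $\cC \met$: were (2) to fail for some sequence $g_n \to g_\mu'$ in $\cC \met$, there would exist closed connected geodesics $\sigma_n \subset (\SS^2, g_n)$ with $\length_{g_n}(\sigma_n) < \Lambda$ that are not iterates of $\gamma_i(g_n)$ for any $i$. Uniform curvature and length bounds (from closeness to $g_\mu'$) would extract a subsequential smooth limit to a closed connected geodesic $\sigma_\infty \subset (\SS^2, g_\mu')$ of length $\leq \Lambda < 2\Lambda$; by \eqref{eq:geodesic.network.ellipsoid.geodesic.uniqueness}, $\sigma_\infty$ is the $k$-fold iterate of some $\gamma_i$, which is also bumpy by \eqref{eq:geodesic.network.ellipsoid.geodesic.genericity}. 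The implicit function theorem then produces a unique geodesic perturbation of this iterate in $(\SS^2, g_n)$, which must coincide with both $\sigma_n$ and the $k$-fold iterate of $\gamma_i(g_n)$, contradicting our assumption. Shrinking $\cC \met$ to a sufficiently small $C^\infty$-ball around $g_\mu'$ therefore secures (2) alongside (1) and (3), completing the proof.
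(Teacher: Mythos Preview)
Your proposal is correct and follows essentially the same approach as the paper: start from the ellipsoidal metric $g_\mu'$, apply the constrained bumpy-metrics Corollary \ref{coro:geodesic.network.constrained.generic.smooth} with $S_i^0 = \bv(\gamma_i, \mathbf{1}_{\gamma_i})$ to obtain a dense set of metrics in $\cC\met$ satisfying (1) and (3), and then recover (2) by a compactness/limiting argument. Your treatment of (2) via the implicit function theorem (uniqueness of the nearby $k$-fold iterate from bumpiness \eqref{eq:geodesic.network.ellipsoid.geodesic.genericity}) is slightly more explicit than the paper's phrasing via ``isolation,'' but the content is the same.
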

\begin{proof}
	We choose $\mu_0$ small enough that $g^E_\mu \in U$ for all $\mu \in (0, \mu_0)$, where $g^E_\mu$ is the  induced metric of $E(a_1(\mu), a_2(\mu), a_3(\mu)) \subset \RR^3$.
	
	Fix any such $\mu$. By \eqref{eq:geodesic.network.ellipsoid.geodesic.genericity}, Corollary \ref{coro:geodesic.network.constrained.generic.smooth} applies at $g^E_\mu$ with $2\Lambda$ in place of $\Lambda$ and with $S_i^0$ the multiplicity-one varifolds on $\gamma_i(a_1(\mu), a_2(\mu), a_3(\mu))$, $i = 1$, $2$, $3$. As a consequence of Corollary \ref{coro:geodesic.network.constrained.generic.smooth} and Remark \ref{rema:geodesic.network.constrained.regularity}, there is a neighborhood $V \subset U$ of $g^E_\mu$ inside of which there is a dense set $D \subset V$ of metrics satisfying conclusions (1) and (3) of our theorem; denote the distinguished geodesics by $\gamma_j(g)$, $j = 1$, $2$, $3$. 
	
	It remains to show that at least one of them satisfies conclusion (2) too and can be taken arbitrarily close to $g_\mu^E$. Suppose that were not the case. Take any sequence $\{ g_\mu^i \}_{i=1}^{\infty} \subset D$ with $g_\mu^i \to g^E_\mu$ as $i \to \infty$. Let $\gamma_\mu^i$ be a closed connected geodesic in $(\SS^2, g_\mu^i)$ with $\length_{g_\mu^i}(\gamma_\mu^i) < \Lambda$ that is not an iterate of any of $\gamma_j(g_\mu^i)$, $j = 1, 2, 3$. Pass to $i \to \infty$ along a subsequence (not relabeled) so that $\gamma^i_\mu \to \gamma^E_\mu$, a geodesic in $(\SS^2, g_\mu^E)$ with $\length_{g^E_\mu}(\gamma^E_\mu) \leq \Lambda < 2 \Lambda$. By \eqref{eq:geodesic.network.ellipsoid.geodesic.uniqueness}, $\gamma^E_\mu$ is an iterate of a $\gamma_j(g^E_\mu)$, $j = 1, 2, 3$. So, $\gamma^i_\mu$ is $o(1)$-close (as $i \to \infty$) to being an iterate of $\gamma_j(g^i_\mu)$. However, recall that the three $\gamma_j(g_\mu)$ and their iterates with length $< 2 \Lambda$ are isolated in $(\SS^2, g_\mu)$ by \eqref{eq:geodesic.network.ellipsoid.geodesic.uniqueness}. Therefore, the geodesics $\gamma_j(g^i_\mu)$ and their iterates with length $\leq \Lambda$ are isolated in $(\SS^2, g^i_\mu)$, contradicting the existence of $\gamma^i_\mu$ when $i$ is large. This completes the proof.
\end{proof}

\begin{lemm}\label{lemm:discrete-widths}
	Fix $p \in \NN^*$. There are $\mu_1 > 0$ and an open neighborhood $U$ of the round metric $g_0$ on $\SS^2$, depending on $p$, so that, for every $\mu \in (0,\mu_1)$:
	\begin{enumerate}
		\item $\omega_p(\SS^2,g_\mu) \leq 2\pi \lfloor \sqrt{p} \rfloor +1$.
		\item For any $\cF$-homotopy class $\Pi \subset \cP_{p,m}^{\bF}$, $m \in \NN^{*}$,
			\begin{multline*}
				\bL_{\textrm{AP}}(\Pi,g_\mu) \in \Big( \{2\pi(n_1+n_2+n_3) + \mu(n_2+2n_3) : (n_1,n_2,n_3) \in \NN^3 \} \setminus \{0\} \Big) \\
				\cup \big[ 2\pi \lfloor \sqrt{p} \rfloor +2,\infty \big). 
			\end{multline*}
	\end{enumerate}
\end{lemm}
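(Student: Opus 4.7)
The plan is to handle (1) and (2) separately, with the main content being the identification of Almgren--Pitts critical values via the sine-Gordon phase transition machinery developed earlier in the paper.

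For (1), I would invoke the upper bound $\omega_p(\SS^2, g_0) \leq 2\pi \lfloor \sqrt p \rfloor$ provided by Aiex's homogeneous polynomial sweepouts (Appendix \ref{sec:upp.bds}) and combine it with the $C^0$-continuity of $g \mapsto \omega_p(\SS^2, g)$ from Lemma \ref{lem:p-width-cont-metric} to extract a $C^\infty$-neighborhood $U_1$ of $g_0$ on which $\omega_p(\SS^2, g) \leq 2\pi \lfloor \sqrt p \rfloor + 1$. Part (1) is then automatic as soon as $g_\mu \in U_1$.

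For (2), I would set $\Lambda := 2\pi \lfloor \sqrt p \rfloor + 2$ and apply Theorem \ref{theo:good.metric.final} with this $\Lambda$ and a neighborhood $U \subset U_1$ of $g_0$, producing $\mu_0 > 0$ such that, for every $\mu \in (0, \mu_0)$, every closed connected geodesic in $(\SS^2, g_\mu)$ of length $< \Lambda$ is an iterate of three distinguished simple closed geodesics $\gamma_1, \gamma_2, \gamma_3$ with $\length_{g_\mu}(\gamma_i) = 2\pi + (i-1)\mu$. Now fix an $\cF$-homotopy class $\Pi \subset \cP_{p, m}^{\bF}$. Without loss of generality $\bL_{\textrm{AP}}(\Pi, g_\mu) < \Lambda$, since otherwise the conclusion follows immediately. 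With $\tilde \Pi$ the $\ZZ_2$-equivariant class attached to the double cover corresponding to $\Pi$, Proposition \ref{prop:AC-AP} would give $\bL_\eps(\tilde \Pi) \to h_0 \bL_{\textrm{AP}}(\Pi, g_\mu) < \infty$ as $\eps \to 0$; in particular $\bL_\eps(\tilde \Pi) < \Vol(\SS^2, g_\mu)/\eps = E_\eps(0)$ for all small $\eps$, so Proposition \ref{prop:AC.min.max} yields sine-Gordon min-max critical points $u_{\eps_i}$ with $\eps_i \to 0$ and $E_{\eps_i}[u_{\eps_i}] \to h_0 \bL_{\textrm{AP}}(\Pi, g_\mu)$. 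The Hutchinson--Tonegawa compactness theorem then extracts a subsequential limit $V \in \bC_{\textnormal{PT}}(\tilde \Pi)$ of their phase-transition 1-varifolds with $\|V\|(\SS^2, g_\mu) = \bL_{\textrm{AP}}(\Pi, g_\mu)$, and Proposition \ref{prop:phase.vs.ap.limit} places $V \in \bC_{\textnormal{AP}}(\Pi)$.

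The identification step is then Proposition \ref{prop:phase.tran.crit.immersed} (which crucially requires the sine-Gordon potential), writing $V = \sum_{j=1}^N m_j \bv(\sigma_j, \mathbf{1}_{\sigma_j})$ with each $\sigma_j$ a primitive closed $g_\mu$-geodesic and $m_j \in \NN^*$. Since $\length_{g_\mu}(\sigma_j) \leq \|V\|(\SS^2, g_\mu) < \Lambda$ and $\sigma_j$ is primitive (hence traversed with multiplicity one), Theorem \ref{theo:good.metric.final}(2) forces $\sigma_j \in \{\gamma_1, \gamma_2, \gamma_3\}$. Collecting multiplicities gives $V = n_1 [\gamma_1] + n_2 [\gamma_2] + n_3 [\gamma_3]$ for some $(n_1, n_2, n_3) \in \NN^3$, necessarily nonzero because $\bL_{\textrm{AP}}(\Pi, g_\mu) \geq \omega_p(\SS^2, g_\mu) > 0$ (as $\Pi \subset \cP_{p,m}^{\bF} \subset \cP_p$ via Lemma \ref{lemm:ap.bf.is.good}), and a direct mass computation yields
\[ \bL_{\textrm{AP}}(\Pi, g_\mu) = 2\pi n_1 + (2\pi + \mu) n_2 + (2\pi + 2\mu) n_3 = 2\pi(n_1 + n_2 + n_3) + \mu(n_2 + 2 n_3), \]
which is exactly the required form. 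The only technical point is verifying the hypothesis $\bL_\eps(\tilde \Pi) < E_\eps(0)$ and securing a limiting varifold $V \in \bC_{\textnormal{PT}}(\tilde \Pi)$ for an arbitrary (not globally optimal) homotopy class, which reduces to bookkeeping via Propositions \ref{prop:AC-AP} and \ref{prop:phase.vs.ap.limit}; the substantive work has been done upstream in Theorem \ref{theo:good.metric.final} and Proposition \ref{prop:phase.tran.crit.immersed}.
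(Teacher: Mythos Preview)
Your proof is correct and follows essentially the same route as the paper's: both use the polynomial sweepout upper bound plus $C^0$-continuity for (1), then apply Theorem~\ref{theo:good.metric.final} with $\Lambda = 2\pi\lfloor\sqrt{p}\rfloor+2$ and invoke the sine-Gordon min-max machinery (Propositions~\ref{prop:AC.min.max}, \ref{prop:AC-AP}, and the sine-Gordon limit theorem) to express $\bL_{\textrm{AP}}(\Pi,g_\mu)$ as a sum of lengths of iterates of $\gamma_1,\gamma_2,\gamma_3$. One remark: your appeal to Proposition~\ref{prop:phase.vs.ap.limit} to place $V\in\bC_{\textnormal{AP}}(\Pi)$ is unnecessary---you never use that containment downstream, and the paper explicitly notes this result is not needed; all you require is $\|V\|(\SS^2,g_\mu)=\bL_{\textrm{AP}}(\Pi,g_\mu)$, which already follows from Proposition~\ref{prop:AC-AP} and Hutchinson--Tonegawa.
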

\begin{proof}
By an argument of Aiex (see Corollary \ref{coro:upper.bds}), we can consider sweepouts constructed from the zero sets on $\SS^2$ of degree $\leq \lfloor \sqrt{p} \rfloor$ polynomials on $\RR^3$, yielding
\[
\omega_p(\SS^2,g_{\SS^2}) \leq 2\pi \lfloor \sqrt{p} \rfloor. 
\]
Therefore, by Lemma \ref{lem:p-width-cont-metric}, 
\[
\omega_p(\SS^2,g) \leq 2\pi \lfloor \sqrt{p} \rfloor + 1. 
\]
as long as $g \in U$ and $U$ is small. This fixes $U$. Set
\[
\mu_1 :=  \mu_0(2\pi \lfloor \sqrt{p} \rfloor+2, U),
\] 
for $\mu_0$ defined in Theorem \ref{theo:good.metric.final}. This completes the proof of (1).

Next, pick any $\cF$-homotopy class $\Pi \subset \cP_{p,m}^{\bF}$ with $\bL_\textrm{AP}(\Pi) < 2\pi \lfloor \sqrt{p} \rfloor + 2$. By Proposition \ref{prop:AC.min.max}, Proposition \ref{prop:AC-AP}, and Theorem \ref{theo:SG-lim}, there are closed connected geodesics $\sigma_1, \ldots, \sigma_N$ so that
\[
\bL_\textrm{AP}(\Pi) = \sum_{j=1}^N \length_{g_\mu} (\sigma_j).
\]
In particular, for every $j = 1, \ldots, N$, $\length_{g_\mu}(\sigma_j) < 2\pi \lfloor \sqrt{p} \rfloor + 2$ so, by Theorem \ref{theo:good.metric.final} parts (1) and (2), $\sigma_j$ is an $s_j$-time iterate of some $\gamma_{i_j}$ with $i_j \in \{ 1, 2, 3 \}$ and $s_j \in \{ 1, 2, \ldots \}$, and thus has $\length_{g_\mu}(\sigma_j) = s_j(2\pi + (i_j - 1) \mu)$. This completes the proof. 
\end{proof}
\begin{coro}[{cf.\ \cite[Proposition 4.8]{MarquesNeves:posRic}}]\label{coro:exist-cub-complex-p-width}
For $p\in \NN^*$ and $\mu \in (0,\mu_1)$, there exists $X \subset I^{2p+1}$ and an $\cF$-homotopy class $\Pi \subset \cP_{p,2p+1}^{\bF}$ so that 
\[
\bL_{\textrm{AP}}(\Pi,g_\mu) = \omega_p(\SS^2,g_\mu)
\]
\end{coro}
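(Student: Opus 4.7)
\medskip

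\noindent\textbf{Proof plan.} The plan is to produce $\Pi$ by minimizing, among $\cF$-homotopy classes of $\bF$-continuous $p$-sweepouts with domain in $I^{2p+1}$, and then to use the discreteness supplied by Lemma~\ref{lemm:discrete-widths}(2) to show that the infimum is attained by some single homotopy class.

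First, by Lemma \ref{lemm:P.p.m}, there is a sequence $\{\Phi_i\}_{i=1}^\infty \subset \cP^{\bF}_{p,2p+1}$ such that
\[
\sup_{x \in \dmn(\Phi_i)} \bM(\Phi_i(x)) \to \omega_p(\SS^2, g_\mu).
\]
Let $\Pi_i$ denote the $\cF$-homotopy class of $\Phi_i$ (in the sense of Definition \ref{defi:ap.homotopy}). Since elements of $\Pi_i$ are $\bF$-continuous maps into $\cZ_1(\SS^2; \bF; \ZZ_2)$, Lemma \ref{lemm:ap.bf.is.good} shows they have no concentration of mass, and the $p$-sweepout property $\Phi^*(\bar\lambda^p) \neq 0$ is preserved under $\cF$-homotopy, so every element of $\Pi_i$ lies in $\cP_p$. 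It follows that
\[
\omega_p(\SS^2, g_\mu) \le \bL_{\textrm{AP}}(\Pi_i, g_\mu) \le \sup_{x \in \dmn(\Phi_i)} \bM(\Phi_i(x)),
\]
so $\bL_{\textrm{AP}}(\Pi_i, g_\mu) \to \omega_p(\SS^2, g_\mu)$ as $i \to \infty$.

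Next, combine this with the two parts of Lemma \ref{lemm:discrete-widths}. By part (1), $\omega_p(\SS^2, g_\mu) \le 2\pi \lfloor \sqrt{p} \rfloor + 1$, so for all $i$ large enough we have $\bL_{\textrm{AP}}(\Pi_i, g_\mu) < 2\pi \lfloor \sqrt{p} \rfloor + 2$. For such $i$, part (2) forces
\[
\bL_{\textrm{AP}}(\Pi_i, g_\mu) \in S := \big\{ 2\pi(n_1 + n_2 + n_3) + \mu(n_2 + 2 n_3) : (n_1, n_2, n_3) \in \NN^3 \big\} \setminus \{0\}.
\]
The key observation is that $S \cap [0, L]$ is finite for every $L > 0$: only finitely many triples $(n_1, n_2, n_3) \in \NN^3$ can satisfy $2\pi(n_1 + n_2 + n_3) \le L$, so $S$ is a discrete subset of $(0, \infty)$. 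In particular, the values $\bL_{\textrm{AP}}(\Pi_i, g_\mu)$ lie in the finite set $S \cap (0, 2\pi \lfloor \sqrt{p} \rfloor + 2)$ for $i$ large, and since they converge to $\omega_p(\SS^2, g_\mu)$, the sequence must eventually be constant and equal to $\omega_p(\SS^2, g_\mu)$. Picking any sufficiently large $i$ and setting $\Pi := \Pi_i$, $X := \dmn(\Phi_i)$ completes the proof.

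There is no real obstacle here beyond keeping bookkeeping straight: the content is that Lemma \ref{lemm:discrete-widths} turns ``$\omega_p$ is an infimum'' into ``$\omega_p$ is realized by one of countably many Almgren--Pitts widths'', each of which is realized by an explicit homotopy class. The discreteness of $S$ below any fixed bound is what promotes the subsequential convergence $\bL_{\textrm{AP}}(\Pi_i, g_\mu) \to \omega_p(\SS^2, g_\mu)$ to eventual equality.
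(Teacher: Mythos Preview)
Your proof is correct and follows essentially the same approach as the paper's: both use Lemma~\ref{lemm:P.p.m} to produce a sequence of homotopy classes $\Pi_i$ with $\bL_{\textrm{AP}}(\Pi_i,g_\mu)\to\omega_p(\SS^2,g_\mu)$, and then invoke Lemma~\ref{lemm:discrete-widths} to conclude that the sequence eventually stabilizes. You have simply spelled out more of the details (the sandwich $\omega_p\le\bL_{\textrm{AP}}(\Pi_i)\le\sup_x\bM(\Phi_i(x))$ and the finiteness of $S$ below any fixed bound) that the paper leaves implicit.
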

\begin{proof}
By Lemma \ref{lemm:P.p.m}, there are $X_k\subset I^{2p+1}$ and homotopy classes $\Pi_k$ so that
\[
\lim_{k\to\infty} \bL_\textrm{AP}(\Pi_k,g_\mu) = \omega_p(\SS^2,g_\mu). 
\]
By Lemma \ref{lemm:discrete-widths}, $k\mapsto \bL_\textrm{AP}(\Pi_k,g_\mu)$ eventually stabilizes. 
\end{proof}

\subsection{Lusternik--Schnirelmann category covering lemma} 
It will be crucial for our proof of Proposition \ref{prop:LS} to know that, for the metric produced by Theorem \ref{theo:good.metric.final}, a certain set of candidate min-max objects have Lusternik--Schnirelmann category equal to zero. See Remark \ref{rema:cover.category}. Our proof involves a new covering lemma, and adaptations of arguments used in \cite[Theorem 6.1]{MarquesNeves:posRic}, \cite[Appendix A]{Aiex:ellipsoids}.

Fix an arbitrary closed Riemannian $2$-manifold $(M, g)$ throughout the section. We define
\[
\cT : 2^{\cI \cV_1(M)} \to 2^{\cZ_2(M;\ZZ_2)}
\]
by setting
\[
\cT : \cS \mapsto \{T \in \cZ_2(M;\ZZ_2) : \supp T \subset \supp V \textrm{ for some } V \in \cS\}.
\]

\begin{lemm}\label{lemm:cont-cT-operation}
Consider $V_i,V \in \cI \cV_1(M)$ with $\bF(V_i,V)\to0$. If $T_i \in \cT(V_i)$ then up to passing to a subsequence, there is $T\in \cT(V)$ with $\cF(T_i,T) \to 0$. 
\end{lemm}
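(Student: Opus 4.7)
The plan is a three-step argument: establish a uniform mass bound on $\{T_i\}$, extract an $\cF$-subsequential limit $T$ via Federer--Fleming compactness, and verify the support inclusion $\supp T \subset \supp V$ by localization.

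First I would derive $\bM(T_i) \leq C$ uniformly in $i$. Since $\bF(V_i, V) \to 0$, one has weak-$*$ convergence $\|V_i\| \to \|V\|$ as Radon measures on $M$, and in particular $\|V_i\|(M) \to \|V\|(M) < \infty$. Each $V_i \in \cI\cV_1(M)$ is integral with density $\geq 1$ $\|V_i\|$-a.e., so its rectifiable carrier $W_i$ satisfies $\mathcal{H}^1(W_i) \leq \|V_i\|(M)$. The condition $\supp T_i \subset \supp V_i = \overline{W_i}$, combined with the $\ZZ_2$-rectifiable structure of $T_i$, yields $\bM(T_i) \leq \mathcal{H}^1(\overline{W_i})$, which is uniformly bounded in our intended applications (where $V_i$ will be stationary geodesic networks or their limits, so $\mathcal{H}^1(\overline{W_i}) = \mathcal{H}^1(W_i)$).

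Second I would apply Federer--Fleming compactness: with the uniform bound $\bM(T_i) + \bM(\partial T_i) \leq C$ (using $\partial T_i = 0$), a subsequence (not relabeled) converges to some $T \in \bI_1(M; \ZZ_2)$ in $\cF$-norm. The $\cF$-continuity of $\partial$ and $\partial T_i = 0$ force $\partial T = 0$, so $T \in \cZ_1(M; \ZZ_2)$.

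Third, for the support inclusion, take any $x \notin \supp V$ and pick $\eps > 0$ with $\bar B_\eps(x) \cap \supp V = \emptyset$ and $\|V\|(\partial B_\eps(x)) = 0$ (possible for all but countably many $\eps$). Weak-$*$ convergence of the mass measures gives $\|V_i\|(\bar B_\eps(x)) \to 0$. Localizing the previous mass argument, $\bM(T_i \restr B_{\eps/2}(x)) \leq \mathcal{H}^1(\supp V_i \cap \bar B_\eps(x)) \to 0$, so $T_i \restr B_{\eps/2}(x) \to 0$ in $\cF$-norm, forcing $T \restr B_{\eps/2}(x) = 0$ and thus $x \notin \supp T$. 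Since $x \notin \supp V$ was arbitrary, $\supp T \subset \supp V$, so $T \in \cT(V)$.

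The main obstacle will be the uniform mass bound $\bM(T_i) \leq C$: for pathological integral $1$-varifolds the closure $\overline{W_i}$ can in principle strictly contain the rectifiable carrier $W_i$ in $\mathcal{H}^1$-measure, and the naive bound $\mathcal{H}^1(\supp V_i) \leq \|V_i\|(M)$ need not hold without further hypotheses. In the applications of this lemma the $V_i$ always have $\supp V_i$ itself $1$-rectifiable with $\mathcal{H}^1$-mass controlled by $\|V_i\|(M)$, so this subtlety causes no trouble; more generally, one can finesse the issue by combining the weak convergence $\|V_i\| \to \|V\|$ with the integrality of the limit $V$.
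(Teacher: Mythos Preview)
Your proposal is correct and follows essentially the same three-step argument as the paper: mass bound, Federer--Fleming compactness, and support inclusion. The paper's proof is a terse one-liner asserting $\bM(T_i)\leq \Vert V_i\Vert(M)=\Vert V\Vert(M)+o(1)$, invoking compactness, and then concluding $\supp T\subset\supp V$ via lower semicontinuity of mass; your localization argument in step three is just an unpacking of that last clause.

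The one point worth noting is that you are right to flag the mass-bound subtlety, which the paper does not. The inequality $\bM(T_i)\leq \Vert V_i\Vert(M)$ can genuinely fail for arbitrary $V_i\in\cI\cV_1(M)$ (one can cook up integral varifolds whose support contains a closed curve of length much larger than the varifold mass). Your observation that in every application of this lemma the $V_i$ are stationary---hence geodesic nets by Allard--Almgren, with $\mathcal{H}^1(\supp V_i)\leq\Vert V_i\Vert(M)$---is exactly what rescues both your argument and the paper's. The same remark applies to your localized bound in step three, which suffers from the identical issue and is saved by the identical fix.
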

\begin{proof}
Note that $\bM(T_i) \leq \Vert V_i\Vert(M) = \Vert V\Vert(M) + o(1)$ so we can pass to a subsequence and find $T\in\cZ_2(M;\ZZ_2)$ with $\cF(T_i,T) \to 0$. By lower semi-continuity of mass, $\supp T \subset \supp V$, i.e., $T\in \cT(V)$. This completes the proof. 
\end{proof}

Below, we will write $\Lim(\cdot)$ to denote the limit points of a set, and $\Lim^{(k)}(\cdot)$ to denote the $k$-times iterated limit points. 

\begin{lemm}\label{lemm:lim.curr.lim.var}
Consider a compact subset $\cS\subset \cI\cV_1(M)$ with all $V \in S$ stationary. Then $\cT(\cS)$ is compact with respect to the $\cF$-topology and
\[
\Lim(\cT(\cS)) \subset \cT(\Lim(\cS))
\]
in the corresponding topologies.
\end{lemm}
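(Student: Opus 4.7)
The proof combines the compactness of $\cS$ in the $\bF$-topology with the upper semi-continuity of $V \mapsto \cT(V)$ established in Lemma~\ref{lemm:cont-cT-operation}. I would handle the two assertions separately.

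For compactness of $\cT(\cS)$, I take an arbitrary sequence $\{T_i\}_{i=1}^\infty \subset \cT(\cS)$ and, by definition, pick $V_i \in \cS$ with $T_i \in \cT(V_i)$. The compactness of $\cS$ provides a subsequence along which $\bF(V_i, V_\infty) \to 0$ for some $V_\infty \in \cS$; Lemma~\ref{lemm:cont-cT-operation} then extracts a further subsequence with $\cF(T_i, T_\infty) \to 0$ for some $T_\infty \in \cT(V_\infty) \subset \cT(\cS)$. This gives sequential, hence topological, compactness.

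For the inclusion $\Lim(\cT(\cS)) \subset \cT(\Lim(\cS))$, take $T \in \Lim(\cT(\cS))$ and fix a sequence of pairwise distinct elements $T_i \in \cT(\cS)$ with $\cF(T_i, T) \to 0$. Writing $T_i \in \cT(V_i)$ with $V_i \in \cS$, the same compactness-and-continuity argument produces, along a subsequence, $V_\infty \in \cS$ with $\bF(V_i, V_\infty) \to 0$ and $T \in \cT(V_\infty)$. If a subsequence of the $V_i$'s differs from $V_\infty$, then $V_\infty \in \Lim(\cS)$ and the conclusion follows.

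The main obstacle is the remaining case, in which $V_i = V_\infty$ for all sufficiently large $i$: then all but finitely many of the (distinct) $T_i$ lie in the single set $\cT(V_\infty)$. To rule this out, I would invoke the Allard--Almgren structure theorem \cite[\S 3]{AllardAlmgren:1varifold} to conclude that $\supp V_\infty$ is a finite $1$-dimensional geodesic network in the closed $2$-manifold $M$, and then observe that the set of $\cZ_2(M;\ZZ_2)$-cycles with support contained in such a network is (combinatorially) finite and therefore $\cF$-discrete. A pairwise-distinct sequence cannot converge in a finite discrete set, contradicting the choice of $T_i$. This excludes the degenerate case and completes the proof.
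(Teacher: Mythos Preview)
Your proof is correct and follows essentially the same route as the paper's: compactness via Lemma~\ref{lemm:cont-cT-operation}, then the inclusion by passing to a subsequential limit $V_\infty\in\cS$ and ruling out the degenerate case $V_i\equiv V_\infty$ using the Allard--Almgren structure theorem to see that $\cT(V_\infty)$ is finite. The only cosmetic difference is that the paper phrases the finiteness of $\cT(V_\infty)$ as a direct application of the constancy theorem for currents, whereas you describe it as a combinatorial observation about cycles supported in a finite geodesic network; these amount to the same thing.
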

\begin{proof}
Compactness of $\cT(\cS)$ follows from the assumed compactness of $\cS$ and Lemma \ref{lemm:cont-cT-operation}. Now, consider $T \in \Lim(\cT(\cS))$. Fix $\{T_i\}_{i=1}^\infty \subset \cT(\cS)\setminus\{T\}$ with $\cF(T_i,T) \to 0$. Since $T_i \in \cT(\cS)$ there are $V_i \in \cS$ with $\supp T_i\subset \supp V_i$. By compactness of $\cS$, we can pass to a subsequence so that $\bF(V_i,V) \to 0$ for some $V\in\cS$. By Lemma \ref{lemm:cont-cT-operation}, $T\in \cT(V)$. If $V_i\neq V$ for infinitely many $i$, then $V \in \Lim(\cS)$. As such, it remains to consider the case that $V_i = V$ for all $i$. Since $V \in \cI\cV_1(M)$ is stationary, \cite[Section 3]{AllardAlmgren:1varifold} implies that $V$ is supported on a fixed geodesic net with finitely many singularities. The constancy theorem for currents implies that $\cT(V)$ is a finite set. Since $\{T_i\}_{i=1}^\infty \subset \cT(V) \setminus \{T\}$, this is a contradiction. 
\end{proof}

\begin{lemm}\label{lemm:cover.cF.bF}
Consider a compact subset $\cS\subset \cI\cV_1(M)$ with all $V\in\cS$ stationary and $\Lim^{(N)}(\cS) = \emptyset$ for some $N \in \NN^*$. Fix $\eps>0$. There a finite subset $\{V_1,\dots,V_k\} \subset \cS$ and positive numbers $\eta_1,\dots,\eta_k,\eps_1,\dots,\eps_k$ with $0 < \eps_i < \eps$ so that the following properties hold:
\begin{enumerate}
\item Covering in the $\bF$ topology:
\[
\cS \subset \bigcup_{i=1}^k B_{\eta_i}^\bF(V_i),
\]
\item Covering in a mixed $\bF$/$\cF$ sense:
\[
\{T \in \cZ_1(M;\ZZ_2) : |T| \in \cup_{i=1}^k B_{2\eta_i}^\bF(V_i)\} \subset \bigcup_{i=1}^k B_{\eps_i}^\cF(\cT(V_i)),
\]
\item $\cF$-balls are disjoint or contained in each other:
\begin{multline*}
	i_1, i_2 \in \{1, \ldots, k\}, \; T_1 \in \cT(V_{i_1}), \; T_2 \in \cT(V_{i_2}) \\
	\implies B^{\cF}_{\eps_{i_1}}(T_1) \cap B^{\cF}_{\eps_{i_2}}(T_2) = \emptyset, \text{ or } B^{\cF}_{\eps_{i_1}}(T_1) \subset B^{\cF}_{\eps_{i_2}}(T_2),\\  \text{ or } B^{\cF}_{\eps_{i_1}}(T_1) \supset B^{\cF}_{\eps_{i_2}}(T_2).
\end{multline*}
\end{enumerate}
\end{lemm}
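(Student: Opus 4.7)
The plan is to induct on $N$.

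In the \emph{base case} $N=1$, the hypothesis $\Lim(\cS) = \emptyset$ together with compactness forces $\cS = \{V_1, \dots, V_k\}$ to be finite. By \cite[Section 3]{AllardAlmgren:1varifold}, each $V_i$ is supported on a finite geodesic net, and the constancy theorem makes $\cT(V_i) \subset \cZ_1(M; \ZZ_2)$ finite. I would choose $\eps_i \in (0, \eps)$ small enough that the finitely many balls $B_{\eps_i}^{\cF}(T)$, as $T$ ranges over $\bigcup_{i'} \cT(V_{i'})$, are pairwise either equal or disjoint; this secures property (3) at once. Property (1) then holds for any positive $\eta_i$, and (2) is obtained by a contradiction argument: were it to fail for arbitrarily small $\eta_i$, one could extract a sequence $T_n \in \cZ_1(M; \ZZ_2)$ with $\bF(|T_n|, V_i) \to 0$ but $\cF(T_n, T) \geq \eps_i$ for every $T \in \cT(V_i)$; the mass bound together with Lemma~\ref{lemm:cont-cT-operation} would produce a subsequential $\cF$-limit $T_\infty \in \cT(V_i)$, contradicting the distance lower bound.

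In the \emph{inductive step}, $\Lim(\cS) \subset \cS$ is compact and satisfies $\Lim^{(N-1)}(\Lim(\cS)) = \emptyset$, so the inductive hypothesis yields centers $V_1, \dots, V_\ell$ and radii $\eta_i, \eps_i$ meeting (1), (2), (3) for $\Lim(\cS)$. The set $\cS \setminus \bigcup_i B_{\eta_i}^{\bF}(V_i)$ is compact and disjoint from $\Lim(\cS)$, hence discrete, and therefore finite; I label it $\{W_1, \dots, W_m\}$. Each $\cT(W_j)$ is finite for the same reason as before. It remains to append radii $\bar\eta_j, \bar\eps_j \in (0, \eps)$ at each $W_j$ so the combined configuration still satisfies (1), (2), (3). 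Property (1) is automatic, and (2) at each $W_j$ follows from the same compactness/contradiction argument as in the base case, once $\bar\eps_j$ is fixed.

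The main obstacle is preserving (3). Because all the sets $\cT(W_j)$ and $\cT(V_i)$ are finite, (3) for the combined configuration reduces to finitely many strict inequalities of the form $\bar\eps_j + \cF(T, T') < \eps_i$ (forcing $B^{\cF}_{\bar\eps_j}(T) \subset B^{\cF}_{\eps_i}(T')$) or $\bar\eps_j + \eps_i < \cF(T, T')$ (forcing disjointness), for pairs $(T, T') \in \cT(W_j) \times \cT(V_i)$, together with analogous pairwise constraints among the new balls themselves. This system is solvable in $\bar\eps_j > 0$ unless some $\cF(T, T')$ equals $\eps_i$ exactly. To eliminate these coincidences, I would first uniformly shrink all the pre-existing $\eps_i$'s by a small $\delta > 0$ chosen to miss the finite set of problematic values. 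Uniform shrinkage automatically preserves (3) for the old cover, since nested or disjoint pairs of balls stay nested or disjoint when both radii are decreased by the same amount, and one can simultaneously shrink each $\eta_i$ slightly to keep (2) intact (possible because $\Lim(\cS)$ is compactly covered by the open $\bF$-balls, which have positive Lebesgue number). With this slack in place, each $\bar\eps_j > 0$ can be chosen small enough to meet all the strict inequalities, and then $\bar\eta_j$ is chosen small enough for (2). The essential reason this plan works is that at each level of the induction only finitely many new centers and finitely many numerical constraints arise, so a finite avoidance argument always suffices.
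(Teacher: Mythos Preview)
Your overall strategy is close in spirit to the paper's: both arguments peel off one layer of iterated limit points at a time and add finitely many new centers at each step. The paper organizes this as an induction on $\ell \in \{1,\ldots,N\}$ covering $\Lim^{(N-\ell)}(\cS)$, while you induct on $N$ and apply the hypothesis to $\Lim(\cS)$; these are equivalent viewpoints, and your base case and ``append new centers'' step are essentially correct.

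The genuine gap is your mechanism for restoring (3) when the new centers are added. You assert that ``nested or disjoint pairs of balls stay nested or disjoint when both radii are decreased by the same amount.'' This is false in a general metric space. Disjointness is preserved under uniform shrinkage, and so is concentric nesting, but a set-theoretic containment $B_r(x)\subset B_s(y)$ with $x\neq y$ need not survive: one can have $B_{r-\delta}(x)\cap B_{s-\delta}(y)\neq\emptyset$ while neither is contained in the other. (Take the four-point space $\{x,y,p,q\}$ with $d(x,y)=1$, $d(p,x)=d(q,y)=1.5$, $d(p,y)=d(q,x)=2.5$, $d(p,q)=2$; then $B_3(x)=B_3(y)$, but $B_2(x)=\{x,y,p\}$ and $B_2(y)=\{x,y,q\}$ overlap without nesting.) Since your inductive hypothesis only records the set-theoretic form of (3), you cannot invoke uniform shrinkage.

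The paper avoids this by never shrinking. It carries through the induction an auxiliary condition: each $\eps_i$ is chosen outside the set $\{\cF(T,T') : T\in\cT(\cS),\,T'\in\cT(V_i)\}$, which is at most countable by Corollary~\ref{coro:countable} (this is where $\Lim^{(N)}(\cS)=\emptyset$ and Lemma~\ref{lemm:lim.curr.lim.var} enter). With this in hand, for every new center $W$ and every pair $(T,T')\in\cT(W)\times\cT(V_i)$ one has $\cF(T,T')\neq\eps_i$ \emph{a~priori}, so a sufficiently small new radius automatically gives either disjointness or strict containment of the new ball in the old one. Your argument could be repaired either by building this avoidance condition into your inductive hypothesis, or by strengthening (3) in the hypothesis to the strict-triangle-inequality form $\eps_{i_1}+\eps_{i_2}<\cF(T_1,T_2)$ or $\eps_{i_1}+\cF(T_1,T_2)<\eps_{i_2}$ or the reverse (this \emph{is} preserved by uniform shrinkage); but as written the step does not go through.
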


\begin{rema} \label{rema:cover.category}
	This lemma implies that $\cT(\cS)$ has Lusternik--Schnirelmann $\operatorname{cat}(\cT(\cS)) = 0$. Indeed, the balls $B^{\cF}_{\eps_i}(T)$, $T \in \cT(\{ V_1, \ldots, V_k\})$, are homotopically trivial when $\eps > 0$ is small; see \cite[Proposition 3.3]{MarquesNeves:posRic}.
\end{rema}

The lemma readily implies:

\begin{coro} \label{coro:cover.cF.bF}
	In the setting above, if $\Phi : \SS^1 \to \cZ_1(M;\ZZ_2)$ is continuous and
	\[ |\Phi(t)| \subset \cup_{i=1}^k B_{2\eta_i}^\bF(V_i) \text{ for all } t \in \SS^1, \]
	then 
	\[ \Phi(\SS^1)\subset B_\eps^\cF(T) \text{ for some } T \in \cT(\{ V_1, \ldots, V_k \}). \]
\end{coro}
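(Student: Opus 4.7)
The plan is to combine the three conclusions of Lemma \ref{lemm:cover.cF.bF} with the connectedness of $\SS^1$, in a manner reminiscent of \cite[\S 6]{MarquesNeves:posRic}. First I would apply conclusion (2) of the lemma pointwise along the image of $\Phi$: since $|\Phi(t)| \in \cup_{i=1}^k B^{\bF}_{2\eta_i}(V_i)$ for every $t \in \SS^1$, there exist $i(t) \in \{1,\dots,k\}$ and $T(t) \in \cT(V_{i(t)})$ with $\Phi(t) \in B^{\cF}_{\eps_{i(t)}}(T(t))$. Thus the open (in the $\cF$-topology) family
\[
\mathcal{C} := \big\{ B^{\cF}_{\eps_i}(T) : i \in \{1,\dots,k\},\ T \in \cT(V_i) \big\}
\]
covers $\Phi(\SS^1)$. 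Crucially, $\mathcal{C}$ is \emph{finite}: each $V_i$ is stationary in $\cI\cV_1(M)$, so by \cite[Section 3]{AllardAlmgren:1varifold} its support is a finite geodesic net, and the constancy theorem then forces $\cT(V_i)$ to be a finite set, as already noted in the proof of Lemma \ref{lemm:lim.curr.lim.var}.

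Next I would use the ``disjoint-or-nested'' conclusion (3) to extract the collection of maximal balls in $\mathcal{C}$, i.e., those $B \in \mathcal{C}$ not properly contained in any other ball of $\mathcal{C}$. By (3), any two distinct maximal balls are disjoint, and every $B \in \mathcal{C}$ is contained in a (unique) maximal ball. Thus the maximal balls $B_1,\dots,B_N$ are pairwise disjoint open sets with $\bigcup_{j=1}^N B_j = \bigcup \mathcal{C} \supset \Phi(\SS^1)$.

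Finally, since $\SS^1$ is connected and $\Phi$ is $\cF$-continuous, $\Phi(\SS^1)$ is connected in the $\cF$-topology. A connected set covered by finitely many pairwise disjoint open sets must be entirely contained in one of them, so $\Phi(\SS^1) \subset B_{j_0}$ for some $j_0$. Writing $B_{j_0} = B^{\cF}_{\eps_{i_0}}(T_0)$ with $T_0 \in \cT(V_{i_0})$ and using $\eps_{i_0} < \eps$ from the lemma, we conclude $\Phi(\SS^1) \subset B^{\cF}_{\eps}(T_0)$ with $T_0 \in \cT(\{V_1,\dots,V_k\})$, as required. The only subtle point, and the one I would double-check, is the finiteness of $\mathcal{C}$; without it the disjoint-or-nested structure still produces a partition into maximal elements, but the connectedness argument requires the partition to be finite (equivalently, each $B_j$ to be open in the subspace $\Phi(\SS^1)$), so the stationarity of each $V_i$ together with Allard--Almgren is doing essential work here.
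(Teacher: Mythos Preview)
Your proof is correct and follows essentially the same approach as the paper: both extract a maximal ball from the disjoint-or-nested structure of conclusion (3) and then use connectedness of $\Phi(\SS^1)$ (the paper does this via a boundary-point contradiction, you via the standard open-cover formulation). One small imprecision in your closing remark: the connectedness argument works for any pairwise disjoint open cover, finite or not; the reason finiteness of $\mathcal{C}$ matters is rather to guarantee that maximal balls \emph{exist} (an infinite ascending chain under the nesting order would have no maximal element), which is indeed secured by the finiteness of each $\cT(V_i)$ as you noted.
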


\begin{proof}
	Fix $\Phi : \SS^1 \to \cZ_1(M;\ZZ_2)$ as above, and let $t_0\in \SS^1$. By conclusion (2) of the lemma, $\Phi(t_0) \in B_{\eps_i}^\cF(T)$ for some $T \in \cT(V_i)$, $i \in \{ 1, \ldots, k \}$. By conclusion (3) of the lemma, by possibly taking a different choice of $i$, we can assume that the following ``maximality'' condition holds: for any $i'\in\{1,\dots,k\}$ and $T'\in\cT(V_{i'})$, either $B_{\eps_{i'}}^\cF(T') \cap B_{\eps_{i}} ^\cF(T) = \emptyset$ or $B_{\eps_{i'}}^\cF(T') \subset B_{\eps_{i}}^\cF(T)$. If $\Phi(\SS^1) \not \subset B_{\eps_i}^\cF(T)$ then there is $\{t_m\}_{m=1}^\infty\subset \SS^1$ with $t_m \to t_\infty$ and $\Phi(t_m) \in B_{\eps_i}^\cF(T)$, but $\Phi(t_\infty) \not \in B_{\eps_i}^\cF(T)$. By conclusion (2) of the lemma, $\Phi(t_\infty)  \in B_{\eps_{i'}}^\cF(T')$ for some $T' \in \cT(V_{i'})$, $i' \in \{ 1, \ldots, k \}$. By the maximality property arranged above, it must hold that $B_{\eps_i}^\cF(T) \cap B_{\eps_{i'}}^\cF(T') = \emptyset$, so $\Phi(t_m) \not \in B_{\eps_{i'}}^\cF(T')$. This contradicts $\cF(\Phi(t_m), \Phi(t_\infty)) \to 0$.
\end{proof}

\begin{proof}[Proof of Lemma \ref{lemm:cover.cF.bF}] 
We cover $\cS$ backwards, starting from $\Lim^{(N-1)}(\cS)$ and working down to $\Lim^{(0)}(\cS) = \cS$. In fact, we will prove by induction on $\ell \in \{1,\dots,N\}$ that we can choose
\[
\{V_1,\dots,V_{k_{\ell}}\}\subset \cS
\]
and $\eta_1,\dots,\eta_{k_{\ell}},\eps_1,\dots,\eps_{k_{\ell}} > 0$, with $0< \eps_i < \eps$, so that:
\begin{enumerate}
\item[($1_\ell$)] $\eps_i \not \in \{\cF(T,T') : T \in \cT(\cS),T'\in \cT(V_i)\}$ for all $i\in\{1,\dots,k_{\ell}\}$.
\item[($2_\ell$)] $\Lim^{(N-\ell)}(\cS) \subset \cup_{i=1}^{k_{\ell}} B_{\eta_i}^\bF(V_i)$.
\item[($3_\ell$)] For all $T\in \cZ_1(M;\ZZ_2)$, $|T| \in \cup_{i=1}^{k_{\ell}} B_{2\eta_i}^\bF(V_i) \implies T \in \cup_{i=1}^{k_\ell} B_{\eps_i}^\cF(\cT(V_i))$. 
\item[($4_\ell$)] For all $i_1,i_2 \in \{1,\dots,k_{\ell}\}$, $T_1 \in \cT(V_{i_1})$, $T_2\in\cT(V_{i_2})$, either:
\begin{enumerate}
\item [($4_\ell.a$)] $B^\cF_{\eps_{i_1}}(T_1) \cap B^\cF_{\eps_{i_2}}(T_2) = \emptyset$,
\item [($4_\ell.b$)] $B^\cF_{\eps_{i_1}}(T_1) \subset B^\cF_{\eps_{i_2}}(T_2)$, or
\item [($4_\ell.c$)] $B^\cF_{\eps_{i_1}}(T_1) \supset B^\cF_{\eps_{i_2}}(T_2)$.
\end{enumerate}
\end{enumerate}
Notice that $(2_N)$-$(4_N)$ imply the statement of the lemma, with $k := k_N$.

We proceed with the induction and start with the base case, $\ell = 1$. Note that $\Lim^{(N-1)}(\cS)$ is compact with no limit points since $\Lim(\Lim^{(N-1)}(\cS))= \Lim^{(N)}(\cS) = \emptyset$. Therefore, $\Lim^{(N-1)}(\cS)$ is finite, so we may write
\begin{equation} \label{eq:cover.lemm.base.case.limset}
	\Lim^{(N-1)}(\cS) =: \{V_1,\dots,V_{k_1}\}.
\end{equation}
As in Lemma \ref{lemm:lim.curr.lim.var}, $\cT_1 : = \cT(\Lim^{(N-1)}(\cS))$ is finite. For $i \in \{ 1, \ldots, k_1 \}$, choose 
\begin{equation}\label{eq:cover.lemm.base.case.flat.balls.disjoint}
0 < \eps_i < \min\{\eps,\min\{\cF(T_1,T_2) : T_1\neq T_2\in \cT_1\}/3\}
\end{equation}
so that 
\begin{equation}\label{eq:cover.lemm.base.case.not.in.bdry.eps}
\eps_i \not \in \{\cF(T,T') : T\in \cT(\cS), \; T'\in \cT(V_i)) \}.
\end{equation}
Such a choice is possible since (i) $\cT_1$ is finite and (ii) $\{\cF(T,T') : T\in \cT(\cS), \; T'\in \cT(V_i)\}$ is at most countable, by combining Lemma \ref{lemm:lim.curr.lim.var} with Corollary \ref{coro:countable}. 
\begin{claim}
There is $\eta>0$ so that, for every $T \in \cZ_1(M;\ZZ_2)$,
\[
|T| \in B^{\bF}_{2\eta}(\Lim^{(N-1)}(\cS)) \implies T \in \bigcup_{i=1}^{k_1} B_{\eps_i}^\cF(\cT(V_i)). 
\]
\end{claim}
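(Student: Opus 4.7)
The plan is to argue by contradiction. If no such $\eta > 0$ existed, then for each $n \in \NN^*$ one could find $T_n \in \cZ_1(M;\ZZ_2)$ and $W_n \in \Lim^{(N-1)}(\cS)$ with $\bF(|T_n|, W_n) < 1/n$, yet $T_n \notin \bigcup_{i=1}^{k_1} B^{\cF}_{\eps_i}(\cT(V_i))$. Since $\Lim^{(N-1)}(\cS) = \{V_1, \ldots, V_{k_1}\}$ is a finite set by \eqref{eq:cover.lemm.base.case.limset}, I would pass to a subsequence along which $W_n \equiv V_i$ for a single fixed index $i \in \{1, \ldots, k_1\}$, so that $|T_n| \to V_i$ in the $\bF$-metric.

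The crucial observation is that $T_n \in \cT(|T_n|)$ trivially, since $\supp T_n = \supp |T_n|$. Thus Lemma \ref{lemm:cont-cT-operation} applied to $(|T_n|, V_i)$ together with the currents $T_n \in \cT(|T_n|)$ produces, after passing to a further subsequence (not relabeled), a limit current $T_\infty \in \cT(V_i)$ with $\cF(T_n, T_\infty) \to 0$. For all sufficiently large $n$ we then have
\[
T_n \in B^{\cF}_{\eps_i}(T_\infty) \subset B^{\cF}_{\eps_i}(\cT(V_i)),
\]
contradicting the standing assumption and completing the proof.

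The main (and essentially only) technical ingredient is the interplay between $\bF$-convergence of varifolds and $\cF$-convergence of their associated integral currents, which is already encapsulated in Lemma \ref{lemm:cont-cT-operation}; that lemma in turn rests on flat compactness for $\ZZ_2$ cycles of bounded mass together with lower semi-continuity of mass under flat convergence, to ensure the flat limit is supported in $\supp V_i$. Given Lemma \ref{lemm:cont-cT-operation}, I foresee no substantive obstacle: the argument is a one-step contradiction via the finiteness of $\Lim^{(N-1)}(\cS)$ and a diagonal subsequence extraction.
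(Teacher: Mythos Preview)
Your proposal is correct and follows essentially the same argument as the paper's proof: argue by contradiction, use finiteness of $\Lim^{(N-1)}(\cS)$ to pass to a subsequence with $|T_n|\to V_i$ in $\bF$, invoke Lemma~\ref{lemm:cont-cT-operation} to extract a flat-convergent subsequence with limit in $\cT(V_i)$, and derive the contradiction. Your explicit remark that $T_n\in\cT(|T_n|)$ is the observation making Lemma~\ref{lemm:cont-cT-operation} applicable, which the paper leaves implicit.
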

\begin{proof}[Proof of claim] 
If not, there is $\{T_j\}_{j=1}^\infty \subset \cZ_1(M;\ZZ_2)$ with
\[ |T_j| \in B^{\bF}_{1/j}(\Lim^{(N-1)}(\cS)) = \bigcup_{i=1}^{k_1} B^{\bF}_{1/j}(V_i) \]
but 
\begin{equation}\label{eq:claim.in.covering.arg.not.in.flat.balls}
T_j \not \in \bigcup_{i=1}^k B_{\eps_i}^\cF(\cT(V_i)) \text{ for all } j = 1, 2, \ldots
\end{equation}
Pass to a subsequence and fix $i \in \{1,\dots,k_1\}$ so that 
\[
\lim_{j\to\infty} \bF(|T_j|,V_i) = 0. 
\]
By Lemma \ref{lemm:cont-cT-operation} we can pass to a further subsequence so that 
\[
\lim_{j\to\infty} \cF(T_j,T) = 0
\]
for some $T \in \cT(V_i)$. This contradicts \eqref{eq:claim.in.covering.arg.not.in.flat.balls}, completing the proof. 
\end{proof}
We now fix $\eta_1 = \dots = \eta_{k_1}$ to be equal to the $\eta$ in the previous claim. The base case will be completed by:

\begin{claim}
The $\{V_1,\dots,V_{k_1}\}$ and $\eta_1,\dots,\eta_{k_{1}},\eps_1,\dots,\eps_{k_{1}}$ chosen above satisfy $(1_1)$-$(4_1)$. 
\end{claim}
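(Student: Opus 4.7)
My plan is to verify each of the four conditions $(1_1)$--$(4_1)$ in turn, observing that only $(4_1)$ is non-trivial and the rest are essentially immediate from how the objects were chosen.

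First I would note that $(1_1)$ is a direct restatement of the choice \eqref{eq:cover.lemm.base.case.not.in.bdry.eps}, which was made precisely to ensure this. Next, $(2_1)$ is immediate from the definition $\{V_1,\dots,V_{k_1}\} := \Lim^{(N-1)}(\cS)$ in \eqref{eq:cover.lemm.base.case.limset}, since each $V_i \in B^{\bF}_{\eta_i}(V_i)$ trivially. Condition $(3_1)$ is exactly the content of the claim proved immediately before, using the common value $\eta_1 = \dots = \eta_{k_1} = \eta$ established there.

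The main (though still routine) step is $(4_1)$. The plan is to fix $i_1, i_2 \in \{1,\dots,k_1\}$ and $T_1 \in \cT(V_{i_1})$, $T_2 \in \cT(V_{i_2})$, and split into two cases based on whether $T_1 = T_2$ or not. Note that because $V_{i_1}, V_{i_2} \in \Lim^{(N-1)}(\cS)$, we have $T_1, T_2 \in \cT_1$, so the inequality \eqref{eq:cover.lemm.base.case.flat.balls.disjoint} applies to the pair $(T_1, T_2)$ when they differ.

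In the first case, $T_1 \neq T_2$, the choice \eqref{eq:cover.lemm.base.case.flat.balls.disjoint} gives $\eps_{i_1}, \eps_{i_2} < \cF(T_1,T_2)/3$, so in particular $\eps_{i_1} + \eps_{i_2} < \cF(T_1, T_2)$, which yields $B^{\cF}_{\eps_{i_1}}(T_1) \cap B^{\cF}_{\eps_{i_2}}(T_2) = \emptyset$, i.e., alternative $(4_1.a)$. In the second case, $T_1 = T_2$, the two balls are concentric, so whichever of $\eps_{i_1}, \eps_{i_2}$ is smaller gives the smaller ball contained in the larger, yielding alternative $(4_1.b)$ or $(4_1.c)$. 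This exhausts the cases and completes the verification, hence the base case of the induction.

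I do not anticipate any real obstacle here beyond bookkeeping: the only potentially subtle point is ensuring that both $T_1$ and $T_2$ do indeed lie in the finite set $\cT_1$ so that the gap bound \eqref{eq:cover.lemm.base.case.flat.balls.disjoint} governs their $\cF$-distance, but this is automatic from $V_{i_1}, V_{i_2} \in \Lim^{(N-1)}(\cS)$ and the definition of $\cT$.
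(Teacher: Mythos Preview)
Your proposal is correct and follows essentially the same approach as the paper, verifying each of $(1_1)$--$(4_1)$ by direct appeal to the choices already made. Your case split for $(4_1)$ based on whether $T_1 = T_2$ is in fact slightly cleaner than the paper's split on whether $i_1 = i_2$, since distinct $V_{i_1}, V_{i_2}$ can share cycles in $\cT_1$ (e.g., the zero cycle), a case your version handles transparently.
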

\begin{proof}[Proof of claim]
We arranged $(1_1)$ in \eqref{eq:cover.lemm.base.case.not.in.bdry.eps}. Note that $(2_1)$ holds trivially by \eqref{eq:cover.lemm.base.case.limset}. By choice of $\eta$ in the previous claim, $(3_1)$ holds too. Finally, by \eqref{eq:cover.lemm.base.case.flat.balls.disjoint}, $(4_1.a)$ holds for all $i_1 \neq i_2 \in \{1,\dots,k_1\}$, while $(4_1.b)$ holds when $i_1=i_2$. This proves the claim. 
\end{proof}

We proceed with the inductive step. Fix $\ell\in\{1,\dots,N-1\}$ so that $(1_\ell)$-$(4_\ell)$ hold with $\{V_1,\dots,V_{k_{\ell}}\}\subset \cS$, and $\eta_1,\dots,\eta_{k_{\ell}},\eps_1,\dots,\eps_{k_{\ell}} > 0$ with $0< \eps_i < \eps$. Consider
\[
\cS' = \cS \setminus \bigcup_{i=1}^{k_{\ell}} B_{\eta_i}^\bF(V_i). 
\]
Note that $\cS'$ is compact and by $(2_\ell)$, $\Lim^{(N-\ell)}(\cS') = \emptyset$. Thus, $\Lim^{(N-\ell-1)}(\cS')$ and $T_{\ell+1} : = \cT(\Lim^{(N-\ell-1)})(\cS')$ are finite sets. Write
\begin{equation} \label{eq:cover.lemm.induct.limset}
	\Lim^{(N-\ell-1)}(\cS') =: \{V_{k_{\ell}+1},\dots,V_{k_{\ell+1}}\}
\end{equation}
and set
\begin{equation}\label{eq:cover.lemm.eps.ell.defn}
\eps'_{\ell+1} : = \min\{\eps, \min\{\cF(T_1,T_2): T_1\neq T_2 \in \cT_{\ell+1}\}/3\}
\end{equation}
Below, we fix $i\in \{k_{\ell}+1,\dots,k_{\ell+1}\}$ and $T \in \cT(V_i)$. For all previously covered indices $j \in \{1,\dots,k_{\ell}\}$ and $T' \in \cT(V_j)$ note that 
\[
0<\cF(T,T') \neq \eps_j
\]
by $(1_\ell)$. Thus, we can take
\begin{multline}\label{eq:cover.lemm.induct.eps.i}
0 < \eps_i < \min \{\eps_{\ell+1}', \min\{|\cF(T,T')-\eps_j| : T\in\cT(V_i), \\
j\in\{1,\dots,k_{\ell-1}\}, T'\in \cT(V_j)\}\}
\end{multline}
with 
\[
\eps_i \not \in \{\cF(T,T') : T\in \cT(\cS), T' \in \cT(V_i)\}.
\]
This ensures that $(1_{\ell+1})$ holds. For any choice of $\eta_{k_{\ell}+1},\dots \eta_{k_{\ell+1}}>0$, it is clear that $(2_{\ell+1})$ holds. We can then fix $\eta_{k_{\ell}+1},\dots \eta_{k_{\ell+1}}$ sufficiently small so that $(3_{\ell+1})$ holds by the same argument as in claim in the base case. Finally we verify $(4_{\ell+1})$. Fix $i_1,i_2 \in \{1,\dots,k_{\ell+1}\}$. Note that if $i_1,i_2 \leq k_{\ell}$ then $(4_\ell)$ ensures that the relevant condition holds. If $i_1,i_2 > k_{\ell}$ then by \eqref{eq:cover.lemm.eps.ell.defn} it is clear that $(4_{\ell+1}.a)$ holds. Finally, it remains to consider $i_1 \leq k_{\ell} < i_2$ (the condition is symmetric in $i_1,i_2$). Fix $T_1 \in \cT(V_{i_1})$ and $T_2 \in \cT(V_{i_2})$ By \eqref{eq:cover.lemm.induct.eps.i}, 
\[
\eps_{i_1} + \eps_{i_2} < \cF(T_1,T_2),
\]
so $(4_{\ell+1}.a)$ holds. This completes the inductive step, and thus the proof of the lemma. 
\end{proof}

\subsection{Lusternik--Schnirelmann theory} We now modify the arguments from \cite[\S 6]{MarquesNeves:posRic} and \cite[Appendix A]{Aiex:ellipsoids} to prove the following result.

\begin{prop}\label{prop:LS}
	Fix $p \in \NN^*$, and $\mu_1=\mu_1(p)>0$ as in Lemma \ref{lemm:discrete-widths}. Then,
	\[ \omega_p(\SS^2,g_\mu) < \omega_{p+1}(\SS^2,g_\mu) \]
	for every $\mu \in (0, \mu_1)$.
\end{prop}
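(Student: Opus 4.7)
The approach is a Lusternik--Schnirelmann argument by contradiction, following the template of \cite[Theorem 6.1]{MarquesNeves:posRic} and \cite[Appendix A]{Aiex:ellipsoids}. Suppose, for contradiction, that $\omega_p(\SS^2, g_\mu) = \omega_{p+1}(\SS^2, g_\mu) =: L$. By Corollary~\ref{coro:exist-cub-complex-p-width} applied to $p+1$, there is a cubical subcomplex $X \subset I^{2p+3}$ and an $\cF$-homotopy class $\Pi \subset \cP^\bF_{p+1,2p+3}$ with $\bL_{\mathrm{AP}}(\Pi,g_\mu) = L$. First I would apply the pull-tight procedure (Proposition~\ref{prop:pull.tight}) and then Proposition~\ref{prop:am.vary.dom} with $k = 2p+3$: if alternative (2) there holds, we obtain a strict decrease of $\bL_{\mathrm{AP}}(\Pi)$ contradicting minimality; so alternative (1) must hold, yielding a stationary integral 1-varifold $V$ in the critical set with $\#\sing V \leq 5^{2p+3}$ and $\|V\|(\SS^2) = L$.

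Next I would identify the candidate critical set. Combining Theorem~\ref{theo:SG-lim}/Proposition~\ref{prop:phase.tran.crit.immersed} (applied via a phase-transition minimizing sequence, and the inclusion $\bC_{\mathrm{PT}} \subset \bC_{\mathrm{AP}}$ of Proposition~\ref{prop:phase.vs.ap.limit}) with Theorem~\ref{theo:good.metric.final}~(1)--(2), every such $V$ is a union of iterates of $\gamma_1,\gamma_2,\gamma_3$, i.e.
\[
V = \sum_{i=1}^3 \omega_i\, \mathbf{v}(k_i\cdot \gamma_i,\mathbf{1}), \qquad \sum_{i=1}^3 \omega_i k_i (2\pi + (i-1)\mu) = L,
\]
where $k_i \cdot \gamma_i$ denotes the $k_i$-fold iterate. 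This Diophantine equation has only finitely many solutions in $\NN^3 \times \NN^3$, so the set $\cS$ of candidate critical varifolds is finite; in particular $\Lim(\cS) = \emptyset$, so Lemma~\ref{lemm:cover.cF.bF} applies with $N=1$. Choosing $\eps>0$ small enough that each $B^\cF_\eps(T) \subset \cZ_1(\SS^2;\ZZ_2)$ is contractible (by \cite[Proposition 3.3]{MarquesNeves:posRic}), I obtain $V_1,\ldots,V_k \in \cS$ and parameters $\eta_i,\eps_i>0$ such that the open set $\cU := \bigcup_{i=1}^k B^\cF_{\eps_i}(\cT(V_i))$ is trivializing for $\bar\lambda$ (i.e. $\bar\lambda|_\cU = 0$), and satisfies the disjointness/nesting property (3) of the covering lemma.

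The final step is the LS deformation. For a near-optimal $\Phi \in \Pi$ with $\sup_x \bM(\Phi(x)) < L + \delta$, set $W := \Phi^{-1}(\cU)$. Using property (1) of Lemma~\ref{lemm:cover.cF.bF} together with a compactness argument (if some $x_n$ had $\bM(\Phi(x_n)) \to L$ with $\Phi(x_n) \notin \cU$, a subsequential limit would produce a critical varifold in $\cS$ at positive $\bF$-distance from all the $V_i$, contradicting the covering), one sees that $\bM(\Phi(x)) \leq L - \delta_0$ for all $x \in X \setminus W$, with $\delta_0 > 0$ depending only on $\cU$. Since $\bar\lambda|_\cU = 0$, the cohomology long exact sequence of $(X, X \setminus W)$ forces $\Phi|_{X \setminus W}$ to still pull back $\bar\lambda^p$ nontrivially: this follows exactly as in \cite[\S 6]{MarquesNeves:posRic} using Corollary~\ref{coro:cover.cF.bF} and the fact that cupping against a class supported in $W$ absorbs at most one factor of $\bar\lambda$. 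Extending $\Phi|_{X \setminus W}$ (possibly after the Almgren--Pitts interpolation bookkeeping of \cite[\S 3]{MarquesNeves:posRic}) to a bona fide $p$-sweepout with the same sup-mass bound yields $\omega_p \leq L - \delta_0 < L = \omega_p$, a contradiction.

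The main obstacle is the algebraic-topological bookkeeping of the last paragraph — verifying that $\Phi|_{X \setminus W}$ genuinely is a $p$-sweepout (rather than merely satisfying $\Phi^*\bar\lambda^p \neq 0$ on the relative complex) and that the sup-mass gap $\delta_0$ holds uniformly after any small deformation needed to control the domain. Once the covering structure of Lemma~\ref{lemm:cover.cF.bF} is in place, both of these follow from Marques--Neves' \cite[Theorem 6.1]{MarquesNeves:posRic} and Aiex's \cite[Appendix A]{Aiex:ellipsoids} almost verbatim, the only genuinely new input being the trichotomy of Section~\ref{sec:geo.net} which was used to ensure $\cS$ is finite.
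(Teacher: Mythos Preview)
Your identification of the candidate set $\cS$ is where the argument breaks. You take $\cS$ to be the (finite) set of multiplicity-weighted sums of $\gamma_1,\gamma_2,\gamma_3$ with total length $L$, justifying this via $\bC_{\mathrm{PT}} \subset \bC_{\mathrm{AP}}$ and Theorem~\ref{theo:good.metric.final}~(2). But that inclusion goes the wrong way for what you need: it says phase-transition limits lie in the Almgren--Pitts critical set, not that every Almgren--Pitts critical varifold is a phase-transition limit. At the decisive final step, after restricting to the subcomplexes $Y_i$ and applying Proposition~\ref{prop:am.vary.dom} to the $\Psi_i := \Phi_i|_{Y_i}$, alternative~(1) hands you a stationary integral $V \in \bC(\{\Psi_i\})$ with $\#\sing V \leq 5^{2p+3}$ and $\Vert V\Vert(\SS^2) = L$. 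This $V$ is a priori only a \emph{geodesic net}; nothing forces it to be a union of closed geodesics, so it need not lie in your $\cS$ and the covering yields no contradiction. (The same gap already appears in your compactness step ``a subsequential limit would produce a critical varifold in $\cS$'': pull-tight gives stationarity, not the closed-geodesic structure.)

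The paper instead takes $\cS := \{V \in \bar\cS^{\Lambda'}(g_\mu) : \Vert V\Vert(\SS^2) = L\}$ with $\Lambda' = L + 5^{2p+3}$, i.e.\ \emph{all} stationary integral $1$-varifolds of mass $L$ with at most $5^{2p+3}$ singular points. This set is not known to be finite, and your Diophantine count does not apply to it. What is used instead is that Theorem~\ref{theo:good.metric.final}~(3) (no not-everywhere-tangential Jacobi fields along any $S \in \cS^\Lambda(g_\mu)$) forces alternative~(3) of the trichotomy Theorem~\ref{theo:geodesic.network.trichotomy}, giving $\Lim^{(N)}(\cS) = \emptyset$ for some $N = N(p)$. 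This is precisely the hypothesis of Lemma~\ref{lemm:cover.cF.bF}, which is engineered for such possibly-infinite sets rather than just finite ones. The $V$ produced at the end then lies in this larger $\cS$ by construction, and the contradiction with the covering goes through. Your closing remark that the trichotomy ``was used to ensure $\cS$ is finite'' misreads its role: it yields only the absence of $N$-th iterated limit points, which is strictly weaker than finiteness but is exactly what the covering lemma needs.
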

\begin{proof}
Assume that, for the sake of contradiction, that 
\[
\omega_p(\SS^2,g_\mu)= \omega_{p+1}(\SS^2,g_\mu)
\]
for some $\mu$. We will show this contradicts the Lusternik--Schnirelmann category zero property of a certain space of cycles, as arranged for by Theorem \ref{theo:good.metric.final} and Lemma \ref{lemm:cover.cF.bF}.

By Corollary \ref{coro:exist-cub-complex-p-width} there exists $X\subset I^{2p+3}$ with corresponding homotopy class $\Pi$ so that 
\begin{equation}\label{eq:fix.X.LS.arg}
\bL_\textrm{AP}(\Pi,g_\mu) = \omega_{p+1}(\SS^2,g_\mu). 
\end{equation}
Define $\Lambda' := \bL_\textrm{AP}(\Pi,g_\mu) + 5^{2p+3}$, $\Lambda := \Lambda'+1$, and
\[
\cS : = \{ V \in \bar \cS^{\Lambda'}(g_\mu) : \Vert V \Vert = \bL_\textrm{AP}(\Pi,g_\mu) \} \subset \cI \cV_1(M)
\]
Note that $\Lambda$ is bounded from above depending on $p$, by Lemma \ref{lemm:discrete-widths}.  By (3) in Theorem \ref{theo:good.metric.final}, alternative (3) of Theorem \ref{theo:geodesic.network.trichotomy} is the only one that can hold and implies that there is $N = N(p)$ so that 
\[
\Lim^{(N)}(\cS) = \emptyset. 
\]
Fix $\eps>0$ so that every continuous map $\Phi : \SS^1\to \cZ_1(M;\ZZ_2)$ with $\Phi(\SS^1) \subset B_{\eps}^\cF(T)$ for some $T \in \cZ_1(M;\ZZ_2)$ is homotopically trivial (this is possible thanks to \cite[Proposition 3.3]{MarquesNeves:posRic}). Since $\cS$ is compact in the $\bF$-topology (by Allard's integral compactness theorem \cite[Remark 42.8]{Simon83}), we can apply Lemma \ref{lemm:cover.cF.bF} and Corollary \ref{coro:cover.cF.bF} to $\cS$ and $\eps>0$ just chosen, yielding $\{V_1,\dots,V_k\}\subset \cS$, $\eta_1,\dots,\eta_k,\eps_1,\dots,\eps_k>0$ with $0<\eps_j < \eps$, 
\begin{equation}\label{eq:LS.arg.cS.cover}
\cS \subset \bigcup_{j=1}^k B_{\eta_j}^\bF(V_j),
\end{equation}
and
\begin{equation}\label{eq:LS.arg.cS.cover.2}
	\begin{gathered}
		\text{for any continuous } \Phi : \SS^1 \to \cZ_1(M;\ZZ_2) \text{ satisfying } \\
		|\Phi(t)| \subset \cup_{j=1}^k B_{2\eta_j}^\bF(V_j) \text{ for all } t \in \SS^1, \\
		\text{we have } \Phi(\SS^1) \subset B_\eps^{\cF}(T) \text{ for some } T \in \cT(\{V_1, \ldots, V_k\}).
	\end{gathered}
\end{equation}
We obtain a contradiction as in the proof of \cite[Theorem 6.1]{MarquesNeves:posRic}.  By Proposition \ref{prop:pull.tight} we can choose a minimizing sequence $\{\Phi_i\}_{i=1}^\infty\subset \Pi$ so that every element of $\bC(\{\Phi_i\})$ is stationary. By \cite[Corollary 3.9]{MarquesNeves:posRic} we can assume that $\Phi_i : X \to \cZ_1(M;\bM;\ZZ_2)$ is continuous (i.e., with respect to the mass topology).

For $\{\ell_i\}_{i=1}^\infty\subset \NN$ to be chosen, define $Y_i \subset X(\ell_i)$ to be the subcomplex of $X(\ell_i)$ consisting of cells $\alpha \in X(\ell_i)$ so that 
\[
|\Phi_i(x)| \not \in \bigcup_{j=1}^k B_{\eta_j}^\bF(V_j)
\]
for every vertex $x\in\alpha_0$. We fix $\ell_i$ sufficiently large so that (i) if $x \in \overline{X\setminus Y_i}$, then
\[
|\Phi_i(x)| \in \bigcup_{j=1}^k B_{2\eta_j}^\bF(V_j)
\]
and (ii) the fineness\footnote{Recall that the fineness of $\phi : W_0\to \cZ_1(M;\ZZ_2)$ is the maximum of $\bM(\phi(x)-\phi(y))$ over all adjacent vertices $x,y\in W_0$; see \cite[p.\ 141]{Pitts} and \cite[p.\ 583]{MarquesNeves:posRic}.} of $\Phi_i$ restricted to $X(\ell_i)_0$ is less than $1/i$.

By the argument in \cite[Claim 6.3]{MarquesNeves:posRic}, $\Psi_i: = (\Phi_i)|_{Y_i}$ are $p$-sweepouts for $i$ sufficiently large, because whenever $\gamma : \SS^1 \to Z_i := \overline{X\setminus Y_i}$ is continuous, we have
\[
|\Phi_i \circ \gamma(t)| \in \bigcup_{j=1}^k B_{2\eta_j}^\bF(V_j)
\]
for all $t \in \SS^1$, so $\Phi_i\circ\gamma$ is homotopically trivial by our choice of $\eps$ and by \eqref{eq:LS.arg.cS.cover.2}.

At this point, a contradiction follows as in \cite[pp.\ 604--5]{MarquesNeves:posRic}. Indeed:
\[
\omega_p(\SS^2,g_\mu) \leq \limsup_{i\to\infty}\sup_{x\in Y_i} \bM(\Psi_i(x)) \leq \omega_{p+1}(\SS^2,g_\mu) = \omega_p(\SS^2,g_\mu).
\]
The first inequality holds because the $\Psi_i$ are $p$-sweepouts. The second inequality holds because the $\Psi_i$ are restrictions of the $\Phi_i$ that form a minimizing sequence for $\Pi$ and \eqref{eq:fix.X.LS.arg} holds. The equality on the right is the hypothesis we made at the start of the proof. Thus:
\[ \limsup_{i \to \infty} \sup_{x \in Y_i} \bM(\Psi_i(x)) = \omega_p(\SS^2,g_\mu). \]
Because $\Psi_i$ are $p$-sweepouts, it follows that alternative (2) of Proposition \ref{prop:am.vary.dom} cannot occur. Thus, by alternative (1), there must exist $V \in \bC(\{\Psi_i\})\cap \cS^{\Lambda}(g_\mu) \subset \cS$. By (ii) in the choice of $\ell_i$ and \cite[p.\ 66]{Pitts}, we can pass to a subsequence and find $x_i \in (Y_i)_0 \subset X(\ell_i)_0$ with 
\[
\lim_{i\to\infty}\bF(|\Phi_i(x_i)|,V) = 0.
\]
On the other hand, since $x_i \in (Y_i)_0$, we have $|\Phi_i(x_i)| \not \in \bigcup_{j=1}^k B_{\eta_j}^\bF(V_j)$ for all $i$, so
\[
V \not \in \bigcup_{j=1}^k B_{\eta_j}^\bF(V_j).
\]
This contradicts \eqref{eq:LS.arg.cS.cover}, completing the proof. 
\end{proof}

\section{The $p$-widths of a round 2-sphere} \label{sec:sphere.p.widths}

\begin{proof}[Proof of Theorem \ref{theo:sphere.pwidths}]
Rather than compute $\omega_p(\SS^2, g_0)$ one $p$ at a time, we will show that
\begin{equation} \label{eq:main.restated}
	\omega_{n^2}(\SS^2, g_0) = \ldots = \omega_{(n+1)^2-1}(\SS^2, g_0) = 2 \pi n, \text{ for every } n \in \NN^*.
\end{equation}
So, fix $n \in \NN^*$ and also $\mu_1=\mu_1((n+1)^2-1) > 0$ as in Lemma \ref{lemm:discrete-widths}. Assume that 
\[
0 < \mu < \min\{\mu_1,1/2n\}.
\]
\begin{claim} \label{clai:main}
	For $m \in \{1,\dots,n\}$, we have
	\begin{multline*}
		\{\omega_p(\SS^2,g_\mu) : p = 1,\dots, (m+1)^2-1\} =\\
		 \{2\pi(n_1+n_2+n_3) + \mu(n_2+2n_3) : (n_1,n_2,n_3) \in \NN^3\} \cap (0,2\pi m+1].
	\end{multline*} 
\end{claim}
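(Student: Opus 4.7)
\textbf{Proof plan for Claim \ref{clai:main}.} The plan is to prove the claim by induction on $m \in \{1,\dots,n\}$, treating it essentially as a counting/pigeonhole argument assembled from three ingredients: the upper bound $\omega_p(\SS^2,g_\mu) \leq 2\pi\lfloor\sqrt{p}\rfloor +1$ from Lemma \ref{lemm:discrete-widths}(1), the quantization statement from Lemma \ref{lemm:discrete-widths}(2), and the strict monotonicity $\omega_p<\omega_{p+1}$ from Proposition \ref{prop:LS}. Denote
\[ S_m := \{2\pi(n_1+n_2+n_3)+\mu(n_2+2n_3):(n_1,n_2,n_3)\in\NN^3\}\cap(0,2\pi m+1]. \]
First I would enumerate $S_m$. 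Since $\mu<1/(2n)$, any element of $S_m$ with $k:=n_1+n_2+n_3$ satisfies $2\pi k \leq 2\pi k+\mu(n_2+2n_3)\leq 2\pi k+2\mu k$, so membership in $(0,2\pi m+1]$ forces $1\leq k\leq m$. For each such $k$, the values $j:=n_2+2n_3$ take exactly the $2k+1$ integers $0,1,\dots,2k$, and distinct $(k,j)$-pairs give distinct values of $2\pi k+\mu j$ (as $\mu$ is irrational with respect to $2\pi$ in the relevant range — or more concretely, $\mu\cdot 2k<1<2\pi$). Hence
\[ \#S_m=\sum_{k=1}^{m}(2k+1)=m^2+2m=(m+1)^2-1. \]

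For the base case $m=1$, the three elements $2\pi,\,2\pi+\mu,\,2\pi+2\mu$ of $S_1$ all lie in $(0,2\pi+1]$ since $2\mu<1$. By Lemma \ref{lemm:discrete-widths}(1), $\omega_1,\omega_2,\omega_3\leq 2\pi+1$, and since $2\pi+1<2\pi\lfloor\sqrt{p}\rfloor+2$ for $p\leq 3$, Lemma \ref{lemm:discrete-widths}(2) forces each $\omega_p$ into the quantized set, hence into $S_1$. Proposition \ref{prop:LS} makes $\omega_1<\omega_2<\omega_3$ strictly increasing, giving three distinct elements in the three-element set $S_1$, so they exhaust $S_1$.

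For the inductive step, assume $\{\omega_p:p=1,\dots,m^2-1\}=S_{m-1}$. Then $\omega_{m^2-1}=\max S_{m-1}=2\pi(m-1)+2\mu(m-1)$. For any $p$ with $m^2\leq p\leq (m+1)^2-1$, we have $\lfloor\sqrt{p}\rfloor\leq m$, so by Lemma \ref{lemm:discrete-widths} the width $\omega_p$ lies in the quantized set and satisfies $\omega_p\leq 2\pi m+1$; combined with strict monotonicity $\omega_p>\omega_{m^2-1}=2\pi(m-1)+2\mu(m-1)$, we get $\omega_p\in S_m\setminus S_{m-1}$. But the enumeration above shows $S_m\setminus S_{m-1}=\{2\pi m+\mu j:j=0,1,\dots,2m\}$ has exactly $2m+1$ elements, matching the number of indices $p\in\{m^2,\dots,(m+1)^2-1\}$. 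Strict monotonicity then forces $\{\omega_{m^2},\dots,\omega_{(m+1)^2-1}\}=S_m\setminus S_{m-1}$, completing the induction.

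I do not expect any serious obstacle here: all the analytic work has been done (the quantization in Lemma \ref{lemm:discrete-widths} rests on Theorem \ref{theo:SG-lim} via the sine--Gordon limit, and the strict monotonicity in Proposition \ref{prop:LS} rests on the Lusternik--Schnirelmann covering lemma together with the trichotomy theorem), so this is a bookkeeping argument. The only mild care needed is verifying that the separation gap $\min(S_m\setminus S_{m-1})-\max S_{m-1}=2\pi-2\mu(m-1)$ is positive (guaranteed by $\mu<\mu_1\leq 1/(2n)$ and $m\leq n$), ensuring the new widths really are new. Once the claim is in hand, Theorem \ref{theo:sphere.pwidths} follows by reading off $\omega_{n^2}(\SS^2,g_\mu)=2\pi n$ from the $k=m=n$, $j=0$ element and using Lemma \ref{lem:p-width-cont-metric} to send $\mu\to 0$, with the sandwich $\omega_{n^2}\leq\omega_p\leq\omega_{(n+1)^2-1}$ collapsing the intermediate widths to $2\pi n$.
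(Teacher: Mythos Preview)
Your proof is correct and uses the same three ingredients as the paper: the upper bound and quantization from Lemma \ref{lemm:discrete-widths} (together with Corollary \ref{coro:exist-cub-complex-p-width} to pass from $\bL_{\textrm{AP}}(\Pi)$ to $\omega_p$), the strict monotonicity from Proposition \ref{prop:LS}, and the enumeration $\#S_m=\sum_{k=1}^m(2k+1)=(m+1)^2-1$. The paper packages this as a direct argument for each fixed $m$: one shows $\{\omega_1,\dots,\omega_{(m+1)^2-1}\}\subset S_m$ and that both sides have cardinality $(m+1)^2-1$, hence equality. Your induction on $m$ is an unnecessary organizational layer (the inductive hypothesis is only used to locate $\omega_{m^2-1}$, which the direct cardinality match makes redundant), but it is harmless and the argument goes through as written.
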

\begin{proof}[Proof of claim]
	Lemma \ref{lemm:discrete-widths} implies that one inclusion above holds:
	\begin{multline*}
		\{\omega_p(\SS^2,g_\mu) : p = 1,\dots, (m+1)^2-1\} \subset \\
		 \{2\pi(n_1+n_2+n_3) + \mu(n_2+2n_3) : (n_1,n_2,n_3) \in \NN^3\} \cap (0,2\pi m+1].
	\end{multline*} 
	Note that by Proposition \ref{prop:LS},
	\[
		\#\{\omega_p(\SS^2,g_\mu) : p = 1,\dots, (m+1)^2-1\}=(m+1)^2-1.
	\]
	Furthermore, for $j \in \{1,\dots,n\}$,
	\[
		\{\mu(n_2+2n_3) : (n_1,n_2,n_3)\in \NN^3, n_1+n_2+n_3=j\} = \{0,\mu,\dots,2j\mu\}
	\] 
	and since $\mu<1/2n$, we find
	\begin{align*}
		& \# \Big( \{2\pi(n_1+n_2+n_3) + \mu(n_2+2n_3) : (n_1,n_2,n_3) \in \NN^3\} \cap (0,2\pi m+1] \Big) \\
		& = \sum_{j=1}^m (2j+1) = (m+1)^2-1. 
	\end{align*}
	This completes our proof of the claim.
\end{proof}

Now a simple induction argument on $m \in \{1,\dots,n\}$ shows that
\[
2\pi m \leq \omega_p(\SS^2,g_\mu) \leq (2\pi + 2\mu) m \text{ for all } p \in \{m^2,\dots,(m+1)^2-1\}. 
\]
Since the $p$-widths are continuous in the metric by Lemma \ref{lem:p-width-cont-metric}, we can send $\mu\to 0$ above to obtain
\[ \omega_p(\SS^2, g_0) = 2 \pi m \text{ for all } p \in \{ m^2, \ldots, (m+1)^2-1 \}, \]
and all $m \in \NN^*$, as required. The fact that these values are attained by a sweepout of homogeneous polynomials follows from Proposition \ref{prop:upper.bds}.
\end{proof}

\section{Open questions} \label{sec:open}

\begin{enumerate}
	\item \textbf{Morse index}. 
		In ambient dimensions $n+1 \geq 3$, the Morse index of $\Sigma_p$ in Theorem \ref{theo:ap.existence} has been shown by Marques--Neves \cite{MarquesNeves:multiplicity} and Li \cite{Li:morse} to satisfy:
		\begin{equation} \label{eq:ap.index}
			\operatorname{index}_g(\Sigma_p) \leq p,
		\end{equation} 
		provided we consider variations supported away from $\bar \Sigma_p \setminus \Sigma_p$ (see also Gaspar \cite{Gaspar}, Hiesmayr \cite{Hiesmayr} for the phase transition approach). Moreover, if $3 \leq n+1 \leq 7$, the work of Zhou \cite{Zhou:multiplicity-one} and Marques--Neves \cite{MarquesNeves:uper-semi-index} (see also \cite{CM:3d} for the phase transition approach) shows
		\begin{equation} \label{eq:ap.index.bumpy}
			\operatorname{index}_g(\Sigma_p) = p,
		\end{equation}
		for generic metrics $g$ on $M$. 

		It would be interesting to relate the Morse index of the geodesics $\sigma_j$ in Theorem \ref{theo:immersed.geo.min.max} to $p$, similarly to \eqref{eq:ap.index} or \eqref{eq:ap.index.bumpy}. In two-dimensions, such a relationship is somewhat more complicated, since  points of non-embeddedness will contribute to the index count. We conjecture (based on the ideas contained in \cite{dPKW:neumann.mult,Hiesmayr,Gaspar,Mantoulidis,CM:3d,LiuWei}) that if $(M^2,g)$ is bumpy (i.e., no immersed geodesic admits a nontrivial normal Jacobi field) then an expression of the following kind should hold:
		\[
			\sum_{j=1}^{N(p)} \Index(\sigma_{p,j}) + \sum_{i \leq j}c_{ij} = p
		\]
		where $c_{ij}$ is a function of the (self) intersections between $\sigma_{p,i}$ and $\sigma_{p,j}$. For example, if $\sigma_1$ is a figure-eight type curve, we expect that $c_{11} = 1$. Note that the resolution of such a conjecture would likely yield an alternative proof of Theorem \ref{theo:sphere.pwidths} that avoids any reference to Almgren--Pitts theory.
	
	\item \textbf{Computing the min-max configurations}. What configuration of great circles can occur in $\mathbf{C}_{\textrm{PT}}(\tilde\Pi)$? Is $\bC_{\textrm{AP}}(\Pi) \setminus \bC_\textrm{PT}(\tilde\Pi)$ non-empty? If so, are there elements that are unions of great circles that are not attained by phase transition min-max? Are there non-trivial geodesic nets in $\bC_{\textrm{AP}}(\Pi)$?

		One expects (by Lusternik--Schnirelmann theory) that there should be non-trivial families of configurations, since the $p$-widths of the round two-sphere are not strictly increasing. It seems plausible that one possible configuration that can occur is the union of $\lfloor\sqrt{p}\rfloor$ great circles intersecting in antipodal points at equally distributed angles (like an orange). Note that phase transition solutions with the this symmetry can be constructed by a reflection argument (this was observed by Guaraco, cf.\ \cite[\S 6]{Guaraco:ACnotes}). To this end, it would be interesting to compute the Morse index  and nullity of these solutions. 
	
		One can draw a parallel between this and recent work of Kapouleas--Wiygul \cite{KaouleasWiygul} concerning the Morse index (nullity) of the Lawson surfaces\footnote{cf.\ \cite{Lawson,Kapouleas:survey,Brendle:survey}} $\xi_{g,1}$ (and it seems likely the techniques used in \cite{KaouleasWiygul} could be applied to compute the index and nullity of the reflection solutions). Other potential candidates for (1) above would be an equator with multiplicity $\lfloor \sqrt{p}\rfloor$; it seems possible that this could arise from phase transition solutions corresponding to a non-trivial solution to the Jacobi--Toda system on the equator (cf.\ \cite{dPKW:neumann.mult,delPinoKowalczykWeiYang:interface}). It would be interesting to generalize the results \cite{KaouleasWiygul} to the full Lawson family $\xi_{m,k}$, since one can view them as a desingularization of $k+1$ great spheres intersecting in a common equatorial circle (with equal angles); cf.\ \cite{Kapouleas:survey}. As such, Theorem \ref{theo:immersed.geo.min.max} could suggest that (some of) the $p$-widths of the round three-sphere might be attained by $\xi_{m,k}$ for $k \sim p^{1/3}$ and $m$ large (but this is quite speculative). Determining the index of $\xi_{m,k}$ would be an interesting first step in understanding if this is a reasonable suggestion. 
		
	\item \textbf{Other double-well potentials}. We do not know whether Theorem \ref{theo:immersed.geo.min.max} can be proven using the standard double-well form of the phase transition regularization, or any other double-well potential that isn't trivially related to the sine-Gordon one. 

		 Note that is not the first work in the theory of phase-transitions that may or may not crucially rely on a particular double-well potential. For instance, the precise form of the potential also plays an important role for Taubes \cite{Taubes:first.second.order}; see also the more recent work of Pigati--Stern \cite{PigatiStern} on codimension-two phase transition min-max (\cite[Remark 1.2]{PigatiStern}). (These potentials are different than ours.)
	
	\item \textbf{Other regularizations}. Can the prescribed mean curvature regularization used in \cite{Zhou:multiplicity-one} (cf.\ \cite{ZZ:cmc,ZZ:pmc,ChengZhou}) be used to prove Theorem \ref{theo:immersed.geo.min.max}? 

	\item \textbf{Other surfaces} It would be interesting to study the $p$-widths on other surfaces (e.g., flat tori and hyperbolic surfaces), by combining Theorem \ref{theo:immersed.geo.min.max} with the knowledge that $a(1)=\sqrt{\pi}$. See also \cite{Liokumovich:short.cycles}.  The case of surfaces with boundary (e.g., flat disks) would also be interesting to consider, e.g., as discussed in Remark \ref{rema:guth.bound}.
\end{enumerate}

\appendix

\section{Metric space notions}\label{app:metric}

For $S\subset (X,d)$, we denote:
\begin{itemize}
	\item $\Lim(S)$ to be the set of limit points of $S$. Recall that $\Lim(S)$ is closed by a standard diagonal argument.
	\item For $x \in X$, $D(x,S) : = \{d(y,x) : y\in S\}$. 
\end{itemize}

We record the following elementary lemmas:

\begin{lemm}\label{lemm:limit.dist.dist.limit}
For $S \subset (X,d)$ compact, $\Lim(D(x,S)) \subset D(x,\Lim(S))$. 
\end{lemm}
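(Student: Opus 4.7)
The plan is to unwind both sides of the claimed inclusion using sequential compactness of $S$ and continuity of the distance function $d(\cdot, x)$.

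First I would pick an arbitrary $r \in \Lim(D(x,S))$. By definition of a limit point, there exists a sequence $\{r_n\}_{n=1}^\infty \subset D(x,S) \setminus \{r\}$ with $r_n \to r$. For each $n$, choose $y_n \in S$ realizing $r_n = d(y_n, x)$. Since $S$ is compact, after passing to a subsequence (not relabeled), $y_n \to y$ for some $y \in S$. Continuity of $d(\cdot, x) : X \to \RR$ then gives $r_n = d(y_n, x) \to d(y, x)$, so $r = d(y, x)$.

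The only remaining step is to upgrade $y \in S$ to $y \in \Lim(S)$, and this is where the condition $r_n \neq r$ is used. Indeed, if $y_n = y$ for some $n$, then $r_n = d(y_n, x) = d(y, x) = r$, contradicting $r_n \in D(x,S) \setminus \{r\}$. Thus $y_n \in S \setminus \{y\}$ for every $n$, and $y_n \to y$, so $y$ is by definition a limit point of $S$, i.e., $y \in \Lim(S)$. Hence $r = d(y, x) \in D(x, \Lim(S))$, completing the inclusion.

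There is essentially no obstacle here; the argument is a short compactness-plus-continuity exercise. The only minor subtlety to keep straight is using the hypothesis $r_n \neq r$ (inherent to the ``limit point'' rather than ``closure'' definition) to force $y_n \neq y$ along the subsequence, which is what promotes $y$ from a generic element of $S$ to a limit point of $S$.
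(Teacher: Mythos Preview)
Your proof is correct and essentially identical to the paper's: both pick a sequence $r_n \to r$ with $r_n \neq r$, realize $r_n = d(y_n,x)$, use compactness of $S$ to pass to a subsequence $y_n \to y$, and then observe that $r_n \neq r$ forces $y_n \neq y$, so $y \in \Lim(S)$.
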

\begin{proof}
Suppose that $t \in \Lim(D(x,S))$. Fix $\{t_i\}_{i=1}^\infty\subset D(x,S)\setminus\{t\}$ with $t_i\to t$ and correspondingly $y_i \in S$ with $d(y_i,x) = t_i$. By compactness of $S$, we can assume that $y_i\to y \in S$. Since $t_i\neq t$, we see that $y_i\neq y$ for all $i$. Hence, $y\in\Lim(S)$ so $t=d(y,x) \in D(x,\Lim(S))$. 
\end{proof}

\begin{coro}\label{coro:countable}
If $S\subset (X,d)$ is compact with $\Lim^{(N)}(S) = \emptyset$ for some $N \in\NN$, then $D(x,S)$ is at most countable for all $x \in X$. 
\end{coro}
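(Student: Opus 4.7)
The plan is to reduce the claim to a statement purely about subsets of $\RR$: since $D(x,S)$ is a bounded closed subset of $[0,\infty)$ (being the continuous image of the compact set $S$ under $y \mapsto d(x,y)$), it suffices to prove first that its $N$-th iterated limit set is empty, and then that any subset of $\RR$ with this property is countable.

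For the first step, I would show by induction on $k \in \{0,1,\dots,N\}$ that
\[
\Lim^{(k)}(D(x,S)) \subset D(x, \Lim^{(k)}(S)).
\]
The base case $k=0$ is tautological. For the inductive step, I would note that $\Lim^{(k)}(S)$ is compact, since $\Lim(\,\cdot\,)$ of any set is closed (as the excerpt recalls just before Lemma \ref{lemm:limit.dist.dist.limit}) and a closed subset of the compact set $S$ is compact. Thus Lemma \ref{lemm:limit.dist.dist.limit} is applicable to $\Lim^{(k)}(S)$, and combined with the trivial monotonicity $A\subset B \Rightarrow \Lim(A)\subset \Lim(B)$ this chains to
\[
\Lim^{(k+1)}(D(x,S)) \subset \Lim\bigl(D(x,\Lim^{(k)}(S))\bigr) \subset D\bigl(x,\Lim^{(k+1)}(S)\bigr).
\]
Setting $k=N$ gives $\Lim^{(N)}(D(x,S)) \subset D(x,\emptyset) = \emptyset$.

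For the second step, I would use the following Cantor--Bendixson style claim: any subset $T$ of a second countable metric space (such as $\RR$) with $\Lim^{(N)}(T) = \emptyset$ is at most countable. This is proved by induction on $N$. When $N=0$ one has $T=\emptyset$. When $N\geq 1$, the set $\Lim(T)$ satisfies $\Lim^{(N-1)}(\Lim(T)) = \emptyset$, so by the inductive hypothesis it is countable; meanwhile $T\setminus \Lim(T)$ is the set of isolated points of $T$, and such a set in a second countable space is countable because one may select, for each isolated point, a distinct element of a fixed countable basis of the ambient topology that meets $T$ only at that point.

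I don't foresee any serious obstacle. The only mild subtlety is ensuring at each inductive step of Step 1 that $\Lim^{(k)}(S)$ is compact, so that Lemma \ref{lemm:limit.dist.dist.limit} can be invoked; this follows cleanly from the fact that $\Lim$ always produces a closed set and closed subsets of compact sets are compact.
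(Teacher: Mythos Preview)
Your proposal is correct and follows essentially the same approach as the paper: iterate Lemma~\ref{lemm:limit.dist.dist.limit} to get $\Lim^{(N)}(D(x,S))=\emptyset$, then invoke the Cantor--Bendixson-type fact that a subset of $\RR$ with empty $N$-th iterated limit set is countable. The paper compresses both steps into one sentence each (and for the second uses the equivalent formulation that an uncountable subset of $\RR$ has uncountably many limit points), whereas you spell out the inductions explicitly.
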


\begin{proof}
By Lemma \ref{lemm:limit.dist.dist.limit}, $\Lim^{(N)}(D(x,S)) = 0$. An uncountable subset of $\RR$ has uncountably many limit points, so $D(x,S)$ must be at most countable. 
\end{proof}

\section{Geometric measure theory} \label{sec:GMT}

Consider $(M,g)$ a closed oriented Riemannian $2$-manifold isometrically embedded in some $\RR^J$. The relevant spaces considered here are (see also \cite[\S 2.1]{MarquesNeves:posRic}): 
\begin{itemize}
\item the space $\bI_k(M;\ZZ_2)$ of $k$-dimensional mod $2$ flat chains in $\RR^J$ with support in $M$,
\item the space $\cZ_1(M;\ZZ_2)\subset \bI_1(M;\ZZ_2)$ of cycles,
\item the space $\cV_1(M)$ of $1$-varifolds on $M$, and
\item the space $\cI\cV_1(M)$ of integral rectifiable $1$-varifolds on $M$.
\end{itemize}
For $T \in \bI_1(M;\ZZ_2)$, denote by $|T|,\Vert T\Vert$ the associated integral varifold and Radon measure on $M$; similarly, for $V \in \cV_1(M)$, write $\Vert V\Vert$ for the associated Radon measure on $M$. We will use the \emph{flat metric} $\cF(\cdot)$ on $\bI_k(M;\ZZ_2)$ and write $\bM(\cdot)$ for the \emph{mass functional}. We will also use the $\bF$-metrics on $\cV_1(M)$ and $\bI_1(M;\ZZ_2)$ (cf.\ \cite[p.\ 66]{Pitts}). We will give $\cZ_1(M;\ZZ_2)$ the flat metric and write $\cZ_1(M;\bF;\ZZ_2)$, $\cZ_1(M;\bM;\ZZ_2)$ when we use the $\bF$ or $\bM$ metrics. 

If $K\subset M$ is a countably $1$-rectifiable, $\cH^1$-measurable set and $\theta$ is a $\cH^1$-measurable function on $M$ with $\int_M \theta d\cH^1 < \infty$, we will write $\bv(K,\theta)$ for the associated integral $1$-varifold, cf.\  \cite[\S 4]{Simon83}. If $V \in \cI\cV_1(M)$ we will write $\reg V$ for the regular set of $V$, defined to be $p\in M$ at which there is a neighborhood $U$ so that $V\restr G_1(U) = \bv(\sigma,\theta_0)$ for $\sigma$ a properly embedded $C^1$-curve in $U$ and $\theta_0 \in \NN^*$. The complementary set $\sing V := \supp V \setminus \reg V$ is the singular set.

\begin{prop} \label{prop:geodesic.network.stationary.varifold.limit}
	Let $(M, g)$ be a closed 2-dimensional manifold. Suppose $S_1, S_2, \ldots \in \cI \cV_1(M)$ are $g$-stationary and $S_i \rightharpoonup S_\infty$ as $i \to \infty$. Then:
	\begin{equation} \label{eq:geodesic.network.stationary.varifold.limit.supp}
		\supp S_\infty = \lim_{i \to \infty} \supp S_i,
	\end{equation}
	\begin{equation} \label{eq:geodesic.network.stationary.varifold.limit.sing}
		\sing S_\infty \subset \lim_{i \to \infty} \sing S_i,
	\end{equation}
	in the Hausdorff sense.
\end{prop}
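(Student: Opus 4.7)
\medskip

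\textbf{Proof plan.} For the support equality, the plan is to prove both inclusions separately and combine them with the fact that $\liminf \subset \limsup$. The inclusion $\supp S_\infty \subset \liminf_i \supp S_i$ is immediate from weak convergence: if $p \in \supp S_\infty$ and $r>0$, then $\|S_\infty\|(B_r(p))>0$, so by the weak-$*$ lower semicontinuity of mass on open sets, $\|S_i\|(B_r(p)) > 0$ for all large $i$, i.e.\ $B_r(p) \cap \supp S_i \neq \emptyset$. For the reverse inclusion $\limsup_i \supp S_i \subset \supp S_\infty$, the key ingredient is the monotonicity formula for stationary integral $1$-varifolds on $(M,g)$ (see \cite[\S2]{AllardAlmgren:1varifold}): there exists $r_0, c > 0$ depending only on $(M,g)$ such that, for every $g$-stationary $V \in \cI \cV_1(M)$ and $q \in \supp V$, $\|V\|(\bar B_r(q)) \geq c r$ for all $r \in (0, r_0)$. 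Then, if $p_i \in \supp S_i$ converge to $p$, for any $r \in (0, r_0)$ and $i$ large, $\bar B_r(p) \supset \bar B_{r/2}(p_i)$, and the upper semicontinuity of mass on closed sets gives
\[
\|S_\infty\|(\bar B_r(p)) \geq \limsup_i \|S_i\|(\bar B_r(p)) \geq \limsup_i \|S_i\|(\bar B_{r/2}(p_i)) \geq c r/2,
\]
so $p \in \supp S_\infty$.

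For the singular set inclusion, I would argue by contradiction. Suppose there is $p \in \sing S_\infty$ that is not in $\liminf_i \sing S_i$; then, along a subsequence (not relabeled), $\sing S_i \cap B_r(p) = \emptyset$ for some $r>0$ and all $i$. By the definition of $\reg$, each $S_i \restr B_r(p)$ is supported on a properly embedded $C^1$ $1$-manifold with locally constant integer multiplicity, and by the first variation condition each component is a geodesic. The uniform mass bound coming from $S_i \rightharpoonup S_\infty$ together with the monotonicity formula gives a uniform upper bound, independent of $i$, on the number of connected components of $\supp S_i \cap B_{r/2}(p)$ counted with multiplicity. Passing to a subsequence, fix the number of components and their multiplicities, and write $\supp S_i \cap B_{r/2}(p) = \sqcup_k \gamma_{i,k}$ with integer weights $m_k$.

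By ODE compactness of geodesics in $(M,g)$, each family $\{\gamma_{i,k}\}_i$ subconverges in $C^1_{\mathrm{loc}}$ to a geodesic arc $\gamma_{\infty,k} \subset \bar B_{r/2}(p)$. The crucial step is to show that the limiting arcs $\gamma_{\infty,k}$ are either pairwise disjoint or pairwise equal: by the uniqueness of geodesics given initial data, two distinct geodesics in a $2$-manifold intersect only transversally, and transverse intersection is $C^1$-stable under perturbation. Since the approximators $\gamma_{i,k}, \gamma_{i,k'}$ are disjoint for every $i$ (by the $\reg$ condition), their $C^1$-limits cannot meet transversally, so they must coincide wherever they meet. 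Merging coincident arcs and summing multiplicities, we conclude that $S_\infty \restr B_{r'}(p)$ is a disjoint union of embedded geodesic arcs with integer multiplicities for some $r' \leq r/2$, hence $p \in \reg S_\infty$, contradicting $p \in \sing S_\infty$.

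The main obstacle will be Step~3, and in particular justifying that the possibly-overlapping limiting geodesic arcs globally yield a regular varifold near $p$: one needs the stability-of-transverse-intersection argument combined with careful bookkeeping of multiplicities when arcs coalesce. Everything else reduces to standard consequences of weak convergence and the classical monotonicity formula for stationary $1$-varifolds.
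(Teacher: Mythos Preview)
Your proof is correct and follows the same approach as the paper: monotonicity for the support equality, and for the singular set inclusion, contradiction via the fact that near $p$ each $S_i$ consists of disjoint geodesic arcs whose limit must again be smooth. The paper compresses your Step~3 into one line (``such segments have curvature estimates, so their limit must be smooth too''), but your more detailed ODE-compactness and transverse-intersection-stability argument is exactly the content behind that sentence.
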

\begin{proof}
	Equation \eqref{eq:geodesic.network.stationary.varifold.limit.supp} is a well-known consequence of the monotonicity formula for stationary integral varifolds and holds true in all codimensions; see \cite[Section 2]{AllardAlmgren:1varifold}.
	
	Equation \eqref{eq:geodesic.network.stationary.varifold.limit.sing} is only true in this dimension. Suppose $p \in \sing S_\infty$ were not in $\lim_{i \to \infty} \sing S_i$. Then, there will exist some $\eps > 0$ such that $\sing S_i \cap B_\eps(p) = \emptyset$ for all $i$. Then, by \cite[Section 3]{AllardAlmgren:1varifold}, $\supp S_i \cap B_\eps(p)$ must consist of non-intersecting smooth geodesic segments. Such segments have curvature estimates, so their limit must be smooth too, violating $p \in \sing S_\infty$.
\end{proof}

\section{Phase transition regularity results} \label{sec:ac.regularity}

We will rely on the various general results concerning $\eps \to 0$ limits of solutions to \eqref{eq:ac.pde} on 2-dimensional Riemannian manifolds, which we now recall. 

The first result is Hutchinson--Tonegawa's compactness theorem, which concerns taking $\eps \to 0$ limits of arbitrary critical points with suitable $L^\infty$ and energy bounds and obtaining a limiting stationary integral 1-varifold.

\begin{prop}[{\cite[Theorem 1]{HutchinsonTonegawa00}, cf.\ \cite[Appendix B]{Guaraco}}]\label{prop:HT.theory}
	Suppose $(M, g_\infty)$ is a complete 2-dimensional Riemannian manifold, $\{g_i\}_{i=1}^\infty \subset \met(M)$ are complete metrics with $\lim_i g_i = g_\infty$ in $C^\infty_\textnormal{loc}(M)$, $U \subset M$ is open, $\{(u_i,\eps_i)\}_{i=1}^\infty\subset C^\infty_\textnormal{loc}(U)\times (0,\infty)$, $\lim_i \eps_i=0$, and each $u_i$ satisfies \eqref{eq:ac.pde} on $(U, g_i)$ with
\[
	\Vert u_i \Vert_{L^\infty(U)} \leq 1 \text{ and } (E_{\eps_i}\restr(U,g_i))[u_i] \leq E_0, 
\]
	for $i=1,2,\dots$ After passing to a subsequence, we have
	\begin{itemize}
		\item $\lim_i u_i = u_\infty$ in $L^1_\textnormal{loc}(U)$, $u_\infty \in BV_\textnormal{loc}(U)$, $u_\infty = \pm 1$ a.e.\ on $U$,
		\item $\lim_i V_{\eps_i}[u_i] \restr G_1(U)  = V^\infty$ for a stationary integral $1$-varifold $V^\infty \in \cI\cV_1(U)$,
		\item $\lim_i(h_0^{-1}E_{\eps_i}\restr (U',g_i))[u_i] = \Vert V^\infty\Vert(U')$ for all $U' \Subset U$,
		\item $\lim_i \{u_i=t\} \cap U' = \supp \Vert V^\infty\Vert \cap U'$ in the Hausdorff topolgy, for all $U' \Subset U$ and all $t \in (-1,1)$.
		\item the density (``multiplicity'') of $V^\infty$ is a.e. odd on $\partial^*\{u_\infty = +1\}\cap U$ and a.e. even on $\supp \Vert V^\infty\Vert \cap U \setminus \partial^* \{u_\infty = +1\}$. 
	\end{itemize}
\end{prop}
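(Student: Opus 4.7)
The strategy follows the Hutchinson--Tonegawa program \cite{HutchinsonTonegawa00}, adapted to varying Riemannian backgrounds as in \cite[Appendix B]{Guaraco}. The starting point is Modica's AM--GM inequality: if $\Phi(t) := \int_0^t \sqrt{2W(s)}\,ds$, then pointwise
\[
	|\nabla_{g_i}\Phi(u_i)| \;\leq\; \tfrac{\eps_i}{2}|\nabla_{g_i}u_i|^2 + \tfrac{1}{\eps_i}W(u_i),
\]
so the energy bound gives $\Phi(u_i) \in BV_{\textnormal{loc}}(U)$ uniformly. Standard BV compactness produces $\Phi(u_i) \to \Phi(u_\infty)$ in $L^1_{\textnormal{loc}}$, and from $|u_i| \leq 1$ together with continuity of $\Phi$ one recovers $u_i \to u_\infty$ in $L^1_{\textnormal{loc}}$. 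Fatou applied to $\int W(u_i)/\eps_i$ then forces $W(u_\infty)=0$ a.e., which by the structure of $W$ (wells at $\pm 1$) yields $u_\infty \in \{\pm 1\}$ a.e., and the BV bound descends to $u_\infty \in BV_{\textnormal{loc}}(U)$.

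Next, the varifolds $V_{\eps_i}[u_i]$ have mass uniformly bounded by $h_0^{-1}E_0$, so Allard's compactness yields subsequential weak-* convergence to some $V^\infty \in \cV_1(U)$. To upgrade this to stationarity, one uses the \emph{stress--energy tensor}
\[
	T_{\eps}[u] \;=\; \Big(\tfrac{\eps}{2}|\nabla u|^2 + \tfrac{W(u)}{\eps}\Big)g \;-\; \eps\, du\otimes du,
\]
whose divergence vanishes precisely when $u$ solves \eqref{eq:ac.pde}. Testing $\Div_{g_i}T_{\eps_i}[u_i]=0$ against compactly supported vector fields and passing to the limit, with the metric convergence $g_i \to g_\infty$ absorbed into lower-order remainders, shows $V^\infty$ is stationary in $(U,g_\infty)$ (modulo the following point about the discrepancy).

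The main obstacle is to prove \emph{rectifiability} and \emph{integrality} of $V^\infty$ and to control the \emph{discrepancy measure} $\xi_{\eps_i} := (\tfrac{\eps_i}{2}|\nabla u_i|^2 - \tfrac{W(u_i)}{\eps_i})\,d\Vol_{g_i}$. Here one invokes the monotonicity formula for the Allen--Cahn energy (derived from the stress--energy tensor together with the identity $\Div(x^j T^i_j)=T^i_i$ in Euclidean coordinates, with curvature corrections from the embedding $M \hookrightarrow \RR^J$), which forces the $\eps_i$-density ratios $r^{-1}(E_{\eps_i} \restr B_r(x))[u_i]$ to have well-defined limits as $r \to 0$ and $i\to\infty$. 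Existence of these limits, combined with the sign property $\xi_{\eps_i} \to 0$ as measures (the hardest part, established via integration against test functions and the first-variation identity), gives $h_0^{-1}E_{\eps_i}[u_i] \rightharpoonup \|V^\infty\|$ locally; this yields the third bullet. Rectifiability of $V^\infty$ then follows from Preiss's theorem applied to the limiting density, while integrality is obtained by a blow-up argument: tangent varifolds at $\|V^\infty\|$-a.e.\ point are limits of rescaled Allen--Cahn solutions on $\RR^2$, whose structure forces the density to be an integer multiple of $h_0$ (through parity of the number of transition layers crossing a generic slab).

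Finally, Hausdorff convergence of level sets $\{u_i=t\}$ to $\supp\|V^\infty\|$ follows from the standard exponential decay $1-u_i^2 + |\nabla u_i| \leq Ce^{-\kappa\dist(\cdot,\{u_i=0\})/\eps_i}$ away from the transition region, combined with the energy-to-mass convergence just established: no mass can concentrate away from $\lim\{u_i=t\}$, and conversely every limit point of $\{u_i=t\}$ lies in $\supp\|V^\infty\|$ by the nontriviality of the transition at that scale. The parity statement on multiplicities is a consequence of the BV convergence $u_i \to u_\infty$: across the reduced boundary $\partial^*\{u_\infty=+1\}$, the function $\Phi(u_i)$ must jump by $\Phi(1)-\Phi(-1) = h_0$, which forces an odd number of transition layers to accumulate; at a $\|V^\infty\|$-a.e.\ point not lying on $\partial^*\{u_\infty=+1\}$, no net jump occurs, so the number of accumulating layers is even. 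Equating ``number of layers'' with the integer density concludes the proof.
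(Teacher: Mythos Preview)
The paper does not give its own proof of this proposition: it is stated with the citation \cite[Theorem 1]{HutchinsonTonegawa00} (cf.\ \cite[Appendix B]{Guaraco}) and invoked as a known black box throughout. Your sketch is a reasonable outline of the Hutchinson--Tonegawa argument from those references, so there is nothing to compare against in the paper itself.
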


We can get improved convergence in Proposition \ref{prop:HT.theory} if we additionally assume that each $u_i$ is a linearly stable critical point, i.e., $\Index_{\eps_i}(u_i; U) = 0$. This is because one then has estimates on the following curvature-type quantity:
 
 \begin{defi}
 For $(M,g)$ a Riemannian $2$-manifold and $u\in C^\infty_\textrm{loc}$, if $x \in M\setminus\{\nabla u=0\}$ then the \emph{enhanced second fundamental form} of $u$ at $x$ is
 \[
 \cA = |\nabla u|^{-1}(\nabla^2u - \nabla^2 u(\cdot,\nu) \otimes \nu^\flat)
 \]
 where $\nu=|\nabla u|^{-1}\nabla u$. 
 \end{defi}
\begin{rema}\label{rema:gen.sff}
 It is straightforward to check that at points where $\nabla u \neq 0$,
 \[
 |\cA|^2 = k^2 + |\nabla ^T\log|\nabla u||^2
 \]
 where $k$ denotes the curvature of the level curve through $x$ and $\nabla^T$ denotes the tangential gradient along the level curve.
 \end{rema}
 
Curvature estimates were first obtained by Tonegawa \cite{Tonegawa05} in the form of $L^2$ estimates on $\cA$  (cf. \cite[Lemma 4.6]{Mantoulidis}) in the setting of Proposition \ref{prop:HT.theory} with the additional assumption $\Index_{\eps_i}(u_i; U) = 0$. While these suffice for certain applications, such as controlling the number of singular points when $\Index_{\eps_i}(u_i; U)$ is $\leq I$ rather than $0$, they do not suffice when studying the finer structure of the singularity. 

We will, instead, rely on the following fundamental curvature estimates due to Wang--Wei \cite{WangWei}  (cf.\ \cite{CM:3d,WangWei2} for higher dimensional extensions).

\begin{prop}[{\cite[Theorem 3.5]{WangWei}, cf.\ \cite[Theorem 4.13 and Corollary 4.14]{Mantoulidis}}]\label{prop:WW.est}
	Suppose $(M, g)$ is a complete 2-dimensional Riemannian manifold, $U \subset M$ is open, and $u$ satisfies \eqref{eq:ac.pde} on $U$ with 
	\[ \Vert u \Vert_{L^\infty(U)} \leq 1 \text{ and } \Index_{\eps}(u; U) = 0. \]
	For all $U' \Subset U$ and $\beta \in (0, 1)$, there are $C, \theta, \eps_0 > 0$ depending on $\beta$, $\dist_g(U',\partial U)$, $\inj(U, g)$, and the $C^\infty$ norm of $g$ with respect to a fixed background metric so that, if $\eps \in (0,\eps_0)$, then
 \[
 \eps |\nabla u| \geq C^{-1} \textrm{ and } |\cA(x)| \leq C\eps^\theta \text{ on } U' \cap \{ |u| \leq 1-\beta \}.
 \]
\end{prop}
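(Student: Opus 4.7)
The plan is to prove both estimates by a compactness-contradiction argument based at the $\eps$-scale, reducing to the classification of stable entire phase transitions on $\RR^2$. I would first work in $g$-normal coordinates around any point $x_0 \in U' \cap \{|u| \leq 1-\beta\}$ and consider the rescaling $\tilde u_\eps(y) := u(\exp^g_{x_0}(\eps y))$ on the Euclidean ball $B_{R_\eps}(0)$ with $R_\eps \sim \eps^{-1}\dist_g(U',\partial U)$. Then $\tilde u_\eps$ solves $\Delta_{\tilde g_\eps} \tilde u_\eps = W'(\tilde u_\eps)$ for metrics $\tilde g_\eps \to$ Euclidean in $C^\infty_\textnormal{loc}(\RR^2)$ as $\eps \to 0$, inherits $\|\tilde u_\eps\|_{L^\infty} \leq 1$, $|\tilde u_\eps(0)| \leq 1-\beta$, and inherits stability of the linearized operator.

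Suppose some sequence $(u_i,\eps_i)$ with $\eps_i\to 0$ violates either estimate at points $x_i \in U' \cap \{|u_i|\leq 1-\beta\}$. Standard $C^{2,\alpha}_\textnormal{loc}$ elliptic regularity for \eqref{eq:ac.pde} and Arzel\`a--Ascoli let me extract $\tilde u_{\eps_i} \to u_\infty$ on compact subsets of $\RR^2$, where $u_\infty$ solves $\Delta u_\infty = W'(u_\infty)$ on all of $\RR^2$, satisfies $|u_\infty|\leq 1$, $|u_\infty(0)|\leq 1-\beta$, and is stable (the stability inequality is quadratic, hence lower semicontinuous under smooth convergence on compact sets). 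The main step is then to classify such stable entire solutions: combining the stationary integral limit varifold from Proposition~\ref{prop:HT.theory} with the fact that stability rules out multiplicity and singularities (a density-one argument using the Sternberg--Zumbrun identity after testing stability against $|\nabla u|\eta$), one deduces that $V^\infty$ is supported on a single line and hence $u_\infty(y) = \HH(a\cdot y + b)$ for some $a \in \SS^1$, $b\in \RR$. Along the heteroclinic profile one has $|\HH'(t)| \geq c(\beta)$ on $\{|\HH|\leq 1-\beta\}$ and $\cA_{u_\infty} \equiv 0$, which directly contradicts the scale-invariant quantities $\eps_i|\nabla u_i|(x_i)\to 0$ and $|\cA_{u_i}|(x_i)\to\infty$ in the qualitative form.

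The main obstacle, and the core of Wang--Wei's contribution, is upgrading this qualitative compactness to the \emph{quantitative} $\eps^\theta$ decay rate on $|\cA|$. Here I would implement the following: using the gradient nondegeneracy $\eps|\nabla u|\geq C^{-1}$ already obtained, introduce Fermi coordinates $(t,s)$ adapted to the level set through a given point, so that $u = \HH(t/\eps) + \eps \phi(t,s)$ in a neighborhood of scale $\gg \eps$. The stability inequality, projected normal to the profile, becomes a one-dimensional inequality with leading operator $\eps^{-1}(-\partial_t^2 + W''(\HH))$, which has zero eigenvalue spanned by $\HH'$ and a positive spectral gap. A Sternberg--Zumbrun-type integration of the stability inequality against $|\nabla u|\eta$, combined with the Bochner-type identity
\begin{equation*}
\Delta_g\!\left(\tfrac{\eps}{2}|\nabla u|^2\right) - \eps^{-1}W''(u)|\nabla u|^2 = \eps |\nabla^2 u|^2 - \eps|\nabla |\nabla u||^2 + \eps\Ric(\nabla u,\nabla u),
\end{equation*}
yields an $L^2$ bound $\int |\cA|^2 |\nabla u|^2 \eta^2 \leq C\int |\nabla u|^2 |\nabla\eta|^2$ (Tonegawa). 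The spectral gap then allows a Moser iteration on this Bochner inequality, where each iteration gains a uniform fractional power thanks to a reverse Poincar\'e inequality available because the transverse oscillation of $u$ is concentrated at scale $\eps$; the exponent $\theta$ one extracts is precisely the gap exponent in this iteration.

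Finally, the lower bound $\eps|\nabla u|\geq C^{-1}$ is not a separate argument but falls out of the same blow-up: on $\{|u_\infty|\leq 1-\beta\}$ the 1D profile gives $|\nabla u_\infty|\geq c(\beta)$, so any counterexample sequence $\eps_i|\nabla u_i|(x_i)\to 0$ produces $|\nabla u_\infty(0)|=0$ with $|u_\infty(0)|\leq 1-\beta$, impossible for a heteroclinic. A standard covering and patching argument then globalizes both estimates to $U' \cap \{|u|\leq 1-\beta\}$ for all $\eps < \eps_0$, completing the proof.
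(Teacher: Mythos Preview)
The paper does not give its own proof of this proposition: it is stated in the appendix purely as a citation of Wang--Wei \cite[Theorem 3.5]{WangWei} (cf.\ \cite[Theorem 4.13, Corollary 4.14]{Mantoulidis}), with no argument supplied. So there is nothing in the paper to compare your proposal against, and what you have written is effectively a sketch of the Wang--Wei proof itself rather than an alternative to anything the authors do.

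That said, a few comments on your sketch. The blow-up and classification step is essentially right, but your invocation of Proposition~\ref{prop:HT.theory} at that point is misplaced: after rescaling at scale $\eps$ you are looking at a fixed-$\eps$ equation, and the limit $u_\infty$ is obtained by elliptic compactness, not by Hutchinson--Tonegawa. The one-dimensionality of entire stable solutions on $\RR^2$ is then established directly by testing stability against $|\nabla u_\infty|\eta$ (Sternberg--Zumbrun) and a log-cutoff Liouville argument giving $\cA_{u_\infty}\equiv 0$; no varifold limit is needed there. Your outline of the quantitative $\eps^\theta$ decay is in the right spirit (Fermi coordinates over the level set, spectral gap of the linearized one-dimensional operator, iteration), though the actual Wang--Wei argument is more delicate than a plain Moser iteration and the mechanism by which the exponent $\theta$ emerges is not really captured by ``each iteration gains a uniform fractional power.'' Finally, you have not addressed where the a priori energy bound used in the blow-up comes from; as the remark following the proposition notes, in dimension two this is a consequence of stability itself, but it is an input that needs to be supplied.
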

\begin{rema}
	In the two-dimensional setting above, Proposition \ref{prop:WW.est} does not require an energy estimate for $u$, but the proof is simpler if one does assume it holds. In the context of the current paper, we will always have such an estimate available (cf.\ Lemma \ref{lemm:monotonicity.AC.energy} below).
\end{rema}
 
\begin{lemm}[{\cite[Proposition 3.4]{HutchinsonTonegawa00}, cf.\ \cite[Appendix B]{Guaraco}, \cite[Lemma 4.3]{Mantoulidis}}]\label{lemm:monotonicity.AC.energy}
	Suppose $(M, g)$ is a complete 2-dimensional Riemannian manifold, $U \subset M$ is open, and $u$ satisfies \eqref{eq:ac.pde} on $U$ with
	\[ (E_{\eps} \restr U)[u] \leq E_0. \]
	For all $U' \Subset U$, there are $C, \eps_0, r_0 > 0$ depending on $E_0$, $\dist_g(U', \partial U)$, $\inj(U, g)$, and the $C^\infty$ norm of $g$ with respect to a fixed background metric so that, if $\eps \in (0, \eps_0)$, then 
	\[
		(E_\eps\restr B_r(p))[u] \leq C r \text{ for all } r \in (0, r_0), \; p \in U'.
	\]
\end{lemm}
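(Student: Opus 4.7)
The plan is to derive an approximate monotonicity formula for the Allen--Cahn energy on $(M,g)$ and integrate it from a small scale $r$ up to a fixed reference scale $r_0$ at which the hypothesis $(E_\eps \restr U)[u] \leq E_0$ supplies control. Fix $r_0 < \tfrac12 \min\{\inj(U,g), \dist_g(U', \partial U)\}$ and, for $p \in U'$, work in $g$-geodesic normal coordinates on $B_{r_0}(p)$ with the radial vector field $X = r\,\nabla r$, where $r(x) = \dist_g(p,x)$. The stress--energy tensor associated to \eqref{eq:ac.pde},
\[
T_{ij} \;=\; \eps\, \partial_i u \, \partial_j u \;-\; \Big(\tfrac{\eps}{2}|\nabla u|^2 + \tfrac{1}{\eps} W(u)\Big) g_{ij},
\]
is divergence-free on $(M,g)$. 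Contracting against $X$ and integrating by parts over $B_s(p)$, together with the expansion $\nabla X = g + O(s^2)\cdot \Riem$ in normal coordinates, yields an identity of the schematic form
\[
\frac{d}{ds}\Big[\tfrac{1}{s} E_\eps(u, B_s(p))\Big] \;=\; \tfrac{1}{s}\!\!\int_{\partial B_s}\!\eps\, |\partial_\nu u|^2 \, d\mathcal{H}^1 \;-\; \tfrac{1}{s^2}\!\int_{B_s}\!\xi_\eps(u) \;+\; R_s,
\]
where $\xi_\eps(u) := \tfrac{\eps}{2}|\nabla u|^2 - \tfrac{1}{\eps} W(u)$ is the \emph{discrepancy} and $|R_s| \leq C_g\, s^{-1} E_\eps(u, B_s(p))$ with $C_g$ depending on the $C^\infty$-norm of $g$.

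The main obstacle is handling the bulk discrepancy term $s^{-2}\int_{B_s}\xi_\eps(u)$, since without an $L^\infty$ bound on $u$ one cannot directly invoke Modica's inequality $\xi_\eps \leq 0$. Following the strategy of Hutchinson--Tonegawa \cite{HutchinsonTonegawa00} (adapted to the manifold setting as in \cite[Appendix B]{Guaraco} and \cite[Lemma 4.3]{Mantoulidis}), I would test the PDE against a suitable truncation of $r\,\partial_r u$ at scale $s$ to obtain an auxiliary identity expressing $s^{-2}\int_{B_s}\xi_\eps$ as a total $s$-derivative of a quantity bounded by $C_g s^{-1} E_\eps(u, B_s(p))$, plus terms already absorbable into the left-hand side. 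This lets one combine the monotonicity identity with its auxiliary counterpart into a \emph{modified} ratio $\Phi_\eps(s)$ whose $s$-derivative is bounded below by $-C$ (with $C$ depending only on $C_g$ and $E_0$), whose value at $s=r_0$ is controlled by $C_g r_0^{-1} E_\eps(u, B_{r_0}(p)) \leq C_g r_0^{-1} E_0$, and which dominates $r^{-1} E_\eps(u, B_r(p))$ up to an error of order $o_\eps(1)$ as $\eps \to 0$.

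Integrating the resulting Gr\"onwall-type inequality from $s=r$ up to $s=r_0$ and choosing $\eps_0$ small enough to absorb the $o_\eps(1)$ errors yields
\[
\tfrac{1}{r} E_\eps(u, B_r(p)) \;\leq\; C\Big(\tfrac{E_0}{r_0} + r_0\Big),
\]
uniformly for $p \in U'$, $r \in (0, r_0)$, and $\eps \in (0, \eps_0)$. Rearranging gives the claimed bound $E_\eps(u, B_r(p)) \leq Cr$, with $C$, $\eps_0$, $r_0$ depending only on $E_0$, $\dist_g(U',\partial U)$, $\inj(U,g)$, and the $C^\infty$-norm of $g$.
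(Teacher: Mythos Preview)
The paper does not supply its own proof of this lemma; it is quoted directly from \cite[Proposition 3.4]{HutchinsonTonegawa00} (with the manifold adaptations in \cite[Appendix B]{Guaraco}, \cite[Lemma 4.3]{Mantoulidis}). Your sketch follows exactly the strategy of those references --- Pohozaev identity via the stress--energy tensor, an almost-monotonicity formula for $s^{-1}E_\eps(u,B_s(p))$ with curvature errors $R_s$, then integrate from $r$ to $r_0$ --- so at the level of architecture it is correct and there is nothing to compare.

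One point deserves sharpening. Your paragraph on the discrepancy is where the real content lies, and the mechanism you describe (``test the PDE against a truncation of $r\,\partial_r u$ to express $s^{-2}\!\int_{B_s}\xi_\eps$ as a total $s$-derivative'') is not quite how Hutchinson--Tonegawa proceed: testing against $r\,\partial_r u$ \emph{is} the Pohozaev identity you already wrote down, so this yields no new information. In \cite{HutchinsonTonegawa00} the discrepancy is instead controlled \emph{pointwise} by a maximum-principle argument (their \S3.3), which requires an a priori $L^\infty$ bound on $u$ --- a hypothesis present in the cited references but omitted from the lemma as stated here. Since every application of this lemma in the paper is to solutions with $|u|<1$ (coming from Proposition~\ref{prop:AC.min.max}), the cleanest fix is simply to add the hypothesis $\Vert u\Vert_{L^\infty(U)}\leq c_0$ and then quote the pointwise bound $\xi_\eps \leq C\eps$ on $U'$; this makes $s^{-2}\!\int_{B_s}\xi_\eps^+ \leq C\eps$ uniformly, and the rest of your Gr\"onwall argument goes through without the unspecified ``auxiliary identity.''
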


\section{Upper bounds for the $p$-widths of the two-sphere} \label{sec:upp.bds}
Guth considered upper bounds for the $p$-widths of a disk in \cite[\S 6]{Guth:minimax} coming from zero sets of polynomials. We now recall Aiex's construction of similar sweepouts on $\SS^2$ \cite[\S 5-6]{Aiex:ellipsoids}.

Let $\RR[x,y]_k$ (resp. $\RR[x,y,z]_k$) denote the space of real polynomials in two variables  (resp. three variables), of degree $\leq k$. Define 
\[
A_k := \{f + z g : f \in  \RR[x,y]_k , g \in  \RR[x,y]_{k-1}\} \subset \RR[x,y,z]_k.
\]
Clearly $A_k$ is a linear subspace of $\RR[x,y,z]_k$ of dimension 
\[
 \dim \RR[x,y]_k + \dim \RR[x,y]_{k-1} = {k+2 \choose 2} + {k+1 \choose 2}  = (k+1)^2.
\]
Note that any $(f, g) \mapsto f+zg \in A_k$ is a linear isomorphism. Indeed, suppose $f + zg \equiv 0$. For any $(x,y,z)\in \SS^2 \setminus\{z=0\}$ we have
\[
f(x,y) + z g(x,y) = f(x,y) - z g(x,y) = 0 \Rightarrow f(x,y) = g(x,y) = 0. 
\]
This implies that $f,g$ vanish on the open unit ball in $\RR^2$, so $f \equiv g \equiv 0$, and the claim follows.  In what follows, we identify $\RR P^D$ with $A_k \setminus \{0\}$ mod $\RR^*$.

\begin{prop} \label{prop:upper.bds}
	Let $k \in \NN^*$ and $A_k$ be as above. Set $D = (k+1)^2-1$. The projectivization 
	\[ F_D : \RR P^D \to \cZ_1(\SS^2; \ZZ_2), \; F_D(f) := \{ f = 0 \}, \]
	gives a continuous map with respect to the flat norm, with no concentration of mass, detecting the generator of $H^*(\cZ_1(\SS^2; \ZZ_2))$, and satisfying
	\[ \sup_{x \in \RR P^D} \bM(F_D(x)) \leq 2\pi k. \]
\end{prop}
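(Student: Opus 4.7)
The plan is to verify the four assertions separately: the uniform mass bound comes from the Crofton formula on $\SS^2$, no concentration of mass from its local version, continuity in the flat topology from standard semicontinuity together with the mass bound, and detection of the generator from lifting $F_D$ to a $\ZZ_2$-equivariant map out of $\SS^D$. This is essentially Aiex's argument in \cite[\S 5-6]{Aiex:ellipsoids}, which I would reproduce with small cosmetic changes.

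For the mass bound, recall the Crofton formula
\[
\length(\gamma) = \tfrac{1}{4}\int_{v \in \SS^2} \#(\gamma \cap C_v)\, d\sigma(v),
\]
where $C_v := \SS^2 \cap v^\perp$. Given $f + zg \in A_k$ with $f \in \RR[x,y]_k$, $g \in \RR[x,y]_{k-1}$, I would rotate coordinates so that an arbitrary great circle $C_v$ becomes the equator $\{z=0\} \cap \SS^2$; in the new coordinates the polynomial still has total degree $\leq k$, and its restriction to the equator becomes a trigonometric polynomial of degree $\leq k$ in $\theta$, which has at most $2k$ zeros on $[0,2\pi)$ (unless it vanishes identically, which occurs only on a $\sigma$-null subset of $v$'s). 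Integrating yields $\bM(F_D(f)) \leq \tfrac{1}{4}\cdot 2k \cdot 4\pi = 2\pi k$. The same argument restricted to a geodesic ball $B_r(p) \subset \SS^2$ gives $\|F_D(f)\|(B_r(p)) \leq C k r$ uniformly in $f$, hence no concentration of mass in the sense of Definition \ref{defi:no.concentration.of.mass}.

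For continuity, I would argue as follows. Given $f_n \to f$ in $\RR P^D$, choose unit-norm representatives in $A_k$; away from $\{f=0\}$ the polynomials $f_n$ converge in $C^\infty_{\textnormal{loc}}$ to $f$, so $\{f_n = 0\}$ stays eventually outside any $\{|f| > \eps\}$ and the sets converge in Hausdorff distance on the regular locus of $f$. Combined with the uniform mass bound from the previous paragraph, any flat-norm subsequential limit $T$ of $\{f_n = 0\}$ must be supported on $\{f = 0\}$, and a mod-$2$ degree (bordism) check identifies $T = \{f = 0\}$. It is convenient to lift and work with the $2$-chains $\{f > 0\} \in \bI_2(\SS^2;\ZZ_2)$, which vary continuously in flat norm with $f$, so that $\partial\{f>0\} = \{f=0\}$ does too.

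For the cohomological property, consider the $\ZZ_2$-equivariant lift
\[
\tilde F_D : \SS^D \longrightarrow \bI_2(\SS^2;\ZZ_2), \qquad f \longmapsto [\{f > 0\}],
\]
where $\SS^D$ is the unit sphere in $A_k$ with respect to a fixed inner product; the antipodal action $f \mapsto -f$ is intertwined with the fiber-swap $T \mapsto [\SS^2] - T$ on $\partial : \bI_2 \to \cZ_1$, since $\{-f > 0\} = \SS^2 \setminus \{f \geq 0\}$ up to a null set. Thus $\tilde F_D$ is a morphism of $\ZZ_2$-principal bundles from $\SS^D \to \RR P^D$ to $\bI_2(\SS^2;\ZZ_2) \to \cZ_1(\SS^2;\ZZ_2)$, so $F_D^*(\bar\lambda)$ is the first Stiefel--Whitney class of $\SS^D \to \RR P^D$, which is the generator of $H^1(\RR P^D;\ZZ_2) = \ZZ_2$ because $\SS^D$ is connected (as $D \geq 3$). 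Since $H^*(\RR P^D;\ZZ_2) = \ZZ_2[x]/(x^{D+1})$, it follows that $F_D^*(\bar\lambda^D) = x^D \neq 0$, so $F_D$ is a $D$-sweepout and hence a $p$-sweepout for every $p \leq D = (k+1)^2 - 1$.

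The main obstacle I expect is the continuity step: one has to handle parameters $f$ whose zero set has self-intersections or multiplicity, where the map $f \mapsto \{f=0\}$ is not a topological isotopy. The workaround outlined above, using the lift to $\bI_2(\SS^2;\ZZ_2)$ together with the uniform mass bound to rule out mass escaping away from $\{f=0\}$, should handle this cleanly, but making it fully rigorous is the least formulaic part.
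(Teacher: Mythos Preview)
Your proposal is correct and follows essentially the same approach as the paper, which simply cites \cite[Lemma 6.2]{Guth:minimax} for continuity and \cite[\S 5]{Aiex:ellipsoids} for the Crofton-based mass bound and no-concentration-of-mass, exactly as you outline. The one cosmetic difference is in detecting $\bar\lambda$: the paper restricts $F_D$ to the linear pencil $\{ax+b\} \cong \RR P^1 \subset \RR P^D$ and recognizes this as the standard $1$-sweepout, whereas you argue directly that the lift $\tilde F_D$ is a $\ZZ_2$-bundle map; these are equivalent, since both amount to checking that $F_D$ pulls the nontrivial double cover $\partial : \bI_2 \to \cZ_1$ back to $\SS^D \to \RR P^D$.
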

\begin{proof}
	Flat-norm continuity follows from the arguments in \cite[Lemma 6.2]{Guth:minimax}. That $F_D$ detects the generator of $H^*(\cZ_1(\SS^2; \ZZ_2)) \cong \ZZ_2[\bar \lambda]$ follows by considering the linear sweepout $ax+b \in A_k$. The no-concentration of mass property follows from the Crofton formula as in \cite[Lemma 5.1]{Aiex:ellipsoids} as does $\sup_{x\in \RR P^D} \bM(F_D(x)) \leq 2\pi k$; 
see \cite[Theorem 5.2]{Aiex:ellipsoids}. 
\end{proof}

Proposition \ref{prop:upper.bds} and the easy fact that $p$-sweepouts are $p'$-sweepouts when $p' \leq p$ imply:
\begin{coro} \label{coro:upper.bds}
	$\omega_p(\SS^2,g_{\SS^2}) \leq 2\pi \lfloor \sqrt{p}\rfloor$ for all $p \in \NN^*$. 
\end{coro}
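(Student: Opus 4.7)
The plan is to observe that Proposition \ref{prop:upper.bds}'s map $F_D$ is not merely a $1$-sweepout but in fact a $D$-sweepout, and then to pick the degree $k$ so that $D = (k+1)^2 - 1 \geq p$ while $k = \lfloor \sqrt{p} \rfloor$.

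First I would unpack the cohomological input. Since $F_D$ detects the generator of $H^1(\cZ_1(\SS^2;\ZZ_2);\ZZ_2)$, the pullback $F_D^*(\bar\lambda) \in H^1(\RR P^D;\ZZ_2)$ equals the nontrivial class $\alpha$. Using the well-known ring structure $H^*(\RR P^D;\ZZ_2) = \ZZ_2[\alpha]/(\alpha^{D+1})$ and the fact that $F_D^*$ is a ring homomorphism, we have
\[
F_D^*(\bar\lambda^p) = F_D^*(\bar\lambda)^p = \alpha^p \neq 0 \quad \text{whenever } 1 \leq p \leq D.
\]
Combined with Proposition \ref{prop:upper.bds}'s continuity and no-concentration-of-mass, this shows $F_D \in \cP_p(\SS^2)$ for every $p \in \{1, \ldots, D\}$, with Definition \ref{defi:p-width} giving
\[
\omega_p(\SS^2, g_{\SS^2}) \leq \sup_{x \in \RR P^D} \bM(F_D(x)) \leq 2\pi k.
\]

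Next, given $p \in \NN^*$, I would choose $k := \lfloor \sqrt{p} \rfloor$, so that $D = (k+1)^2 - 1$. Writing $n = \lfloor \sqrt{p} \rfloor$, we have $n^2 \leq p \leq (n+1)^2 - 1 = D$, so $p \leq D$ and the preceding estimate applies with this $k$, yielding $\omega_p(\SS^2, g_{\SS^2}) \leq 2\pi \lfloor \sqrt{p} \rfloor$ as desired.

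There is no real obstacle; Proposition \ref{prop:upper.bds} does all the geometric work (continuity via \cite[Lemma 6.2]{Guth:minimax}, the mass bound via the Crofton formula) and the only step beyond quoting it is the elementary cohomological observation that a single generator-detecting map out of $\RR P^D$ automatically detects all powers up to $\bar\lambda^D$. The integer arithmetic matching $k = \lfloor \sqrt{p} \rfloor$ with $D = (k+1)^2 - 1 \geq p$ is immediate from the definition of the floor function.
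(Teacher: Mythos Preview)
Your proof is correct and is essentially the paper's argument written out in full: the paper simply says the corollary follows from Proposition~\ref{prop:upper.bds} together with the ``easy fact that $p$-sweepouts are $p'$-sweepouts when $p' \leq p$,'' and your cohomological computation (that $F_D^*(\bar\lambda)^p = \alpha^p \neq 0$ in $H^*(\RR P^D;\ZZ_2)$ for $p \leq D$) is exactly what underlies that easy fact. The choice $k = \lfloor \sqrt{p} \rfloor$ with $D = (k+1)^2 - 1 \geq p$ is the same in both.
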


\bibliographystyle{alpha}
\bibliography{bib}
\end{document}